\newtheorem{theorem}{Theorem}[section]
\newtheorem{proposition}[theorem]{Proposition}
\newtheorem{corollary}[theorem]{Corollary}
\newtheorem{oq}{Open Question}
\newtheorem{lemma}[theorem]{Lemma}
\theoremstyle{definition}
\newtheorem{definition}[theorem]{Definition}
\theoremstyle{remark}
\newtheorem{remark}[theorem]{Remark}
\newtheorem{example}[theorem]{Example}
\renewcommand{\leq}{\leqslant}
\renewcommand{\geq}{\geqslant}
\newcommand{\Sp}{\mathbb{S}^2}
\newcommand{\mH}{\mathcal H}
\newcommand{\mC}{\mathcal C}
\newcommand{\mL}{\mathcal L}
\newcommand{\la}{\langle}
\newcommand{\ra}{\rangle}
\newcommand{\diam}{\mathrm{diam}}
\newcommand{\dist}{\mathrm{dist}}
\newcommand{\supp}{\mathrm{supp}}
\newcommand{\eps}{\varepsilon}
\newcommand{\Met}{\mathrm{Met}}
\newcommand{\Diff}{\mathrm{Diff}}
\newcommand{\can}{\mathrm{can}}
\newcommand{\inj}{\mathrm{inj}}
\newcommand{\Conf}{\mathrm{Conf}}
\newcommand{\length}{\mathrm{Length}}
\newcommand{\Wdot}{\dot{W}^{-1,2}}
\newcommand{\Lip}{\mathrm{Lip}}
\newcommand{\GrB}{\mathcal{G}_2(n+1)}
\DeclareMathOperator{\area}{\mathrm{Area}}
\DeclareMathOperator{\ind}{\mathrm{ind}}
\numberwithin{equation}{section}
\begin{document}

\title[Title]{From Steklov to Laplace: free boundary minimal surfaces with many boundary components}

\author[M. Karpukhin]{Mikhail Karpukhin}
\address[Mikhail Karpukhin]{Mathematics 253-37, California Institute of Technology, 
Pasadena, CA 91125, USA}
\email{mikhailk@caltech.edu}
\author[D. Stern]{Daniel Stern}
\address[Daniel Stern]{Department of Mathematics, University of Chicago,
5734 S University Ave
Chicago IL, 60637, USA}
\email{dstern@uchicago.edu}
\date{}
\maketitle
\begin{abstract}
In the present paper, we study sharp isoperimetric inequalities for the first Steklov eigenvalue $\sigma_1$ on surfaces with fixed genus and large number $k$ of boundary components. We show that as $k\to \infty$ the free boundary minimal surfaces in the unit ball arising from the maximization of $\sigma_1$ converge to a closed minimal surface in the boundary sphere arising from the maximization of the first Laplace eigenvalue on the corresponding closed surface. For some genera, we prove that the corresponding areas converge at the optimal rate $\frac{\log k}{k}$. This result appears to provide the first examples of free boundary minimal surfaces in a compact domain converging to closed minimal surfaces in the boundary, suggesting new directions in the study of free boundary minimal surfaces, with many open questions proposed in the present paper. A similar phenomenon is observed for free boundary harmonic maps associated to conformally-constrained shape optimization problems.

\end{abstract}


\section{Introduction}

\subsection{Background} Since the 18th century, minimal surfaces have played a central role in geometry and analysis, among other areas of mathematics and physics. While early investigations focused on minimal surfaces in Euclidean space, the twentieth century saw an increased interest in the study of minimal surfaces and higher-dimensional minimal submanifolds in compact Riemannian manifolds, with a fundamental special case being the study of minimal submanifolds in the sphere $\mathbb{S}^n$. Indeed, in addition to their intrinsic geometric interest, minimal submanifolds in $\mathbb{S}^n$ are an unavoidable object of study for those investigating analytic aspects of minimal submanifolds, since cones over minimal varieties in the sphere generate the blow-up models for singularities of minimal submanifolds in any ambient space.

In recent decades, the study of minimal submanifolds in spheres has been greatly enriched by the discovery of an intimate link between minimal surfaces in spheres and certain natural shape optimization problems for Laplacian eigenvalues. On a closed Riemannian surface $(M,g)$, denote by 
$$
0=\lambda_0(M,g)<\lambda_1(M,g)\leqslant \lambda_2(M,g)\leqslant\ldots \nearrow+\infty
$$
the spectrum of the \emph{positive} Laplacian $\Delta_g=\delta_gd$. Normalizing by the area, one obtains a sequence of scale-invariant quantities, of which the most fundamental is the first non-trivial normalized eigenvalue
$$
\bar{\lambda}_1(M,g):=\lambda_1(M,g)\area_g(M).
$$
About fifty years ago, Hersch observed in the influential paper \cite{Hersch} that $\bar{\lambda}_1(\mathbb{S}^2,g)\leq 8\pi$ for any metric $g$ on $\mathbb{S}^2$, with equality only for round metrics. This paved the way for the study of the maximization problem for $\bar{\lambda}_1(M,g)$ over metrics on a surface of fixed topological type, and the associated maxima
$$\Lambda_1(M):=\sup_{g\in \Met(M)}\bar{\lambda}_1(M,g).$$
Early key contributions were made by Yang-Yau \cite{YY}, who showed that $\Lambda_1(M)<\infty$ for orientable $M$  (see \cite{KarNonOrientable} for the non-orientable case), and Li-Yau \cite{LY}, whose introduction of the conformal volume led to the characterization of the round metric as the unique $\bar{\lambda}_1$-maximizing metric on $\mathbb{RP}^2$, among other important consequences.

In the '90s, a significant breakthrough was made by Nadirashvili \cite{Nad}, who realized that metrics maximizing the normalized Laplacian eigenvalues $\bar{\lambda}_i(M,g)$ are induced by minimal immersions to the unit sphere--an observation which he used to confirm Berger's conjecture that the flat equilateral metric on $\mathbb{T}^2$ maximizes $\bar{\lambda}_1$. By the recent work of \cite{MS, PetL1}, it is now known that for any closed surface $M$, the supremum $\Lambda_1(M)$ is achieved by a $\bar{\lambda}_1$-maximizing metric--possibly with conical singularities--induced by a branched minimal immersion $M\to \mathbb{S}^n$ of area $\frac{1}{2}\Lambda_1(M)$, though at present the explicit maximizer has only been determined for $M=\mathbb{S}^2,\mathbb{RP}^2,\mathbb{T}^2$, the Klein bottle $\mathbb{K}$ \cite{EGJ, JNP, CKM}, and the orientable surface of genus two \cite{JLNNP, NaySh, Ros}. Let us also remark that the corresponding theory for higher eigenvalues has seen a lot of recent progress; see \cite{PetLk, KNPP, KNPP2, KarRP2}. Moreover, it has been observed \cite{ESI} that the induced metric on any minimally immersed closed submanifold in $\mathbb{S}^n$ is a (typically non-maximizing) critical point for one of the functionals $\bar{\lambda}_i$, so in principle all immersed minimal submanifolds in the sphere may arise from variational methods for $\bar{\lambda}_i$.

In the setting of surfaces with boundary $(N,g)$, one finds a strong analogy between closed minimal surfaces in the sphere and \emph{free boundary minimal surfaces}--critical points for the area functional among relative $2$-cycles--in Euclidean balls. In recent years, much activity in this direction has been stimulated by the work of Fraser and Schoen \cite{FS}, who demonstrated that metrics maximizing normalized \emph{Steklov} eigenvalues 
$$
\bar{\sigma}_i(N,g)=\sigma_i(N,g)\length(N,g),
$$
where $\sigma_i(N,g)$ are eigenvalues of the Dirichlet-to-Neumann map $C^{\infty}(\partial N)\to C^{\infty}(\partial N)$, are induced by free boundary minimal immersions to Euclidean balls. In addition to solving several important problems related to optimal bounds for Steklov eigenvalues, their work reinvigorated the study of free boundary minimal submanifolds in general; see the survey \cite{Li_survey} for a discussion of many results in this direction obtained over the last decade. For the purposes of this paper, let us note that the supremum
$$\Sigma_1(N):=\sup_g\bar{\sigma}_1(N,g)$$
of the first nontrivial normalized Steklov eigenvalue over metrics on a surface with boundary $N$ is finite \cite{GP:HPS, FS:conf_volume, Medvedev}, and it was recently proved in \cite{MP} that the supremum $\Sigma_1(N)$ is always achieved by a $\bar{\sigma}_1$-maximizing metric (possibly with conical singularities) on $N$, induced by a branched free boundary minimal immersion of area $\frac{1}{2}\Sigma_1(N)$ in a Euclidean unit ball of suitable dimension.

In the present paper, we establish an explicit link--beyond the well-known analogy--between the free boundary minimal surfaces in Euclidean balls arising from maximization of $\bar{\sigma}_1$ and the closed minimal surfaces in $\mathbb{S}^n$ arising from maximization of $\bar{\lambda}_1$. Namely, building on the recent results of \cite{GL,GKL,KS,KNPS}, we show that the free boundary minimal surfaces of genus $\gamma$ and $k$ boundary components in $\mathbb{B}^{n+1}$ arising from maximization of $\bar{\sigma}_1$ converge as $k\to\infty$, in the varifold sense, to the closed minimal surface of genus $\gamma$ in $\mathbb{S}^n$ arising from maximization of $\bar{\lambda}_1$ (and similarly in the nonorientable setting), and obtain sharp estimates for the rate at which their areas $\frac{1}{2}\Sigma_1(N_k)$ converge to $\frac{1}{2}\Lambda_1(M)$. 

In particular, while the explicit maximizers for $\bar{\sigma}_1$ are known only for the disk, the annulus, and the M\"obius band \cite{FS}, our results provide an asymptotic description of the $\bar{\sigma}_1$-maximizing metrics on surfaces with many boundary components in every case where the maximizing metric for $\bar{\lambda}_1$ is known on the corresponding closed surface (namely, at the moment, for $M=\mathbb{S}^2,\mathbb{RP}^2,\mathbb{T}^2,\mathbb{K}$, or the orientable surface of genus two). Moreover, to our knowledge, these results provide the first examples of families of compact free boundary minimal surfaces in a manifold with boundary limiting to closed minimal surfaces in the boundary, suggesting a number of new questions and lines of investigation in the study of free boundary minimal surfaces (see Section \ref{sec:disc_intro} below).

\subsection{Convergence of $\bar\sigma_1$-maximizing maps} Given a closed surface $M$, let $N_k$ be the compact surface with boundary obtained by removing $k$ disjoint disks from $M$. To be precise, $N_k$ is orientable if and only if $M$ is orientable, $N_k$ has $k$ boundary components and the same genus as $M$. The following surprising identities were recently established in~\cite{GL,KS},
\begin{equation}
\label{ineq:KS_intro}
\Sigma_1(N_k)<\Lambda_1(M);
\end{equation}
\begin{equation}
\label{lim:GL_intro}
\lim_{k\to\infty}\Sigma_1(N_k) = \Lambda_1(M);
\end{equation}
see also~\cite{GKL} for a more streamlined proof of~\eqref{lim:GL_intro}.
In particular, since $\bar\sigma_1$-maximal metrics correspond to free boundary minimal surfaces of bounded area $\frac{1}{2}\Sigma_1(N_k)<\frac{1}{2}\Lambda_1(M)<\infty$ in a ball $\mathbb{B}^{n+1}$ of fixed dimension $n$, it follows that these minimal surfaces must converge subsequentially in the varifold sense (see Section \ref{sec:convergence} below for relevant definitions) to some limit varifold in $\mathbb{B}^{n+1}$ satisfying a weak version of the free boundary stationary condition. By \eqref{lim:GL_intro}, we see that this limit varifold must have area $\frac{1}{2}\Lambda_1(M)$, and since closed minimal surfaces in $\mathbb{S}^n$ satisfy the weak definition of free boundary stationary varifolds in $\mathbb{B}^{n+1}$, it is natural to expect that these free boundary minimal surfaces converge as $k\to\infty$ to the closed minimal surface in $\mathbb{S}^n$ realizing $\Lambda_1(M)$. 
For $M=\Sp$ this was posed as a conjecture in~\cite{GL}. In the first result of the present paper we resolve this conjecture for an arbitrary closed surface $M$ and prove the following theorem. (See Section~\ref{sec:convergence} and Theorem~\ref{thm:convergence} for a more detailed statement.)

\begin{theorem}
\label{thm:convergence_intro}
For an appropriate $n=n(M)\in \mathbb{N}$, up to a choice of a subsequence, the (branched) free boundary minimal surfaces in $\mathbb{B}^{n+1}$ inducing the $\bar\sigma_1(N_k)$-maximizing metrics on $N_k$ converge in the varifold sense to a closed (branched) minimal surface in $\mathbb{S}^n$ inducing the $\bar\lambda_1(M)$-maximizing metric on $M$. Moreover, as a consequence, their supports converge in the Hausdorff distance, and the boundary measures converge to twice the area measure of the limit surface. 
\end{theorem}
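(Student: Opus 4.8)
The plan is to exploit the fact, noted in the introduction, that the $\bar\sigma_1(N_k)$-maximizing metrics are realized by branched free boundary minimal immersions $\Phi_k\colon N_k\to \mathbb{B}^{n+1}$ of area $\tfrac12\Sigma_1(N_k)$, with $n$ bounded uniformly by the general existence theory of \cite{MP}; after passing to a subsequence we may fix the dimension $n=n(M)$. Since $\Sigma_1(N_k)<\Lambda_1(M)<\infty$ by \eqref{ineq:KS_intro}, the associated integral varifolds $V_k=(\Phi_k)_*[N_k]$ have uniformly bounded mass, so by compactness of varifolds with bounded first variation (the free boundary stationarity of $\Phi_k$ gives a uniform bound on $\|\delta V_k\|$ in $\mathbb{B}^{n+1}$) a subsequence converges in the varifold sense to a limit varifold $V_\infty$ in $\overline{\mathbb{B}^{n+1}}$. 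The first task is to identify the total mass: lower semicontinuity of mass under varifold convergence combined with \eqref{lim:GL_intro} forces $\|V_\infty\|(\overline{\mathbb{B}^{n+1}})\leq \tfrac12\Lambda_1(M)$, and the reverse inequality must come from a monotonicity/no-mass-loss argument, using that each $\Phi_k$ is minimal and the boundary contact is free (so no mass escapes to $\partial\mathbb{B}^{n+1}$ in a way that decreases total area in the limit); here I would invoke the results of \cite{GL,GKL,KS,KNPS} cited in the introduction to control the boundary lengths $\length(\partial N_k)$ and pin the limit area to exactly $\tfrac12\Lambda_1(M)$.

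The second, and I expect hardest, task is to show that $V_\infty$ is supported in the boundary sphere $\mathbb{S}^n=\partial\mathbb{B}^{n+1}$ and is the $\bar\lambda_1(M)$-maximizing closed minimal surface. The structural input is that the coordinate functions of $\Phi_k$ are $\sigma_1$-eigenfunctions on $\partial N_k$ with eigenvalue $\sigma_1(N_k,g_k)$, and as $k\to\infty$ the relevant Steklov problem degenerates (the many small boundary circles shrink) so that, after rescaling lengths, the Steklov eigenvalue equation on $\partial N_k$ converges to the Laplace eigenvalue equation on $M$ with eigenvalue $\lambda_1(M)$ — this is the ``Steklov to Laplace'' mechanism. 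Concretely I would show that $|\Phi_k|^2\to 1$ in an averaged sense (the harmonic function $|\Phi_k|^2$ on $N_k$ equals $1$ on $\partial N_k$ and is subharmonic, and its deviation from $1$ is controlled by the total boundary length of the removed disks, which goes to $0$ after normalization), forcing $\supp V_\infty\subset \mathbb{S}^n$. Once the limit lives on the sphere, a closed minimal surface there, its area is $\tfrac12$ of some $\bar\lambda_1$-type energy, and matching the mass $\tfrac12\Lambda_1(M)$ together with the fact that the limit still "remembers" the topology of $M$ (via the degree/homology of $\Phi_k$ restricted away from the boundary circles, which is preserved in the limit) identifies it as a $\bar\lambda_1(M)$-maximizer; uniqueness of the maximizing metric in its conformal class (when applicable) or the variational characterization of \cite{MS,PetL1} then pins down the limit surface.

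The third task is to upgrade varifold convergence to the stated geometric consequences. Hausdorff convergence of supports follows from varifold convergence plus a uniform density lower bound for free boundary minimal surfaces (monotonicity at interior and boundary points gives $\|V_k\|(B_r(x))\geq c r^2$ for $x\in\supp V_k$), which prevents the supports from developing spurious gaps or spikes in the limit; this is a standard consequence once no-mass-loss is established. For the boundary measures, I would track the measures $\mu_k:=(\Phi_k)_*(\text{arclength on }\partial N_k)$ on $\overline{\mathbb{B}^{n+1}}$; these have total mass $\length(N_k,g_k)$, which after normalization converges, and since $\partial N_k$ consists of $k$ circles each mapped near a shrinking region but whose union "fills out" $M$, the limit of $\mu_k$ should be $2$ times the area measure of the limit surface — the factor $2$ coming from the doubling $N_k\rightsquigarrow$ double of $N_k$ that underlies the Steklov-to-Laplace comparison (the closed surface $M$ is the limit of the doubles $\widehat{N_k}$, and boundary arclength on $N_k$ corresponds to $2\times$ area on $M$ in the appropriate normalization). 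The main obstacle throughout is the no-mass-loss / no-concentration step: ruling out that a positive fraction of the area of $\Phi_k$ collapses onto the boundary sphere in a degenerate way or escapes via the shrinking boundary circles, and here the key leverage is the combination of the sharp identity \eqref{lim:GL_intro}, the strict inequality \eqref{ineq:KS_intro}, and the quantitative estimates from \cite{GL,GKL,KS,KNPS}, which I would expect to give the $\frac{\log k}{k}$ rate used in the companion area-estimate theorem and, a fortiori, enough control to run the compactness argument cleanly.
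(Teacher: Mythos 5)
Your high-level outline---uniform bound on $n$, area bound from \eqref{ineq:KS_intro}, varifold compactness, mass exactly $\tfrac12\Lambda_1(M)$, support in $\mathbb{S}^n$, identification as $\bar\lambda_1$-maximizer---is the right shape, and your intuition that ``the Steklov eigenvalue equation degenerates to the Laplace eigenvalue equation on $M$'' is the correct mechanism. But the proposal is missing the two technical pivots on which the paper's proof actually turns, and the replacements you sketch for them would not go through.

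The first missing pivot is the \emph{uniformization plus stability} step. The paper uses Theorem~\ref{thm:uni} (Maskit--Haas) to realize each $\bar\sigma_1$-maximal metric conformally as a domain $\Omega_k\subset(M,g_k)$ with $g_k\in\Met_{\can}(M)$ and $M\setminus\Omega_k$ a union of geodesic disks; this puts the boundary length measures $\mu_k=ds_{\tilde g_k}$ on the fixed surface $M$, where the qualitative stability result of \cite{KNPS} (Theorem~\ref{thm:glob_qual_stab}) gives that, along a subsequence, $g_k\to g$ in $C^1$ and $\mu_k\to\mu_{\max}$ in $W^{-1,2}$, with $\mu_{\max}$ a $\bar\lambda_1$-maximal measure. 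Without this, you have no handle on \emph{which} closed minimal surface you are converging to. Your substitute---``the limit still remembers the topology of $M$ via degree/homology''---does not work: a closed minimal surface in $\mathbb{S}^n$ of area $\tfrac12\Lambda_1(M)$ need not induce a $\bar\lambda_1$-maximal metric, and nothing in degree or homology alone forces the limit map's components to be \emph{first} eigenfunctions. The identification happens because the limit map $u$ satisfies $\Delta_{g_{\max}}u=\Lambda_1(M)\,u$ with $\mu_{\max}=dv_{g_{\max}}$ the $\bar\lambda_1$-maximal measure supplied by the stability theorem, and this is obtained by passing to the limit in the weak Steklov equation using the $W^{-1,2}$ convergence of $\mu_k$.

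The second missing pivot is the \emph{small-energy extension} (Proposition~\ref{prop:extension}, item~(2)): one must extend $u_k$ harmonically across the disks $M\setminus\Omega_k$ to $\hat u_k\in W^{1,2}(M)$ with $E(\hat u_k;M\setminus\Omega_k)\to 0$, so that $\hat u_k\to u$ strongly in $W^{1,2}(M)$ (Lemma~\ref{lemma:strong_H1}). This is the ingredient that both forces $\supp V_\infty\subset\mathbb{S}^n$ and upgrades to varifold convergence---varifold convergence follows from strong $W^{1,2}$ convergence of maps from a closed surface plus the vanishing-energy tail, not from general GMT compactness and a clean no-mass-loss argument. Your substitute---``$|\Phi_k|^2\to 1$ because the deviation from $1$ is controlled by the total boundary length of the removed disks''---is not correct: the total boundary length is normalized, and controlling $|\Phi_k|^2$ alone does not control the derivative data needed for varifold convergence. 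The actual proof of the extension lemma is delicate: it uses the $W^{-1,2}$ control from stability to show that both the radii of the removed disks and the lengths $\mu_k(\partial B_{j,k})$ of individual boundary components tend to zero (Lemma~\ref{lem:rad_length}), constructs explicit cone extensions with area $\lesssim l_{j,k}^2$, and then modifies the metric inside the holes so that small area implies small energy.

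Two smaller points: the factor of $2$ in the boundary-measure statement does not come from any doubling of $N_k$; it comes from the first variation formula $\int_{\partial u_k(N_k)}f=\int_{u_k(N_k)}\bigl(2f+\langle x,\nabla^{u_k(N_k)}f\rangle\bigr)$ applied to $X=fx$, where the tangential term vanishes in the limit because $x\perp T\mathbb{S}^n$. And invoking the quantitative $\tfrac{\log k}{k}$ rate of Theorem~\ref{thm:asymptotics_intro} to ``run the compactness argument cleanly'' is unnecessary and not how the paper proceeds: the convergence theorem uses only the qualitative stability of \cite{KNPS}, and the sharp rate is a separate, logically independent result.
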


\begin{remark}
As is customary in the theory of varifolds, the surfaces in Theorem~\ref{thm:convergence_intro} should be understood with the appropriate multiplicity; see Section~\ref{sec:convergence}. For example, if $M=M_2$ is the orientable surface of genus $2$, then the limiting surface is $\Sp$ with multiplicity $2$; see Open Question~\ref{oq:Bolza} below. 
\end{remark}

\begin{remark} Note that one should not expect to improve the convergence statement far beyond varifold convergence: in particular, note that a free boundary surface in $\mathbb{B}^{n+1}$ cannot be $C^1$ close to any surface in $\mathbb{S}^n$ near its boundary, and it is clear from direct examination of the minimal surface equations in $\mathbb{R}^{n+1}$ and $\mathbb{S}^n$ that a minimal surface in $\mathbb{B}^{n+1}$ is nowhere close to a minimal surface in $\mathbb{S}^n$ in a $C^2$ sense. Nonetheless, one can of course ask for a more refined picture of the convergence given in Theorem \ref{thm:convergence_intro}; see Section \ref{sec:disc_intro} below for some open questions in this direction.
\end{remark}

If $M$ is a sphere $\Sp$, a projective plane $\mathbb{RP}^2$, a torus $\mathbb{T}^2$ or a Klein bottle $\mathbb{K}$,
then the branched minimal surface corresponding to $\bar\lambda_1(M)$-maximal metric is unique up to an isometry of $\mathbb{R}^{n+1}$, see e.g.~\cite{CKM}. Since $O(n+1)$ is compact, the convergence of Theorem~\ref{thm:convergence_intro} holds along the full sequence after applying a suitable element of $O(n+1)$ to each member of the sequence. Moreover, in all these examples, the limit surface and the value of $n$ are known explicitly.

\begin{itemize}
\item If $M=\Sp$, then $n=2$ and the limit surface is the whole sphere. Furthermore, Proposition 8.1 of~\cite{FS} implies that all of these
free boundary minimal surfaces are embedded. In particular, we have the following corollary.
\end{itemize}
\begin{corollary}
\label{cor:sphere_intro}
For each $k>0$ there exists an embedded free boundary minimal surface in $\mathbb{B}^3$ of genus $0$ with $k$ boundary components, such that as $k\to\infty$ these surfaces converge in the varifold sense to the boundary sphere $\mathbb{S}^2=\partial \mathbb{B}^3$.
\end{corollary}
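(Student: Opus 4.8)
The plan is to obtain Corollary~\ref{cor:sphere_intro} as a direct combination of Theorem~\ref{thm:convergence_intro}, specialized to $M=\Sp$, with the explicit description of the $\bar\lambda_1$-maximizing object on the sphere and with the embeddedness result of Fraser--Schoen. First I would recall that by Hersch's theorem the $\bar\lambda_1$-maximizing metric on $\Sp$ is the round metric, realized by the standard isometric minimal immersion $\Sp\hookrightarrow\Sp$ of degree one; in particular the ambient dimension in Theorem~\ref{thm:convergence_intro} is $n=2$ and the limiting closed minimal surface in $\Sp=\bd\mathbb{B}^3$ is the whole boundary sphere with multiplicity one. Thus the varifold convergence, Hausdorff convergence of supports, and convergence of boundary measures supplied by Theorem~\ref{thm:convergence_intro} all hold with the boundary sphere $\Sp=\bd\mathbb{B}^3$ as the limit, once we have fixed $N_k$ to be the genus-zero surface with $k$ boundary components (i.e. $\Sp$ with $k$ disjoint open disks removed), which is exactly the surface to which the $\bar\sigma_1$-maximization of \cite{FS,MP} applies.

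Second, I would invoke the existence half of the picture: by \cite{FS, MP}, for each $k$ the supremum $\Sigma_1(N_k)$ is attained by a $\bar\sigma_1$-maximizing metric on $N_k$, and this metric is induced by a (branched) free boundary minimal immersion of $N_k$ into a Euclidean unit ball $\mathbb{B}^{n_k+1}$; combined with the inequality $\Sigma_1(N_k)<\Lambda_1(\Sp)=8\pi$ from \eqref{ineq:KS_intro} and the uniform area bound $\tfrac12\Sigma_1(N_k)<4\pi$, these are the free boundary minimal surfaces to which Theorem~\ref{thm:convergence_intro} is applied. The one structural point worth spelling out is that for $M=\Sp$ one can take the ambient ball to be $\mathbb{B}^3$ uniformly in $k$: this is precisely the content of Proposition~8.1 of~\cite{FS}, which shows that the genus-zero free boundary minimal surfaces arising from $\bar\sigma_1$-maximization lie in $\mathbb{B}^3$ and are embedded. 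Hence for every $k$ we genuinely have an embedded free boundary minimal surface of genus $0$ with $k$ boundary components in $\mathbb{B}^3$, giving the first clause of the corollary.

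Finally I would assemble the two ingredients: Theorem~\ref{thm:convergence_intro} gives a subsequence along which these embedded surfaces converge in the varifold sense to the round $\Sp$; since the $\bar\lambda_1$-maximizer on $\Sp$ is unique up to isometries of $\R^3$ and $O(3)$ is compact, the convergence in fact holds for the full sequence after rotating each surface by a suitable element of $O(3)$ — but since the limit $\bd\mathbb{B}^3$ is $O(3)$-invariant, no rotation is even needed and the stated convergence holds along the full sequence as written. The only genuinely substantive input is Theorem~\ref{thm:convergence_intro} itself together with \eqref{lim:GL_intro}; granting those, the corollary is essentially a matter of bookkeeping, and I do not anticipate a real obstacle beyond correctly matching conventions (multiplicity one, $n=2$, and the identification of $N_k$ with the $k$-holed sphere). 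If one wanted a self-contained remark, the main point to double-check is that the embeddedness in Proposition~8.1 of~\cite{FS} is stated for exactly the $\bar\sigma_1$-maximizing genus-zero surfaces and not merely for the known explicit examples (disk, annulus), so that it applies for all $k$.
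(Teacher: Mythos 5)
Your proposal is correct and follows essentially the same route as the paper: apply Theorem~\ref{thm:convergence_intro} with $M=\Sp$, identify the limit as the round boundary sphere via Hersch's theorem, and invoke Proposition~8.1 of Fraser--Schoen for embeddedness and for placing the maximizers in $\mathbb{B}^3$. Your observation that for $M=\Sp$ the full-sequence convergence requires no rotation at all (since the limit $\partial\mathbb{B}^3$ is $O(3)$-invariant) is a slight and correct streamlining of the paper's remark that one can rotate to get full-sequence convergence whenever the $\bar\lambda_1$-maximizer is unique up to isometry.
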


\begin{remark}
Note that Corollary~\ref{cor:sphere_intro} and the relation~\eqref{lim:GL_intro} are in contradiction with~\cite[Theorem 1.6]{FS}. We refer to the appendix in~\cite{GL} for the explanation.
\end{remark}

\begin{remark}
Approximate pictures of these free boundary minimal surfaces are obtained in~\cite{GL, KOO} using numerical computations.
\end{remark} 
\begin{itemize}
\item If $M=\mathbb{RP}^2$, then $n=4$ and the limit surface is the {\em Veronese surface}. 

\item If $M=\mathbb{K}$, then $n=4$ and the limit surface is one of the minimal Klein bottles constructed by Lawson in~\cite{Lawson}, in his notation $\tilde\tau_{3,1}$. Note that the same surface is conjectured to be the Klein bottle with the smallest Willmore energy. 

\item If $M=\mathbb{T}^2$, then $n=5$ and the limit surface is the so-called ``Bryant-Itoh-Montiel-Ros" torus. It is characterized by the fact that the induced metric is the flat metric corresponding to the equilateral lattice. 
\end{itemize}

We remark that, using arguments similar to those in~\cite[Proposition 8.1]{FS}, it is possible to show that the free boundary immersions corresponding to $M=\mathbb{RP}^2,\mathbb{K},\mathbb{T}^2$ are unbranched as soon as they are linearly full, i.e. not contained in a proper linear subspace of $\mathbb{R}^{n+1}$. However, embeddedness seems to be a more subtle issue; see Open Question~\ref{oq:embedded}.  

The phenomenon described in Theorem~\ref{thm:convergence_intro} is not unique to minimal surfaces, but also occurs naturally in the setting of non-conformal harmonic maps. Indeed, we show that there are many examples exhibiting the same behaviour in the harmonic map setting (see Section~\ref{sec:convergence_conf} for a more precise statement).
\begin{proposition}\label{prop:conf_cvg_intro}
For every conformal class $[g]$ on $M$, there exist conformal classes $[g_k]$ on $N_k$ induced by an inclusion $N_k\rightarrow (M,[g])$ and nonconstant free boundary harmonic maps $u_k\colon (N_k,\partial N_k)\to (\mathbb{B}^{n+1},\mathbb{S}^n)$ whose harmonic extensions in $M$ converge strongly in $W^{1,2}(M,g)$ to a nonconstant harmonic map $u: (M,[g])\to \mathbb{S}^n=\partial \mathbb{B}^{n+1}$.
\end{proposition}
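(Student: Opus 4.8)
The plan is to run the conformally constrained counterpart of the analysis behind Theorem~\ref{thm:convergence_intro}. Fix a conformal class $[g]$ on $M$, and let $u\colon(M,[g])\to\mathbb{S}^n$ be a nonconstant harmonic map inducing a $\bar\lambda_1(M,[g])$-maximizing (possibly conically singular) metric $g_u=\tfrac12|du|_g^2\,g$, whose existence is provided by \cite{MS,PetL1}; for this map the standard relation between conformal eigenvalue maxima and Dirichlet energies of sphere-valued harmonic maps gives $\area_{g_u}(M)=E(u)$ and $2E(u)=\bar\lambda_1(M,g_u)=\Lambda_1(M,[g])$ (here $E$ is the conformally invariant Dirichlet energy). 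For each $k$, let $N_k\subset M$ be obtained by deleting $k$ geodesic disks of radius $r_k$ centred along a maximal $\rho_k$-separated net, with $r_k\ll\rho_k\to 0$ chosen in the appropriate homogenization regime (and small enough for the extension estimate below), and set $[g_k]:=[g|_{N_k}]$. The conformally constrained analogues of \eqref{ineq:KS_intro}--\eqref{lim:GL_intro}, obtained by the methods of \cite{GL,GKL,KS} (see also \cite{KNPS}), give
\begin{equation}\label{eq:conf_hom_sketch}
\Sigma_1(N_k,[g_k])<\Lambda_1(M,[g])\qquad\text{and}\qquad\lim_{k\to\infty}\Sigma_1(N_k,[g_k])=\Lambda_1(M,[g]);
\end{equation}
and, by the conformally constrained version of the existence theory for $\bar\sigma_1$-maximizers (the analogue of \cite{MP}, cf.\ \cite{FS}), each $\Sigma_1(N_k,[g_k])$ is attained by a nonconstant free boundary harmonic map $u_k\colon(N_k,\partial N_k)\to(\mathbb{B}^{n+1},\mathbb{S}^n)$ --- with $n$ uniform in $k$, which is legitimate given the uniform energy bound in \eqref{eq:conf_hom_sketch}, or after regarding all $u_k$ as maps into a fixed large ball --- Hersch-balanced against its induced boundary density, with $2E(u_k)=\Sigma_1(N_k,[g_k])$ and free boundary condition $\partial_\nu u_k=c_ku_k$ on $\partial N_k$ for some $c_k\geq 0$.

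Let $\hat u_k\colon M\to\overline{\mathbb{B}^{n+1}}$ be the harmonic extension of $u_k$ over the deleted disks. Since $|u_k|^2$ is subharmonic on $N_k$ and equal to $1$ on $\partial N_k$, one has $|u_k|\leq 1$, and the same holds for $\hat u_k$; a Poincar\'e estimate on the tiny boundary circles bounds the energy added by the extension by $O\big(r_k\|\partial_\ell u_k\|_{L^2(\partial N_k)}^2\big)$, so that for $r_k$ small enough $E(\hat u_k)=E(u_k)+o(1)=\tfrac12\Lambda_1(M,[g])+o(1)$; in particular $\sup_k E(\hat u_k)<\infty$. Passing to a subsequence, $\hat u_k\rightharpoonup u_\infty$ weakly in $W^{1,2}(M,g)$ (hence strongly in $L^2$) for some $u_\infty\in W^{1,2}(M,\overline{\mathbb{B}^{n+1}})$, with Dirichlet energy possibly concentrating at finitely many points. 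Two structural features of $u_\infty$ --- both part of the homogenization analysis that parallels Theorem~\ref{thm:convergence_intro} --- are that $|u_\infty|\equiv 1$, i.e.\ $u_\infty$ takes values in $\mathbb{S}^n$ (reflecting that the circles $\partial N_k$, where $|u_k|=1$, become capacitarily dense in $M$), and that passing to the limit in the equations $\Delta_gu_k^i=0$ on $N_k$, $\partial_\nu u_k^i=c_ku_k^i$ on $\partial N_k$, the weighted boundary measures $c_k\,d\ell_g$ converge weakly-$*$ to $2\,dA_{g_{u_\infty}}$ while $u_\infty$ solves the homogenized equation $\Delta_gu_\infty^i=|du_\infty|_g^2\,u_\infty^i$ --- so $u_\infty$ is a harmonic map into $\mathbb{S}^n$ whose components are first eigenfunctions of the induced metric $g_{u_\infty}$.

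The heart of the argument is to exclude concentration of Dirichlet energy, which upgrades the convergence to strong $W^{1,2}(M,g)$ (equivalently, guarantees that the limiting boundary measure $2\,dA_{g_{u_\infty}}$ has no atoms). The mechanism should be bubble reabsorption together with the strict inequality in \eqref{eq:conf_hom_sketch}: a nontrivial bubble at one of the holes would be a free boundary harmonic disk of energy at least some universal $\eps_0>0$, and reabsorbing it (in the spirit of the regularity theory for $\bar\sigma$-maximizers, cf.\ \cite{MP}) would yield, for large $k$, a metric on $N_k$ with $\bar\sigma_1\geq\Lambda_1(M,[g])$, contradicting \eqref{eq:conf_hom_sketch}. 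Hence no bubbling occurs, $E(\hat u_k)\to E(u_\infty)$, the convergence is strong, and $2E(u_\infty)=\bar\lambda_1(M,g_{u_\infty})=\lim_k\Sigma_1(N_k,[g_k])=\Lambda_1(M,[g])$, so $g_{u_\infty}$ is a $\bar\lambda_1(M,[g])$-maximizing metric and $u_\infty$ is the desired nonconstant limiting harmonic map. The main obstacle I anticipate is exactly this last stage: carrying out the homogenization limit of the eigenvalue equations in a fixed conformal class (the weak-$*$ convergence of the weighted boundary measures, and the a priori control of the $u_k$ near the many small boundary circles needed for it), together with making the bubble-reabsorption argument work under the conformal constraint --- that is, transplanting the full strength of the analysis behind Theorem~\ref{thm:convergence_intro} to the non-conformal harmonic map setting.
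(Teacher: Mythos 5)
Your setup agrees with the paper through the choice of domains and the appeal to Petrides' existence theorem (Theorem~\ref{thm:Steklov_2pi}) for $\bar\sigma_1$-conformal maximizers: the paper likewise chooses the $\Omega_k$ via Proposition~\ref{prop:conf_GL} (a homogenization construction with vanishing harmonic-extension norm), uses the conformal versions of \eqref{ineq:KS_intro}--\eqref{lim:GL_intro}, and takes harmonic extensions $\hat u_k$ with $E_g(\hat u_k; M\setminus\Omega_k)\to 0$, so up to that point the routes coincide. The divergence happens exactly where you flag it: the paper never invokes bubbling or a bubble-reabsorption surgery. Instead it uses the conformally constrained qualitative stability theorem from \cite{KNPS} (Theorem~\ref{thm:conf_qual_stab}) to obtain $\mu_k\to\mu_{\max}$ in $W^{-1,2}(M,g)$, where $\mu_k$ are the boundary measures and $\mu_{\max}$ is a conformally $\bar\lambda_1$-maximal measure. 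That single piece of information drives the whole of Lemma~\ref{lemma:strong_H1}: $|u|^2\equiv 1$ a.e.\ follows by pairing $(1-|\hat u_k|^2)$ against $\mu_{\max}$ and $\mu_k$ and using $|\hat u_k|=1$ on $\supp\mu_k$; the limit equation $\Delta_{g_{\max}} u=|du|^2_{g_{\max}} u$ follows by passing to the limit in the Steklov weak formulation using the same $W^{-1,2}$ convergence; and the energy identity (hence strong $W^{1,2}$ convergence) follows because $E_g(u)=\frac{1}{2}\Lambda_1(M,[g])\int|u|^2 d\mu_{\max}$ forces no drop.

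By contrast, your proposal replaces this with a bubbling analysis and a reabsorption argument, which you yourself anticipate is the main obstacle — and indeed it is unsubstantiated. It is not clear that a concentration bubble at a shrinking hole can be ``reabsorbed'' to produce a metric on the same $N_k$ with $\bar\sigma_1\geq\Lambda_1(M,[g])$: the natural surgery changes the topology, and the $u_k$ are already maximizers on $N_k$, so the contradiction you want is not immediate. Similarly, the claims that $|u_\infty|\equiv1$ by ``capacitary density'' of $\partial N_k$ and that $c_k\,d\ell_g\rightharpoonup^* 2\,dA_{g_{u_\infty}}$ are asserted rather than proved, whereas the paper gets both cleanly from the $W^{-1,2}$ stability input. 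So the missing ingredient in your argument is the qualitative stability theorem for conformally constrained maximizers (\cite{KNPS}, Theorem~\ref{thm:conf_qual_stab}): without it, your steps for identifying $u_\infty$ and excluding concentration are not justified, and the replacement mechanism you propose has not been carried out.
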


\subsection{Refined asymptotics for $\Sigma_1(N_k)$.}

In our second result we provide a sharp rate of convergence for the limit~\eqref{lim:GL_intro}, namely, we identify $\frac{\log k}{k}$ as the decay rate of the correction term.
\begin{theorem}
\label{thm:asymptotics_intro}
Let $N_k$ be a compact surface with boundary obtained by removing $k$ disjoint disks from a closed surface $M$. Then the following holds.
\begin{enumerate}
\item There exists a constant $C=C(M)>0$ such that for all $k>0$ one has
\begin{equation}
\label{ineq:lowerbound_intro}
\Sigma_1(N_k)\geqslant \Lambda_1(M) - C\frac{\log k}{k}.
\end{equation}
\item Let $M$ be a sphere $\Sp$, a projective plane $\mathbb{RP}^2$, a torus $\mathbb{T}^2$ or a Klein bottle $\mathbb{K}$. Then there exists a constant $c=c(M)>0$ such that for all $k>0$ one has
\begin{equation}
\label{ineq:upperbound_intro}
\Sigma_1(N_k)\leqslant \Lambda_1(M) - c\frac{\log k}{k}.
\end{equation}
\end{enumerate}
\end{theorem}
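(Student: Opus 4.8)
The plan is to prove the two estimates separately: \eqref{ineq:lowerbound_intro} by constructing explicit near-optimal metrics on $N_k$, and \eqref{ineq:upperbound_intro} by combining a Hersch-type test function built from the limit minimal immersion with a capacity bound for the removed disks.

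\emph{Lower bound.} Fix $\delta>0$ and a smooth metric $g$ on $M$ with $\area_g(M)=1$ and $\bar\lambda_1(M,g)>\Lambda_1(M)-\delta$, with first eigenfunctions $\phi_1,\dots,\phi_{n+1}$. I would form $N_k$ by removing $k$ geodesic disks $B_{r_k}(p_j)$ with $r_k\asymp 1/k$, centered at a configuration $\{p_j\}$ whose Voronoi cells satisfy $|V_j|\asymp 1/k$ and mesh $\asymp k^{-1/2}$, and put on $N_k$ the metric in the conformal class of $g|_{N_k}$ whose boundary length measure $\mu_k$ is uniform of mass $|V_j|$ on each circle $\partial B_{r_k}(p_j)$ (so $\mu_k(\partial N_k)=1$). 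By conformal invariance of the Dirichlet energy $E_g$, one has $\bar\sigma_1(N_k)=\inf\{E_g(u):\int u\,d\mu_k=0,\ \|u\|_{L^2(\mu_k)}=1\}$. To bound this below, given such a $u$ I would pass to the function $U$ on $M$ obtained by harmonically extending $u|_{\partial B_{r_k}(p_j)}$ across each removed disk, use the mean value property to match the circle averages of $u$ with the values of $U$ at the $p_j$, discard the oscillatory boundary part of $u$ (whose Rayleigh quotient is $\gtrsim k$), apply $E_g(U)\geq\bar\lambda_1(M,g)\|U-\bar U\|_{L^2(M,g)}^2$, and control the quadrature error $|\sum_j|V_j|f(p_j)-\int_M f\,dA_g|\lesssim k^{-1/2}\|f\|_{C^2}$. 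The surviving error is dominated by the $2$-dimensional capacities of the $k$ holes and is $O(\log k/k)$; letting $\delta\to 0$ gives \eqref{ineq:lowerbound_intro}.

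\emph{Upper bound.} For $M\in\{\Sp,\mathbb{RP}^2,\mathbb{T}^2,\mathbb{K}\}$ the $\bar\lambda_1$-maximizing metric $g$ is unique and the associated minimal immersion $\Psi\colon M\to\mathbb{S}^n$ is homogeneous, so $|d\Psi|^2_g\equiv\Lambda_1(M)$ (equivalently $\Psi^*g_{\mathbb{S}^n}=\tfrac{\Lambda_1(M)}2 g$), and hence $E_g(\tau\circ\Psi)=\Lambda_1(M)\,\area(\tau(\Sigma))/\area(\Sigma)$ for every conformal automorphism $\tau$ of $\mathbb{B}^{n+1}$, where $\Sigma=\Psi(M)\subset\mathbb{S}^n$; for $M=\Sp$ this equals $\Lambda_1(M)$ identically. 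Given any metric $h$ on $N_k$ with boundary measure $\mu$ normalized to $\mu(\partial N_k)=1$: if, after filling in the removed Jordan domains $D_1,\dots,D_k\subset M$, the conformal class of $N_k$ is far from $[g]$, then $\bar\sigma_1(N_k,h)$ is dominated by $\bar\lambda_1$ of the filled-in surface, which by uniqueness of the maximizer is already $\le\Lambda_1(M)-c\log k/k$; otherwise Hersch/Fraser--Schoen balancing produces $\tau$ with $\int_{\partial N_k}(\tau\circ\Psi)\,d\mu=0$, and one argues that $\tau$ is close to a rotation so that $\area(\tau\Sigma)\le(1+o(1))\area(\Sigma)$. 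With $\rho\asymp k^{-1/2}$ I would test the Steklov problem against the function $w$ equal to $\tilde\Psi:=\tau\circ\Psi$ outside $\bigcup_j B_\rho(p_j)$ and, on each annulus $B_\rho(p_j)\setminus D_j$, equal to the harmonic function with outer data $\tilde\Psi|_{\partial B_\rho(p_j)}$ and inner data the constant $(1+s)\tilde\Psi(p_j)$ (with $s$ a scalar to be optimized, plus negligible componentwise corrections restoring the balancing). Using that each $\tilde\Psi^i$ satisfies $\Delta_g\tilde\Psi^i=-|d\tilde\Psi|^2\tilde\Psi^i$ and therefore differs on every $\rho$-ball from its harmonic replacement by $O(\rho^2)$ in $C^0$ and $O(\rho^4)$ in energy, one computes $E_g(w)=\Lambda_1(M)(1-A_k)+s^2C_k+O(1/k)$ and $\int_{\partial N_k}w^2\,d\mu=(1+s)^2+o(\log k/k)$, where $A_k=\sum_j|D_j|_g$ and $C_k=\sum_j\ca_{B_\rho(p_j)}(D_j)=\sum_j 2\pi/\log(\rho/r_j)$ with $r_j$ the conformal radius of $D_j$. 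Optimizing over $s$ gives $\bar\sigma_1(N_k,h)\le a-a^2/(C_k+a)$ with $a\approx\Lambda_1(M)(1-A_k)$, hence $\Lambda_1(M)-\bar\sigma_1(N_k,h)\gtrsim\Lambda_1(M)A_k+\Lambda_1(M)^2/(C_k+\Lambda_1(M))$. Finally, since the $D_j$ are $k$ disjoint subsets of $(M,g)$ with total area $A_k$, a concavity/Faber--Krahn argument yields $C_k\le\tfrac{4\pi k}{|\log A_k|}(1+o(1))$, and substituting and minimizing the resulting lower bound over $A_k$ --- the minimum being attained near $A_k\asymp 1/k$ and equal to $\asymp\log k/k$ --- gives \eqref{ineq:upperbound_intro} uniformly in $h$.

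\emph{Main obstacle.} The heart of \eqref{ineq:upperbound_intro} is the capacity control of $C_k$: one must rule out removed domains that are thin or large (small area but substantial capacity, or natural scale exceeding $\rho$), which forces an adaptive choice of cutoff scales $\rho_j$ while keeping the modified regions disjoint, and one must show the balancing transformation $\tau$ stays close to a rotation at a rate compatible with $\log k/k$ --- this rests on a quantitative version of the weak convergence $\mu\rightharpoonup dA_g$ underlying Theorem~\ref{thm:convergence_intro}, together with uniqueness of the $\bar\lambda_1$-maximizer, which is precisely why \eqref{ineq:upperbound_intro} is restricted to these four surfaces (for $M=\Sp$ no $\tau$-correction is needed and this step is vacuous). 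For \eqref{ineq:lowerbound_intro} the delicate point is the uniform lower bound over \emph{all} Steklov test functions, resolved by the dichotomy between bulk functions, governed by the Laplace bound on $M$, and functions concentrated near the holes, governed by their capacity-induced Rayleigh quotient $\asymp k/\log k$.
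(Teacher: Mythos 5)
Your proposal takes genuinely different routes on both halves, but each route as sketched has a real gap compared to what the paper does.

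\textbf{Lower bound.} You follow a Girouard--Lagac\'e homogenisation-style scheme: remove disks of radius $\asymp 1/k$, put a Voronoi-weighted boundary measure $\mu_k$ on the circles, harmonically extend a test function $u$ to $U$ on $M$, and compare the Rayleigh quotients using the quadrature estimate $|\sum_j|V_j|f(p_j)-\int_Mf|\lesssim k^{-1/2}\|f\|_{C^2}$. But the function to which you apply the quadrature is $f=U^2$ (and $f=U$ for the balancing), with $U$ the extension of an \emph{arbitrary} test function, so $\|U^2\|_{C^2}$ is not controlled by the Dirichlet energy. The best one can say for general $U\in W^{1,2}$ is a $k^{-1/2}\|U\|_{L^2}\|dU\|_{L^2}$ bound, which is far worse than $\log k/k$. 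The paper sidesteps quadrature altogether: it chooses the boundary measure to be $\beta_k\,ds_{g_0}$ with $\beta_k=\partial_\nu\psi_k$ and $\Delta_{g_0}\psi_k=-f$, $\psi_k|_{\partial\Omega_k}=0$, so that $\int_{\partial\Omega_k}\varphi\,\beta_k\,ds=\int_{\Omega_k}\hat\varphi\,f\,dv$ holds \emph{exactly} for harmonic extensions $\hat\varphi$. The $\log k/k$ then enters through the Poincar\'e-type bound $\|\psi_k\|_{L^2}\lesssim\frac{\log k}{k}$, not through any quadrature. You also omit the step that pins down the \emph{first} eigenvalue: showing first Laplace eigenfunctions are $\frac{\log k}{k}$-quasimodes only yields $\dim V$ Steklov eigenvalues in a band around $\Lambda_1$; the paper needs a separate compactness/contradiction argument (Lemma 4.10 and the proof of Proposition~\ref{construct}) to rule out a smaller $\sigma_1$ lurking below the band.

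\textbf{Upper bound.} Your scheme --- Hersch-balance $\tilde\Psi=\tau\circ\Psi$, build an explicit test function that interpolates to a scaled constant on each hole, trade area loss $A_k$ against the combined capacity $C_k$, and close with the concavity bound $C_k\le 4\pi k/|\log A_k|$ --- is an appealing and genuinely different organisation from the paper's (which pairs a $\log$-distance cutoff against $\mu-2dv_{g_{\max}}$ in $W^{-1,2}$ after invoking the quantitative stability of \cite{KNPS}). However, the key quantitative inputs you defer to the end are precisely where the paper does its work. First, the ``far/close'' dichotomy on the conformal class cannot be settled by \emph{uniqueness} of the maximiser alone; you need a rate, and that rate is the $C^1$ estimate on $g_0-g_1$ from Proposition~\ref{quant.stab.prop}. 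Second, the computation $E_g(w)=\Lambda_1(1-A_k)+s^2C_k+O(1/k)$ uses $|d\tilde\Psi|^2\approx\Lambda_1$ on the holes, and while $\area(\tau\Sigma)\le\area(\Sigma)$ is automatic from Li--Yau conformal volume, the \emph{pointwise} lower bound $|d\tilde\Psi|^2\gtrsim\Lambda_1$ on $\bigcup D_j$ requires $\tau$ to be $O(\sqrt\eta)$-close to an isometry in $C^1$, which again is exactly \eqref{map.met.close} from the stability machinery. Third, you skip the uniformisation of Theorem~\ref{thm:uni}; without it the $D_j$ are arbitrary Jordan domains, and your ``main obstacle'' of thin or large holes is a genuine obstruction, not a technicality --- the paper avoids it by reducing to geodesic disks from the start and then splitting into the big-radius exceptional set $S'$ and the rest (Lemmas~\ref{bigrad.smallmass}, \ref{big.sec4.est}). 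With these three points supplied, your test-function route should be salvageable and would be an instructive alternative to the $W^{-1,2}$ argument; as written it stops just short of the estimates that actually force the $\log k/k$ rate.
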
  
\begin{remark} In terms of the free boundary minimal immersions $u_k: N_k\to \mathbb{B}^{n+1}$ realizing $\Sigma_1(N_k)$ and the minimal immersion $u:M\to \mathbb{S}^n$ realizing $\Lambda_1(M)$, this tells us that the areas satisfy
\begin{equation}
Area(u(M))-Area(u_k(N_k))\leq C(M)\frac{\log k}{k}
\end{equation}
for any $M$, and this convergence rate is sharp for $M=\mathbb{S}^2, \mathbb{RP}^2, \mathbb{T}^2$, or $\mathbb{K}$.
\end{remark}

\begin{remark}
\label{rmk:quant_stab_intro}
The proof of~\eqref{ineq:upperbound_intro} is based on a refinement of the quantitative stability of $\bar\lambda_1$-maximal metrics as defined in~\cite{KNPS}, and the surfaces listed in the assumptions are precisely those for which quantitative stability is verified in~\cite{KNPS}. However, it is interesting to note that the results of~\cite{KNPS} alone do not suffice to establish the sharp bound~\eqref{ineq:upperbound_intro}; see the discussion in Section~\ref{ref.stab}.
\end{remark}
\begin{remark}
Inequality~\eqref{ineq:upperbound_intro} is a quantitative improvement over~\eqref{ineq:KS_intro}. The only other known result of this type is~\cite[Theorem 1.8]{GKL}, where the correction term decays exponentially with $k$. We note that a variant of \eqref{ineq:lowerbound_intro} also holds for the conformally-constrained maximization problem (see Proposition \ref{construct}), and the corresponding variant of the upper bound \eqref{ineq:upperbound_intro} holds for many non-maximizing conformal classes--e.g., for any conformal class admitting a minimal immersion to $\mathbb{S}^n$ by first eigenfunctions (see Remark \ref{conf.ubds}). 
\end{remark}

\subsection{Ideas of the proofs}

To prove Theorem \ref{thm:convergence_intro}, we begin by applying uniformization results of~\cite{Maskit} and~\cite{Haas} to identify $\bar{\sigma}_1$-maximizing metrics $(N_k,\tilde{g}_k)$ on $N_k$ conformally with a domain $\Omega_k\subset (M,g_k)$ given by removing disjoint geodesic disks from a constant curvature metric $g_k$ on $M$. 
Combining \eqref{lim:GL_intro} with the stability results of \cite{KNPS}, we are able to deduce that the the conformal classes $[g_k]$ converge subsequentially to $[g]$, and the boundary length measures $ds_{\tilde g_k}$ of $\partial\Omega_k$ converge in $W^{-1,2}(M,g)$ to the area measure $dv_{g_{\max}}$ of a $\bar{\lambda}_1$-maximizing metric $g_{\max}\in [g]$ on $M$.

We then show that there exists a metric $\tilde g$ on $M$ with respect to which the harmonic extension $\hat{u}_k\colon M\to \mathbb{B}^{n+1}$ of the branched free boundary minimal immersions $u_k\colon (N_k,\tilde{g}_k)\to \mathbb{B}^{n+1}$ by $\sigma_1(N_k,\tilde{g}_k)$-eigenfunctions have vanishing energy in the complement $M\setminus\Omega_k$, and use the strong $W^{-1,2}$ convergence $ds_{\tilde g_k}\to dv_{g_{\max}}$ to deduce that the maps $\hat{u}_k$ converge strongly (subsequentially) in $W^{1,2}(M,\tilde g)$ to a minimal immersion $(M,g_{\max})\to \mathbb{S}^n$ by first eigenfunctions of $\Delta_{g_{\max}}$. The convergence of the associated varifolds then follows by standard arguments from the strong convergence $\hat{u}_k\to u$ and the vanishing of the energy $\int_{M\setminus\Omega_k}|d\hat{u}_k|^2_{g_k}\to 0.$ The proof of Proposition \ref{prop:conf_cvg_intro} follows similar lines. 

The proof of the lower bound \eqref{ineq:lowerbound_intro} in Theorem \ref{thm:asymptotics_intro} is constructive--namely, we produce a metric on $N_k$ satifying $\bar{\sigma}_1\geq \Lambda_1(M)-C\frac{\log k}{k}$. As in \cite{GL}, we begin by removing several small geodesic disks--of radius $k^{-\alpha}$ for $\alpha$ sufficiently large--with respect to a constant curvature metric conformal to a $\bar{\lambda}_1$-maximizing metric $g_{\max}$, to produce a domain $\Omega_k\subset M$ diffeomorphic to $N_k$. We then choose a conformal metric $\tilde{g}_k$ on this domain with the property that the pairing $\langle \mu_k,\varphi\rangle$ of the resulting length measure $\mu_k=ds_{\tilde{g}_k}$ of $\partial\Omega_k$ with a smooth function $\varphi\in C^{\infty}(\partial\Omega_k)$ is given by the integral $\int_{\Omega_k}\hat{\varphi}dv_{g_{\max}}$ of the harmonic extension $\hat{\varphi}$ over $\Omega_k$ with respect to $g_{\max}$. 

To show that the resulting metric $(\Omega_k,\tilde{g}_k)$ satisfies $\bar{\sigma}_1(\Omega_k,\tilde{g}_k)\geq \Lambda_1(M)-C\frac{\log k}{k}$, we first argue that the restriction to $\partial\Omega_k$ of the first eigenfunctions for $\Delta_{g_{\max}}$ are--in an appropriate sense--approximate eigenfunctions, i.e. {\em quasimodes}, of the Dirichlet-to-Neumann map for $(\Omega_k,\tilde{g}_k)$, with the normalized eigenvalue lying in $\left[\Lambda_1-C\frac{\log k}{k},\Lambda_1+C\frac{\log k}{k}\right]$. Denoting by $m$ the multiplicity of the first eigenvalue of $\Delta_{g_{\max}}$, we then deduce that there must exist at least $m$ Steklov eigenvalues in $\left[\Lambda_1-C\frac{\log k}{k},\Lambda_1+C\frac{\log k}{k}\right]$, and employ a contradiction argument to conclude that the \emph{first} normalized Steklov eigenvalue $\bar{\sigma}_1(\Omega,\tilde{g}_k)$ must lie in $\left[\Lambda_1-C\frac{\log k}{k},\Lambda_1+C\frac{\log k}{k}\right]$, as desired.

To prove the upper bound \eqref{ineq:upperbound_intro} in Theorem \ref{thm:asymptotics_intro}, we need to show that if $M=\mathbb{S}^2,\mathbb{RP}^2,\mathbb{T}^2$, or $\mathbb{K}$, then \emph{every} metric $g$ on $N_k$ must satisfy $\bar{\sigma}_1(N_k,g)\leq \Lambda_1(M)-c\frac{\log k}{k}$. To this end, we again begin by identifying a given metric $(N_k,g)$ conformally with the complement of geodesic disks for some constant curvature metric on $M$. Building on the techniques of \cite{KNPS}, we then show that for any such domain $\Omega_k$ with conformal metric $\tilde{g}_k$, there exists a $\bar{\lambda}_1$-maximizing metric $g_{\max}$ on $M$ such that the gap $\Lambda_1(M)-\bar{\sigma}_1(\Omega,\tilde{g}_k)$ is bounded below by $\area_{g_{\max}}(M\setminus\Omega)$ and the square of the $W^{-1,2}(M,g_{\max})$-distance between the length measure $ds_{\tilde g_k}$ of $\partial\Omega$ and an appropriate multiple of the area measure $dv_{g_{\max}}$. The area bound $\area_{g_{\max}}(M\setminus\Omega)\leq C \left(\Lambda_1(M)-\bar{\sigma}_1(\Omega,\tilde g_k)\right)$ is then used to show that a certain test function $\varphi_k$ (related to the logarithm of the distance to the centers of the disks comprising the complement $M\setminus\Omega$) satisfies
$$
c\frac{\log k}{k}\leq \frac{\langle \varphi_k,ds_{\tilde g_k}-dv_{g_{\max}}\rangle}{\|\varphi_k\|_{W^{1,2}(M,g_{\max})}}\leq \|ds_{\tilde g_k}-dv_{g_{\max}}\|_{W^{-1,2}(M,g_{\max})},
$$
from which the desired bound follows.

\subsection{Discussion and Open Questions}
\label{sec:disc_intro}

Item (2) of Theorem~\ref{thm:asymptotics_intro} immediately begs the following question
\begin{oq}
Does the inequality~\eqref{ineq:upperbound_intro} hold for all closed surfaces $M$?
\end{oq}
One of the ways to resolve this question would be to prove an appropriate quantitative stability result relating the difference $\Lambda_1(M)-\bar{\sigma}_1(\Omega,\tilde{g})$ for a domain $\Omega\subset M$ to the $W^{-1,2}(M,g_{\max})$ difference between the measures $dv_{g_{\max}}$ and $ds_{\tilde g}$ and the area of $M\setminus \Omega$ with respect to some $\bar{\lambda}_1$-maximizing metric $g_{\max}$ on $M$. It could be illuminating to investigate this problem first for surfaces of genus $2$, where the $\bar{\lambda}_1$-maximizing metrics are known, but do not meet the criteria needed to apply our methods of proof for ~\eqref{ineq:upperbound_intro}. 

It is also natural to ask to what extent the estimates of Theorem ~\ref{thm:asymptotics_intro} can be sharpened. In this direction, the following question is an obvious place to begin.
\begin{oq}
\label{oq:limit}
Does the limit 
$$
\lim_{k\to\infty}(\Lambda_1(M) - \Sigma_1(N_k))\frac{k}{\log k}
$$ 
exist? If so, then find its value.
\end{oq}
An explicit answer to this question will likely go hand-in-hand with a sharper geometric picture of the associated $\bar{\sigma}_1$-maximizing metrics, see Question \ref{oq:Omega_k} below. 

There are many natural questions concerning the limiting behavior of the free boundary minimal surfaces realizing $\Sigma_1(N_k)$. From the perspective of geometric measure theory, one of the first questions one might pose concerns the \emph{persistence of singularities} of these surfaces in the limit as $k\to\infty$.

\begin{oq}
\label{oq:embedded}
 If the limiting minimal surface in $\mathbb{S}^n$ realizing $\Lambda_1(M)$ is embedded, does it necessarily follow that the free boundary minimal surfaces in $\mathbb{B}^{n+1}$ realizing $\Sigma_1(N_k)$ are embedded for $k$ sufficiently large? 
\end{oq}

\begin{remark} Note that the standard persistence-of-singularities result for stationary varifolds in a fixed domain does not hold for families of free boundary stationary varifolds in $\mathbb{B}^{n+1}$ approaching a stationary varifold in $\mathbb{S}^n$: for an elementary counterexample, note that the boundary of an inscribed regular $k$-gon in the $2$-dimensional unit disk $\mathbb{B}^2$ gives a singular free boundary stationary geodesic network, which approaches the (smooth, multiplicity one) boundary circle as $k\to \infty$. However, it is straightforward to check that the embededdness of the limit surface in $\mathbb{S}^n$--by Allard regularity and standard monotonicity results--rules out the possibility of singularities with density larger than $2$ in nearby free boundary minimal surfaces in $\mathbb{B}^{n+1}$; moreover, these free boundary minimal surfaces must look roughly conical at all small scales (though perhaps with different cones at different scales) near a singularity of density equal to $2$, so the conditions under which singularities could disappear in the limit appear to be quite restrictive.
\end{remark}

The following question is inspired by Corollary~\ref{cor:sphere_intro} and concerns a finer structure of free boundary minimal surfaces corresponding to $\bar\sigma_1(N_k)$-maximal metrics. We formulate the question for $M=\Sp$, but, of course, similar problems can be posed for other closed surfaces. 

\begin{oq}
\label{oq:Omega_k}
Let $\Omega_k\subset \mathbb{B}^3$ be an embedded free boundary minimal surface of genus $0$ with $k$ boundary components corresponding to a $\bar\sigma_1$-maximizing metric. Prove or disprove the following.
\begin{enumerate}
\item  $\Omega_k$ is unique up to isometries of $\mathbb{B}^3$.
\item All boundary components are approximately of the same size. In particular, let $L_{j,k}$, $j=1,\ldots, k$ be the lengths of boundary components of $\Omega_k$, then there exist $c,C>0$ such that for all $j,k$
$$
\frac{c}{k}\leqslant  L_{j,k}\leqslant \frac{C}{k}.
$$
More precisely, show that 
$$
\lim_{k\to\infty}\sup_{1\leqslant j\leqslant k}\frac{L_{j,k}}{\sum_{j=1}^jL_{j,k}} = \frac{1}{k}.
$$
\item The boundary $\partial\Omega_k$ is dense on the scale $\frac{1}{\sqrt{k}}$ inside $\Sp$, i.e. there exists $C>0$ such that the $\frac{C}{\sqrt{k}}$-tubular neighborhood of $\partial\Omega_k$ contains $\Sp$. 
\item For large $k$, each boundary component is close to a half-catenoid, i.e. the blow-ups of boundary components on the scale $\frac{1}{k}$ converge to the the unique rotationally symmetric free boundary minimal surface in the half-space.
\end{enumerate}
\end{oq}

The numerical examples of~\cite{GL,KOO} point to the fact that the topology of $\Omega_k$ alone does not guarantee uniqueness for free boundary minimal surfaces in $\mathbb{B}^3$. For example, numerical computations of~\cite{GL} suggest that for $k=8,20$ there exist free boundary minimal surfaces of genus $0$ and $k$-boundary components with the symmetry group of cube and dodecahedron respectively. At the same time, the computations in~\cite{KOO} indicate that these surfaces are {\em not} Steklov maximizers and that, instead, the  boundary components of the maximizers are distributed more irregularly. This resulted in the observation in~\cite[Section 5]{KOO} that centers of mass of boundary components form a solution to a point distribution problem--in particular, the Thompson problem was suggested as a candidate. While the sample size in~\cite{KOO} is too small to formulate an exact open question, the possibility is too tantalizing to ignore. Thus, the following question is purposefully open-ended.
\begin{oq}
In the notation of Question~\ref{oq:Omega_k}, show that the centers of mass of the boundary components of $\Omega_k$ are located according to a solution of some $k$-point distribution problem of $\Sp$.
\end{oq}

Another special case which merits further study is the genus 2 setting. When $M$ is the orientable surface of genus $2$, then $\Lambda_1(M) = 16\pi$, there is a continuous family of $\bar\lambda_1(M)$-maximal metrics, and the corresponding branched minimal immersions are simply branched covers of $\Sp$ with the location of branch points varying within the family, see~\cite{JLNNP, NaySh, Ros}. The most symmetric member of the family is the co-called {\em Bolza surface} with branch points at the vertices of an octahedron. At the same time, the fact that the limiting map is a cover of $\Sp$ does not mean that $n=2$ in Theorem~\ref{thm:convergence_intro}, although it seems reasonable to suggest that the immersion corresponding to $N_{2k}$ could be a double branched cover of $\Omega_k$ defined in Question~\ref{oq:Omega_k}, at least for large $k$. If the answer to the latter question is positive, it would be interesting to understand the location of the branch points, even though it is likely such covers are not unique, similarly to the closed case. We collect these thoughts below.
 \begin{oq}
 \label{oq:Bolza}
 Let $M$ be an orientable surface of genus $2$ and let $N_k$ be a surface with boundary obtained by removing $k$ disjoint disks from $M$. Let $u_k$ be a branched free boundary immersion corresponding to a $\bar\sigma_1(N_k)$-maximizing metric. 
 \begin{enumerate}
 \item Is the map $u_k$ unique, up to isometries of $\mathbb{R}^{n+1}$? What are the possible limits of $u_k$, e.g. is it true that the Bolza cover is the only accumulation point of $\{u_k\}$?
 \item Is it true that for large enough $k$ the image of $u_k$ is contained in $\mathbb{B}^3$?
 \item More specifically, is it true that for large enough $k$ the maps $u_{2k}$ are branched covers  over surfaces $\Omega_k$ defined in Question~\ref{oq:Omega_k}? If so, then what are the locations of branch points?
 \item Similarly, is there any relation between $u_{2k+1}$ and the surfaces $\Omega_j$?
 \end{enumerate}
 \end{oq}

Finally, let us close by posing a question which should be of general interest to the minimal surface community, independent of any connections to spectral geometry. 
 
\begin{oq} Given a smooth, convex domain $P\subset \mathbb{R}^{n+1}$ and a minimal submanifold $M\subset \partial P$ in $\partial P$, does there exist a family of free boundary minimal surfaces in $P$ approaching $M$ in a varifold sense? As a special case, do there exist free boundary minimal hypersurfaces in $P$ approaching $\partial P$ in the varifold sense? (Note that Corollary \ref{cor:sphere_intro} gives a positive answer in the case $P=\mathbb{B}^3$.)
\end{oq}

\begin{remark} Na\"{i}vely, one might hope to approach this via novel gluing methods, or perhaps some variational scheme. One can also formulate an analogous question for harmonic maps, modeled on the conclusion of Proposition \ref{prop:conf_cvg_intro}.
\end{remark}

\subsection*{Acknowledgements} The research of M.K. is supported by the NSF grant DMS-2104254. The research of D.S. is supported by the NSF fellowship DMS-2002055.

\section{Preliminaries}

\subsection{Uniformization theorems for surfaces}

Recall the notation used in the introduction, where $N_k$ is a compact surface obtained by removing $k$ disjoint disks from a closed surface $M$.

Uniformization theorems are concerned with choosing a canonical metric in each conformal class of metrics. For example, the classical uniformization theorem states that given a conformal class $\mC$ on $M$ there exists a  unit area metric $g\in\mC$ of constant Gauss curvature. Furthermore, if $M\ne\Sp$, then such a metric is unique, whereas on $\Sp$ is is unique up to a conformal automorphism. We denote by $\Met_\can(M)$ the space of metrics of unit area and constant Gauss curvature on $M$.

The most commonly used uniformization theorem for manifolds with boundary states that for any conformal class $\mC$ on $N$ there is a unit area metric $g\in \mC$ with constant Gauss curvature and geodesic boundary (see, e.g., \cite{OPS}). In the present paper we use another, perhaps lesser-known, uniformization result.

\begin{theorem}[\cite{Maskit, Haas}]
\label{thm:uni}
Let $(N_k,\mC)$ be a compact surface with $k$ boundary components endowed with a conformal class $\mC$. Then there exists a closed Riemannian surface $(M,g)$ of unit area and constant curvature, a collection $B_i\subset M$, $i=1,\ldots, k$ of embedded open non-empty geodesic disks with disjoint closure, and a conformal diffeomorphism $F\colon (N,\mC)\to (\Omega_k,g)$, where $\Omega_k = M\setminus \bigcup_{i=1}^k B_i$. Moreover, for any two such conformal diffeomorphisms $F\colon(N,\mathcal{C})\to (\Omega_k,g)$, $F'\colon(N,\mathcal{C})\to (\Omega_k',g')$, there exists a conformal automorphism $G\colon (M,g)\to (M',g')$ such that $G\circ F=F'$.

In other words, $(N,\mC)$ can be (uniquely) conformally identified with a complement of $k$ geodesic disks in a closed surface endowed with a metric of constant curvature.
\end{theorem}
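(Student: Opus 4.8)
The plan is to derive Theorem~\ref{thm:uni} by the \emph{continuity method} that underlies Koebe-type uniformization results, passing from the classical case of planar domains to closed surfaces of arbitrary genus. Note first that, in contrast with the geodesic-boundary uniformization recalled just above, here $N=N_k$ is to be realized as a \emph{subdomain} of a closed constant-curvature surface. So I fix the closed surface $M$ obtained by gluing a disk into each boundary circle of $N$ (so $M$ is orientable iff $N$ is, with the same genus $\gamma$), and I introduce the \emph{configuration space} $\mathcal D$ consisting of pairs $(g,\{\bar B_1,\dots,\bar B_k\})$, where $g$ is a unit-area metric of constant curvature on $M$ and the $\bar B_i\subset M$ are closed geodesic disks with pairwise disjoint closures, taken modulo conformal automorphisms of $(M,g)$ (and isotopy). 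Deleting the disks attaches to each configuration the conformal class of the complement $\Omega_k=M\setminus\bigcup_i B_i$, which is diffeomorphic to $N$; after fixing markings, this gives a continuous, mapping-class-group equivariant map $\Phi\colon\mathcal D\to\mathcal T(N)$ into the Teichm\"uller space of conformal structures on the bordered surface $N$. Theorem~\ref{thm:uni} is then equivalent to the statement that $\Phi$ is a homeomorphism: surjectivity yields the existence of $(M,g)$, the disks, and the conformal diffeomorphism $F$ for any prescribed $(N,\mathcal C)$, while injectivity (after descending to the quotients by the mapping class groups) gives the asserted uniqueness up to a conformal automorphism of $M$.

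The first observation is a dimension count: a constant-curvature unit-area metric on $M$ modulo isometry carries $6\gamma-6$ parameters when $\gamma\geq 2$ (respectively $2$ for the torus; $0$ for the sphere, where this is offset by the conformal group $PSL(2,\mathbb C)$, which acts with finite stabilizers on configurations of $k\geq 3$ disjoint disks so that $\mathcal D$ remains a finite-dimensional manifold), and a configuration of $k$ disjoint geodesic disks adds $3k$ parameters, so $\dim\mathcal D=6\gamma-6+3k=\dim\mathcal T(N)$, with the non-hyperbolic cases $N=$ disk, annulus checked directly. The second observation is that $\Phi$ is a local homeomorphism: since domain and target are now equidimensional manifolds it suffices to verify that $d\Phi$ is injective, i.e.\ that a deformation of $(g,\{\bar B_i\})$ not coming from an automorphism of $M$ genuinely alters the conformal type of $\Omega_k$; dually, pairing against holomorphic quadratic differentials, one shows that every infinitesimal deformation of the conformal structure of $\Omega_k$ is captured by a deformation of the closed conformal structure of $M$ together with a motion of the $k$ disks — a Riemann--Roch count on $M$ adapted to the removed disks. (One may instead bypass the injectivity of $d\Phi$ and show directly that $\Phi$ has topological degree $\pm 1$, by exhibiting a single highly symmetric conformal structure on $N$ with a unique, transverse preimage.)

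Granting these, the conclusion is formal: $\mathcal T(N)$ is a cell, hence simply connected, so a proper local homeomorphism from the nonempty connected manifold $\mathcal D$ (connectedness following from that of $\mathcal T(M)$ and of the space of $k$ disjoint geodesic disks in a fixed surface) onto $\mathcal T(N)$ must be a covering map, hence a homeomorphism. The crux — and the step I expect to be the main obstacle — is therefore the \emph{properness} of $\Phi$: if $\Phi(g_n,\{\bar B^{(n)}_i\})$ stays in a compact subset of $\mathcal T(N)$, then $(g_n,\{\bar B^{(n)}_i\})$ must subconverge in $\mathcal D$. This is a degeneration analysis that must exclude, along such a sequence, a disk $\bar B^{(n)}_i$ collapsing to a point, the boundaries of two disks coming into contact, a disk swelling until $\Omega_k$ pinches off an essential subsurface, and the ambient metric $g_n$ degenerating through a collapsing geodesic. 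The unifying mechanism is that each such degeneration forces some essential simple closed curve on $\Omega_k\cong N$ to have extremal length tending to $0$ (equivalently, by the collar lemma and a modulus estimate, shortest $g_n$-representative tending to $0$), or a boundary curve's length-to-distance ratio to blow up, either of which drives $\Phi(g_n,\cdot)$ to the boundary of $\mathcal T(N)$, contradicting compactness. Making this quantitative — via the collar lemma, extremal-length and modulus comparisons between the $g_n$-geometry and the conformal geometry of $\Omega_k$, and Mumford-type compactness for the closed surfaces $(M,g_n)$ — is the technical heart, and is the point at which the two classical treatments diverge, Maskit phrasing the deformation theory through (quasi-)Fuchsian groups and Haas through hyperbolic geometry directly. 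Finally, the non-orientable case is reduced to the orientable one by running the same argument equivariantly on the orientation double cover with its deck involution.
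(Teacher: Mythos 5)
The paper does not reprove this result: for the orientable case it simply cites Theorem~A of Maskit and the theorem of Haas, adding only a remark that boundary regularity of biholomorphisms upgrades Maskit's statement (a biholomorphism of \emph{interiors}) to a conformal diffeomorphism of closures; the only argument the paper actually supplies is the reduction of the nonorientable case to the orientable one via the oriented double cover $\tilde{N}\to N$ with its antiholomorphic free involution $s$, verifying that the induced conformal automorphism $G$ of the uniformized closed surface $\tilde M$ is an orientation-reversing involutive isometry acting \emph{freely} (using the fact that an antiholomorphic automorphism of a closed disk must have a boundary fixed point), so that $\tilde M/G$ is the desired closed nonorientable constant-curvature surface. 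Your proposal instead sketches, from scratch, the continuity-method proof that underlies both Maskit's and Haas's papers: a configuration space $\mathcal D$ of constant-curvature metrics with $k$ marked geodesic disks, a map $\Phi\colon\mathcal D\to\mathcal T(N)$, equality of dimensions, local injectivity of $d\Phi$, properness of $\Phi$, and the covering-space conclusion using simple connectedness of $\mathcal T(N)$. The dimension counts are correct in all cases (including the torus and sphere, where the positive-dimensional conformal groups must be quotiented), the outline is sound, and your closing reduction of the nonorientable case via the double cover coincides with the paper's argument. The difference in route is therefore this: the paper's contribution to Theorem~\ref{thm:uni} is essentially a citation plus the double-cover reduction, while you aim at a self-contained proof. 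But as a proof the proposal has two genuine gaps which you yourself flag as the ``technical heart'': neither the infinitesimal injectivity of $d\Phi$ (the Riemann--Roch/quadratic-differential count, which must account for the boundary curves of $\Omega_k$) nor the properness of $\Phi$ (ruling out disk collapse, disk contact, and pinching of $g_n$ by comparing extremal lengths of curves in $\Omega_k$ with the geometry of $(M,g_n)$) is actually carried out. Each of these is substantial — they \emph{are} the content of the cited references — so as written your argument is a roadmap rather than a proof. If the goal is to match the paper's level of detail, the efficient course is the paper's: cite Maskit and Haas for the orientable case, note the boundary-regularity upgrade, and give the double-cover reduction for the nonorientable case, since that reduction is the one piece the references do not supply.
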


\begin{remark} Theorem A in \cite{Maskit} as originally stated provides a biholomorphism of the \emph{interior} of $N_k$ onto the complement of closed disks, but standard results on boundary regularity of biholomorphisms imply that this extends to a diffeomorphism up to the boundary. For example, one can refer to the introduction of~\cite{BK}, where it is explained how the classical boundary regularity for the Riemann mapping theorem implies the analogous result for multiply connected planar domains. In particular, any biholomorphism of open annuli extends to a diffeomorphism of their closures -- a result which we can apply to small annuli near the boundary circles of $N_k$.
\end{remark}

As written, the original statement of Theorem A in \cite{Maskit} applies only to Riemann surfaces, but it is straightforward to extend it to the nonorientable case, as follows. Given a nonorientable compact surface $N$ with boundary, let $\pi\colon\tilde{N}\to N$ be the oriented double cover, with free antiholomorphic involution $s\colon \tilde{N}\to \tilde{N}$ such that $s^2=\mathrm{id}$ and $\pi\circ s=\pi$. By Haas and Maskit's results for orientable surfaces, we know that there exists a closed surface $(\tilde{M},g)$ of unit area and constant curvature, and geodesic disks $B_i\subset \tilde{M}$ with disjoint closures, such that $\tilde{N}$ admits a conformal diffeomorphism
$$
\tilde{F}\colon (\tilde{N},\mathcal{C})\to (\tilde{\Omega},g)
$$
onto the complement $\tilde{\Omega}=\tilde{M}\setminus \bigcup B_i$. Moreover, observe that the composition $\tilde{F}\circ s\colon (N,\mathcal{C})\to (\tilde{\Omega},g)$ with the antiholomorphic involution $s$ gives another conformal diffeomorphism, so by the uniqueness part of Maskit's theorem, there must exist a conformal diffeomorphism $G: (\tilde{M},g)\to (\tilde{M},g)$ such that $\tilde{F}\circ s=G$. Without loss of generality, we may assume that $G$ is an isometry of $(\tilde{M},g)$; if $\tilde{M}\neq \Sp$, this is automatic, while if $\tilde{M}=\Sp$, this may be achieved by replacing $\tilde{F}$ with $\Phi\circ \tilde{F}$ for a suitable conformal automorphism $\Phi\colon \Sp\to \Sp$.

Evidently, this isometry $G\colon (\tilde{M},g)\to (\tilde{M},g)$ preserves the image $\tilde{\Omega}=\tilde{M}\setminus \bigcup B_i$, reverses orientation, and satisfies $G^2=\mathrm{id}$ on $\tilde{\Omega}$; hence $G^2=\mathrm{id}$ on $M$, by unique continuation. Moreover, since $s$ has no fixed points on $\tilde{N}$, $G$ cannot have fixed points in $\tilde{\Omega}$. We claim now that $G$ has no fixed points in $\tilde{M}$. Indeed, if $G$ fixes a point $x\in \tilde{M}\setminus \tilde{\Omega}$, then $x$ must lie in the interior of one of the disks $B_i\subset M\setminus \Omega$, and since $G$ fixes the disjoint union $\bigcup \overline{B_i}$, we see that the restriction $G|_{\overline{B_i}}$ must then act as an antiholomorphic diffeomorphism of the closed disk $\overline{B_i}$. However, it follows from the standard classification of holomorphic automorphisms of the closed disk that any antiholomorphic automorphism $G$ of $\overline{B_i}$ must have fixed points on the boundary $\partial B_i\subset \partial\tilde{\Omega}$, which cannot occur since $G$ acts freely on $\tilde{\Omega}$.

We therefore see that the isometry $G\colon(\tilde{M},g)\to (\tilde{M},g)$ is an orientation-reversing involution acting freely on $\tilde{M}$, from which we obtain a smooth quotient surface 
$$
p\colon (\tilde{M},g)\to (M,g):=(\tilde{M},g)/G
$$
of constant curvature (and area $\frac{1}{2}$), such that the conformal diffeomorphism $\tilde{F}\colon(\tilde{N},\mathcal{C})\to (\tilde{\Omega},g)$ descends to a conformal diffeomorphism
$$F:(N,\mathcal{C})\to (\Omega,g)\subset (M,g),$$
where $\Omega:=p(\tilde{\Omega})=M\setminus p\left(\bigcup B_i\right)$. Evidently, the image $p\left(\bigcup_{i=1}^k B_i\right)$ of the geodesic disks under the $2$-fold covering map $p$ is a union of $k/2$ geodesic disks in $(M,g)$, so that $F$ gives the desired uniformization of $(N,\mathcal{C})$. Uniqueness of $F$ up to conformal diffeomorphisms likewise follows from uniqueness in the orientable case and the observation that any such uniformization $(N,\mathcal{C})\to (\Omega,g)$ lifts to an orientable uniformization $(\tilde{N},\mathcal{C})\to (\tilde{\Omega},g)$.


\subsection{Eigenvalues of measures}
In recent years, it has been observed that the study of variational problems for Laplace and Steklov eigenvalues fit into a useful, more general framework, based on assigning certain natural spectra to Radon measures on Riemann surfaces. To be precise, let $N$ be a compact surface with boundary (possibly empty) and let $\mC$ be a conformal class on $N$. Given a Radon measure $\mu$ on $N$ one can define the variational eigenvalues
\begin{equation}
\label{eq:eigmes}
\lambda_k(N,\mC,\mu)=\inf_{E_{k+1}} \,\, \sup_{0 \ne f \in E_{k+1}}\frac{\int_N |\nabla f|_{g}^2 dv_{g}}{\int_N f^2 d\mu},
\end{equation}
where $g\in\mC$ is any representative of the conformal class and $E_{k+1}$ ranges over all $(k+1)$-dimensional of $C^\infty(N)$; one then defines the mass-normalized eigenvalues
$$
\bar\lambda_k(N,\mC,\mu) = \lambda_k(N,\mC,\mu)\mu(N).
$$
We say that the measure $\mu$ is {\em admissible} (\cite{Kokarev, KS, GKL}) if the identity map on $C^\infty(M)$ can be extended to a compact map $W^{1,2}(M,g)\to L^2(\mu)$, $g\in\mC$. This definition does not depend on the choice of $g\in\mC$ and essentially guarantees that the eigenvalues $\lambda_k(N,\mC,\mu)$ behave similarly to the classical eigenvalues of the Laplacian, see e.g.~\cite{GKL}. While many examples of admissible measures lead to interesting eigenvalue problems~\cite[Section 4]{GKL}, the following are the only examples used in the present paper.
\begin{example}
\label{ex:Laplace}
Let $\partial N = \varnothing$, $\mu$ be a volume measure of a smooth metric $g\in \mC$, $\mu = dv_g$. Then the Rellich-Kondrachov compactness theorem implies that $\mu$ is admissible. In fact, then $\lambda_k(N,\mC,\mu) = \lambda_k(N,g)$ corresponds to the $k$th nontrivial eigenvalue of the Laplacian $\Delta_g$.
\end{example} 
\begin{example}
\label{ex:con_sing}
Let $\partial N = \varnothing$, $\mu = fdv_g$, where $g\in\mC$ is a smooth metric, $f\geqslant 0$ with zeroes of finite order at isolated points of $N$. Then $\mu$ is a volume measure of the metric $fg$, which is a smooth metric outside of finitely many conical singularities. 
The variational eigenvalues $\lambda_k(N,\mC,\mu)$ coincide with the eigenvalues of the Friedrichs extension of $\Delta_{fg}$ and we continue to write $\lambda_k(N,fg) = \lambda_k(N,\mC,\mu)$.
\end{example}
\begin{example}
\label{ex:Steklov}
Let $\partial N\ne \varnothing$, $\mu = ds_g^{\partial N}$ be the boundary length measure of a metric $g\in \mC$. Then the Sobolev trace embedding implies that $\mu$ is admissible and $\lambda_k(N,\mC,\mu) = \sigma_k(N,g)$. In particular, the Steklov eigenvalues $\sigma_k(N,g)$ depend only on the conformal class $[g]$ and the restriction of $g$ to the boundary $\partial N$. 
\end{example}
\begin{example}
Let $(M,g)$ be a closed surface and let $\Omega\subset M$ be a smooth domain. Let $\mu$ be the boundary length measure of $\partial\Omega$, $\mu = ds_g^{\partial\Omega}$. Consider $\overline{\Omega}\subset M$ as a manifold with boundary, then one has $\bar\lambda_k(\overline{\Omega},[g],\mu) = \bar\sigma_k(\Omega,g)$. Furthermore, the definition~\eqref{eq:eigmes} easily implies that 
\begin{equation}
\label{ineq:Sdom}
\bar\sigma_k(\Omega,g) = \bar\lambda_k(\overline{\Omega},[g],\mu) \leqslant \bar\lambda_k(M,[g],\mu).
\end{equation} 
\end{example}

\begin{remark}[Invariance under diffeomorphisms]
\label{rmk:diffeo}
Let $N$ be a compact surface, $g\in\Met_{\can}(N)$ and $\mu$ be an admissible measure on $N$. If $\Phi\colon N\to N_1$ is a diffeomorphism, then it is easy to see that $\lambda_1(N,[g],\mu) = \lambda_1(N_1,[(\Phi^{-1})^*g],\Phi_*\mu)$. Furthermore, if $f$ is an eigenfunction on $N_1$, then $\Phi^*f$ is an eigenfunction on $N$ with the same eigenvalue. In particular, this induces the  action of $\Diff(N)$ on the set of pairs $(g,\mu)$ by
$$
\Phi\cdot(g,\mu) = ((\Phi^{-1})^*g,\Phi_*\mu),
$$
which preserves the variational eigenvalues.
%
\end{remark}

Finally, we endow the space of all admissible measures with the topology induced by the $W^{-1,2}_g(M)$-norm. Namely, for any Radon measure $\mu$ we define
$$
\|\mu\|_{W^{-1,2}(M,g)}=\sup_{u}\int_M u\,d\mu,
$$ 
where the supremum is over all smooth functions $u$ satisfying $\|u\|_{W^{1,2}(M,g)}=1$. It is easy to see that any admissible $\mu$ has finite $W^{-1,2}(M,g)$-norm. 

\subsection{Geometric characterization of maximal metrics: Laplacian}
\label{sec:Laplace_extremal}
In the next two sections we recall some key results on the connection between eigenvalue optimization problems and minimal surfaces, starting with the Laplace eigenvalues of closed surfaces.

Given a closed surface $M$ consider again the supremum
$$
\Lambda_1(M) = \sup_g\bar\lambda_1(M,g);
$$
of the normalized first eigenvalue over all metrics on $M$, as well as the conformally-constrained supremum
$$
\Lambda_1(M,[g]) = \sup_{h\in [g]}\bar\lambda_1(M,h),
$$
where in the second quantity one can always assume $g\in\Met_{\can}(M)$. As was mentioned in the introduction, $\Lambda_1(M,[g])\leqslant\Lambda_1(M)<\infty$ for any surface $M$ and conformal class $[g]$ and the supremum is achieved for a metric, smooth up to a finite number of conical singularities~\cite{KS,PetL1,MS,KNPP2}. Furthermore, these singularities have integer angles. In particular, if $h$ is such a metric, then $h=fg,$ where $g\in\Met_{\can}(M)$ and $f\in C^\infty(M)$, with $f>0$ outside of finitely many points corresponding to the singularities of $h$; see Example~\ref{ex:con_sing}. If the metric $h$ (possibly with isolated conical singularities) is such that $\bar\lambda_1(M,g) = \Lambda_1(M)$ (or $\bar\lambda_1(M,h) = \Lambda_1(M,[g])$), then we say that $h$ is a $\bar\lambda_1$-(conformally) maximal metric. Additionally, keeping in mind Example~\ref{ex:con_sing}, we also say that $dv_h$ is a $\bar\lambda_1$-(conformally) maximal measure. We denote by $\Met_0(M)\subset\Met_{\can}(M)$ the subset of unit-area, constant curvature metrics corresponding to $\bar\lambda_1$-maximal conformal classes; i.e. 
$$
\Met_0(M):=\{g\in \Met_{\can}(M)\mid \Lambda_1(M,[g])=\Lambda_1(M)\}.
$$

Recall that a map $u \colon (M,g)\to\mathbb{S}^n$ is called {\em harmonic} if 
$$
\Delta_g u = |du|_g^2u,
$$
which holds precisely when $u$ is a critical point for the \emph{energy}
$$
E_g(u) = \frac{1}{2}\int_M|du|^2_g\,dv_g
$$
among $\mathbb{S}^n$-valued maps. This equation is conformally invariant on surfaces--i.e. $u$ is harmonic with respect to any other metric in the conformal class $[g]$. In particular, setting $g_u = \frac{1}{2}|du|_g^2g$ one obtains $\Delta_{g_u}u = 2u$, so that the components are the eigenfunctions of $\Delta_{g_u}$ with eigenvalue $2$. If, furthermore, $\lambda_1(M,g_u) = 2$, then we say that $u$ is of {spectral index $1$} and write $\ind_S(u) = 1$. Note that $du = 0$ only at isolated points of $M$, which correspond to conical singularities of $g_u$.

\begin{theorem}[\cite{ESI, FS:extremal}]
Let $g$ be a $\bar\lambda_1$-conformally maximal metric. Then there exists $n>0$, a harmonic map $u\colon (M,[g])\to\mathbb{S}^n$ of spectral index $1$ and $\alpha>0$ such that $g = \alpha g_u$. In particular, $\Lambda_1(M,[g]) = 2E_g(u)$. 
\end{theorem}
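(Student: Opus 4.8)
\emph{Proof proposal.} The plan is to extract from the conformal maximality of $g$ a pointwise algebraic identity among its first eigenfunctions, and to read off the harmonic map $u$ directly from that identity. Normalize so that $\area_g(M)=1$, and set $\lambda_1:=\lambda_1(M,g)=\bar\lambda_1(M,g)=\Lambda_1(M,[g])$; let $E\subset C^\infty(M)$ denote the $\lambda_1$-eigenspace of $\Delta_g$ (smooth away from the conical points of $g$), with $L^2(dv_g)$-orthonormal basis $u_1,\dots,u_m$. The starting point is the behaviour of $\bar\lambda_1$ along the conformal perturbations $h_t:=(1+t\phi)g$, $\phi\in C^\infty(M)$. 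Since the Dirichlet energy is conformally invariant in dimension two, $\lambda_1(M,h_t)$ is the first eigenvalue of the pencil $\Delta_g u=\lambda(1+t\phi)u$, and analytic perturbation theory for the bottom eigenvalue (of multiplicity $m$) gives the one-sided derivative
\[
\frac{d}{dt}\Big|_{t=0^+}\bar\lambda_1(M,h_t)=\lambda_1\Big(\int_M\phi\,dv_g-\max_{u\in E,\ \|u\|_{L^2(dv_g)}=1}\int_M\phi\,u^2\,dv_g\Big).
\]

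Since $g$ maximizes $\bar\lambda_1$ within $[g]$, this derivative is $\le 0$ for every $\phi$, i.e.
\[
\int_M\phi\,dv_g\ \le\ \max_{u\in E,\ \|u\|_{L^2(dv_g)}=1}\int_M\phi\,u^2\,dv_g\qquad\text{for all }\phi\in C^\infty(M),
\]
and applying this to $-\phi$ yields the two-sided sandwich $\min_u\int_M\phi u^2\,dv_g\le\int_M\phi\,dv_g\le\max_u\int_M\phi u^2\,dv_g$. I would then argue by separation. The set $\mathcal K:=\mathrm{conv}\{u^2:u\in E,\ \|u\|_{L^2(dv_g)}=1\}$ is a compact convex subset of the finite-dimensional space $W\subset C^0(M)$ spanned by the constant function and the products $u_iu_j$ (compactness via Carath\'eodory together with compactness of the unit sphere of $E$), and every linear functional on $W$ has the form $q\mapsto\int_M\phi\,q\,dv_g$ for some $\phi\in C^\infty(M)$ (because $\phi\mapsto\int_M\phi(\cdot)\,dv_g|_W$ maps onto $W^*$). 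If $1\notin\mathcal K$, Hahn--Banach produces such a $\phi$ with $\int_M\phi\,dv_g>\max_{q\in\mathcal K}\int_M\phi\,q\,dv_g=\max_u\int_M\phi\,u^2\,dv_g$, contradicting the inequality above. Hence there are $u_\ell\in E$ with $\|u_\ell\|_{L^2(dv_g)}=1$, coefficients $\alpha_\ell>0$, and (testing against $\phi\equiv1$) $\sum_\ell\alpha_\ell=1$, such that $\sum_\ell\alpha_\ell u_\ell^2\equiv1$.

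Now set $v_\ell:=\sqrt{\alpha_\ell}\,u_\ell$ and $U:=(v_1,\dots,v_N)\colon M\to\mathbb R^N$. Then $|U|^2\equiv1$, so $U$ takes values in $\mathbb{S}^{N-1}$, and each component solves $\Delta_g v_\ell=\lambda_1 v_\ell$; in particular the $v_\ell$ cannot all be constant, so $N\ge 2$. Using $\Delta_g|U|^2=2\sum_\ell\big(v_\ell\Delta_g v_\ell-|\nabla v_\ell|_g^2\big)=2\lambda_1|U|^2-2|dU|_g^2$ together with $|U|^2\equiv1$ gives $|dU|_g^2\equiv\lambda_1$, whence $\Delta_g U=\lambda_1 U=|dU|_g^2\,U$, which is exactly the harmonic map equation into the sphere; set $n:=N-1>0$ (or smaller, after discarding linear dependencies among the $v_\ell$). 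To conclude, recall that $u\mapsto g_u=\tfrac12|du|^2(\cdot)$ is conformally invariant, so we may compute $g_U$ using $g$ itself: $g_U=\tfrac12|dU|_g^2\,g=\tfrac12\lambda_1\,g$, hence $g=\alpha g_U$ with $\alpha=2/\lambda_1>0$. The scaling law $\lambda_k(cg)=c^{-1}\lambda_k(g)$ in dimension two then gives $\lambda_1(M,g_U)=\tfrac{2}{\lambda_1}\lambda_1=2$, i.e. $\ind_S(U)=1$; and $2E_g(U)=\int_M|dU|_g^2\,dv_g=\lambda_1\area_g(M)=\bar\lambda_1(M,g)=\Lambda_1(M,[g])$.

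The main obstacle is essentially technical rather than conceptual, and concentrated in two places. First, the one-sided derivative formula above must be made rigorous when $\lambda_1$ is a multiple eigenvalue; this is classical (Kato-type analytic perturbation of the linear pencil $\Delta_g-\lambda(1+t\phi)$, or a direct min--max computation), \emph{but} in the stated generality the maximizer is a metric $g=fg_0$ which may carry finitely many conical singularities, so one should run the entire argument in the admissible-measure framework of Example~\ref{ex:con_sing} with $\mu=f\,dv_{g_0}$ in place of $dv_g$, and then invoke a removable-singularity theorem for the finite-energy map $U$ to see that it is genuinely harmonic across the conical points (at which $dU$ degenerates consistently with the degeneration of $g$). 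Second, the passage from the variational inequality to the convex-combination identity $\sum_\ell\alpha_\ell u_\ell^2\equiv1$ is the conceptual heart of the proof: the separation argument is elementary once phrased in the finite-dimensional space $W$, but it does rely on the small observation that every functional on $W$ is represented by a smooth density against $dv_g$.
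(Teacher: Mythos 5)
Your argument correctly reconstructs the standard proof from the cited references \cite{ESI,Nad,FS:extremal}: the paper itself does not prove this theorem but attributes it to that literature, and your chain---one-sided derivative of $\bar\lambda_1$ along conformal perturbations $(1+t\phi)g$ via Kato perturbation theory for the pencil $\Delta_{g_0}u=\lambda(1+t\phi)f u$, extremality giving the two-sided sandwich $\min_u\int\phi u^2\,dv_g\le\int\phi\,dv_g\le\max_u\int\phi u^2\,dv_g$, convex separation in the span of $\{1\}\cup\{u_iu_j\}$ producing eigenfunctions with $\sum\alpha_\ell u_\ell^2\equiv1$, and the direct computation $\Delta_{g_0}|U|^2=0\Rightarrow |dU|_{g_0}^2=\lambda_1 f$ giving the harmonic map equation---is precisely the Nadirashvili/El~Soufi--Ilias/Fraser--Schoen mechanism. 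The conical-singularity caveat you flag is handled more cheaply than you suggest: the eigenfunctions of the degenerate problem $\Delta_{g_0}u=\lambda f u$ are already globally smooth by elliptic regularity (since $g_0$ and $f$ are smooth), so $U$ is a genuine smooth harmonic map with respect to $g_0$ without any removable-singularity step; the identity $g=\alpha g_U$ then reads $fg_0=\frac{\alpha}{2}|dU|_{g_0}^2 g_0$, which is well-posed everywhere. One small expository point: rather than ``discarding linear dependencies among the $v_\ell$,'' apply an orthogonal transformation of $\mathbb{R}^N$ to zero out redundant coordinates---this preserves $|U|^2$, $|dU|^2$, and the eigenfunction property of the components, and lands the image in a great subsphere; but the theorem only asks for \emph{some} $n>0$, so this reduction is optional.
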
 
\begin{remark}
\label{rmk:Laplace_mult}
Note that $n$ is bounded by the multiplicity of the first eigenvalue of $\Delta_{g_u}$, but is not necessarily equal to it~\cite[Remark 1.4]{CKM}. The multiplicity bounds of~\cite{Cheng, Besson, Nad:mult} imply that $n$ is bounded from above only in terms of the topology of $M$.
\end{remark}

Conversely, for any harmonic map $u\colon(M,g)\to\mathbb{S}^n$ of spectral index $1$ satisfying $2E_g(u)= \Lambda_1(M,[g])$, the metric $g_u$ is $\bar\lambda_1$-conformally maximal. We say that such a map $u$ is a $\bar\lambda_1$-conformally maximal map. 

A map $u\colon (M,g)\to (P,h)$ is called {\em weakly conformal} if $u^*h = g_u$. On surfaces, any weakly conformal harmonic map is a branched minimal immersion and vice versa. The branch points of the immersion correspond to the singularities of $g_u$.
\begin{theorem} [\cite{Nad, ESI, FS:extremal}]
Let $g$ be a $\bar\lambda_1$-maximal metric. Then there exists $n>0$, a branched minimal immersion $u\colon M\to\mathbb{S}^n$ of spectral index $1$ and $\alpha>0$ such that $g = \alpha u^*g_{\mathbb{S}^n}$. In particular, $\Lambda_1(M) = 2\area(u(M))$.
\end{theorem}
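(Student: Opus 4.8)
The plan is to deduce this from the preceding conformally-maximal theorem together with Nadirashvili's observation that the Hopf differential of the maximizing harmonic map must vanish once the metric is maximal among \emph{all} metrics on $M$. First I would note that a $\bar\lambda_1$-maximal metric $g$ is in particular $\bar\lambda_1$-conformally maximal, so the previous theorem furnishes $n>0$, $\alpha>0$, and a harmonic map $u\colon(M,[g])\to\mathbb{S}^n$ of spectral index $1$ with $g=\alpha g_u$ and $\Lambda_1(M)=\Lambda_1(M,[g])=2E_g(u)$. It then remains to show that $u$ is weakly conformal, i.e. $u^*g_{\mathbb{S}^n}=g_u$; granting this, $u$ is a branched minimal immersion (as recalled above), $g=\alpha g_u=\alpha u^*g_{\mathbb{S}^n}$, and, since energy equals area for weakly conformal maps on surfaces, $\Lambda_1(M)=2E_g(u)=2\area(u(M))$.

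To prove weak conformality, consider the stress--energy tensor $T_u=u^*g_{\mathbb{S}^n}-g_u$; its trace vanishes and, in a local conformal coordinate $z$, its $(2,0)$-part is the Hopf differential $\Phi=\langle\partial_z u,\partial_z u\rangle\,dz^2$, which is a \emph{holomorphic} quadratic differential because $u$ is harmonic into the round sphere (the $\bar\partial$-derivative of $\langle\partial_z u,\partial_z u\rangle$ is a multiple of $\langle u,\partial_z u\rangle=\tfrac12\partial_z|u|^2=0$). Weak conformality is precisely the statement $\Phi\equiv 0$, so suppose $\Phi\not\equiv 0$. Since $g$ is maximal over \emph{all} metrics, the conformal class $[g]$ maximizes $[h]\mapsto\Lambda_1(M,[h])$ over the moduli space of conformal classes on $M$. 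I would then invoke the first-variation identity for the harmonic energy: along a smooth path $[g_t]$ of conformal classes with $[g_0]=[g]$ and Teichmüller velocity represented by a holomorphic quadratic differential $q$, the conformally maximal values satisfy
$$\frac{d}{dt}\Big|_{0}\Lambda_1(M,[g_t])\;=\;c\,\mathrm{Re}\int_M\langle\Phi,\bar q\rangle$$
for a fixed positive constant $c$, using that $\Lambda_1(M,[g_t])=2E_{g_t}(u_t)$ for the conformally maximal harmonic map $u_t$ in the class $[g_t]$, that $u_0=u$, and the classical formula $\tfrac{d}{dt}E_{g_t}(v)=-\tfrac14\int\langle T_v,\dot g_t\rangle$ (the contribution of $\dot u_t$ dropping at first order by harmonicity of $u$). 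Choosing $q=\Phi$ makes the right-hand side a positive multiple of $\int_M|\Phi|^2>0$, while $-q$ makes it negative, so the one-sided derivatives of $t\mapsto\Lambda_1(M,[g_t])$ cannot both be $\le0$ -- contradicting maximality of $[g]$. Hence $\Phi\equiv 0$, and the area identity follows from $E_g(u)=\area(u(M))$ for weakly conformal $u$.

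The main obstacle is making this variational step rigorous. The functional $[h]\mapsto\Lambda_1(M,[h])$ need not be differentiable: the first eigenvalue of $g_{u_t}$ may have multiplicity $m>1$, so that $\Lambda_1(M,[g_t])$ is only a minimum of finitely many one-sided-differentiable branches, and the displayed equality must be replaced by an inequality between one-sided derivatives obtained from a min--max over the full first eigenspace of $g_u$. One also needs continuity (or at least subconvergence) of the maximizing harmonic maps $u_t$ as $t\to0$, which is supplied by the compactness and regularity theory underlying the two preceding theorems, and one must check that the chosen path genuinely deforms the conformal class -- this is exactly where holomorphicity of $\Phi$, hence nonvanishing of its self-pairing, is used. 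Modulo these standard but delicate points, carried out in \cite{Nad,ESI,FS:extremal}, the argument concludes as above.
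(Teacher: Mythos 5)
The paper does not provide its own proof of this statement; it is quoted directly from the cited references \cite{Nad, ESI, FS:extremal}. Your sketch follows the standard argument from those references: reduce to the conformally-maximal case, observe that the Hopf differential $\Phi$ of the resulting harmonic map is holomorphic, and then vary the conformal class (marked point in Teichm\"uller space) in the direction dual to $\Phi$ to derive a contradiction with global maximality unless $\Phi\equiv 0$. The key ingredients you identify are all correct: the vanishing of $\bar\partial\langle\partial_z u,\partial_z u\rangle$ using $\langle u,\partial_z u\rangle=0$, the classical stress--energy formula, the cancellation of the $\dot u_t$ contribution at first order by harmonicity, and the observation that the trivial cases $M=\mathbb{S}^2,\mathbb{RP}^2$ carry no nonzero holomorphic quadratic differentials. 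You have also correctly flagged the delicate points that require genuine work in the references: the non-differentiability of $\Lambda_1(M,[\cdot])$ caused by eigenvalue multiplicity (handled via one-sided derivatives / Clarke-type subdifferentials or via the Nadirashvili--El Soufi--Ilias perturbation scheme), the need for subconvergence $u_t\to u$, and the fact that $\Phi$ being holomorphic is what ensures the chosen Beltrami direction is nondegenerate in Teichm\"uller space. Modulo these standard-but-nontrivial points, carried out in the cited papers, your argument is correct and follows the same route.
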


Conversely, for any branched minimal immersion $u\colon M\to\mathbb{S}^n$ of spectral index $1$ satisfying $2\area(u(M))= \Lambda_1(M,[g])$, the metric $u^*g_{\mathbb{S}^n}$ is $\bar\lambda_1$-maximal. We say that such a map $u$ is a $\bar\lambda_1$-maximal map. 

\subsection{Geometric characterization of maximal metrics: Steklov} 
\label{sec:Steklov_extremal}
The variational theory for normalized Steklov eigenvalues is to a large extent parallel to that of the Laplacian.

Given a connected compact surface with boundary $N$, we consider again the supremum
$$
\Sigma_1(N) = \sup_g\bar\sigma_1(N,g);
$$
of the first nontrivial (length-normalized) Steklov eigenvalue over all metrics on $N$, as well as the conformally constrained supremum
$$
\Sigma_1(N,[g]) = \sup_{h\in [g]}\bar\sigma_1(N,h).
$$
As was mentioned in the introduction, $\Sigma_1(N)<\infty$ for any surface $N$, and the supremum is achieved by a smooth metric, possibly with some conical singularities. For the conformally-constrained supremum, the situation is a little different.
\begin{theorem}[~\cite{PetS}]
\label{thm:Steklov_2pi}
Assume that $\Sigma_1(N,[g])>2\pi$. Then the supremum is achieved for a smooth metric (possibly with a finite number of conical singularities).
\end{theorem}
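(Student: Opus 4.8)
The plan is to adapt the variational existence strategy of Nadirashvili, El Soufi--Ilias, Petrides and Kokarev for $\bar\lambda_1$-maximal metrics on closed surfaces to the Steklov setting, with the role of the ``sphere bubble'' of value $\Lambda_1(\Sp)=8\pi$ now played by the ``disk bubble'' of value $\Sigma_1(\mathbb{D}^2)=2\pi$ (Weinstock's inequality). First I would relax the problem to measures on the boundary: fixing a smooth representative $g\in\mC$ with geodesic boundary, Example~\ref{ex:Steklov} identifies $\bar\sigma_1(N,e^{2\omega}g)$ with $\bar\lambda_1(N,\mC,e^{\omega}ds_g^{\partial N})$, and a standard approximation argument within the admissible-measure framework of~\cite{Kokarev,GKL} yields
$$
\Sigma_1(N,[g])=\sup\left\{\bar\lambda_1(N,\mC,\mu)\ :\ \mu\text{ a probability measure on }\partial N\right\}.
$$

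Next I would fix a maximizing sequence $\mu_j$ of probability measures on $\partial N$ with $\bar\lambda_1(N,\mC,\mu_j)\to\Sigma_1(N,[g])=:\Sigma$, and pass to a weak-$*$ limit $\mu_j\rightharpoonup\mu$ (a probability measure on $\partial N$, a priori with atoms). The key point is a dichotomy, established by a concentration-compactness analysis of the associated first eigenfunctions: either \emph{(a)} no mass of the $\mu_j$ concentrates at a point of $\partial N$, in which case $\mu$ is admissible, $\bar\lambda_1(N,\mC,\mu)\geq\Sigma$, and $\mu$ is a maximizer; or \emph{(b)} a positive amount of mass concentrates at some $x_0\in\partial N$, in which case rescaling near $x_0$ at the scale of concentration produces a \emph{disk bubble}---the limiting configuration lives on a half-plane, conformally $\mathbb{D}^2$, with the concentrated mass on its boundary---and one shows that the eigenvalue captured by the bubble is at most $\Sigma_1(\mathbb{D}^2)=2\pi$, whence $\Sigma\leq 2\pi$. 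Under the hypothesis $\Sigma_1(N,[g])>2\pi$, case~\emph{(b)} is impossible, so we obtain a maximizing admissible probability measure $\mu$ on $\partial N$.

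It then remains to promote $\mu$ to a metric and establish regularity. The standard El Soufi--Ilias argument (no admissible perturbation of $\mu$ increases $\bar\lambda_1$, combined with a Hahn--Banach/convexity step) forces the existence of first eigenfunctions $u_0,\dots,u_n$ with $\sum_i u_i^2\equiv 1$ $\mu$-almost everywhere; equivalently, the harmonic map $U=(u_0,\dots,u_n)\colon (N,\partial N)\to(\mathbb{B}^{n+1},\Sp)$ satisfies the free boundary condition, and pairing the eigenvalue equation with the $u_i$ gives $\mu=\tfrac{1}{2\Sigma}\,\partial_\nu|U|^2\,ds_g^{\partial N}$. Boundary regularity for free boundary harmonic maps into a ball then shows $U$ is smooth up to $\partial N$ away from finitely many points, at which $dU$ vanishes to finite order; hence the density of $\mu$ with respect to $ds_g^{\partial N}$ is smooth and positive except at finitely many boundary points where it has a zero of finite order. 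Extending the corresponding conformal factor smoothly into the interior produces $h\in[g]$ with $ds_h^{\partial N}=\mu$, so $\bar\sigma_1(N,h)=\bar\lambda_1(N,\mC,\mu)=\Sigma_1(N,[g])$ and $h$ is smooth away from finitely many conical singularities, as desired.

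The main obstacle is the dichotomy in the second step---the concentration-compactness/bubbling analysis. The delicate part is showing that a concentrating maximizing sequence can only concentrate into (essentially) a single disk bubble carrying all of the first eigenvalue, with no eigenvalue lost in the degenerating annular ``necks''; this is precisely where the sharp value $2\pi=\Sigma_1(\mathbb{D}^2)$ must be extracted, and it requires careful test-function constructions, mirroring the corresponding step in Petrides's proof of the $\Lambda_1(M,[g])>8\pi$ criterion in the closed case. The boundary regularity input in the last step is also nontrivial but by now standard.
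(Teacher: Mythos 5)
The paper does not prove this statement itself---it cites it directly from Petrides~\cite{PetS}---and your outline reproduces the key architecture of that reference: relaxation to boundary probability measures, a concentration-compactness dichotomy in which the only possible bubble is a half-plane/disk bubble carrying Weinstock's value $2\pi$ (ruled out by the hypothesis $\Sigma_1(N,[g])>2\pi$), followed by the El~Soufi--Ilias/Hahn--Banach step producing a free boundary harmonic map and boundary regularity yielding a conformal factor smooth away from finitely many zeros. So your proposal matches the cited proof in approach; the part you (rightly) flag as the crux---showing no eigenvalue is lost in the degenerating necks and that the bubble carries at most $2\pi$---is exactly the technical heart of Petrides's argument, and a complete write-up would need to supply the careful test-function estimates there.
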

\begin{remark}
It is expected that $\Sigma_1(N,[g])>2\pi$ for $N\ne\mathbb{D}$ and for any conformal class $[g]$, but as of this writing, this has only been verified for the annulus and the M\"obius band~\cite{MP2}.
\end{remark}

If the metric $g$ is such that $\bar\sigma_1(N,g) = \Sigma_1(N)$ (or $\bar\sigma_1(N,g) = \Sigma_1(N,[g])$), then we say that $g$ is a $\bar\sigma_1$-(conformally) maximal metric. Additionally, keeping in mind Example~\ref{ex:Steklov}, we also say that $ds^{\partial N}_g$ is a $\bar\sigma_1$-(conformally) maximal measure. Note that by Example~\ref{ex:Steklov}, if $g$ is a $\bar\sigma_1$-(conformally) maximal metric, then any $h\in [g]$ with $ds_h^{\partial N} =  ds_g^{\partial N}$ is also $\bar\sigma_1$-(conformally) maximal. For that reason, in the following we predominantly refer to $\bar\sigma_1$-(conformally) maximal measures as opposed to metrics.

Recall that a map $u\colon (N,g)\to\mathbb{B}^{n+1}$ is called {\em free boundary harmonic} if $u(\partial N)\subset \mathbb{S}^n = \partial\mathbb{B}^{n+1}$ and
\begin{equation*}
\begin{cases}
\Delta_g u = 0 &\text{ in $N$;}\\
\partial_{\nu_g} u\parallel u &\text{ on $\partial N$,}
\end{cases}
\end{equation*}
where $\nu_g$ is an outer unit normal. Its energy satisfies
$$
E_g(u) = \frac{1}{2}\int_N|du|_g^2\,dv_g = \frac{1}{2}\int_{\partial N}|\partial_{\nu_g} u|\,ds_g
$$
Similar to the harmonic maps, this definition only depends on the conformal class $[g]$ in our two-dimensional setting. In particular, setting $\mu_u = |\partial_{\nu_g} u|\,ds_g$ one obtains that the components of $u$ are Steklov eigenfunctions associated with the measure $\mu_u$, whose eigenvalue is equal to $1$. If, furthermore, $\lambda_1(N,[g],\mu_u) = 1$, then we say that $u$ is of spectral index $1$ and write $\ind_S(u) = 1$. 

\begin{theorem}[\cite{FS:extremal, KM}]
Let $\mu$ be a $\bar\sigma_1$-conformally maximal measure. Then there exists $n>0$, a free boundary harmonic map $u\colon (N,[g])\to\mathbb{B}^{n+1}$ of spectral index $1$ and $\alpha>0$ such that $\mu=\mu_u$. In particular, 
$\Sigma_1(N,[g])=2E_g(u)$.
\end{theorem}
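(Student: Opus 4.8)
The plan is to mirror, in the Steklov setting, the variational characterization of $\bar\lambda_1$-conformally maximal metrics given by the previous two theorems, using the machinery of eigenvalues of measures from Section 2.2. Recall first that Theorem~\ref{thm:Steklov_2pi} guarantees (once $\Sigma_1(N,[g])>2\pi$, which is implicitly the relevant regime) that the conformally-constrained supremum is achieved by a metric $h\in[g]$ with at worst finitely many conical singularities; by Example~\ref{ex:Steklov} only the restriction $h|_{\partial N}$ and the conformal class matter, so it is natural to work with the maximal measure $\mu = ds_h^{\partial N}$ directly. The first step is therefore to set up the eigenvalue problem $\bar\sigma_1(N,[g],\mu)$ as a variational problem and observe that a $\bar\sigma_1$-conformally maximal measure $\mu$ is, by definition, a maximizer of $\mu\mapsto\bar\sigma_1(N,[g],\mu)$ among admissible boundary measures of fixed conformal class.

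Next I would carry out the standard perturbation/first-variation argument. Fixing a reference metric $g\in\Met_{\can}(N)$ and writing competitors as $\mu_t = (1+t\varphi)\mu$ for $\varphi\in C^\infty(\partial N)$, one differentiates the normalized eigenvalue $\bar\sigma_1(N,[g],\mu_t)$ at $t=0$. Because the first eigenvalue may have multiplicity $m>1$, the eigenvalue is only one-sided differentiable, and maximality forces the Hadamard-type variational inequality to hold for all $\varphi$ with both signs; the conclusion is that there is no single $\varphi$ increasing all the eigenvalues, which by a Hahn--Banach / convexity argument (exactly as in the Laplacian case, cf.\ the Nadirashvili--El Soufi--Ilias scheme) produces a convex combination of squared first eigenfunctions $u_1,\dots,u_{n+1}$ whose boundary restrictions satisfy $\sum_i (u_i|_{\partial N})^2 \equiv \mathrm{const}$ on $\partial N$ with respect to $\mu$. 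After rescaling we may take this constant to be $1$, so that $u=(u_1,\dots,u_{n+1})$ maps $\partial N$ into $\mathbb{S}^n$.

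The third step is to check that this map $u$ is free boundary harmonic of spectral index $1$ and that $\mu=\mu_u$. Harmonicity of the $u_i$ in the interior is immediate since each is a Steklov eigenfunction for the measure $\mu$, i.e.\ $\Delta_g u_i=0$ in $N$; the Steklov (Dirichlet-to-Neumann) eigenvalue equation $\partial_{\nu_g}u_i = \sigma_1 \, u_i\,(d\mu/ds_g)$ on $\partial N$, combined with the normalization $\sigma_1$ = (after scaling) $1$ and $\sum_i u_i^2=1$ on $\partial N$, gives $\partial_{\nu_g}u\parallel u$, so $u$ is free boundary harmonic and $\mu_u=|\partial_{\nu_g}u|\,ds_g = \sum_i u_i\,\partial_{\nu_g}u_i\,ds_g = \mu$ up to the scaling constant $\alpha$; absorbing $\alpha$ into $u$ or $\mu$ as in the statement. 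That $\ind_S(u)=1$ holds because $u$ was built from \emph{first} Steklov eigenfunctions of $\mu=\mu_u$, i.e.\ $\lambda_1(N,[g],\mu_u)=1$. Finally, the energy identity $\Sigma_1(N,[g]) = \bar\sigma_1(N,[g],\mu)=\sigma_1\cdot\mu(N) = \int_{\partial N}|\partial_{\nu_g}u|\,ds_g = 2E_g(u)$ follows from the boundary form of the energy recalled just before the theorem.

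The main obstacle is the first-variation step in the presence of eigenvalue multiplicity and of conical singularities: one must justify that the eigenvalues of the measure $\mu_t$ vary (one-sidedly) differentiably in $t$ and that the admissibility/compactness framework of Section~2.2 is stable under the perturbations $\mu\mapsto(1+t\varphi)\mu$, so that Hadamard-type formulas and the convex-duality argument go through exactly as in the smooth Laplacian case. This is precisely the content that is imported from \cite{FS:extremal, KM} — the former establishing the scheme for regular Steklov problems and the latter extending it to the measure-theoretic setting with singularities — so for the purposes of this paper I would cite those results for the delicate analytic points and present the argument above as the structural outline; no genuinely new estimate is needed beyond what those references provide.
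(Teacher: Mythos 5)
Your proposal is correct and follows exactly the Nadirashvili / El Soufi--Ilias / Fraser--Schoen scheme (first variation, Hadamard formulas, convexity/Hahn--Banach argument producing $\sum_i u_i^2\equiv\mathrm{const}$ on $\partial N$) that underlies the cited references \cite{FS:extremal, KM}; the paper itself supplies no proof of this theorem and simply imports it, so there is nothing internal to compare against. The one detail worth spelling out is that $u(N)\subset\overline{\mathbb{B}^{n+1}}$ follows from the maximum principle applied to the subharmonic function $|u|^2$, and that the correct conclusion is $\mu=\alpha\,\mu_u$ (with $\alpha=1/\sigma_1$ before normalization), which you correctly note can be absorbed by rescaling.
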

\begin{remark}
\label{rmk:Steklov_mult}
Similarly to Remark~\ref{rmk:Laplace_mult}, the number $n$ is bounded only in terms of the topology of $N$,~\cite{FS, Jammes, KKP}. Furthermore--and crucially for the purposes of the present paper--the upper bound \emph{does not depend on the number of boundary components of $N$}. 
\end{remark}

Conversely, for any free boundary harmonic map $u\colon (N,g)\to\mathbb{B}^{n+1}$ of spectral index $1$ satisfying  $2E_g(u)=\Sigma_1(N,[g])$, the measure $\mu_u$ is $\bar\sigma_1$-conformally maximal. We refer to such maps $u$ as $\bar\sigma_1$-conformally maximal maps.

\begin{theorem}[\cite{FS:extremal, KM}]
Let $\mu$ be a $\bar\sigma_1$-maximal measure. There exists $n>0$, a free boundary branched minimal immersion $u\colon N\to\mathbb{B}^{n+1}$ of spectral index $1$ and $\alpha>0$ such that $\mu=\alpha ds_{u^*g_{\mathbb{S}^n}}$. In particular, $\Sigma_1(N) = 2\area (u(N))$.
\end{theorem}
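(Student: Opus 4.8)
The plan is to mirror exactly the structure of the corresponding Laplace statement in Section~\ref{sec:Laplace_extremal} (the two preceding theorems for $\bar\lambda_1$-maximal maps), adapting each ingredient from closed surfaces to surfaces with boundary. First I would recall that a $\bar\sigma_1$-maximal measure $\mu$ on $N$ is in particular a $\bar\sigma_1$-\emph{conformally} maximal measure for its own conformal class $[g]$, so the already-cited theorem of \cite{FS:extremal, KM} produces $n>0$, $\alpha>0$, and a free boundary harmonic map $u\colon (N,[g])\to\mathbb{B}^{n+1}$ of spectral index $1$ with $\mu=\mu_u=|\partial_{\nu_g}u|\,ds_g$ and $\Sigma_1(N,[g])=2E_g(u)$. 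The content of the present theorem beyond that statement is (i) that $u$ can be taken \emph{weakly conformal}, hence a branched minimal immersion with free boundary, and (ii) that the conformal class $[g]$ is not merely one that happens to contain a $\bar\sigma_1$-maximal measure but is in fact $\Sigma_1$-maximal, i.e. $\Sigma_1(N,[g])=\Sigma_1(N)$, which forces $\Sigma_1(N)=2E_g(u)=2\area(u(N))$.

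For step (ii), I would argue that if $\mu$ is $\bar\sigma_1$-maximal then $\Sigma_1(N,[g])=\bar\sigma_1(N,\cdot)$ evaluated on $\mu$ equals $\Sigma_1(N)$: indeed $\bar\sigma_1$ at $\mu$ equals $\Sigma_1(N)$ by definition of $\bar\sigma_1$-maximality, while $\bar\sigma_1$ at $\mu$ is by definition at most $\Sigma_1(N,[g])\le \Sigma_1(N)$, so all three coincide. Thus $[g]$ is automatically a maximal conformal class and the map produced by the conformally-constrained theorem already realizes the global supremum.

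For step (i) — which I expect to be the crux — one must upgrade the free boundary harmonic map $u$ of spectral index $1$ to a weakly conformal one. The mechanism is the standard one from the work of Nadirashvili and El Soufi--Ilias in the closed case, adapted to the Steklov setting: the maximality of $\mu$ among measures in $[g]$ (more precisely, among boundary length measures $ds_h$, $h\in[g]$, with the relevant scaling) is a \emph{first-order} condition, and differentiating $\bar\sigma_1$ along conformal deformations $h_t = e^{2tw}g$ of the boundary metric yields that the "stress-energy" quadratic differential associated with the first eigenfunctions must pair to zero against all admissible $w$; since on a surface this stress-energy is (the real part of) a holomorphic quadratic differential with prescribed boundary behavior compatible with the free boundary condition, vanishing of this pairing forces it to vanish identically, which is precisely the statement that the boundary trace of $u$ is a conformal map and hence $u^*g_{\mathbb{S}^n}=g_u$, i.e. $u^*g_{\mathbb{S}^n}$ is (a multiple of) $\mu$'s conformal factor. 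One then sets $\alpha$ so that $\mu=\alpha\,ds_{u^*g_{\mathbb{S}^n}}$. The subtlety here — and where I would be most careful — is that the maximization is over \emph{metrics} and not just over the conformal class, so one has the freedom to vary $[g]$ as well; one must check that varying the conformal class produces no additional obstruction, which is automatic because the spectral-index-$1$ free boundary harmonic map of a maximal metric is always weakly conformal once $[g]$ is itself a maximal conformal class (the interior harmonicity plus the free boundary condition plus vanishing of the stress-energy differential is a conformally invariant package). Given weak conformality, a free boundary harmonic map is a free boundary branched minimal immersion and vice versa — exactly as in the closed statement relating weakly conformal harmonic maps to branched minimal immersions — so $u\colon N\to\mathbb{B}^{n+1}$ is the desired free boundary branched minimal immersion of spectral index $1$, with branch points at the zeros of $du$.

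Finally, with $u$ weakly conformal one has $E_g(u)=\area(u(N))$ (the energy of a conformal map equals the area of its image, counted with multiplicity), so combining with $\Sigma_1(N)=2E_g(u)$ from steps (i)–(ii) gives $\Sigma_1(N)=2\area(u(N))$, completing the proof. The bound on $n$ in terms of the topology of $N$ alone (independent of the number of boundary components) is exactly Remark~\ref{rmk:Steklov_mult} and requires no further argument here.
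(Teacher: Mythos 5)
The paper does not prove this theorem; it is quoted verbatim from \cite{FS:extremal, KM} as background in Section~\ref{sec:Steklov_extremal}, so there is no ``paper's own proof'' to compare against. What I can do is assess the correctness of your sketch against the known argument.

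Your step~(ii) is fine, and your overall two-stage architecture (first invoke the conformally-constrained theorem to get a free boundary harmonic map of spectral index~$1$, then upgrade it to a weakly conformal one by exploiting maximality in the ``conformal-class'' direction) is the right skeleton. However, step~(i) has a real logical confusion that would sink the argument as written. You say that ``differentiating $\bar\sigma_1$ along conformal deformations $h_t=e^{2tw}g$'' is what forces the stress-energy quadratic differential to pair to zero. That is backwards: varying the conformal factor $w$ while keeping the conformal class $[g]$ fixed is exactly the variation that produces the free boundary \emph{harmonic} map condition (already encoded in the conformally-constrained theorem you invoked), and it pairs with functions on the boundary, not with the Hopf differential. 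What kills the Hopf differential is the variation of the conformal class \emph{itself}, i.e.\ differentiating along Teichm\"{u}ller directions (Beltrami differentials transverse to the conformal orbit). The Hopf differential of the free boundary harmonic map is a holomorphic quadratic differential, real along $\partial N$, and the derivative of the energy-type functional along a Beltrami differential $\beta$ is $\mathrm{Re}\int \Phi\,\bar\beta$; global maximality of $\mu$ over all metrics (hence over all conformal classes) makes this vanish for every $\beta$, and since such $\Phi$ live in the cotangent space to Teichm\"{u}ller space, the non-degeneracy of the pairing forces $\Phi\equiv 0$, i.e.\ weak conformality. Your parenthetical claim that ``vanishing of the stress-energy differential'' is ``automatic once $[g]$ is itself a maximal conformal class'' is circular: it is precisely maximality in the Teichm\"{u}ller direction that you need to invoke, and you cannot take the conclusion as a hypothesis. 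Finally, you should at least flag the technical issue that $\bar\sigma_1$ is not smooth at multiple eigenvalues, so the ``first-order conditions'' above really mean one-sided derivatives / sub- and super-differentials, and the argument (as in Fraser--Schoen and Karpukhin--M\'etras) requires selecting an appropriate convex combination of eigenfunctions before the Hopf differential statement can be extracted. These are standard repairs, but they are where the real work lies.
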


For any free boundary branched minimal immersion $u\colon N\to\mathbb{B}^{n+1}$ of spectral index $1$ satisfying $2\area (u(N))=\Sigma_1(N)$, the measure $ds^{\partial N}_{u^*(g_{\mathbb{S}^n})}$ is $\bar\sigma_1$-maximal. We say that such a map $u$ is $\bar\sigma_1$-maximal. 

\subsection{Convergence of $\Sigma_1(N_k)$} 
\label{sec:GLKS}
In the present section we explain the ideas behind the identities~\eqref{ineq:KS_intro},~\eqref{lim:GL_intro}.

In \cite{KS}, the following regularity/rigidity result for conformally $\bar{\lambda}_1$-maximal measures is obtained as a byproduct of a new characterization of $\Lambda_1(M,[g])$ via the min-max theory of harmonic maps.

\begin{theorem}[Regularity of maximal measures,~\cite{KS}]
\label{thm:KS}
Let $M$ be a closed surface and $\mC$ be a conformal class on $M$. Then for any admissible measure $\mu$ on $M$ one has
\begin{equation}
\label{ineq:KS}
\bar\lambda_1(M,\mC,\mu)\leqslant \Lambda_1(M,\mC)
\end{equation}
with equality iff $\mu$ is a $\bar\lambda_1$-conformally maximal measure, i.e. $\mu=dv_g$, where $g$ is a $\bar\lambda_1$-conformally maximal metric.
\end{theorem}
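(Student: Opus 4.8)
The plan is to split the statement into the inequality~\eqref{ineq:KS} for an arbitrary admissible $\mu$ and the ``only if'' part; the ``if'' direction is immediate, since for a $\bar\lambda_1$-conformally maximal metric $g$ the measure $\mu=dv_g$ is admissible (Examples~\ref{ex:Laplace}--\ref{ex:con_sing}) and $\bar\lambda_1(M,\mC,\mu)=\bar\lambda_1(M,g)=\Lambda_1(M,\mC)$.

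\emph{The inequality.} I would prove it by approximation. Fix $g_0\in\Met_{\can}(M)\cap\mC$ and choose smooth positive $f_j$ so that $\mu_j:=f_j\,dv_{g_0}\to\mu$ weakly-$\ast$ with $\{\mu_j\}$ uniformly admissible (mollify $\mu$ and add $\tfrac1j\,dv_{g_0}$). Since $\mu_j=dv_{f_jg_0}$ is the volume measure of a smooth metric in $\mC$, we have $\bar\lambda_1(M,\mC,\mu_j)\leq\Lambda_1(M,\mC)$ for every $j$, so it suffices to prove the lower semicontinuity $\bar\lambda_1(M,\mC,\mu)\leq\liminf_j\bar\lambda_1(M,\mC,\mu_j)$. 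After normalizing $\mu_j(M)=\mu(M)=1$, take first eigenfunctions $\psi_j$ with unit mass and mean zero for $\mu_j$, so $\int_M|\nabla\psi_j|^2_{g_0}\,dv_{g_0}=\lambda_1(M,\mC,\mu_j)$; the Poincar\'e inequality and uniform admissibility bound $\{\psi_j\}$ in $W^{1,2}(M,g_0)$, and for a weak limit $\psi$ one has $\int_M\psi\,d\mu=0$, $\int_M\psi^2\,d\mu=1$ (uniform admissibility), so $\lambda_1(M,\mC,\mu)\leq\int_M|\nabla\psi|^2_{g_0}\,dv_{g_0}\leq\liminf_j\lambda_1(M,\mC,\mu_j)$. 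This is the standard continuity theory for eigenvalues of measures (cf.~\cite{Kokarev,GKL}).

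\emph{The ``only if'' part.} Suppose $\bar\lambda_1(M,\mC,\mu)=\Lambda_1(M,\mC)=:\Lambda$ and normalize $\mu(M)=1$, so $\lambda_1(M,\mC,\mu)=\Lambda$. Fix an $L^2(\mu)$-orthonormal basis $\phi_0,\dots,\phi_n$ of the first eigenspace and let $\Phi=(\phi_0,\dots,\phi_n)\colon M\to\R^{n+1}$, so that $\int_M\Phi\,d\mu=0$, $\int_M\Phi\otimes\Phi\,d\mu=I$, and $\int_M\langle\nabla\Phi,\nabla\psi\rangle_{g_0}\,dv_{g_0}=\Lambda\int_M\langle\Phi,\psi\rangle\,d\mu$ for all test functions $\psi$; in particular $2E_{g_0}(\Phi)=\Lambda(n+1)$. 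Because $\mu$ maximizes $\bar\lambda_1$ over \emph{all} admissible measures, I would then exploit first variations $\mu\mapsto\mu+t\nu$ keeping $\mu+t\nu$ nonnegative and admissible: the Hadamard-type first-variation inequality for the (possibly multiple) eigenvalue $\Lambda$ (cf.~\cite{ESI,Nad}) forces $\sum_i\phi_i^2=n+1$ $\mu$-a.e.\ (variation $\nu=\rho\mu$, $\int\rho\,d\mu=0$), and also $\supp\mu=M$ (variation $\nu=\eta\,dv_{g_0}$ with $\eta$ a bump in the open set $M\setminus\supp\mu$, on which $\Phi$ is harmonic, hence smooth). Thus $u:=\Phi/\sqrt{n+1}$ takes values in $\mathbb{S}^n$ $\mu$-a.e., $2E_{g_0}(u)=\Lambda$, and $\Delta_{g_0}u=\Lambda u\,\mu$ weakly. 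The crucial remaining step---and the place where the min-max characterization of $\Lambda_1(M,\mC)$ from~\cite{KS} is needed---is to rule out a part of $\mu$ singular with respect to $dv_{g_0}$ and to upgrade $u$ to a genuine harmonic map: realizing $\Lambda$ as a min-max width for the energy over sweepouts by sphere-valued maps, one identifies $u$ with a min-max critical map at the critical energy level, so that $\varepsilon$-regularity and energy quantization for harmonic maps of surfaces yield that $u$ is a smooth harmonic map $M\to\mathbb{S}^n$. Comparing $\Delta_{g_0}u=|du|^2_{g_0}u$ with $\Delta_{g_0}u=\Lambda u\,\mu$ and using $|u|\equiv 1$ then gives $\mu=\tfrac1\Lambda|du|^2_{g_0}\,dv_{g_0}=\tfrac2\Lambda\,dv_{g_u}$, whence $\lambda_1(M,g_u)=2$ and $2E_{g_0}(u)=\Lambda_1(M,\mC)$; by the results of Section~\ref{sec:Laplace_extremal}, $g:=\tfrac2\Lambda g_u$ is $\bar\lambda_1$-conformally maximal and $\mu=dv_g$ (possibly with conical singularities at the zeros of $|du|^2$).

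\emph{Main obstacle.} Part (i) and the first-variation identities of (ii) are essentially formal, given the continuity theory for eigenvalues of measures and the multiplicity-aware Hadamard formula. The real difficulty is the last step of (ii): excluding a $dv_{g_0}$-singular part of the maximizing measure $\mu$ (equivalently, excluding concentration of energy onto a null set in the limiting map) and upgrading the weak eigenmap to a smooth harmonic map whose energy measure is precisely $\mu$. This is exactly what the min-max description of $\Lambda_1(M,\mC)$ controls: at the critical energy level, the bubbling analysis for harmonic maps of surfaces, combined with the fact that $\Lambda_1(M,\mC)$ is \emph{attained}, prevents loss or concentration of energy, so the limit must be the honest harmonic map inducing a $\bar\lambda_1$-conformally maximal metric. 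The remaining technical points---the correct handling of the multiplicity of $\lambda_1$ in the first variation and the conical singularities of the resulting maximal metric---are comparatively minor.
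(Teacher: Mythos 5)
This theorem is not proved in the present paper: it is cited from~\cite{KS} as a black box (the paper explicitly describes it as a byproduct of the min-max theory of harmonic maps developed there), so there is no in-paper argument to compare against, and your reconstruction has to be judged on its own.

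Your proof of the inequality~\eqref{ineq:KS} is sound in outline and in fact tracks the earlier approach of Kokarev~\cite{Kokarev} more closely than~\cite{KS}: approximate $\mu$ by smooth positive densities $\mu_j=f_j\,dv_{g_0}$, apply $\bar\lambda_1(M,\mC,\mu_j)\leq\Lambda_1(M,\mC)$, then pass to the limit by semicontinuity. The one point that must be argued rather than asserted is the ``uniform admissibility'' of the mollified family, which is precisely what lets you conclude $\int\psi_j^2\,d\mu_j\to\int\psi^2\,d\mu$ and $\int\psi_j\,d\mu_j\to\int\psi\,d\mu$; that is the technical heart of the continuity theory, and it is available, e.g.\ in~\cite{GKL}. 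So this half is essentially fine, just by a different route than~\cite{KS}, which derives the inequality directly from the identity $\Lambda_1(M,\mC)=2\mE_1(M,\mC)$ with the min-max energy, without smoothing $\mu$.

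The genuine gap is in the ``only if'' direction, and you have located it correctly. Your first-variation sketch can plausibly be pushed to produce $u=\Phi/\sqrt{n+1}\in W^{1,2}(M,\R^{n+1})$ solving $\Delta_{g_0}u=\Lambda\,u\,\mu$ distributionally, with $|u|=1$ $\mu$-a.e.\ and $\supp\mu=M$. But this equation has a Radon measure on the right-hand side rather than $|du|^2_{g_0}u\,dv_{g_0}$, and $\varepsilon$-regularity and energy quantization are theorems \emph{about} weakly harmonic maps into $\mathbb{S}^n$; they say nothing about solutions of $\Delta u=\Lambda u\,\mu$ for a possibly singular $\mu$, and nothing in your argument yet rules out a singular part of $\mu$ or shows $|u|\equiv 1$ off the $\mu$-full-measure set on which this is forced. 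The point of the construction in~\cite{KS} is that it runs in the \emph{opposite} direction: the bubble-tree/Palais--Smale analysis first \emph{produces} a genuine harmonic map $M\to\mathbb{S}^n$ at the energy level $\tfrac{1}{2}\Lambda_1(M,\mC)$, and the equality $\bar\lambda_1(M,\mC,\mu)=\Lambda_1(M,\mC)$ is then used to force $\mu$ to coincide with the energy measure of that map, with no concentration or bubbling. Starting from the extremal measure and trying to reconstruct the harmonic map is precisely the hard direction, and ``identifying $u$ with a min-max critical map'' is not an argument but a restatement of what has to be shown.
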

The meaning of this theorem is as follows: even after relaxing the optimization problem for $\bar\lambda_1(M,g)$ to include (admissible) measures, the set of maximizers (and, as a result, the optimal value) does not change. Now, let $\Omega\subset M$ be a smooth domain. Combining~\eqref{ineq:Sdom} with~\eqref{ineq:KS} implies that
$$
\Sigma_1(\overline{\Omega},\mC)<\Lambda_1(M,\mC),
$$
where we abuse notation slightly by letting $\mC$ denote the conformal class on $\overline\Omega$ induced by the inclusion $\Omega\subset (M,\mC)$. Taking the supremum over all conformal classes $\mC$ yields~\eqref{ineq:KS_intro}.

The relation~\eqref{lim:GL_intro} follows from the following theorem.

\begin{theorem}[\cite{GL}]
\label{thm:GL}
For any closed surface $(M,g)$ there exists a sequence of domains $\Omega_k\subset M$, such that 
\begin{equation}
\label{lim:GL1}
\bar\sigma_1(\Omega_k,g)\to\bar\lambda_1(M,g)
\end{equation}
as $k\to\infty$. The domains $\Omega_k$ are obtained by removing many small disks from $M$.
\end{theorem}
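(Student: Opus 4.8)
\emph{Strategy.} The plan is to take $\Omega_k=M\setminus\bigcup_{i=1}^kB_i$, where $B_i=B_{r_k}(p_i)$ are tiny geodesic disks centered at points $p_i$ essentially equidistributed for $dv_g$ — one near the center of each cell of a partition of $(M,g)$ into $k$ pieces of area $\approx\area_g(M)/k$ and diameter $\delta_k\approx k^{-1/2}$ — with radii in the window $k^{-1/2}\ll r_k\ll1$, concretely $r_k=k^{-\alpha}$ for a fixed $\alpha>\tfrac12$. Writing $\mu_k=ds_g^{\partial\Omega_k}$, $L_k=\mu_k(M)\approx2\pi kr_k$, and $\nu_k=\tfrac{\area_g(M)}{L_k}\mu_k$ (so $\nu_k(M)=dv_g(M)$), the whole construction hinges on two facts about $\nu_k$, both checked by direct computation: (i) $\nu_k\to dv_g$ in $W^{-1,2}(M,g)$ — a Green-function estimate gives $\|\nu_k-dv_g\|_{W^{-1,2}}^2=O(\tfrac{\log k}{k})$, since the cell-by-cell contributions are of size $\sim(\tfrac{\area_g(M)}{k})^2\log(\delta_k/r_k)$ with negligible cross terms, each local discrepancy having mean zero on its cell; and (ii) a trace/Sobolev estimate over the collars gives the uniform admissibility bound $\|f\|_{L^2(\nu_k)}^2\le C\|f\|_{W^{1,2}(M,g)}^2$, using $\log(1/r_k)=o(k)$. (Fact (i) also yields the lower bound \eqref{ineq:lowerbound_intro} in Theorem~\ref{thm:asymptotics_intro}.)

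\emph{Upper bound.} By \eqref{ineq:Sdom} and scale invariance, $\bar\sigma_1(\Omega_k,g)=\bar\lambda_1(\overline{\Omega_k},[g],\mu_k)\le\bar\lambda_1(M,[g],\mu_k)=\bar\lambda_1(M,[g],\nu_k)$. Testing the min-max with the span of $1$ and a first eigenfunction $\phi_1$ of $\Delta_g$, and using weak-$*$ convergence $\nu_k\rightharpoonup dv_g$ against the smooth functions $\phi_1,\phi_1^2$, I get $\limsup_k\bar\lambda_1(M,[g],\nu_k)\le\bar\lambda_1(M,[g],dv_g)=\bar\lambda_1(M,g)$, hence $\limsup_k\bar\sigma_1(\Omega_k,g)\le\bar\lambda_1(M,g)$.

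\emph{Lower bound — the main step.} Let $f_k$ be a first Steklov eigenfunction of $(\Omega_k,g)$: harmonic in $\Omega_k$, with $\partial_\nu f_k=\sigma_1(\Omega_k,g)f_k$ on $\partial\Omega_k$ and $\int_{\partial\Omega_k}f_k\,ds_g=0$, so $\bar\sigma_1(\Omega_k,g)=L_k\int_{\Omega_k}|\nabla f_k|^2/\int_{\partial\Omega_k}f_k^2\,ds_g$. Extend $f_k$ to $\tilde f_k\in W^{1,2}(M)$ by harmonic extension into each $B_i$. The key estimate is $\int_{\bigcup B_i}|\nabla\tilde f_k|^2=o(1)\int_{\Omega_k}|\nabla f_k|^2$: in geodesic normal coordinates around $p_i$, where $g$ is $(1+o(1))$-flat at scale $\delta_k$, write $f_k=c_0+d_0\log(\rho/r_k)+\sum_{n\ne0}\bigl(c_n(\rho/r_k)^{|n|}+d_n(\rho/r_k)^{-|n|}\bigr)e^{in\theta}$ on the annulus $B_{\delta_k/2}(p_i)\setminus B_{r_k}(p_i)\subset\Omega_k$; the Steklov condition on $\partial B_{r_k}(p_i)$ forces $d_n=c_n\tfrac{|n|+\sigma_1r_k}{|n|-\sigma_1r_k}$, and since $\sigma_1(\Omega_k,g)\,r_k=\bar\sigma_1(\Omega_k,g)/(2\pi k)+o(k^{-1})=O(k^{-1})$ (here the upper bound is used), $|d_n|\le(1+o(1))|c_n|$. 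Thus the extension energy into $B_{r_k}(p_i)$ is $\lesssim\sum_{n\ne0}|n||c_n+d_n|^2\lesssim\sum_{n\ne0}|n||c_n|^2$, while $|d_n|\lesssim|c_n|$ forces $f_k$ to carry energy $\gtrsim(\delta_k/r_k)^2\sum_{n\ne0}|n||c_n|^2$ on that annulus, because the outgoing parts $c_n(\rho/r_k)^{|n|}$ can no longer be cancelled near $\rho\sim\delta_k$; dividing gives $\int_{B_{r_k}(p_i)}|\nabla\tilde f_k|^2\lesssim(r_k/\delta_k)^2\int_{B_{\delta_k/2}(p_i)\setminus B_{r_k}(p_i)}|\nabla f_k|^2$, and summing over $i$ (bounded overlap of the annuli) proves the claim since $(r_k/\delta_k)^2=k^{1-2\alpha}\to0$. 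Hence $\int_M|\nabla\tilde f_k|^2=(1+o(1))\int_{\Omega_k}|\nabla f_k|^2$. Now use the span of $1$ and $\tilde f_k$ as a test space for $\lambda_1(M,[g],\mu_k)$ (the variational characterization extends to $W^{1,2}$ test functions): since $\mu_k$ is supported on $\partial\Omega_k$, where $\tilde f_k=f_k$ and $\int f_k\,d\mu_k=0$, the Rayleigh-quotient supremum over this span equals $\int_M|\nabla\tilde f_k|^2/\int_{\partial\Omega_k}f_k^2\,ds_g$, so $\bar\lambda_1(M,[g],\mu_k)\le(1+o(1))\bar\sigma_1(\Omega_k,g)$, i.e. $\bar\sigma_1(\Omega_k,g)\ge(1-o(1))\bar\lambda_1(M,[g],\nu_k)$. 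Combined with the continuity $\bar\lambda_1(M,[g],\nu_k)\to\bar\lambda_1(M,g)$ — whose delicate half ($\liminf$) follows from (i), (ii), compactness of $W^{1,2}(M,g)\hookrightarrow L^2(\nu_k)$, and the theory of eigenvalues of measures (cf. Theorem~\ref{thm:KS} and \cite{GKL}) — this gives $\liminf_k\bar\sigma_1(\Omega_k,g)\ge\bar\lambda_1(M,g)$. With the upper bound, $\bar\sigma_1(\Omega_k,g)\to\bar\lambda_1(M,g)$, which is \eqref{lim:GL1} since $\bar\lambda_1(M,[g],dv_g)=\bar\lambda_1(M,g)$ (Example~\ref{ex:Laplace}).

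\emph{Main obstacle.} The hard part is the hole-filling energy estimate. A naive bound by capacity is useless: $k$ disks of polynomially small radius have total capacity $\sim k/\log k\to\infty$, and in fact no choice of $r_k$ makes the total capacity small while keeping $\nu_k\to dv_g$. One must instead exploit that $f_k$ is a \emph{Steklov} eigenfunction — the balance between incoming and outgoing modes on the surrounding annuli forces the small boundary circles to carry negligible high-frequency content — and must pick $r_k$ threading three constraints simultaneously: $kr_k^2\to0$ (small holes), $\log(1/r_k)=o(k)$ (admissibility and $\nu_k\to dv_g$), and $r_k/\delta_k\to0$ (the extension estimate). The other genuine input is the continuity of $\mu\mapsto\bar\lambda_1(M,[g],\mu)$ at $dv_g$ along this sequence, which is where the compactness built into admissibility is essential.
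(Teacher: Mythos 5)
Your proof is essentially correct and self-contained. The paper does not reproduce a proof of Theorem \ref{thm:GL}, deferring to \cite{GL} and \cite{GKL}; what it does record (Proposition \ref{prop:conf_GL}) is that the domains can be chosen so that the harmonic extension operator norm $\|\mH_k\|$ tends to zero, which together with measure-eigenvalue continuity yields \eqref{lim:GL1}. Your route is a concrete alternative: rather than proving $\|\mH_k\|\to 0$ for arbitrary $W^{1,2}$ functions, you get the needed $(1+o(1))$-energy extension only for the first Steklov eigenfunction by exploiting the boundary condition directly — the mode relation $d_n = c_n\tfrac{|n|+\sigma_1 r_k}{|n|-\sigma_1 r_k}$ together with $\sigma_1 r_k = O(1/k)$ forces the extension energy into each hole to be $O((r_k/\delta_k)^2)$ times the annular energy, which is exactly what the min-max comparison $\bar\lambda_1(M,[g],\mu_k)\le(1+o(1))\bar\sigma_1(\Omega_k,g)$ requires. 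You correctly identify that the crude constant bound of Lemma \ref{ext.lem} would only give $\bar\sigma_1(\Omega_k,g)\ge c\,\bar\lambda_1(M,[g],\mu_k)$ for a fixed $c<1$, which is insufficient, so the Steklov structure is genuinely needed at this step. The remaining ingredient — the $\liminf$ half of the continuity $\lambda_1(M,[g],\nu_k)\to\lambda_1(M,g)$ — is outsourced to \cite{GKL}, which is fair: it is a real theorem, not a formality.

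Two caveats, neither affecting the proof of \eqref{lim:GL1} itself. First, the parenthetical claim that fact (i) "also yields the lower bound \eqref{ineq:lowerbound_intro}" overreaches: that inequality concerns the unconstrained $\Sigma_1(N_k)$, so one must work with a conformal metric adapted to $g_{\max}$ rather than with $g$, and — more seriously — a quantitative $W^{-1,2}$-rate for $\nu_k\to dv_g$ does not by itself give a rate for $\bar\lambda_1(M,[g],\nu_k)\to\bar\lambda_1(M,g)$. That is precisely why the paper's Section \ref{l.bds} avoids any stability estimate and instead builds quasimodes of the Dirichlet-to-Neumann operator (and why Remark \ref{rmk:quant_stab_intro} points out that even the stability results of \cite{KNPS} are not quite sufficient for a sharp rate). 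Second, the Fourier expansion on the annuli should be carried out in isothermal rather than geodesic normal coordinates, since $g$-harmonicity is then exactly Euclidean harmonicity; the geodesic circle and the Steklov density then carry $(1+o(1))$ multiplicative errors, which you acknowledge in passing but do not formalize.
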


As in the introduction, let $M$ be a closed surface, and denote by $N_k$ the compact surface with boundary obtained by removing $k$ disjoint disks from $M$. It is easy to see that the sequence $\Sigma_1(N_k)$ is non-decreasing (and much harder to see that it is strictly increasing, see~\cite{MP}). Thus, taking the supremum over all $g$ in~\eqref{lim:GL1} yields
$$
\lim_{k\to\infty}\Sigma_1(N_k)\geqslant \Lambda_1(M),
$$
which combined with~\eqref{ineq:KS} yields~\eqref{lim:GL_intro}.

For convenience, we formulate the following corollary of the proof of Theorem~\ref{thm:GL}.
\begin{proposition}
\label{prop:conf_GL}
For any closed surface $(M,g)$ there exists a sequence of domains $\Omega_k\subset M$, such that
$$
\lim_{k\to\infty}\Sigma_1(\Omega_k,[g]) = \Lambda_1(M,[g]).
$$
Furthermore, the domains $\Omega_k$ can be chosen so that the harmonic extension operator $\mH_k\colon W^{1,2}_g(\Omega_k)\to W^{1,2}_g(M\setminus\Omega_k)$ satisfies 
\begin{equation}
\label{cond:harm_extension}
\lim_{k\to\infty}\|\mH_k\|\to 0.
\end{equation}
\end{proposition}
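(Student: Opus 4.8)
The statement is essentially a sharpened version of Theorem~\ref{thm:GL} with the conformally-constrained normalization, plus the quantitative control \eqref{cond:harm_extension} on the harmonic extension operator. The plan is to revisit the construction in~\cite{GL}, which produces $\Omega_k$ by deleting $k$ small geodesic disks from $(M,g)$, and to record two facts that are implicit in that construction: first, that the convergence $\bar\sigma_1(\Omega_k,g)\to\bar\lambda_1(M,g)$ can be arranged while staying in the \emph{fixed} conformal class $[g]$, giving $\Sigma_1(\Omega_k,[g])\to\Lambda_1(M,[g])$ by combining the resulting lower bound with the upper bound $\Sigma_1(\overline\Omega_k,[g])\le\Lambda_1(M,[g])$ from \eqref{ineq:Sdom}--\eqref{ineq:KS}; and second, that when the radii of the deleted disks are taken to shrink fast enough in $k$, the operator norm of the harmonic extension $\mH_k\colon W^{1,2}_g(\Omega_k)\to W^{1,2}_g(M\setminus\Omega_k)$ tends to $0$.

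First I would set up the harmonic extension estimate. Write $M\setminus\Omega_k=\bigsqcup_{i=1}^k B_i$ where $B_i=B_{r_i}(p_i)$ are the deleted geodesic balls, and note that $\mathcal H_k$ decomposes as a direct sum over the $B_i$ of the solution operators for the Dirichlet problem on each ball with boundary data the trace of $f\in W^{1,2}_g(\Omega_k)$ on $\partial B_i$. On a small geodesic ball of radius $r$ in a surface, a scaling/capacity argument (compare the Euclidean model, where the Dirichlet energy of the harmonic extension of boundary data $\varphi$ on a disk of radius $r$ is controlled by $r^{-?}$ times... — more precisely, by the trace inequality on the circle, $\int_{B_r}|\nabla(\text{harm. ext.})|^2\le C\|\varphi\|_{\dot H^{1/2}(\partial B_r)}^2$, and the half-norm on a circle of radius $r$ of the restriction of an $H^1(\Omega_k)$ function is controlled uniformly) shows $\|\mathcal H_k f\|_{W^{1,2}(B_i)}^2\le \omega(r_i)\,\|f\|_{W^{1,2}(\Omega_k)}^2$ for some modulus $\omega$ with $\omega(r)\to 0$ as $r\to 0$, \emph{uniformly in the number of balls} because the $B_i$ are disjoint and one can localize the $W^{1,2}(\Omega_k)$ norm of $f$ to a neighborhood of $\partial B_i$. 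Summing over $i$ and using disjointness gives $\|\mathcal H_k\|^2\le \sup_i\omega(r_i)$, so choosing $r_i=r_i(k)\to 0$ (e.g. all equal to some $r(k)\to0$, which is compatible with the flexibility in the construction of~\cite{GL}) forces $\|\mathcal H_k\|\to 0$. This is the step I expect to require the most care: one must make the trace/capacity bound genuinely uniform in $k$, which hinges on choosing the balls small and well-separated, and on a clean statement that the $H^{1/2}$-norm on $\partial B_{r}(p)$ of the restriction of an $H^1$ function on the complement is bounded by a $k$-independent constant times a quantity that is $o(1)$ as $r\to0$ — this is where a sharp logarithmic-capacity estimate for the disk enters.

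Next, for the eigenvalue convergence in a fixed conformal class: inspecting the proof of Theorem~\ref{thm:GL} in~\cite{GL}, the test measures used to bound $\bar\sigma_1(\Omega_k,g)$ from below are constructed from below by pairing with harmonic extensions on $\Omega_k$ of first eigenfunctions of a $\bar\lambda_1$-conformally maximal metric $g_{\max}\in[g]$ — so the argument already lives inside $[g]$, and the conformal factor one uses to realize the desired boundary length measure $ds_{\tilde g_k}$ on $\partial\Omega_k$ can be taken conformal to $g$. Concretely, one picks $\tilde g_k=f_k\, g$ on $\Omega_k$ with $f_k$ chosen so that $ds_{\tilde g_k}$ approximates $dv_{g_{\max}}|_{\Omega_k}$ in $W^{-1,2}(M,g)$; the variational characterization \eqref{eq:eigmes}, together with the strong $W^{-1,2}$ convergence of the length measures to $dv_{g_{\max}}$ and the vanishing of $\|\mathcal H_k\|$ from the previous paragraph (which controls the error from restricting eigenfunctions of $(M,g_{\max})$ to $\Omega_k$), yields $\liminf_k\bar\sigma_1(\Omega_k,[g])\ge \bar\lambda_1(M,g_{\max})=\Lambda_1(M,[g])$. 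Combined with $\Sigma_1(\overline\Omega_k,[g])=\bar\lambda_1(\overline\Omega_k,[g],ds_{\tilde g_k})\le\bar\lambda_1(M,[g],ds_{\tilde g_k})\le\Lambda_1(M,[g])$ from \eqref{ineq:Sdom} and Theorem~\ref{thm:KS}, this gives the claimed limit. Finally I would note that nothing forces the radii used for the eigenvalue argument and those forcing $\|\mathcal H_k\|\to 0$ to be incompatible: one simply takes $r(k)$ small enough to achieve both, and if the two requirements call for different $k$-dependent scales one passes to the slower-decaying of the two, completing the proof of the proposition.
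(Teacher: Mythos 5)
The central difficulty is your claim that the harmonic extension operator norm $\|\mathcal H_k\|$ can be driven to zero by shrinking the radii of the removed disks, via a ``modulus $\omega(r)\to 0$'' or ``sharp logarithmic-capacity estimate.'' This step does not work as stated. The logarithmic capacity of a small disk controls only the \emph{constant} (zeroth Fourier) mode of boundary data; the relevant quantity for the Dirichlet energy of the harmonic extension is the $\dot H^{1/2}$ \emph{seminorm} of the trace, and this is scale-invariant. Concretely, take the Euclidean model annulus $A = B_R(0)\setminus B_r(0)$ with $r\ll R$ and consider
$$
f(\rho,\theta)=\left(a\rho+\frac{b}{\rho}\right)\cos\theta,\qquad a=-\frac{r}{R^2-r^2},\quad b=\frac{rR^2}{R^2-r^2},
$$
which satisfies $f|_{\partial B_R}=0$ and $f|_{\partial B_r}=\cos\theta$. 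Extend $f$ by zero outside $B_R$ to an element of $W^{1,2}(\Omega_k)$ (this is legitimate when $R$ is smaller than the separation between hole centers, as in the Girouard--Lagac\'e construction). A direct computation gives $\int_A|\nabla f|^2 = \pi\frac{R^2+r^2}{R^2-r^2}\approx \pi$, while the harmonic extension of $\cos\theta$ to $B_r$ is $(\rho/r)\cos\theta$ with Dirichlet energy exactly $\pi$. Since the $L^2$ contributions are small, the ratio $\|\mathcal H_k f\|_{W^{1,2}(B_r)}/\|f\|_{W^{1,2}(\Omega_k)}$ is bounded below by a universal constant, \emph{independently of $r$}. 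So $\|\mathcal H_k\|$ does not tend to $0$ no matter how fast the radii shrink, and your statement that the $H^{1/2}$-trace is ``$o(1)$ as $r\to 0$'' is false -- that argument only bounds it uniformly, which is exactly what the paper's Lemma~\ref{ext.lem} establishes and no more.

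One further caveat worth noting: the paper does not supply a proof of Proposition~\ref{prop:conf_GL} -- it is stated as ``a corollary of the proof of Theorem~\ref{thm:GL}'' from~\cite{GL}, so there is nothing in this paper to compare your argument against directly. What \emph{is} used downstream (Theorem~\ref{thm:convergence_conf}) is the weaker condition \eqref{cond:extension}, namely that the specific conformally-maximal maps $u_k$ admit \emph{some} extension with vanishing energy in the holes; for $\mathbb{S}^n$-valued maps that bound follows from the cone construction in Section~3.4 using $|u_k|\le 1$ and the smallness of $\max_j\mu_k(\partial B_{j,k})$, and does not require control of the full harmonic extension operator. If you want to salvage your approach, you would need to either restrict to that subclass of maps and argue via the cone extension, or identify some additional structure in the Girouard--Lagac\'e construction that rules out trace data concentrating in a single nonzero Fourier mode near one hole -- neither of which is supplied by a capacity estimate alone.
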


\subsection{Varifold convergence of $\bar\sigma_1$-maximal maps} 
\label{sec:convergence}
Let us recall some basic notions from the theory of varifolds, following~\cite[Chapter 3]{CM}. Let $\pi\colon\GrB\to\mathbb{R}^{n+1}$ denote the bundle of (tangent) $2$-planes over $\mathbb{R}^{n+1}$. A {\em 2-varifold}  $T$ is a Radon measure on $\GrB$. The {\em weight measure} of $T$ is the pushforward $\nu_T:=\pi_*(T)$. 
Given a Sobolev map $v\in W^{1,2}(N,\mathbb{R}^{n+1})$ from a surface $(N,g)$ (possibly with boundary) to $\mathbb{R}^{n+1}$, one defines the associated $2$-varifold $T_v\in C_0^0(\mathcal{G}_2(n+1))^*$ by 
$$
\int_{\mathcal{G}_2(n+1)}fdT_v:=\int_{N\cap \{J_v(x)>0\}} f(v(x), dv(T_xN))J_v(x) dv_g,
$$
where $J_v(x)$ denotes the Jacobian determinant
$$
J_v(x):=\sqrt{\mathrm{det}_g(dv_x^tdv_x)}=\sqrt{\mathrm{det}_g(v^*g_{\mathbb{R}^{n+1}})(x)}.
$$
Note that, while $J_v(x)$ and $dv_g$ depend on the metric $g$, their product does not, and in the case where $v\colon N\hookrightarrow\mathbb{R}^{n+1}$ is a smooth embedding, the preceding definition is equivalent to setting
$$
T_v(U)=\area(U\cap TN),
$$
by the area formula. Similarly, if $v$ is a branched $d$-sheeted covering over the image, then
$$
T_v(U)=d\area(U\cap Tv(N)).
$$ 
A sequence of varifolds $T_k$ is said to converge to $T$ if they $*$-weak converge as measures. A sequence of surfaces $N_i\subset \mathbb{R}^{n+1}$ (possibly with multiplicity) arising as images of branched conformal immersions is likewise said to converge to $M\subset\mathbb{R}^{n+1}$ in the varifold sense if the corresponding varifolds converge.

Recall now the setup from Section~\ref{sec:GLKS}: $M$ is a closed surface, $N_k$ is the compact surface with boundary obtained by removing $k$ disjoint disks from $M$. 
For each $k$, choose a $\bar\sigma_1$-maximal map $u_k\colon N_k\to\mathbb{B}^{n_k+1}$. While in principle the dimension of the ball $n_k+1$ does depend on $k$, Remark~\ref{rmk:Steklov_mult} guarantees that 
$n_k$ are bounded independent of $k$. Thus, without loss of generality, one can assume $n_k\equiv n$. This allows us to define the $2$-varifolds $T_k$ in $\mathbb{B}^{n+1}\subset\mathbb{R}^{n+1}$ associated to $u_k$.
 Thus, the exact statement of Theorem~\ref{thm:convergence_intro} is as follows
\begin{theorem}
\label{thm:convergence}
There exists a $\bar\lambda_1$-maximal map $u\colon M\to \mathbb{S}^n$, such that up to a choice of a subsequence
the varifolds $T_k$ associated to $\bar\sigma_1(N_k)$-maximal maps $u_k$ converge to the varifold $T$ associated with $u$.
\end{theorem}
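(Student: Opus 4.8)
The plan is to combine the uniformization result (Theorem~\ref{thm:uni}), the quantitative stability results of \cite{KNPS}, and the energy-vanishing property of the harmonic extension operator (Proposition~\ref{prop:conf_GL}) to promote the abstract varifold compactness statement to the identification of the limit as a $\bar\lambda_1$-maximal map. First, for each $k$ I would use Theorem~\ref{thm:uni} to identify the $\bar\sigma_1(N_k)$-maximal metric on $N_k$ conformally with $\Omega_k=M\setminus\bigcup_{i=1}^kB_{i,k}$, where $g_k\in\Met_\can(M)$ and the $B_{i,k}$ are disjoint geodesic disks. Write $\mu_k=ds^{\partial\Omega_k}_{\tilde g_k}$ for the $\bar\sigma_1$-maximal boundary measure, normalized so that $\mu_k(\partial\Omega_k)=1$, so by Example~\ref{ex:Steklov} and \eqref{ineq:Sdom}, $\bar\sigma_1(N_k)=\bar\lambda_1(\overline{\Omega_k},[g_k],\mu_k)\leq\bar\lambda_1(M,[g_k],\mu_k)\leq\Lambda_1(M,[g_k])\leq\Lambda_1(M)$. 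By \eqref{lim:GL_intro}, both of these intermediate inequalities are asymptotically sharp; in particular $\bar\lambda_1(M,[g_k],\mu_k)\to\Lambda_1(M)$.

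The next step is to extract convergence of the conformal data. Since $\Lambda_1(M,\cdot)$ is continuous and positive on the moduli space and $\bar\lambda_1(M,[g_k],\mu_k)\leq\Lambda_1(M,[g_k])$ forces $\Lambda_1(M,[g_k])\to\Lambda_1(M)$, I would invoke properness/compactness of the set of nearly-maximal conformal classes (this is exactly the kind of statement extracted from \cite{KNPS}; for $M=\mathbb{S}^2$ one instead normalizes by a Möbius/conformal automorphism, cf.\ the remark after Theorem~\ref{thm:convergence_intro}) to conclude that, after passing to a subsequence and acting by diffeomorphisms as in Remark~\ref{rmk:diffeo}, $g_k\to g_\infty$ in $\Met_\can(M)$ with $g_\infty\in\Met_0(M)$. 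Then, using that $\mu_k$ has uniformly bounded $W^{-1,2}(M,g_k)$-norm (so a subsequence converges weakly to some admissible limit measure $\mu_\infty$ with $\mu_\infty(M)\le 1$) together with the quantitative stability of $\bar\lambda_1$-maximal \emph{measures}—i.e.\ a bound of the form $\Lambda_1(M)-\bar\lambda_1(M,[g_k],\mu_k)\gtrsim\|\mu_k-dv_{g_{\max}}\|^2_{W^{-1,2}}$ for a suitable maximizing metric $g_{\max}\in[g_\infty]$—I would upgrade this to \emph{strong} $W^{-1,2}(M,g_\infty)$ convergence $\mu_k\to dv_{g_{\max}}$, where $g_{\max}$ is $\bar\lambda_1$-maximal. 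This is the step where the hypotheses of \cite{KNPS} enter, and it is the technical heart of the argument.

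Now let $\hat u_k\colon M\to\mathbb{B}^{n+1}$ be the harmonic extension (with respect to $g_k$) of the $\bar\sigma_1$-maximal free boundary harmonic map $u_k\colon(\Omega_k,\tilde g_k)\to\mathbb{B}^{n+1}$; on $\Omega_k$ it agrees with $u_k$ and is harmonic, and on $M\setminus\Omega_k$ it is the harmonic extension of the boundary data. Because $u_k$ has spectral index $1$ with eigenvalue $1$ and $\mu_k(N_k)=1$, we have $\int_{\Omega_k}|d u_k|^2_{g_k}=\bar\sigma_1(N_k)\le\Lambda_1(M)$, so the energies are uniformly bounded, and choosing the disks as in Proposition~\ref{prop:conf_GL} (or, more precisely, using the bound $\area_{g_{\max}}(M\setminus\Omega_k)\to0$, which follows from the stability estimate together with $\mu_k(\partial\Omega_k)$ bounded and the disks being geodesic) one gets $\int_{M\setminus\Omega_k}|d\hat u_k|^2_{g_k}\to 0$ via the smallness of the harmonic extension operator norm. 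Hence $\hat u_k$ is bounded in $W^{1,2}(M,g_\infty)$ and converges weakly, after a subsequence, to some $u\colon M\to\mathbb{S}^n$ (the image lies in $\mathbb{S}^n$ because $\hat u_k$ takes values in $\overline{\mathbb{B}^{n+1}}$ and the energy concentrates on the shrinking set $M\setminus\Omega_k$... more carefully: $\int_M(1-|\hat u_k|^2)\,d\mu_k=0$ since $u_k$ maps $\partial\Omega_k$ to $\mathbb{S}^n$, and $\mu_k\to dv_{g_{\max}}$ strongly in $W^{-1,2}$, which pins $|u|\equiv1$ $g_{\max}$-a.e.). The strong $W^{-1,2}$ convergence $\mu_k\to dv_{g_{\max}}$ combined with weak $W^{1,2}$ convergence of $\hat u_k$ then yields $\int_M|\hat u_k|^2\,d\mu_k\to\int_M|u|^2\,dv_{g_{\max}}=dv_{g_{\max}}(M)$, and since $\bar\lambda_1(M,[g_k],\mu_k)\to\Lambda_1(M)$ with $\hat u_k$ a valid test map whose Rayleigh quotient equals $\bar\sigma_1(N_k)\mu_k(N_k)/\int|\hat u_k|^2 d\mu_k$ up to lower order terms, one forces the energy to converge, $E_{g_k}(\hat u_k)\to\tfrac12\Lambda_1(M)$, hence strong $W^{1,2}$ convergence $\hat u_k\to u$. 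Passing to the limit in the harmonic map equation (using strong convergence and $\mu_k\to dv_{g_{\max}}$) identifies $u$ as a harmonic map $(M,g_{\max})\to\mathbb{S}^n$ with $\lambda_1(M,g_u)=2$ and $2E(u)=\Lambda_1(M)$—i.e.\ a $\bar\lambda_1$-maximal map. Finally, the varifold convergence $T_k\to T_u$ follows from the standard fact that strong $W^{1,2}$ convergence of (almost) conformal maps, together with $\int_{M\setminus\Omega_k}|d\hat u_k|^2\to0$, implies convergence of the associated $2$-varifolds, and the Hausdorff convergence of supports and convergence of boundary measures to $2\,dv_{g_{\max}}$ follow from the strong convergence combined with $\mu_k\to dv_{g_{\max}}$ and $\int_{\partial\Omega_k}|\partial_\nu u_k|\,ds=2E(u_k)\to\Lambda_1(M)$.

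The main obstacle is the second step: upgrading weak subsequential convergence of $(g_k,\mu_k)$ to strong $W^{-1,2}$ convergence of $\mu_k$ to the area measure of a genuinely $\bar\lambda_1$-\emph{maximal} metric, rather than to the area measure of some merely conformally-maximal metric in a possibly degenerating conformal class. Without a quantitative stability statement this could fail through conformal degeneration of $g_k$ or through $\mu_k$ spreading out; the role of \cite{KNPS} (and, for $\mathbb{S}^2$, of the conformal automorphism normalization) is precisely to rule this out, which is why the argument runs cleanly as stated and why the refined rate in Theorem~\ref{thm:asymptotics_intro}(2) is available only under the hypotheses where that stability is known.
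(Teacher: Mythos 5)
Your overall scheme — uniformize via Theorem~\ref{thm:uni}, invoke the qualitative stability of~\cite{KNPS} to get $g_k\to g$ in $C^1$ and $\mu_k\to dv_{g_{\max}}$ in $W^{-1,2}$, then extract a strong $W^{1,2}$ limit $u$ and identify it as a $\bar\lambda_1$-maximal map, and finally pass to varifolds — matches the paper. But you misidentify where the real difficulty lies. The step the paper singles out as \emph{the} technical heart is proving that the maps $u_k$ admit extensions $\hat u_k\in W^{1,2}(M)$ with $E_{g_k}(\hat u_k;M\setminus\Omega_k)\to 0$ (item (2) of Proposition~\ref{prop:extension}), and your proposal does not give a correct argument for it.

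Concretely, you propose to obtain $\int_{M\setminus\Omega_k}|d\hat u_k|^2\to 0$ from ``the smallness of the harmonic extension operator norm,'' either by ``choosing the disks as in Proposition~\ref{prop:conf_GL}'' or from $\area_{g_{\max}}(M\setminus\Omega_k)\to 0$. Neither works here. First, in Theorem~\ref{thm:convergence} the domains $\Omega_k$ are \emph{not} at our disposal: they are forced by the $\bar\sigma_1(N_k)$-maximal metrics via uniformization, so Proposition~\ref{prop:conf_GL} (which constructs domains with a designed small extension norm) is not applicable. Second, small area of the holes does \emph{not} make the harmonic extension operator small: by conformal invariance of the Dirichlet energy, the extension norm from an annulus $B_{2r}\setminus B_r$ into $B_r$ is independent of $r$, and indeed the paper's own Lemma~\ref{ext.lem} proves only a $k$-independent upper bound $C_1$, not a bound tending to $0$. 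If you apply that bounded operator to $u_k$ on $\Omega_k$ you get $E_{g_k}(\hat u_k;M\setminus\Omega_k)\lesssim E_{g_k}(u_k;\Omega_k)\approx\frac12\Lambda_1(M)$, which is $O(1)$, not $o(1)$. The paper instead proves Lemma~\ref{lem:rad_length} (using $\mu_k\to\mu_{\max}$ in $\Wdot$ to show both $\max_j r_{j,k}\to 0$ and $\max_j \mu_k(\partial B_{j,k})\to 0$), then defines $\hat u_k$ on each $B_{j,k}$ as the \emph{cone} over $u_k|_{\partial B_{j,k}}$ — whose image area is $O(l_{j,k}^2)$, so $\sum_j l_{j,k}^2\leq (\max_j l_{j,k})(\sum_j l_{j,k})\to 0$ — and finally modifies the metric inside the holes to turn the area bound into the needed energy bound (the final lemma of Section~\ref{sec:extension}). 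Without some version of this construction your argument has a genuine gap; the rest of your proposal (identifying the weak limit as a $\bar\lambda_1$-maximal map, no energy drop, and varifold convergence) is essentially the paper's argument and is fine once the extension step is in place.
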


We record several consequences of the varifold convergence, which may paint a clearer picture for readers unfamiliar with varifolds.
\begin{corollary}
Along the converging subsequence $T_k\rightharpoonup^*T$ one has
\begin{enumerate}
\item the free boundary branched minimal surfaces $u_k(N_k)\subset\mathbb{B}^{n+1}$ converge to the branched minimal surface $u(M)\subset\mathbb{S}^n$ in the Hausdorff distance;
\item the boundary length measures of $u_k(N_k)$ converge to the twice the area measure of $u(M)$. 
\end{enumerate}
\end{corollary}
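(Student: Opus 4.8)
The plan is to deduce both items directly from the varifold convergence $T_k\rightharpoonup^* T=T_u$ of Theorem~\ref{thm:convergence}, together with standard facts about (free boundary) minimal surfaces. Write $\nu_k:=\nu_{T_k}$ and $\nu:=\nu_T$ for the weight measures; then $\nu_k\rightharpoonup^*\nu$, the total masses $\nu_k(\mathbb{R}^{n+1})=\area(u_k(N_k))=\tfrac12\Sigma_1(N_k)$ converge to $\nu(\mathbb{R}^{n+1})=\area(u(M))=\tfrac12\Lambda_1(M)$ by~\eqref{lim:GL_intro}, and since $u_k$ and $u$ are branched minimal immersions one has $\mathrm{supp}\,\nu_k=u_k(N_k)$ and $\mathrm{supp}\,\nu=u(M)$.

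For part (1), I would upgrade the weak-$*$ convergence $\nu_k\rightharpoonup^*\nu$ to Hausdorff convergence of supports by means of a uniform lower density bound. The inclusion $u(M)\subseteq B_\varepsilon(u_k(N_k))$ for $k$ large uses only the definition of support together with $\liminf_k\nu_k(U)\ge\nu(U)$ on open sets: covering the compact set $u(M)$ by finitely many balls $B_{\varepsilon/2}(x_i)$ with $x_i\in u(M)$ and noting $\nu(B_{\varepsilon/2}(x_i))>0$, one gets $\nu_k(B_{\varepsilon/2}(x_i))>0$ for all large $k$, whence every point of $u(M)$ lies within $\varepsilon$ of $u_k(N_k)$. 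For the reverse inclusion I would invoke the monotonicity formula for free boundary minimal surfaces in $\mathbb{B}^{n+1}$ (the Gr\"uter--Jost monotonicity, via reflection across $\mathbb{S}^n$), which provides a scale $r_0>0$ and a constant $c>0$, both independent of $k$, with $\nu_k(B_r(x))\ge c\,r^2$ for all $x\in u_k(N_k)$ and $r\le r_0$. If $u_k(N_k)\not\subseteq B_\varepsilon(u(M))$ for infinitely many $k$, pick $x_k\in u_k(N_k)$ with $\mathrm{dist}(x_k,u(M))\ge\varepsilon$; a subsequence converges, $x_k\to x_\infty$ with $\mathrm{dist}(x_\infty,u(M))\ge\varepsilon$, while $\nu_k(B_{\varepsilon/2}(x_k))\ge c(\varepsilon/2)^2$ and upper semicontinuity of $\nu$ on closed sets (total masses converge) force $\nu(\overline{B_{3\varepsilon/4}(x_\infty)})>0$, i.e. $\mathrm{dist}(x_\infty,u(M))\le 3\varepsilon/4$, a contradiction. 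Combining both inclusions yields the Hausdorff convergence $u_k(N_k)\to u(M)$.

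For part (2), I would test the free boundary first variation identity for the branched minimal surface $u_k(N_k)$ against the radial vector field $X(y)=\varphi(y)\,y$, where $\varphi\in C^1$ on a neighbourhood of $\overline{\mathbb{B}}^{n+1}$. As an integral varifold, $T_{u_k}$ is stationary in the interior and meets $\mathbb{S}^n$ orthogonally along $\partial u_k(N_k)$, so its outward conormal there is the position vector; using $\mathrm{div}_S(\varphi\,y)=2\varphi(y)+\langle P_S\nabla\varphi(y),y\rangle$ for a $2$-plane $S$, the first variation becomes
\[
\langle\beta_k,\varphi\rangle=2\int\varphi\,d\nu_k+\int_{\mathcal{G}_2(n+1)}\langle P_S\nabla\varphi(y),y\rangle\,dT_k(y,S),
\]
where $\beta_k$ is the boundary length measure of $u_k(N_k)$, namely the pushforward under $u_k|_{\partial N_k}$ of $ds_{u_k^*g_{\mathbb{S}^n}}$, which by weak conformality of $u_k$ coincides with the pushforward of $\mu_{u_k}$. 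Letting $k\to\infty$, the first term on the right converges to $2\int\varphi\,d\nu$ since $\nu_k\rightharpoonup^*\nu$, and the second converges to $\int\langle P_S\nabla\varphi(y),y\rangle\,dT(y,S)$ since $(y,S)\mapsto\langle P_S\nabla\varphi(y),y\rangle$ is continuous on $\pi^{-1}(\overline{\mathbb{B}}^{n+1})$ and $T_k\rightharpoonup^* T$. But $T=T_u$ is supported on pairs $(y,S)$ with $y\in u(M)\subseteq\mathbb{S}^n$ and $S=T_yu(M)\subseteq T_y\mathbb{S}^n$, so $P_Sy=0$ and this last integral vanishes; hence $\langle\beta_k,\varphi\rangle\to 2\int\varphi\,d\nu$ for every $\varphi\in C^1$. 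Since $\beta_k(\overline{\mathbb{B}}^{n+1})=\length(\partial u_k(N_k))=2\,\area(u_k(N_k))$ (the $\varphi\equiv1$ case) is uniformly bounded, this extends by density to all $\varphi\in C^0$, i.e. $\beta_k\rightharpoonup^*2\nu$, which is exactly assertion (2).

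The step I expect to be the main obstacle is making part (1) genuinely uniform in $k$: this needs the monotonicity formula for the free boundary minimal surfaces $u_k(N_k)\subset\mathbb{B}^{n+1}$ to hold with a scale $r_0$ and density lower bound that do not degenerate as $k\to\infty$, and it requires knowing that these (possibly branched) surfaces are honest free boundary stationary integral varifolds so that the Gr\"uter--Jost argument applies unchanged. Once this is available, both directions of the Hausdorff convergence are routine. Part (2) is comparatively soft; the only points that need care there are the identification of $\mu_{u_k}$ with $ds_{u_k^*g_{\mathbb{S}^n}}$ on $\partial N_k$ and the observation that the ``error term'' $\int\langle P_S\nabla\varphi(y),y\rangle\,dT_k$ is exactly the quantity that vanishes against $T$ because the limit concentrates on $\mathbb{S}^n$ with tangent planes tangent to $\mathbb{S}^n$.
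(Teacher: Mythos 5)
Your argument follows the same strategy as the paper's proof: a lower density bound combined with varifold convergence of the weight measures for item (1), and the first variation identity tested against the radial vector field $X(y)=\varphi(y)\,y$ for item (2). For (2) the two arguments are essentially identical -- the paper likewise observes that the term $\int\langle y,P_S\nabla\varphi(y)\rangle\,dT_k$ tends to zero because the limit varifold is supported on pairs $(y,S)$ with $y\in\mathbb{S}^n$ and $S\subset T_y\mathbb{S}^n$, and that $\int 2\varphi\,d\nu_k\to\int 2\varphi\,d\nu$.

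For (1) your write-up is more careful than the paper's. The paper's proof only treats the case of a point $x_k\in u_k(N_k)$ staying a fixed distance $\delta$ from $\mathbb{S}^n$ (so that a ball around the limit point lies in the open ball $\mathbb{B}^{n+1}$ and the \emph{interior} monotonicity formula applies), concluding that $\nu_k$ carries mass $\geqslant\pi\delta^2/4$ near the limit point while $\nu$ vanishes there. You instead establish both inclusions required for Hausdorff convergence and, crucially, invoke the reflected (Gr\"uter--Jost) free boundary monotonicity so the uniform lower density bound $\nu_k(B_r(x))\geqslant c\,r^2$ holds for \emph{all} $x\in u_k(N_k)$, including points approaching $\mathbb{S}^n$. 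This $k$-independent density bound up to the boundary is exactly what upgrades the conclusion from ``$u_k(N_k)$ approaches $\mathbb{S}^n$'' to Hausdorff convergence to $u(M)\subset\mathbb{S}^n$ even when $u(M)$ is a proper subset of the sphere, so you are right to single it out as the step that needs the most care.
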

\begin{proof}
To prove (1), assume the contrary, i.e. there exists a further subsequence, $\delta>0$ and a point $x_k\in u_k(N_k)$ at the distance $\delta$ from $\mathbb{S}^{n+1}$. Applying a rotation and extracting a further subseqeunce, we can assume that $x_k\equiv y$ does not depend on $k$. Let $0<f\in C^0_0(\mathbb{B}^{n+1})$ be equal to $1$ on the ball of radius $\delta/2$ around $y$ and $0$ outside the ball of radius $\delta$. Let $\nu_k$, $\nu$ be the weight measures of $T_k$ and $T$ respectively.  Then the monotonicity formula for minimal surfaces (see e.g.~\cite[Proposition 1.12]{CM}, implies that $\nu_k(f)\geqslant \frac{\pi\delta^2}{4}$. At the same time, the varifold convergence yields
$$
\frac{\pi\delta^2}{4}\leqslant\nu_k(f)\to \nu(f) = 0,
$$ 
which is a contradiction.

To show (2) let $f\in C_0^0(\mathbb{R}^{n+1})$ and consider the vector field $X(x) = fx$ on $\mathbb{R}^{n+1}$. Then the first variation formula implies
$$
\int_{\partial u_k(N_k)} f = \int_{u_k(N_k)}\left(2f + \la x,\nabla^{u_k(N_k)} f\ra\right)
$$
Define $F\in C_0^0(\GrB)$ by $F(x,\Pi) = \la x,\nabla^{\Pi} f(x)\ra$, where $\nabla^\Pi f(x)$ is the projection of $\nabla f(x)$ onto $\Pi$. Then the varifold convergence implies
$$
 \int_{u_k(N_k)}\la x,\nabla^{u_k(N_k)} f\ra = \int F\,dT_k\to \int F\,dT = \int_{u(M)}\la x,\nabla^{u(M)} f\ra = 0, 
$$
since $x\perp T\mathbb{S}^n\supset Tu(M)$. At the same time, since $\nu_k\rightharpoonup^*\nu$ one has
$$
\int_{u_k(N_k)}2f\to \int_{u(M)} 2f,
$$
which completes the proof.
\end{proof}

\subsection{Convergence of $\bar\sigma_1$-conformally maximal maps} 
\label{sec:convergence_conf}
Let $(M,\mC)$ be a closed surface with a fixed conformal class $\mathcal{C}$. Consider domains $\Omega\subset M$ with the restricted conformal class, which we denote by the same letter $\mC$. By Proposition~\eqref{prop:conf_GL} there exist sequences $\Omega_k\subset M$ satisfying
$$
\Sigma_1(\Omega_k,\mC)\to\Lambda_1(M,\mC).
$$
In particular, since $\Lambda_1(M,\mC)\geqslant 8\pi$, Theorem~\ref{thm:Steklov_2pi} implies that for large enough $k$ there exist a $\bar\sigma_1(\Omega_k,\mC)$-conformally maximal map $u_k\colon (\Omega_k,\mC)\to\mathbb{B}^{n_k+1}$. By Remark~\ref{rmk:Steklov_mult}, we can assume $n_k\equiv n$ is independent of $k$.
The following theorem describes convergence properties of the sequence $\{u_k\}$. 

\begin{theorem}
\label{thm:convergence_conf}
Let $(M,\mC)$ be a closed surface with a fixed conformal class, $g\in\mC$. Let $\Omega_k\subset M$ be a sequence of domains such that 
\begin{equation}
\label{cond:convergence}
\Sigma_1(\Omega_k,\mC)\to\Lambda_1(M,\mC).
\end{equation}
Assume further that the $\bar\sigma_1(\Omega_k,\mC)$-conformally maximal maps $u_k\colon (\Omega_k,\mC)\to\mathbb{B}^{n+1}$ admit an extension $\hat u_k\in W_g^{1,2}(M,\mathbb{B}^{n+1})$ such that
\begin{equation}
\label{cond:extension}
\lim_{k\to\infty}E_g(\hat u_k;M\setminus\Omega_k)=0.
\end{equation}
Then there exists a $\bar\lambda_1(M,\mC)$-conformally maximal map $u\colon (M,\mC)\to\mathbb{S}^n$, such that, up to a choice of a subsequence, $\hat u_k\to u$ in $W^{1,2}_g(M,\mathbb{B}^{n+1})$.
\end{theorem}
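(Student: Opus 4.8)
The plan is to exploit the strong $W^{-1,2}$-convergence of the boundary length measures, together with the rigidity statement of Theorem~\ref{thm:KS}, to promote the (a priori only weak) subsequential limit of the $\hat u_k$ to a genuine $\bar\lambda_1$-conformally maximal map.

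First I would normalize: by scaling each $u_k$ we may assume $E_g(u_k;\Omega_k)=1$ for all $k$, equivalently $\mu_{u_k}(\Omega_k)=2$ (using $E_g(u_k)=\tfrac12\mu_{u_k}(N)$ and $\sigma_1=1$ for the coordinate eigenfunctions). Since $\|d\hat u_k\|_{L^2(M,g)}^2 = 2E_g(u_k;\Omega_k)+2E_g(\hat u_k;M\setminus\Omega_k)$ is bounded by \eqref{cond:extension}, and $|\hat u_k|\le 1$ pointwise (as $u_k$ maps into $\overline{\mathbb{B}^{n+1}}$), the sequence $\hat u_k$ is bounded in $W^{1,2}_g(M,\mathbb{R}^{n+1})$; pass to a subsequence with $\hat u_k\rightharpoonup u$ weakly in $W^{1,2}$, strongly in $L^2$ and a.e. Next, the measures $\mu_k:=\mu_{u_k}=ds^{\partial\Omega_k}_{\tilde g_k}$ (extended by zero to $M$) are admissible measures on $(M,\mathcal C)$ with total mass $2$, hence bounded in $W^{-1,2}(M,g)$; pass to a further subsequence so that $\mu_k\rightharpoonup \mu$ weakly in $W^{-1,2}$. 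The key analytic input is to upgrade this to \emph{strong} $W^{-1,2}$-convergence. This should follow from the concentration-compactness/quantitative-stability machinery of \cite{KNPS} combined with \eqref{cond:convergence}: since $\bar\sigma_1(\Omega_k,\mathcal C)=\bar\lambda_1(\overline{\Omega_k},\mathcal C,\mu_k)\to\Lambda_1(M,\mathcal C)$, the measures $\mu_k$ form a maximizing sequence for $\bar\lambda_1(M,\mathcal C,\cdot)$ among admissible measures; by Theorem~\ref{thm:KS} the only maximizer is $dv_{g_{\max}}$ for a $\bar\lambda_1$-conformally maximal metric $g_{\max}\in\mathcal C$, and the stability estimate prevents loss of mass to a degenerate measure, giving $\|\mu_k-\tfrac{1}{2}\mathrm{vol}_{g_{\max}}(M)^{-1}\!\cdot 2\,dv_{g_{\max}}\|_{W^{-1,2}(M,g)}\to 0$ after rescaling so that $\mu_k(M)=2$; i.e. $\mu_k\to dv_{g_{\max}}$ strongly in $W^{-1,2}$ with $\mathrm{vol}_{g_{\max}}(M)=1$.

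With strong $W^{-1,2}$-convergence $\mu_k\to dv_{g_{\max}}$ in hand, the rest follows the scheme sketched in the paper's overview. Each component $(u_k)^a$ restricted to $\partial\Omega_k$ is a $\sigma_1$-eigenfunction for $\mu_k$; its harmonic extension $(\hat u_k)^a$ to $M$ (with respect to $g$) satisfies, for every $\psi\in C^\infty(M)$,
\[
\int_M \langle d(\hat u_k)^a, d\psi\rangle_g\, dv_g \;=\; \int_M (\hat u_k)^a\,\psi\, d\mu_k \;+\; \int_{M\setminus\Omega_k}\langle d(\hat u_k)^a, d\psi\rangle_g\, dv_g,
\]
using that $u_k$ is free-boundary harmonic on $\Omega_k$ with Steklov eigenvalue $1$ and that the harmonic-extension correction is controlled by \eqref{cond:extension}. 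Passing $k\to\infty$: the last term vanishes by \eqref{cond:extension} and Cauchy--Schwarz; the boundary term converges to $\int_M u^a\,\psi\,dv_{g_{\max}}$ because $\mu_k\to dv_{g_{\max}}$ strongly in $W^{-1,2}$ while $(\hat u_k)^a\psi$ is bounded in $W^{1,2}$ and converges to $u^a\psi$ (a short argument pairing the strong $W^{-1,2}$ limit of $\mu_k$ against the weakly-$W^{1,2}$, strongly-$L^2$ convergent sequence $(\hat u_k)^a\psi$). This shows the weak limit $u$ satisfies $\Delta_g u = u\,dv_{g_{\max}}$ weakly, i.e. $\Delta_{g_{\max}}u = u$ on $(M,g_{\max})$; hence each $u^a$ is a $\Delta_{g_{\max}}$-eigenfunction with eigenvalue $1$, and $\bar\lambda_1\le\Lambda_1$ forces equality (via the energy identity below), so $u$ is $\bar\lambda_1$-conformally maximal provided $u$ is nonconstant and $|u|\equiv 1$.

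It remains to upgrade weak to strong $W^{1,2}$-convergence and to verify the nondegeneracy $|u|\equiv 1$, $u$ nonconstant. For the energy: $2E_g(\hat u_k) = \|d\hat u_k\|^2_{L^2(M,g)} = \int_M |\hat u_k|^2 d\mu_k + 2E_g(\hat u_k;M\setminus\Omega_k) + o(1)$; since $|u_k|\equiv 1$ on $\partial\Omega_k$ (as $u_k(\partial\Omega_k)\subset\mathbb S^n$) we get $\int_M|\hat u_k|^2 d\mu_k = \mu_k(M) = 2$, so $2E_g(\hat u_k)\to 2$, i.e. $\|d\hat u_k\|_{L^2}\to\|du\|_{L^2}=2$ (the limiting value, using $\int_M|u|^2 dv_{g_{\max}} = \int |u|^2 d\mu$ obtained by testing and a density argument, together with $\int\langle du,du\rangle = \int |u|^2 dv_{g_{\max}}$). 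Combined with weak $W^{1,2}$-convergence and $L^2$-convergence, norm convergence gives strong $W^{1,2}_g$-convergence $\hat u_k\to u$, which is the assertion. Strong $L^2(\mu)$-convergence of the traces then yields $\int_M|u|^2\,dv_{g_{\max}} = 2 = \mathrm{vol}_{g_{\max}}(M)\cdot(\text{average of }|u|^2)$... more precisely $|u|\equiv 1$ $dv_{g_{\max}}$-a.e. follows from a.e. convergence $\hat u_k\to u$ combined with the fact that $|u_k|\to 1$ in a suitable averaged sense on the supports of $\mu_k$, which fill out $M$ in the limit; and $u$ nonconstant follows since a constant map has energy $0$ while $E_g(u)=1$. \textbf{The main obstacle} is the upgrade from weak to strong $W^{-1,2}$-convergence of $\mu_k$ to $dv_{g_{\max}}$: a priori a maximizing sequence of measures could concentrate (bubble) or spread out, and ruling this out is exactly where the quantitative stability results of \cite{KNPS}—rather than the mere rigidity of Theorem~\ref{thm:KS}—are needed, since one must control the $W^{-1,2}$-defect of $\mu_k$ by the eigenvalue gap $\Lambda_1(M,\mathcal C)-\bar\sigma_1(\Omega_k,\mathcal C)\to 0$. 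Once this is secured, the passage to the limit in the eigenvalue equation and the energy-comparison argument are routine.
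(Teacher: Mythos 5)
Your proposal follows essentially the same route as the paper: pass to a weak subsequential limit of the extensions, invoke the qualitative (conformal) stability result from \cite{KNPS}---which is exactly Theorem~\ref{thm:conf_qual_stab} here---to get strong $W^{-1,2}$-convergence of the boundary measures to a $\bar\lambda_1$-conformally maximal measure, then pass to the limit in the eigenfunction equation and establish no energy loss, paralleling Lemma~\ref{lemma:strong_H1}. The step you flag as ``the main obstacle'' is precisely what Theorem~\ref{thm:conf_qual_stab} provides; you slightly over-ask by suggesting the \emph{quantitative} stability estimates of \cite{KNPS} are needed---those are only required later for the sharp $\frac{\log k}{k}$ rate in Theorem~\ref{thm:asymptotics_intro}, whereas here the compactness statement suffices.

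Two bookkeeping errors should be fixed. First, you cannot ``scale each $u_k$'' to normalize the energy: $u_k$ is a free boundary harmonic map, so $u_k(\partial\Omega_k)\subset\mathbb S^n$ is a hard constraint and multiplying by a constant destroys it. No such normalization is needed---the energies $E_g(u_k;\Omega_k)=\tfrac12\Sigma_1(\Omega_k,\mathcal C)\to\tfrac12\Lambda_1(M,\mathcal C)$ are already bounded, and the paper's convention is to renormalize $\mu_k$ to a probability measure, so that the coordinates become $\mu_k$-Steklov eigenfunctions with eigenvalue $\Sigma_1(\Omega_k,\mathcal C)$ rather than $1$. Second, this propagates into your limit equation: with $g_{\max}$ of unit area the correct conclusion is $\Delta_{g_{\max}}u=\Lambda_1(M,\mathcal C)\,u$, not $\Delta_{g_{\max}}u=u$ (note $\Lambda_1\geqslant 8\pi$, so the missing factor is not negligible). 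With these corrections, the argument is sound and matches the paper's.
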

\begin{remark}
It is plausible that the condition~\eqref{cond:extension} is superfluous, i.e. it could be a consequence of~\eqref{cond:convergence}. 
\end{remark}

Note that the relation~\eqref{cond:harm_extension} implies the condition~\eqref{cond:extension}. In particular, Proposition~\ref{prop:conf_GL} ensures that for each $(M,\mC)$ there is at least one sequence $\Omega_k$ satisfying the conditions of Theorem~\ref{thm:convergence_conf}. Furthermore, by examining the arguments in~\cite{GL} it is easy to see that there is, in fact, a lot of freedom in the construction of such $\Omega_k$. Thus, the type of convergence described in Theorem~\ref{thm:convergence_conf} is not uncommon, which is indicative of a more general theory for such sequences of free boundary harmonic maps.



\section{Convergence of $\bar\sigma_1$-maximal maps}


\subsection{Qualitative stability of $\bar\lambda_1$-maximal metrics}

A key ingredient in the proof of Theorem \ref{thm:convergence} is the following qualitative stability result for globally $\bar{\lambda}_1$-maximizing measures, see~\cite[Theorem 1.2, Theorem 1.14]{KNPS}.

\begin{theorem}[\cite{KNPS}] 
\label{thm:glob_qual_stab}

Let $\mu_k$ be a sequence of admissible probability measures on $M$ and $g_k\in \Met_{\can}(M)$ a sequence of constant curvature metrics such that 
$$
\lambda_1(M,[g_k],\mu_k)\to \Lambda_1(M)
$$
as $k\to\infty$. Then there exist $\Phi_k\in\Diff(M)$, $g\in \Met_0(M)$ and a $\bar{\lambda}_1$-maximal probability measure $\mu_{\max}$ such that, up to a choice of a subsequence, the pairs
$$
(\tilde g_k,\tilde \mu_k):= \Phi_k\cdot(g_k,\mu_k)
$$
satisfy
\begin{equation}
\label{lim:glob_qual_stab}
\|\tilde g_k - g\|_{C^1(g)} + \|\tilde\mu_k - \mu_{\max}\|_{W^{-1,2}(g)}\to 0.
\end{equation}

If $M=\Sp$, then one can additionally choose $\mu_{\max} = dv_g$.
\end{theorem}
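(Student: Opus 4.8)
The plan is a concentration--compactness analysis of the near-maximizing sequence $(g_k,\mu_k)$: first control the conformal structures $[g_k]$ in the moduli space of constant-curvature metrics, then control the measures $\mu_k$ modulo diffeomorphism, and finally invoke the rigidity statement of Theorem~\ref{thm:KS} to identify the limit. After the classical uniformization we may assume $g_k\in\Met_\can(M)$. One extracts $L^2(\mu_k)$-orthonormal first eigenfunctions $\phi^0_k,\dots,\phi^{n}_k$, and a Hersch--Petrides balancing argument arranges that $\int_M\phi^i_k\,d\mu_k=0$; then the normalized Rayleigh quotients $\int_M|\nabla\phi^i_k|_{g_k}^2\,dv_{g_k}$ all tend to $\Lambda_1(M)$, and $u_k:=(\phi^0_k,\dots,\phi^n_k)$ is an almost-harmonic, almost-conformal, almost-spectral-index-$1$ map into a fixed sphere $\mathbb{S}^n$ -- the dimension $n$ being bounded in terms of the topology of $M$ alone via eigenvalue multiplicity bounds. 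The goal is to produce $\Phi_k\in\Diff(M)$ along which both $g_k$ and $\mu_k$ converge in the asserted senses.

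I expect the decisive step to be ruling out degeneration of $[g_k]$ at the boundary of moduli space. If some $g_k$-geodesic were pinched, then $M$ would split along the pinching curves into pieces of strictly smaller complexity, and a bubble-tree analysis of the near-eigenmaps $u_k$ would bound $\limsup_k\bar\lambda_1(M,[g_k],\mu_k)$ by the value of $\bar\lambda_1$ attainable on such a degenerate configuration -- a disjoint union of lower-complexity surfaces together with at most one sphere bubble. Since $\Lambda_1(M)$ strictly exceeds the value attainable on any proper degeneration (the strict ``genus-drop/sphere-bubble'' gap for $\Lambda_1$), this contradicts $\bar\lambda_1(M,[g_k],\mu_k)\to\Lambda_1(M)$, so the $[g_k]$ stay in a compact part of moduli space. (For $M=\Sp$ there is no conformal degeneration, but the conformal automorphism group is noncompact; the analogue is that, after recentering by conformal automorphisms, the $\mu_k$ do not concentrate at a point, and it is this recentering that supplies the $\Phi_k$.) Granting non-degeneration, Mumford-type compactness yields $\Phi_k\in\Diff(M)$ with $(\Phi_k^{-1})^*g_k\to g$ in $C^\infty$, hence in $C^1$, for some $g\in\Met_\can(M)$.

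Now put $(\tilde g_k,\tilde\mu_k):=\Phi_k\cdot(g_k,\mu_k)$. By Remark~\ref{rmk:diffeo} the variational eigenvalues are unchanged, and by continuity of $\lambda_1$ under $C^1$ perturbation of the background metric, $\bar\lambda_1(M,[g],\tilde\mu_k)\to\Lambda_1(M)$. The $\tilde\mu_k$ are probability measures with uniformly bounded $W^{-1,2}(M,g)$-norm, so along a subsequence $\tilde\mu_k\rightharpoonup\mu_\infty$ weak-$*$, with $\mu_\infty$ a probability measure since $M$ is closed. Testing the Rayleigh quotients of $\tilde\mu_k$ against a fixed $2$-dimensional space of smooth functions near-optimal for $\mu_\infty$ (using $f^2\in C^0(M)$ and compactness of the unit sphere of that space) gives $\bar\lambda_1(M,[g],\mu_\infty)\geq\limsup_k\bar\lambda_1(M,[g],\tilde\mu_k)=\Lambda_1(M)$; the non-concentration from the previous step ensures $\mu_\infty$ is admissible, so Theorem~\ref{thm:KS} forces $\bar\lambda_1(M,[g],\mu_\infty)=\Lambda_1(M,[g])=\Lambda_1(M)$. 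Hence $g\in\Met_0(M)$, $\mu_\infty$ is $\bar\lambda_1$-maximal, and we take $\mu_{\max}:=\mu_\infty$. It remains to upgrade the weak-$*$ convergence to strong convergence in $W^{-1,2}(M,g)$; since $W^{-1,2}$ is a Hilbert space it suffices to show $\|\tilde\mu_k\|_{W^{-1,2}(M,g)}\to\|\mu_\infty\|_{W^{-1,2}(M,g)}$, which I would obtain from a no-loss-of-mass statement -- almost-maximality prevents $W^{-1,2}$ mass from escaping into a bubble or concentrating. Finally, for $M=\Sp$, Hersch's theorem and its rigidity say the only $\bar\lambda_1$-maximal measures are volume measures of round metrics, so one may compose the $\Phi_k$ with further conformal automorphisms to achieve $\mu_{\max}=dv_g$.

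In short, the heart of the argument is the non-degeneration step -- controlling the limiting bubble tree of the near-eigenmaps and extracting the strict inequality that $\Lambda_1(M)$ is not attained, even in a limit, on any degenerate conformal configuration. The passage from weak-$*$ to strong $W^{-1,2}$ convergence of the measures is a secondary obstacle of the same flavor, while uniformization, the balancing argument, the semicontinuity estimate, and the application of Theorem~\ref{thm:KS} are comparatively routine.
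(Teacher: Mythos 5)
This theorem is quoted verbatim from \cite{KNPS} (Theorems 1.2 and 1.14 there), so the present paper contains no proof of its own to compare against; I assess your sketch on its own terms. Your overall architecture---rule out degeneration of $[g_k]$ in moduli space via a bubble-tree and strict-gap argument, extract a weak-$*$ limit $\mu_\infty$ of $\tilde\mu_k$, use upper semicontinuity of $\lambda_1$ under weak-$*$ convergence together with the rigidity half of Theorem~\ref{thm:KS} to identify $\mu_\infty$ as a maximal measure, then upgrade to strong $W^{-1,2}$ convergence---is the right shape, and you correctly flag the two pressure points. But as written there are genuine gaps, not merely omitted details.

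The main one is admissibility of $\mu_\infty$. You assert that ``the non-concentration from the previous step ensures $\mu_\infty$ is admissible,'' but your previous step only controls $[g_k]$ in moduli space and says nothing about the measures. Even with a fixed conformal structure on a compact surface, admissible probability measures with $\bar\lambda_1$ close to $\Lambda_1(M)$ could \emph{a priori} concentrate at a point; the weak-$*$ limit would then be a Dirac mass $\delta_p$, for which $\lambda_1(M,[g],\delta_p)=+\infty$, making your semicontinuity inequality vacuous and Theorem~\ref{thm:KS} inapplicable. Ruling this out is a separate concentration analysis for the measures themselves: for $M\neq\Sp$ one needs that concentration forces $\limsup\bar\lambda_1(\mu_k)\leq 8\pi<\Lambda_1(M)$, and for $M=\Sp$ the concentration is real and must be absorbed into the diffeomorphisms $\Phi_k$; conflating this with the moduli-space step is where the argument breaks. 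Two lesser issues: (i) for merely near-maximal $(g_k,\mu_k)$ the first eigenfunctions do not automatically satisfy $\sum_i(\phi^i_k)^2\equiv1$, so your $u_k$ is not $\mathbb{S}^n$-valued without a renormalization or harmonic-replacement step, which must precede any bubble-tree analysis; moreover $\int_M\phi^i_k\,d\mu_k=0$ is automatic for $\lambda_1$-eigenfunctions and is not the Hersch--Petrides balancing (centering the target by conformal dilations), which serves a different purpose. (ii) The weak-$*$ to $W^{-1,2}$ upgrade is correctly reduced to a norm/no-loss-of-mass statement, but that statement is exactly where the remaining analysis sits and is left entirely open.
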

\begin{remark}
For a closed surface $M\ne\Sp$ one has $\mu_{\max} = fdv_g$, $f\in C^\infty(M)$ and the set of $\bar\lambda_1$-maximal measures is compact (up to the action by diffeomorphisms). In particular, $\|f\|_\infty\leqslant C$, where $C$ only depends on $M$. The last statement of the theorem implies that the same inequality can be used on $\Sp$.
\end{remark}

For technical reasons, we find it convenient to replace the $W^{-1,2}(g)$ distance in the conclusion~\eqref{lim:glob_qual_stab} with a slightly different (but equivalent) one, which has the advantage of being conformally invariant, in addition to simplifying some computations.

\begin{definition}
Let $\nu$, $\mu$ be two probability measures on $M$ and let $g$ be a metric on $M$. Then we set
$$
\|\nu - \mu\|_{\Wdot(g)}:=\sup\left\{\int_Mf\,d(\mu-\nu)\,|\, f\in C^{\infty}(M),\,\|df\|_{L^2(g)} = 1\right\}.
$$
\end{definition}

Extended to measures of arbitrary mass, this definition would yield a pseudometric; for probability measures, however, we have the following.

\begin{lemma}
\label{lem:Wdot}
For any probability measures $\mu,\nu$ on $M$, one has 
$$
\|\nu - \mu\|_{W^{-1,2}(g)}\leqslant\|\nu - \mu\|_{\Wdot(g)}\leqslant \sqrt{1+\frac{1}{\lambda_1(M,g)}}\|\nu - \mu\|_{W^{-1,2}(g)}.
$$
\end{lemma}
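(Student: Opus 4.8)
The statement is a purely functional-analytic comparison between two dual norms on the space of mean-zero distributions, so the proof will hinge on the relationship between the $W^{1,2}$ norm $\|f\|_{W^{1,2}(g)}^2 = \|f\|_{L^2(g)}^2 + \|df\|_{L^2(g)}^2$ and the Dirichlet seminorm $\|df\|_{L^2(g)}$, exploiting that both $\mu$ and $\nu$ are probability measures (so the pairing $\int_M f\,d(\mu-\nu)$ is unchanged if we add a constant to $f$). First I would observe the left inequality: if $f\in C^\infty(M)$ with $\|df\|_{L^2(g)}=1$, then $\|f\|_{W^{1,2}(g)}\geq \|df\|_{L^2(g)} = 1$, so $f/\|f\|_{W^{1,2}(g)}$ is admissible in the supremum defining $\|\nu-\mu\|_{W^{-1,2}(g)}$, giving $\int_M f\,d(\mu-\nu)\leq \|f\|_{W^{1,2}(g)}\cdot\|\nu-\mu\|_{W^{-1,2}(g)}$; but this alone is too lossy. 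The clean way is instead to note that for the $\Wdot$ supremum we may, by the probability-measure property, restrict to $f$ with $\int_M f\,dv_g=0$ (replace $f$ by $f-\fint f$, which changes neither the pairing nor $\|df\|_{L^2}$). For such mean-zero $f$, the Poincaré inequality $\|f\|_{L^2(g)}^2\leq \frac{1}{\lambda_1(M,g)}\|df\|_{L^2(g)}^2$ holds, hence $\|f\|_{W^{1,2}(g)}^2\leq \bigl(1+\tfrac{1}{\lambda_1(M,g)}\bigr)\|df\|_{L^2(g)}^2$.

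With that normalization in hand, the right inequality is immediate: for mean-zero $f$ with $\|df\|_{L^2(g)}=1$,
\[
\int_M f\,d(\mu-\nu)\leq \|f\|_{W^{1,2}(g)}\,\|\nu-\mu\|_{W^{-1,2}(g)}\leq \sqrt{1+\tfrac{1}{\lambda_1(M,g)}}\,\|\nu-\mu\|_{W^{-1,2}(g)},
\]
and taking the supremum over such $f$ yields $\|\nu-\mu\|_{\Wdot(g)}\leq \sqrt{1+\tfrac{1}{\lambda_1(M,g)}}\,\|\nu-\mu\|_{W^{-1,2}(g)}$. For the left inequality $\|\nu-\mu\|_{W^{-1,2}(g)}\leq \|\nu-\mu\|_{\Wdot(g)}$, take any $f\in C^\infty(M)$ with $\|f\|_{W^{1,2}(g)}=1$; again subtract its mean (harmless for the pairing, and only decreases $\|df\|_{L^2}$, which is now $\leq 1$), so $f-\fint f$ is an admissible competitor (after rescaling by $\|d(f-\fint f)\|_{L^2}\leq 1$, or directly since the $\Wdot$ norm scales linearly) in the $\Wdot$ supremum, giving $\int_M f\,d(\mu-\nu)\leq \|\nu-\mu\|_{\Wdot(g)}$.

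The only mild technical points to be careful about are: (i) the reduction to mean-zero test functions, which uses $\mu(M)=\nu(M)$ in an essential way — this is exactly where the "probability measures" hypothesis enters, and it is the reason the extension to arbitrary masses only gives a pseudometric as remarked; and (ii) that the Poincaré/Rellich framework applies, i.e. $\lambda_1(M,g)>0$, which holds since $M$ is closed and $g$ smooth. I do not anticipate a genuine obstacle here — the lemma is elementary — but the cleanest exposition is to state the mean-zero reduction once at the start and then run both inequalities off it, rather than juggling the two norms' normalizations separately.
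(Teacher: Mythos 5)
Your proof is correct and follows essentially the same route as the paper's: both inequalities are reduced, via the probability-measure identity $\int_M c\,d(\mu-\nu)=0$, to the Poincar\'e bound $\|f\|_{L^2(g)}^2\leq \lambda_1(M,g)^{-1}\|df\|_{L^2(g)}^2$ on mean-zero $f$. The only (harmless) cosmetic difference is that the paper obtains the left inequality directly from $\|df\|_{L^2(g)}\leq\|f\|_{W^{1,2}(g)}$ without passing to mean-zero competitors, and your opening paragraph has a mislabeled false start on that inequality which you then correct.
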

\begin{proof}
Recall that the $W^{-1,2}(g)$ norm is given by
$$\|\nu-\mu\|_{W^{-1,2}(g)}:=\sup\left\{\int_Mf\,d(\mu-\nu)\,|\, f\in C^{\infty}(M), \, \|f\|_{W^{1,2}(g)}=1\right\},$$
where the $W^{1,2}(g)$ norm of a function $f$ is given as usual by
$$\|f\|_{W^{1,2}(g)}^2:=\|f\|_{L^2(g)}^2+\|df\|_{L^2(g)}^2.$$
In particular, comparing with the definition of $\Wdot(g)$ and noting that $\|df\|_{L^2(g)}\leq \|f\|_{W^{1,2}(g)}$ holds trivially, the first inequality
$$\|\nu-\mu\|_{W^{-1,2}(g)}\leqslant \|\nu-\mu\|_{\Wdot(g)}$$
is immediate.

For the latter inequality, note that since $\nu$ and $\mu$ are probability measures, one has
$$
\int_Mf\,d(\mu-\nu) = \int_M(f + c)\,d(\mu-\nu)
$$
for any constant $c$; as a consequence, one can equivalently characterize the $\Wdot(g)$ metric via
$$\|\nu-\mu\|_{\Wdot(g)}=\sup\left\{\int_Mf\,d(\mu-\nu)\, |\, f\in C^{\infty}(M), \,\|df\|_{L^2(g)}=1,\, \int_M fdv_g=0\right\}.$$
But for $f\in C^{\infty}(M)$ satisfying $\int_Mf dv_g=0$, we of course have
$$\lambda_1(M,g)\|f\|^2_{L^2(g)}\leqslant \|df\|^2_{L^2(g)},$$
so that $\|f\|_{W^{1,2}(g)}\leqslant \sqrt{1+\frac{1}{\lambda_1(M,g)}}$, and the desired bound follows easily from definitions.
\end{proof}

Lemma~\ref{lem:Wdot} implies that in the conclusion of Theorem~\ref{thm:glob_qual_stab} one can replace $W^{-1,2}$ by $\Wdot$-distance, i.e. 
\begin{equation}
\label{lim:glob_qual_stab2}
\|\tilde\mu_k - \mu_{\max}\|_{\Wdot(g)}\to 0.
\end{equation}

 
 A result similar to Theorem \ref{thm:glob_qual_stab} holds in a fixed conformal class, see \cite{KNPS}[Theorem 1.9]. We assume that $M\ne \Sp$, since in that case there is only one conformal class, and the result is already covered by Theorem~\ref{thm:glob_qual_stab}.
 
 \begin{theorem}
 \label{thm:conf_qual_stab}
  Assume $M\ne\Sp$. Let $g\in\Met_{\can}(M)$ and $\mu_k$ be a sequence of admissible probability measures such that
 $$
 \lambda_1(M,[g],\mu_k)\to\Lambda_1(M,[g]).
 $$
 Then there exist a conformally $\bar\lambda_1$-maximal probability measure in $[g]$ such that, up to a choice of a subsequence, $\mu_k\to \mu_{\max}$ in $W^{-1,2}(M,g)$.
 \end{theorem}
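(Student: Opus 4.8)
The plan is to deduce the conformally-constrained statement from the global qualitative stability result, Theorem \ref{thm:glob_qual_stab}, by tracking carefully the role of the diffeomorphisms $\Phi_k$. First I would apply Theorem \ref{thm:glob_qual_stab} to the sequence $(g,\mu_k)$ — here the constant-curvature metric is the \emph{fixed} metric $g$, so the hypothesis $\lambda_1(M,[g],\mu_k)\to \Lambda_1(M,[g])$ needs first to be upgraded, using the assumption $M \neq \Sp$ and the min-max/compactness structure, to a statement that still lets us invoke the global result. The key point is that when $M \neq \Sp$, the conformal class is rigid under the action of $\Diff(M)$ on $\Met_{\can}(M)$: there is a unique constant-curvature metric in each conformal class, so the diffeomorphisms $\Phi_k$ produced by Theorem \ref{thm:glob_qual_stab} must satisfy $(\Phi_k^{-1})^*g = g_{\infty}$ for the limiting metric $g_\infty \in \Met_0(M)$, and since $\|\tilde g_k - g\|_{C^1} \to 0$ with $\tilde g_k = (\Phi_k^{-1})^*g$, we get $g_\infty = g$ and moreover $\Phi_k$ converges (subsequentially, modulo the finite-dimensional isometry group of $(M,g)$) to an isometry of $(M,g)$.

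Concretely, I would argue as follows. Since $\lambda_1(M,[g],\mu_k) \le \Lambda_1(M,[g]) \le \Lambda_1(M)$ and the left side tends to $\Lambda_1(M,[g])$, we are \emph{not} in general converging to the global maximum, so I cannot directly quote Theorem \ref{thm:glob_qual_stab}; instead I would reprove the analogous compactness statement within the fixed conformal class, which is exactly the content of \cite[Theorem 1.9]{KNPS}. So the honest plan is: invoke \cite[Theorem 1.9]{KNPS} to obtain the subsequential $W^{-1,2}(M,g)$-convergence $\mu_k \to \mu_{\max}$ to \emph{some} measure $\mu_{\max}$, and then show $\mu_{\max}$ is conformally $\bar\lambda_1$-maximal. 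For the latter: lower semicontinuity of $\mu \mapsto \lambda_1(M,[g],\mu)$ under $W^{-1,2}$-convergence (which follows from the variational characterization \eqref{eq:eigmes} and the compact embedding $W^{1,2}(M,g) \hookrightarrow L^2(\mu)$ for admissible measures, together with the fact that a $W^{-1,2}$-limit of admissible probability measures is admissible with the uniform compactness constant) gives $\lambda_1(M,[g],\mu_{\max}) \ge \limsup_k \lambda_1(M,[g],\mu_k) = \Lambda_1(M,[g])$, while Theorem \ref{thm:KS} gives the reverse inequality $\bar\lambda_1(M,[g],\mu_{\max}) \le \Lambda_1(M,[g])$ together with the identification of the maximizers; combining, $\mu_{\max}$ realizes $\Lambda_1(M,[g])$ and is therefore a conformally $\bar\lambda_1$-maximal (hence smooth, by Theorem \ref{thm:KS}) measure. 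The fact that $\mu_{\max}$ is a probability measure is automatic since $W^{-1,2}$-convergence of probability measures preserves total mass (test against the constant function).

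The main obstacle is the upper semicontinuity of the mass-normalized eigenvalue — i.e. showing $\lambda_1$ does not \emph{drop} in the limit enough to spoil maximality — which is subtle because mass can concentrate and escape to a set of lower capacity; this is precisely where the admissibility hypothesis and the uniform bound on the compactness modulus of the embeddings $W^{1,2}(M,g)\hookrightarrow L^2(\mu_k)$ are essential, and where one genuinely needs the machinery of \cite{KNPS}, rather than soft functional analysis. A secondary subtlety is that without passing through $\Met_{\can}$ one must verify that the limiting measure is admissible at all; this again follows from the uniform control on the embedding constants along the sequence. I would organize the write-up so that the bulk of the work is cited from \cite[Theorem 1.9]{KNPS}, with the short additional argument above identifying the limit as conformally maximal via Theorem \ref{thm:KS}.
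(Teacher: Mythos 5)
Your proposal converges to exactly the move the paper makes: Theorem~\ref{thm:conf_qual_stab} is, as you eventually recognize, just (an adaptation of) \cite[Theorem~1.9]{KNPS}, and the paper's ``proof'' is the citation itself, preceded by the remark that $M\neq\Sp$ is assumed only because the $\Sp$ case is already covered by Theorem~\ref{thm:glob_qual_stab}. So in terms of approach you and the paper coincide.

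Two remarks on the scaffolding in your proposal. First, your opening paragraph is a dead end, and for a reason beyond the one you cite: in addition to the mismatch $\Lambda_1(M,[g])\leq\Lambda_1(M)$ making Theorem~\ref{thm:glob_qual_stab} inapplicable, the claim that the diffeomorphisms must satisfy $(\Phi_k^{-1})^*g = g_\infty$ is false as written --- each $(\Phi_k^{-1})^*g$ is \emph{a} constant-curvature metric, but in its own conformal class, and the conclusion of Theorem~\ref{thm:glob_qual_stab} is $C^1$-closeness of $\tilde g_k$ to $g_\infty$, not equality. You are right to abandon this. Second, in your ``honest plan'' you invoke ``lower semicontinuity'' of $\mu\mapsto\lambda_1(M,[g],\mu)$ to obtain $\lambda_1(M,[g],\mu_{\max})\geq\limsup_k\lambda_1(M,[g],\mu_k)$, but that inequality is the opposite semicontinuity (the no-drop direction, which you later correctly call ``upper semicontinuity''). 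It does hold, and it is the easy direction once admissibility of the limit is secured: for a fixed two-dimensional test space $E_2$ nearly achieving $\lambda_1(\mu_{\max})$, the Rayleigh quotients for $\mu_k$ converge uniformly on the unit sphere of $E_2$ by $W^{-1,2}$-convergence, giving $\limsup_k\lambda_1(\mu_k)\leq\lambda_1(\mu_{\max})$. The genuine content, which is where the machinery of \cite{KNPS} lives, is precompactness in $W^{-1,2}$ and admissibility of the limit; once those are granted, identifying $\mu_{\max}$ as maximal is, as you say, a short application of Theorem~\ref{thm:KS}. Since all of this is already packaged in \cite[Theorem~1.9]{KNPS}, your extra argument is correct in spirit but redundant.
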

Note that by the discussion above, the $W^{-1,2}$-distance can be replaced by $\Wdot$-distance.
%


\subsection{Proof of Theorem~\ref{thm:convergence}}
As before, let $M$ be a closed surface, and let $N_k$ be a surface with boundary diffeomorphic to $M$ with $k$ disjoint disks removed.
Let $h_k$ be a $\bar\sigma_1$-maximal metric on $N_k$, so that $\Sigma_1(N_k) = \bar\sigma_1(N,h_k),$ normalized to have $\length(\partial N_k,h_k) = 1$. 
By Theorem~\ref{thm:uni} $(N_k,h_k)$ can be conformally identified with a domain $\Omega_k\subset (M,g_k)$, where $g_k\in\Met_\can(M)$. 

Denote by $\mu_k$ the push-forward $F_*(ds_{h_k}^{\partial N_k})$ of the boundary length measure $ds_{h_k}^{\partial N_k}$ by the conformal embedding $F:(N,h_k)\to (M,g_k)$. By Remark~\ref{rmk:diffeo} and relation~\eqref{lim:GL_intro}, we obtain admissible probability measures $\mu_k$ supported on $\partial \Omega_k$ satisfying 
$$\Sigma_1(N_k) = \lambda_1(\Omega_k,[g_k],\mu_k)\to\Lambda_1(M)$$
as $k\to\infty$. By inequality~\eqref{ineq:Sdom}, one further has that $\lambda_1(M,[g_k],\mu_k)\to \Lambda_1(M)$ and, therefore, the measures $\mu_k$ satisfy the conditions of Theorem~\ref{thm:glob_qual_stab}. It turns that the biggest challenge in proving Theorem~\ref{thm:convergence} is showing that the $\bar\sigma_1$-maximal maps corresponding to $h_k$ have a small energy extension to $M$. This is item (2) of the following proposition, whose proof we postpone to the next subsection.

\begin{proposition}
\label{prop:extension} 
For every $k$ there exists $g_k\in \Met_{\can}(M)$ and a smooth domain $\Omega_k\subset (M,g_k)$ such that
\begin{enumerate}
\item The $\bar\sigma_1$-maximal metrics $h_k$ on $N_k$ are conformally equivalent to the domain 
$(\overline\Omega_k,g_k)\subset (M,g_k)$;
\item The corresponding (branched) free boundary minimal immersions $u_k\colon\Omega_k\to \mathbb{B}^{n+1}$ admit an extension $\hat u_k\in W^{1,2}(M,\mathbb{B}^{n+1};g_k)$ such that 
$$
E_{g_k} (\hat u_k;M\setminus\Omega_k)\to 0
$$
as $k\to\infty$.
\item There exists $g\in\Met_0(M)$ and a $\bar{\lambda}_1$-maximizing probability measure $\mu_{max}$--so that $\bar{\lambda}_1(M,[g],\mu_{max})=\Lambda_1(M)$--for which $g_k\to g$ in $C^1(g)$ and $\|\mu_k-\mu_{\max}\|_{\Wdot(g)}\to 0$.
\end{enumerate}
 In particular, items (2) and (3) imply that $E_g(\hat u_k;M)\to \frac{1}{2}\Lambda_1(M)$.
\end{proposition}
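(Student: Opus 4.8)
The proof of items (1) and (3), and of the concluding assertion, is mostly a matter of assembling the results quoted above; the substantive new ingredient is the small-energy extension of item (2), which I expect to be the main obstacle. Here is how I would proceed.

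\textbf{Items (1) and (3).} First I would apply the uniformization theorem (Theorem~\ref{thm:uni}) to the conformal class $[h_k]$ on $N_k$, obtaining $g_k\in\Met_{\can}(M)$, embedded geodesic disks $B_1,\dots,B_k\subset(M,g_k)$ with disjoint closures, and a conformal diffeomorphism $F_k\colon(N_k,[h_k])\to(\overline\Omega_k,g_k)$ with $\Omega_k=M\setminus\bigcup_i B_i$ --- which is item~(1). Normalizing $\length(\partial N_k,h_k)=1$ (permissible, since $\bar\sigma_1$-maximal metrics are determined only up to scale), I would set $\mu_k:=(F_k)_*\bigl(ds_{h_k}^{\partial N_k}\bigr)$, a probability measure supported on $\partial\Omega_k$. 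By conformal and diffeomorphism invariance of the $\mu$-eigenvalues (Example~\ref{ex:Steklov}, Remark~\ref{rmk:diffeo}) one has $\lambda_1(\overline\Omega_k,[g_k],\mu_k)=\sigma_1(N_k,h_k)=\Sigma_1(N_k)$, using $\bar\sigma_1$-maximality and the length normalization in the last equality. Monotonicity in~\eqref{eq:eigmes} (cf.~\eqref{ineq:Sdom}) then gives $\lambda_1(M,[g_k],\mu_k)\geq\Sigma_1(N_k)$, while Theorem~\ref{thm:KS} gives $\lambda_1(M,[g_k],\mu_k)\leq\Lambda_1(M)$; since $\Sigma_1(N_k)\to\Lambda_1(M)$ by~\eqref{lim:GL_intro}, I conclude $\lambda_1(M,[g_k],\mu_k)\to\Lambda_1(M)$. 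At this point Theorem~\ref{thm:glob_qual_stab}, combined with Lemma~\ref{lem:Wdot} to pass from $W^{-1,2}$ to $\Wdot$, supplies $\Phi_k\in\Diff(M)$, $g\in\Met_0(M)$ and a $\bar\lambda_1$-maximal probability measure $\mu_{\max}$ with $\|(\Phi_k^{-1})^*g_k-g\|_{C^1(g)}+\|(\Phi_k)_*\mu_k-\mu_{\max}\|_{\Wdot(g)}\to 0$. Since the uniformization of Theorem~\ref{thm:uni} is canonical only up to the $\Diff(M)$-action of Remark~\ref{rmk:diffeo} --- which preserves constant curvature, unit area, geodesic disks, the conformal class, and all $\mu$-eigenvalues --- I would replace $(\Omega_k,g_k,F_k,\mu_k)$ by its image under $\Phi_k$, after which $\Phi_k=\mathrm{id}$ and item~(3) holds.

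\textbf{Item (2): the main obstacle.} For the extension $\hat u_k$ I would take the map equal to $u_k$ on $\Omega_k$ whose restriction to each disk $B_i$ is the componentwise harmonic extension of the trace $u_k|_{\partial B_i}$ (the image stays in $\overline{\mathbb{B}^{n+1}}$ by the maximum principle applied to the subharmonic $|\hat u_k|^2$, and the ambient dimension can be taken uniform in $k$ by Remark~\ref{rmk:Steklov_mult}). The hard part will be to show that $E_{g_k}(\hat u_k;M\setminus\Omega_k)=\sum_{i=1}^kE_{g_k}(\hat u_k;B_i)\to 0$. On each disk, conformal invariance of the $2$-dimensional Dirichlet energy identifies $E_{g_k}(\hat u_k;B_i)$ with (a constant times) the scale-invariant seminorm $[u_k|_{\partial B_i}]_{W^{1/2,2}(\partial B_i)}^2$, which a trace estimate on a collar annulus bounds by the energy of $u_k$ in that collar; so the crux is that the energy of $u_k$ does not concentrate near $\partial\Omega_k$. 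The ingredients I would use are: the traces $u_k|_{\partial\Omega_k}$ are first Steklov eigenfunctions (eigenvalue $1$) for $\mu_k$; the $\Wdot$-convergence $\mu_k\to\mu_{\max}$ from item~(3), with $\mu_{\max}$ an admissible --- in particular non-atomic --- $\bar\lambda_1$-maximal measure; and capacity estimates for the shrinking disks $B_i$, in the spirit of the constructions of~\cite{GL,GKL} and the stability analysis of~\cite{KNPS}. I would import these capacity/energy bounds rather than reprove them; the delicate point is to combine them into an estimate summable over the $k$ disks whose only source of smallness is the convergence $\mu_k\to\mu_{\max}$.

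\textbf{The concluding assertion.} Granting (1)--(3), it remains to chase energies. Since the energy of a map from a surface is a conformal invariant and $F_k$ is conformal, the Fraser--Schoen/Karpukhin--M\'etras characterization of $\bar\sigma_1$-maximal maps (Section~\ref{sec:Steklov_extremal}) gives $E_{g_k}(u_k;\Omega_k)=E_{h_k}(u_k;N_k)=\tfrac12\Sigma_1(N_k)$. As $\hat u_k=u_k$ on $\Omega_k$, item~(2) yields $E_{g_k}(\hat u_k;M)=\tfrac12\Sigma_1(N_k)+o(1)\to\tfrac12\Lambda_1(M)$ by~\eqref{lim:GL_intro}. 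Finally the $C^1(g)$-convergence $g_k\to g$ forces $g_k^{ij}\to g^{ij}$ and $dv_{g_k}\to dv_g$ uniformly on the fixed compact surface $M$, hence $E_g(\hat u_k;U)=(1+o(1))E_{g_k}(\hat u_k;U)$ with the $o(1)$ independent of the map and of the measurable set $U\subseteq M$; since $E_{g_k}(\hat u_k;M)$ is bounded, this gives $E_g(\hat u_k;M)=E_{g_k}(\hat u_k;M)+o(1)\to\tfrac12\Lambda_1(M)$, as required.
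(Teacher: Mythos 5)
Your treatment of items (1), (3), and the concluding assertion is correct and essentially matches the paper: uniformize via Theorem~\ref{thm:uni}, push forward the boundary length measure, use~\eqref{ineq:Sdom} and~\eqref{lim:GL_intro} to invoke Theorem~\ref{thm:glob_qual_stab}, and absorb $\Phi_k$ via Remark~\ref{rmk:diffeo}; the energy chase at the end is also right.

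Item (2), however, is where the real work is, and your proposed route has a genuine gap that you half-acknowledge but do not close. Taking the harmonic extension of $u_k|_{\partial B_i}$ into each $B_i$ and bounding its energy by the $W^{1/2,2}$-seminorm of the trace, which in turn is bounded by the Dirichlet energy of $u_k$ in a collar annulus around $\partial B_i$, gives you
$$
E_{g_k}(\hat u_k; M\setminus\Omega_k)\;\leq\; C\sum_{i=1}^k E_{g_k}\bigl(u_k;\,A_i\bigr),
$$
where $A_i\subset\Omega_k$ are collars. But the maps $u_k$ carry total energy $\tfrac12\Sigma_1(N_k)\to\tfrac12\Lambda_1(M)>0$, and nothing \emph{a priori} prevents a fixed fraction of this energy from sitting in the union of collars; there is no single small parameter (the disks are not uniformly tiny in any sense summable over $k$ of them), and ``energy does not concentrate near $\partial\Omega_k$'' is exactly what must be proved, not imported. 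The paper avoids this entirely by using a different extension and a different smallness mechanism: it first shows (Lemma~\ref{lem:rad_length}, using the $\Wdot$-convergence $\mu_k\to\mu_{\max}$) that the \emph{lengths} $l_{j,k}$ of the boundary images satisfy $\max_j l_{j,k}\to 0$ while $\sum_j l_{j,k}$ is bounded; it then extends $u_k$ into each disk by a \emph{cone} over the boundary curve, so that $\area(\hat u_k(B_{j,k}))\leq C l_{j,k}^2$ and hence
$$
\area(\hat u_k(M\setminus\Omega_k))\;\leq\; C\Bigl(\max_j l_{j,k}\Bigr)\sum_j l_{j,k}\;\to\;0.
$$
Since the cone is not conformal, area control is not energy control; the paper's final ingredient is to \emph{modify the metric inside the holes} (immersing the cone via a small perturbation and pulling back the Euclidean metric, then mollifying and re-uniformizing) to make the extension nearly conformal, so that energy is comparable to area. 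This metric surgery is what licenses replacing $g_k$ by a new constant-curvature metric at the end, and there is no analogue of it in your proposal. In short: your extension is the natural one, but the estimate it requires is not available; the paper's cone-plus-metric-modification is the actual content of item (2), and you should not expect to recover it from off-the-shelf capacity bounds without the $\max\cdot\sum$ trick on boundary lengths.
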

Item (3) is a direct consequence of item (1) and Theorem~\ref{thm:glob_qual_stab}, since by Remark~\ref{rmk:diffeo} we can assume without loss of generality that $\Phi_k=\mathrm{id}$. Note that we do not explicitly require here that the domains $\Omega_k$ are complements of geodesic discs, although the full power of Theorem~\ref{thm:uni} is used in the proof of item (2) in Section~\ref{sec:extension} below (and again in Section \ref{sec:u.bds}). For now, let us show how Proposition~\ref{prop:extension} implies Theorem~\ref{thm:convergence}.

For the remainder of the section we work with the metric $g$, in particular, $W^{1,2}(M)$ refers to the Sobolev space with respect to $g$. Items (2) and (3) imply that the sequence $\hat u_k$ is uniformly bounded in $W^{1,2}(M,\mathbb{B}^{n+1})$. Therefore, up to a choice of a subsequence, $\hat u_k$ converge weakly to a map $u\in W^{1,2}(M,\mathbb{B}^{n+1})$. 

\begin{lemma}
\label{lemma:strong_H1}
The limit map $u$ is a weakly conformal harmonic map (i.e., a branched minimal immersion) to the sphere $M\to \mathbb{S}^n=\partial\mathbb{B}^{n+1}$, whose components are $\lambda_1(M,[g],\mu_{\max})$-eigenfunctions. Furthermore, $\hat u_k\to u$ strongly in $W^{1,2}(M,\mathbb{B}^{n+1})$.
\end{lemma}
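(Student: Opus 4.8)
\textbf{Proof proposal for Lemma~\ref{lemma:strong_H1}.} The plan is to combine three ingredients: the weak $W^{1,2}$-convergence $\hat u_k\rightharpoonup u$ established above, the strong $\Wdot(g)$-convergence $\mu_k\to\mu_{\max}$ from item~(3) of Proposition~\ref{prop:extension}, and the vanishing of the exterior energy $E_{g_k}(\hat u_k;M\setminus\Omega_k)\to 0$ from item~(2). First I would record the relevant identities satisfied by $u_k$: since $u_k$ is a $\bar\sigma_1$-maximal (hence free boundary harmonic, weakly conformal) map of spectral index $1$ for the measure $\mu_k$, its components are $L^2(\mu_k)$-orthonormal-up-to-scaling Steklov eigenfunctions with eigenvalue $\lambda_1(\Omega_k,[g_k],\mu_k)=\Sigma_1(N_k)$, so that, after accounting for the precise normalization $\length(\partial N_k,h_k)=1$, one has on $\Omega_k$
$$
\int_{\Omega_k}\langle d\hat u_k,d\varphi\rangle_{g_k}\,dv_{g_k}=\Sigma_1(N_k)\int_M \langle \hat u_k,\varphi\rangle\,d\mu_k
$$
for all test maps $\varphi$, together with $|\hat u_k|=1$ $\mu_k$-a.e.\ and the conformality relation $u_k^*g_{\mathbb{S}^n}=\tfrac12|du_k|^2_{g_k}g_k$. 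Using item~(2) and $\|g_k-g\|_{C^1(g)}\to 0$, the same weak equation holds on all of $M$ with respect to $g$ up to an error tending to $0$, so that $\int_M\langle d\hat u_k,d\varphi\rangle_g\,dv_g-\Sigma_1(N_k)\int_M\langle\hat u_k,\varphi\rangle\,d\mu_k\to 0$.

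Next I would pass to the limit. The key analytic point is that the trace/restriction maps $W^{1,2}(M,g)\to L^2(\mu_k)$ converge appropriately, which follows from the $\Wdot(g)$-convergence $\mu_k\to\mu_{\max}$ of probability measures together with admissibility (this is precisely the mechanism for which the $\Wdot$-norm was introduced): for $\varphi\in C^\infty(M)$, $\int_M\varphi\,d\mu_k\to\int_M\varphi\,d\mu_{\max}$, and more substantially $\int_M\langle\hat u_k,\varphi\rangle\,d\mu_k\to\int_M\langle u,\varphi\rangle\,d\mu_{\max}$ using the compactness of $W^{1,2}(M,g)\to L^2(\mu_{\max})$ (Rellich for admissible measures, cf.~\cite{GKL}) and a uniform bound controlling $\int_M|\hat u_k|^2\,d\mu_k$. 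Since $\Sigma_1(N_k)\to\Lambda_1(M)=\lambda_1(M,[g],\mu_{\max})$, the weak equation passes to the limit to give that $u$ is harmonic with respect to $(g,\mu_{\max})$ with eigenvalue $\lambda_1(M,[g],\mu_{\max})$; moreover $|u|=1$ $\mu_{\max}$-a.e.\ and $\int_M u\,d\mu_{\max}=0$ (the latter by passing to the limit in the balancing condition $\int_M\hat u_k\,d\mu_k=0$). In particular $u$ is a nonconstant $\lambda_1$-eigenmap, hence—by the characterization in Section~\ref{sec:Laplace_extremal} together with Theorem~\ref{thm:KS}, which forces equality in~\eqref{ineq:KS} to occur only at a $\bar\lambda_1$-maximal measure—$u$ is a $\bar\lambda_1$-maximal branched minimal immersion into $\mathbb{S}^n$ and $\mu_{\max}=dv_{g_u}$.

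For the strong convergence, I would test the weak equation for $\hat u_k$ with $\varphi=\hat u_k$ itself, obtaining $\int_M|d\hat u_k|^2_{g_k}\,dv_{g_k}=\Sigma_1(N_k)\int_M|\hat u_k|^2\,d\mu_k+o(1)=\Sigma_1(N_k)+o(1)$ (using $|\hat u_k|=1$ $\mu_k$-a.e.\ and $\mu_k(M)=1$), and similarly $\int_M|du|^2_g\,dv_g=\lambda_1(M,[g],\mu_{\max})=\Lambda_1(M)$. Since $\Sigma_1(N_k)\to\Lambda_1(M)$ and $\|g_k-g\|_{C^1(g)}\to0$, we get $\|d\hat u_k\|_{L^2(g)}\to\|du\|_{L^2(g)}$; combined with the weak convergence $\hat u_k\rightharpoonup u$ in $W^{1,2}(M,g)$ and strong $L^2$-convergence (Rellich), this yields $\hat u_k\to u$ strongly in $W^{1,2}(M,\mathbb{B}^{n+1})$. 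Finally, weak conformality of $u$ follows because weak conformality is preserved under strong $W^{1,2}$-limits of weakly conformal maps on surfaces (the Hopf-differential argument), or alternatively because any $\lambda_1$-eigenmap realizing $\Lambda_1(M)$ is automatically a branched minimal immersion by the results of Section~\ref{sec:Laplace_extremal}.

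\textbf{Main obstacle.} The delicate step is the convergence $\int_M\langle\hat u_k,\varphi\rangle\,d\mu_k\to\int_M\langle u,\varphi\rangle\,d\mu_{\max}$ and, relatedly, $\int_M|\hat u_k|^2\,d\mu_k\to 1$: the measures $\mu_k$ vary, they are only close to $\mu_{\max}$ in the negative Sobolev norm, and $\hat u_k$ converges only weakly in $W^{1,2}$. One must leverage that $\|\mu_k-\mu_{\max}\|_{\Wdot(g)}\to0$ upgrades, via admissibility and the compact embedding $W^{1,2}(M,g)\hookrightarrow L^2(\mu_{\max})$, to genuine convergence of the quadratic forms $f\mapsto\int_M f^2\,d\mu_k$ on $W^{1,2}$-bounded sets — a point where one should cite or adapt the continuity-of-eigenvalues machinery for admissible measures from~\cite{GKL,KS}. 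Once this "strong convergence of the boundary pairing" is in hand, everything else is a routine limiting argument in the weak formulation together with the energy-identity trick for strong convergence.
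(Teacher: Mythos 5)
Your proposal follows essentially the same route as the paper: pass to the limit in the weak eigenvalue identity using the $\Wdot$-convergence $\mu_k\to\mu_{\max}$ against $W^{1,2}$-bounded sequences, deduce that $u$ is a sphere-valued $\lambda_1$-eigenmap, and then obtain strong convergence from the matching of energies. One small point the paper makes implicitly and you should too: showing $|u|=1$ $\mu_{\max}$-a.e.\ only upgrades to $|u|\equiv 1$ Lebesgue-a.e.\ (hence $u\in W^{1,2}(M,\mathbb{S}^n)$) because $\mu_{\max}=f\,dv_g$ with $f$ vanishing only at finitely many points.

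The one place where your argument has a genuine weakness is the final step establishing weak conformality. Your first alternative (Hopf-differential/stress-energy passage to the limit) is what the paper actually does, but as stated it isn't quite applicable: the extended maps $\hat u_k$ are \emph{not} weakly conformal on all of $M$, only on $\Omega_k$, so you must explicitly invoke $E_{g_k}(\hat u_k;M\setminus\Omega_k)\to 0$ to control the $L^1$-norm of the Hopf differential on the complement before passing to the limit via strong $W^{1,2}$-convergence. Your second alternative — that any $\lambda_1$-eigenmap realizing $\Lambda_1(M)$ is automatically a branched minimal immersion — is not a consequence of the characterizations in Section~\ref{sec:Laplace_extremal}. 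Those results assert that \emph{some} minimal immersion by first eigenfunctions induces the maximal metric; they do not say that \emph{every} harmonic map by first eigenfunctions of the maximal metric is conformal (this would be automatic only in genus zero, where holomorphic quadratic differentials vanish). For higher-genus $M$ you really do need the first route, so drop the "alternatively."
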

\begin{proof}

Since $|\hat u_k|^2\leqslant 1$ are uniformly bounded in $W^{1,2}(M)$, up to a choice of a subsequence, we may assume that $|\hat u_k|^2$ converge to $|u|^2$ weakly in $W^{1,2}(M)$, and since $\mu_{\max}\in W^{-1,2}(M)$, it follows that
$$
\int_M(1-|u|^2)\,d\mu_{\max} = \lim_{k\to\infty}\int_M(1-|\hat u_k|^2)\,d\mu_{\max}.
$$
Moreover, since $\mu_k\to\mu_{\max}$ in $W^{-1,2}(M)$ and $|\hat u_k|^2$ are uniformly bounded in $W^{1,2}(M)$, one further has
$$
\lim_{k\to\infty}\int_M(1-|\hat u_k|^2)\,d\mu_{\max} = \lim_{k\to\infty}\int_M(1-|\hat u_k|^2)\,d\mu_k = 0,
$$
where in the last step we used that $|\hat u_k|^2\equiv 1$ on $\supp(\mu_k)\subset\partial\Omega_k$. Recalling that $(1-|u|^2)\geqslant 0$, we obtain $u\in W^{1,2}(M,\mathbb{S}^n)$.

Next, for any $v\in C^\infty(M,\mathbb{R}^{n+1})$ one has 
\begin{equation*}
\begin{split}
\int_M\la du,dv\ra\,dv_g &= \lim_{k\to\infty}\int_M\la d\hat u_k,dv\ra\,dv_g \\
\text{(since $E_g(\hat u_k; M\setminus\Omega_k)\to 0$)} &= \lim_{k\to\infty}\int_{\Omega_k}\la d u_k,dv\ra\,dv_g \\
\text{(since $g_k\to g$ in $C^1(g)$)} &=\lim_{k\to\infty}\int_{\Omega_k}\la d u_k,dv\ra\,dv_{g_k}.
\end{split}
\end{equation*}
In particular, since the components of the maps $u_k:\Omega_k\to \mathbb{B}^{n+1}$ are Steklov eigenfunctions corresponding to the eigenvalue $\lambda_1(\Omega_k,[g_k],\mu_k)=\Sigma_1(N_k)$, and $\lim_{k\to\infty}\Sigma_1(N_k)=\Lambda_1(M)$ by \eqref{lim:GL_intro}, this gives

\begin{equation*}
\begin{split}
\int_M\la du,dv\ra dv_g &=\lim_{k\to\infty}\Sigma_1(N_k)\int_{\Omega_k}\la u_k,v\ra\,d\mu_k\\
\text{(since $\supp(\mu_k)\subset\partial\Omega_k$)}&=\Lambda_1(M)\lim_{k\to\infty}\int_{M}\la \hat u_k,v\ra\,d\mu_k.
\end{split}
\end{equation*}

In particular, since $d\mu_k\to d\mu_{max}$ in $W^{-1,2}$ and $\|\langle \hat{u}_k,v\rangle\|_{W^{1,2}}\leq C$, and using the fact that $\langle \hat{u}_k,v\rangle\rightharpoonup \la u,v\ra$ weakly in $W^{1,2}$, we deduce that

\begin{equation*}
\begin{split}
\int_M\la du,dv\ra dv_g &=\Lambda_1(M)\lim_{k\to\infty}\int_{M}\la \hat u_k,v\ra\,d\mu_{\max}=\\
 &=\Lambda_1(M)\int_{M}\la u,v\ra\,d\mu_{\max}.
\end{split}
\end{equation*}
As a result, since $\lambda_1(M,[g],\mu_{\max})=\Lambda_1(M)$, the components of $u$ are $\lambda_1(M,[g],\mu_{\max})$-eigenfunctions. In particular, since $\mu_{\max} = dv_{g_{\max}}$ for the maximizing metric $g_{\max}\in [g]$, this implies
$$
0 = \Delta_{g_{\max}}(|u|^2) = 2\Lambda_1(M) - 2|du|^2_{g_{\max}},
$$
i.e. $\Delta_{g_{\max}}u = |du|^2_{g_{\max}} u$, which implies that $u\in W^{1,2}(M,\mathbb{S}^n)$ is harmonic.

Now, since 
$$
\lim_{k\to\infty}E_g(\hat u_k;M) = \frac{1}{2}\Lambda_1(M)=\frac{\Lambda_1(M)}{2}\int |u|^2\,d\mu_{\max} =\frac{1}{2} \int_M|du|^2\,dv_g = E_g(u;M),
$$
we see that there is no energy drop in the limit, and $\hat u_k\to u$ strongly in $W^{1,2}(M)$.

Finally, using the facts that $\hat{u}_k\to u$ strongly in $W^{1,2}(M)$, $g_k\to g$ in $C^1(g)$, $E_{g_k}(\hat{u}_k;M\setminus\Omega_k)\to 0$, and the branched free boundary minimal immersions $u_k$ are conformal on $(\Omega_k,g_k)$, we have the $L^1$ convergence of the stress-energy tensors
\begin{equation*}
\begin{split}
du^tdu-\frac{1}{2}|du|_g^2g &=\lim_{k\to\infty}d\hat{u}_k^td\hat{u}_k-\frac{1}{2}|d\hat{u}_k|_{g_k}^2g_k\\
&=\lim_{k\to\infty}\left(du_k^tdu_k-\frac{1}{2}|du_k|_{g_k}^2g_k\right)\cdot {\bf 1}_{\Omega_k}\\
&= 0,
\end{split}
\end{equation*}
confirming that $u$ is weakly conformal--hence a branched, minimal immersion $u\colon M\to \mathbb{S}^n$ by $\lambda_1(M,[g],\mu_{\max})$-eigenfunctions, as desired.

\end{proof}

The varifold convergence statement of Theorem~\ref{thm:convergence} now follows by fairly standard arguments.


\begin{proposition}
Up to a choice of a subsequence, the $2$-varifolds $T_k$ associated to the branched free boundary minimal immersions $u_k:\Omega_k\to \mathbb{B}^{n+1}$ converge as varifolds
$$
T_k\rightharpoonup^*T
$$
to the varifold $T$ associated to the $\bar{\lambda}_1$-maximal map $u: M\to \mathbb{S}^n$.
\end{proposition}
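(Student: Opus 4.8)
The plan is to check varifold convergence directly from the definition, i.e.\ to show that $\int_{\GrB}F\,dT_k\to\int_{\GrB}F\,dT$ for every $F\in C^0_0(\GrB)$. The three inputs are: the strong convergence $\hat u_k\to u$ in $W^{1,2}(M,\mathbb{B}^{n+1})$ established in Lemma~\ref{lemma:strong_H1}; the vanishing of the exterior energy $E_{g_k}(\hat u_k;M\setminus\Omega_k)\to 0$ from item (2) of Proposition~\ref{prop:extension}; and the metric convergence $g_k\to g$ in $C^1(g)$.

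First I would replace $T_k=T_{u_k}$ by the varifold $T_{\hat u_k}$ of the global extension. Since $\hat u_k$ restricts to $u_k$ on $\Omega_k$, and since the density $J_v\,dv_g$ is independent of the metric (as recalled in Section~\ref{sec:convergence}), the difference $\int F\,dT_{\hat u_k}-\int F\,dT_k$ is supported over $M\setminus\Omega_k$ and is bounded, using the pointwise inequality $J_v\le\tfrac12|dv|_g^2$, by $\|F\|_\infty\, E_g(\hat u_k;M\setminus\Omega_k)$; as $g_k\to g$ in $C^1$ this is comparable to $E_{g_k}(\hat u_k;M\setminus\Omega_k)\to 0$. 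So it remains to prove $\int F\,dT_{\hat u_k}\to\int F\,dT_u$, which is a statement about continuity of parametric integrals under strong $W^{1,2}$ convergence.

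For this, I would pass to a finite atlas of $M$ with a subordinate partition of unity and trivialized tangent bundle, so that in each chart $\int F\,dT_v$ becomes $\int\Phi(v,dv)$ over a planar domain, where $\Phi(p,A):=F(p,\im A)\sqrt{\det(A^tA)}$ if $\mathrm{rank}\,A=2$ and $\Phi(p,A):=0$ otherwise. The point I would emphasize is that $\Phi$ is continuous on all of $\mathbb{R}^{n+1}\times\mathrm{Hom}(\R^2,\R^{n+1})$: on the rank-two locus the plane $\im A$ depends continuously on $A$, and across the degenerate set $\sqrt{\det(A^tA)}\to 0$ while $F$ stays bounded; moreover $|\Phi(p,A)|\le\tfrac12\|F\|_\infty|A|^2$. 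Passing to a further subsequence so that $\hat u_k\to u$ and $d\hat u_k\to du$ almost everywhere, continuity of $\Phi$ gives $\Phi(\hat u_k,d\hat u_k)\to\Phi(u,du)$ a.e., and since $|\Phi(\hat u_k,d\hat u_k)|\le\tfrac12\|F\|_\infty|d\hat u_k|_g^2$ with $\int|d\hat u_k|_g^2\to\int|du|_g^2$ by strong $W^{1,2}$ convergence, the generalized dominated convergence theorem gives $\int\Phi(\hat u_k,d\hat u_k)\to\int\Phi(u,du)$. Summing over the atlas and combining with the first step yields the claim.

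The main (and essentially only) delicate point is this final limit: one must notice that the parametric integrand extends continuously across the zero locus of the Jacobian, and that ordinary dominated convergence does not apply -- the natural majorants $|d\hat u_k|_g^2$ are not uniformly controlled, only $L^1$-convergent -- so one must use the generalized form of dominated convergence, or equivalently combine a.e.\ convergence with a subsequence-and-contradiction argument. The remaining ingredients (the reduction to $\hat u_k$, the chart bookkeeping, and the metric-independence of $J_v\,dv_g$) are routine.
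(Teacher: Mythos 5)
Your proof is correct, and the first (reduction) step is identical in spirit to the paper's: both compare $T_{u_k}$ with $T_{\hat u_k}$ and observe that the difference is controlled by the vanishing exterior energy $E(\hat u_k;M\setminus\Omega_k)\to 0$, so that the varifold limits of the two sequences coincide. The genuine difference is in how you handle the key lemma that strong $W^{1,2}$-convergence of $\hat u_k\to u$ implies convergence of the associated varifolds $T_{\hat u_k}\rightharpoonup^* T_u$. The paper treats this as well known and simply points to Colding--Minicozzi \cite[Section 3.6]{CM}, where a (stronger) version is proved for maps from $\mathbb{S}^2$ and noted to adapt to any closed surface. You instead give a self-contained proof: localize by a partition of unity on a finite atlas, observe that the parametric integrand $\Phi(p,A)=F(p,\im A)\sqrt{\det(A^tA)}$, declared to vanish on the rank-degenerate locus, is continuous and satisfies $|\Phi(p,A)|\leqslant\tfrac12\|F\|_\infty|A|^2$, extract a further subsequence with a.e.\ convergence of $\hat u_k$ and $d\hat u_k$, close with the generalized dominated convergence theorem using the $L^1$-convergent dominants $\tfrac12\|F\|_\infty|d\hat u_k|^2_g$, and upgrade to the full sequence via a subsequence-and-contradiction argument. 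Both approaches are sound; yours is essentially a reconstruction of the argument underlying the citation. What you buy is self-containment and an explicit identification of the two delicate points (continuity of $\Phi$ across the Jacobian's zero locus and the need for generalized, not ordinary, dominated convergence); what the paper buys is brevity by deferring to a standard reference. One cosmetic remark: as written, $\Phi$ should carry a spatial argument (absorbing the partition-of-unity factor $\rho_\alpha(x)$), but this does not affect the argument.
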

\begin{proof}
It is well-known that strong $W^{1,2}$-convergence of maps $M\to \mathbb{R}^{n+1}$ from a closed surface $M$ implies convergence of the associated varifolds in $\mathbb{R}^{n+1}$; see e.g.~\cite[Section 3.6]{CM}, where a much stronger result is proved for maps from the sphere $M=\mathbb{S}^2$ (easily adapted to maps from any closed surface). Thus, as a consequence of Lemma \ref{lemma:strong_H1}, we see that the varifolds $\hat T_k$ associated to the maps $\hat{u}_k:M\to \mathbb{B}^{n+1}$ converge as varifolds $\hat T_k\rightharpoonup^* T$ to the varifold $T$ associated with the limiting $\bar{\lambda}_1$-maximal map $u: M\to \mathbb{S}^n$.

Moreover, since $E_g(\hat u_k; M\setminus\Omega_k)\to 0$ as $k\to\infty$, letting $T_k$ denote the varifolds associated to the free boundary (branched) minimal immersions $u_k:\Omega_k\to \mathbb{B}^{n+1}$, we see that, for any $f\in C^0(\mathcal{G}_2(n+1))$,
\begin{equation*}
\begin{split}
|\langle \hat{T}_k-T_k,f\rangle| &=\left|\int_{M\setminus \Omega_k}f(\hat{u}_k(x),d\hat{u}_k(T_xM)) J_{\hat u_k}(x) dv_g\right|\\
&\leq \|f\|_{C^0}\int_{M\setminus \Omega_k}\frac{1}{2}|d\hat{u}_k|_g^2dv_g\\
&\to 0\text{ as }k\to\infty.
\end{split}
\end{equation*}
Thus, the varifold limit of the sequence $T_k$ coincides with that of $\hat T_k$, giving us the desired convergence $T_k \rightharpoonup^* T$.

\end{proof}

\subsection{Small energy extension}
\label{sec:extension}
In this section we prove item (2) of Proposition~\ref{prop:extension} using items (1), (3) and Theorem~\ref{thm:uni}.
We first observe that by Theorem~\ref{thm:uni} we can assume that the complement of $\Omega_k$ is a collection of geodesic disks in the metric $g_k$, which we denote by 
$$
M\setminus\Omega_k = \bigcup_{j=1}^k B_{j,k},
$$
where $B_{j,k} = B_{r_{j,k}}(p_{j,k};g_k)$ is the disk of radius $r_{j,k}>0$ and center $p_{j,k}\in M$ in the metric $g_k$.
Recall moreover that item (3) of Proposition ~\ref{prop:extension} gives
\begin{equation}
\label{lim:glob_qual_stab3}
\|\mu_k - \mu_{\max}\|_{\Wdot(g)}\to 0.
\end{equation}
We argue now that the radii $r_{j,k}$ of the disks $B_{j,k}$ must vanish as $k\to\infty$, as does the contribution of each individual boundary component $\partial B_{j,k}$ to the total length $\mu_k(\partial\Omega_k)$.

\begin{lemma}
\label{lem:rad_length}
One has
\begin{equation}
\label{lim:radii}
\lim_{k\to\infty}\max_{1\leqslant j\leqslant k} r_{j,k} = 0
\end{equation}
and
\begin{equation}
\label{lim:length_bc}
\lim_{k\to\infty} \max_{1\leqslant j\leqslant k} \mu_k(\partial B_{j,k})= 0.
\end{equation}
\end{lemma}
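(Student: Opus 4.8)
The plan is to prove both \eqref{lim:radii} and \eqref{lim:length_bc} by contradiction, in that order, pairing the $\Wdot$-convergence \eqref{lim:glob_qual_stab3} with a fixed cutoff function concentrated near an accumulation point of the disk centers $p_{j,k}$. Two structural facts drive the argument: (i) $\mu_k$ is supported on $\partial\Omega_k=\bigcup_{j}\partial B_{j,k}$, and the disks $B_{j,k}$ have disjoint closures (Theorem~\ref{thm:uni}), so $\mu_k$ assigns zero mass to the interior of each $B_{j,k}$; and (ii) $\mu_{\max}=f\,dv_g$ is absolutely continuous with $f$ bounded by a constant $C(M)$ and $f>0$ almost everywhere, whence $\mu_{\max}(B_\rho(p;g))\leq C(M)\area_g(B_\rho(p))\leq C'\rho^2$ uniformly in $p$, while $\mu_{\max}(B_\rho(p;g))>0$ for every $\rho>0$. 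Throughout, the basic inequality is that, for any $\varphi\in C^\infty(M)$, $\bigl|\int_M\varphi\,d(\mu_k-\mu_{\max})\bigr|\leq\|d\varphi\|_{L^2(g)}\,\|\mu_k-\mu_{\max}\|_{\Wdot(g)}$, which tends to $0$ by \eqref{lim:glob_qual_stab3} whenever $\|d\varphi\|_{L^2(g)}$ stays bounded.

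For \eqref{lim:radii}, suppose instead that along a subsequence there are $\delta>0$ and indices $j(k)$ with $r_{j(k),k}\geq\delta$; passing to a further subsequence, the centers satisfy $p_{j(k),k}\to p_\infty\in M$. Since $g_k\to g$ in $C^1(g)$ (item (3) of Proposition~\ref{prop:extension}), for $k$ large the $g_k$-geodesic disk $B_{j(k),k}$ contains the fixed $g$-ball $B_{\delta/2}(p_\infty;g)$. Fix $\varphi\in C^\infty(M)$ with $\supp\varphi\subset B_{\delta/2}(p_\infty;g)$ and $\varphi\equiv1$ on $B_{\delta/4}(p_\infty;g)$. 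By fact (i), $\supp\varphi$ is disjoint from $\partial\Omega_k=\supp\mu_k$ for $k$ large, so $\int_M\varphi\,d\mu_k=0$, whereas $\int_M\varphi\,d\mu_{\max}\geq\mu_{\max}(B_{\delta/4}(p_\infty;g))=:c_0>0$. Hence $c_0\leq\|d\varphi\|_{L^2(g)}\|\mu_k-\mu_{\max}\|_{\Wdot(g)}\to0$, a contradiction.

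For \eqref{lim:length_bc}, we may now assume \eqref{lim:radii}, so $\max_j r_{j,k}\to0$. Suppose toward a contradiction that along a subsequence $\mu_k(\partial B_{j(k),k})\geq\delta$ for some $\delta>0$, and again pass to a subsequence with $p_{j(k),k}\to p_\infty$. Using fact (ii), choose $R>0$ so small that $\mu_{\max}(B_R(p_\infty;g))<\delta/2$, and fix $\psi\in C^\infty(M)$ with $\supp\psi\subset B_R(p_\infty;g)$, $0\leq\psi\leq1$, and $\psi\equiv1$ on $B_{R/2}(p_\infty;g)$. Since $r_{j(k),k}\to0$ and $p_{j(k),k}\to p_\infty$, the whole disk $B_{j(k),k}$ lies in $B_{R/2}(p_\infty;g)$ for $k$ large, so $\psi\equiv1$ on $\partial B_{j(k),k}$, giving $\int_M\psi\,d\mu_k\geq\mu_k(\partial B_{j(k),k})\geq\delta$; on the other hand $\int_M\psi\,d\mu_{\max}\leq\mu_{\max}(B_R(p_\infty;g))<\delta/2$. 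Thus $\int_M\psi\,d(\mu_k-\mu_{\max})>\delta/2$, contradicting $\bigl|\int_M\psi\,d(\mu_k-\mu_{\max})\bigr|\leq\|d\psi\|_{L^2(g)}\|\mu_k-\mu_{\max}\|_{\Wdot(g)}\to0$.

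There is no serious obstacle here; the only points that require care are the bookkeeping passing between $g_k$-geodesic disks and fixed $g$-metric balls (handled by the $C^0$, hence a fortiori $C^1$, convergence $g_k\to g$), and the ordering of the two parts: \eqref{lim:length_bc} must be established \emph{after} \eqref{lim:radii}, since it is precisely the vanishing of the radii that allows a \emph{fixed} cutoff to capture an offending boundary component, so that no logarithmic test function is needed. One should also observe that the cutoffs above, a priori only Lipschitz, may be taken smooth (or the pairing estimate applied to their mollifications), so that they are admissible competitors in the definition of $\|\cdot\|_{\Wdot(g)}$.
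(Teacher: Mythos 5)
Your proof is correct, and it reaches the same two conclusions by a genuinely different route from the paper. The paper argues directly and uniformly in $j$: for \eqref{lim:radii} it introduces, for each $j,k$, a Lipschitz cutoff $f_{j,k}$ which equals $1$ on $B_{r_{j,k}/4}(p_{j,k};g)$, vanishes outside $B_{r_{j,k}/2}(p_{j,k};g)$, and interpolates linearly; the scale-invariance of the Dirichlet energy in two dimensions gives $\|df_{j,k}\|_{L^2(g)}\leq C$ \emph{uniformly} in $j,k$, so pairing with $\mu_k-\mu_{\max}$ shows $\mu_{\max}(B_{r_{j,k}/4}(p_{j,k};g))\to 0$ uniformly in $j$; the conclusion $\max_j r_{j,k}\to 0$ then follows from the same elementary observation you invoke, that $\mu_{\max}$ charges every fixed ball (stated in the paper as a preliminary step). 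For \eqref{lim:length_bc} the paper similarly uses cutoffs $\tilde f_{j,k}$ at scale $4r_{j,k}$ and the absolute continuity $\mu_{\max}\leq C\,dv_g$ to bound $\mu_k(\partial B_{j,k})$ by $C\,\area_g(B_{4r_{j,k}})+C\|\mu_k-\mu_{\max}\|_{\Wdot}$. Your argument instead proceeds by contradiction with a compactness step (pass to a convergent subsequence of offending centers $p_{j(k),k}\to p_\infty$) and then uses a \emph{fixed} cutoff near $p_\infty$. This is conceptually cleaner and sidesteps the uniform-in-$(j,k)$ energy estimate (at the cost of not being quantitative), but is otherwise built on the same mechanism: $\mu_k$ vanishes in the interiors of the removed disks while $\mu_{\max}$ does not, and the $\Wdot$-distance controls the resulting pairing defect. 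The one place that deserves the extra care you flag is the transition from $g_k$-geodesic disks to fixed $g$-balls; the $C^1$-convergence $g_k\to g$ indeed makes the inclusions $B_{\delta/2}(p_\infty;g)\subset B_{j(k),k}$ and $B_{j(k),k}\subset B_{R/2}(p_\infty;g)$ valid for large $k$, and the paper performs an analogous comparison when passing between $g_k$- and $g$-balls. Both proofs correctly establish \eqref{lim:length_bc} only after \eqref{lim:radii}.
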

\begin{proof}
We first observe that, given a sequence of radii $r_n$ and points $p_n\in M$ with $\mu_{\max}(B_{r_n}(p_n;g))\to 0$, one has $r_n\to 0$. Indeed, otherwise there exists a sequence $r_n\geqslant \rho>0$ and $p_n\to p\in M$ such that $\mu_{\max}(B_{r_n}(p_n;g))\to 0$. Then for large enough $n$ one has $B_{r_n}(p_n;g)\supset B_{\rho/2}(p;g)$. As a result, $\mu_{\max}(B_{\rho/2}(p;g))=0$, which contradicts the fact that $\mu_{\max} = dv_{g_{\max}}= f\,dv_g$, where $f\geqslant 0$ has only finitely many zeroes.

Let us now prove~\eqref{lim:radii}. Recall that $g_k\to g$ in $C^1(g)$, and therefore 
$$
B_{r_{j,k}/2}(p_{j,k};g)\subset B_{j,k}
$$
 for $k$ sufficiently large. Consider the Lipschitz functions
\begin{equation*}
f_{j,k}(x)=
\begin{cases}
1,\, &\text{ if  $\dist_{g}(x,p_{j,k})<\frac{r_{j,k}}{4}$};\\
0,\, &\text{ if $\dist_{g}(x,p_{j,k})>\frac{r_{j,k}}{2}$};\\
2-\dfrac{4\dist_{g}(x,p_{j,k})}{r_{j,k}},\,&\text{ otherwise}.
\end{cases}
\end{equation*}
Since the metric $g$ lies in the compact subset $Met_0(M)\subset Met_{can}(M)$, it is easy to see that $\|df_{j,k}\|_{L^2(g)}\leqslant C$, where $C$ depends only on the topology of $M$. At the same time, since $f_{j,k}$ vanishes on the support of $\mu_k$, one has
$$
\int f_{j,k}\,d(\mu_{\max}-\mu_k) \geqslant  \mu_{\max}(B_{r_{j,k}/4}(p_{j,k};g)).
$$
Combining these estimates one obtains 
\begin{equation*}
\begin{split}
0<\mu_{\max}(B_{r_{j,k}/4}(p_{j,k};g)) &\leqslant \int f_{j,k}\,d(\mu_{\max}-\mu_k)\\
&\leqslant \|\mu_k - \mu_{\max}\|_{\Wdot(g_k)}\|df_{j,k}\|_{L^2(g_k)}\\
&\leqslant C\|\mu_k-\mu_{\max}\|_{\Wdot(g_k)}\to 0
\end{split}
\end{equation*}
as $k\to\infty$, which implies~\eqref{lim:radii} by the observation at the beginning of the proof.

To prove~\eqref{lim:length_bc} note that 
$$
B_{j,k}\subset B_{2r_{j,k}}(p_{j,k};g)
$$
for large enough $k$. Consider the function
\begin{equation*}
\tilde f_{j,k}(x)=
\begin{cases}
1,\, &\text{ if  $\dist_{g}(x,p_{j,k})<2r_{j,k}$};\\
0,\, &\text{ if $\dist_{g}(x,p_{j,k})>4r_{j,k}$};\\
2-\dfrac{\dist_{g}(x,p_{j,k})}{2r_{j,k}},\,&\text{ otherwise}.
\end{cases}
\end{equation*}
Once again, it is easy to see that  $\|d\tilde f_{j,k}\|_{L^2(g_k)}\leqslant C$. Furthermore, $\mu_{\max} = f\,dv_g\leqslant C\,dv_{g}$ for large enough $k$, therefore,
$$
\left|\int \tilde f_{j,k}\,d(\mu_{\max}-\mu_k)\right| \geqslant \mu_k(\partial B_{j,k}) - C\area_{g}(B_{4r_{j,k}}(p_{j,k};g)).
$$
Thus,
$$
\mu_k(\partial B_{j,k}) - C\area_{g}(B_{4r_{j,k}}(p_{j,k};g))\leqslant C\|\mu_k - \mu_{\max}\|_{\Wdot(g_k)}\to 0,
$$
as $k\to \infty$, and since, by \eqref{lim:radii},
$$\max_j Area_g(B_{4r_{j,k}}(p_{j,k};g))\leqslant C' \max_j r_{j,k}^2\to 0$$
as $k\to\infty$, it follows that
$$
\limsup_{k\to\infty}\max_j \mu_k(\partial B_{j,k})\leqslant C\lim_{k\to\infty}\max_j\area_{g}(B_{4r_{j,k}}(p_{j,k};g))\to 0,
$$
as desired.

\end{proof}

Recall that if $u_k\colon \Omega_k\to \mathbb{B}^{n+1}$ is the $\bar\sigma_1$-maximal map corresponding to $\mu_k$, then the induced boundary length measure $ds^{\partial\Omega_k}_{u_k^*(g_{\mathbb{R}^{n+1}})}$ coincides with $\Sigma_1(N_k)\mu_k$. Thus, identity~\eqref{lim:length_bc} can be equivalently stated as
$$
\lim_{k\to\infty} \max_{1\leqslant j\leqslant k} \length(u_k(\partial B_{j,k}))= 0.
$$
\begin{lemma}
There exists an extension $\hat{u}_k\in W^{1,2}(M,\mathbb{B}^{n+1})$ of $\hat{u}_k\in W^{1,2}(\Omega_k,\mathbb{B}^{n+1})$ such that 
$$
\lim_{k\to\infty}\area(\hat{u}_k(M\setminus\Omega_k)) = 0.
$$
\end{lemma}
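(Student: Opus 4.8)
The plan is to fill in each deleted disk $B_{j,k}$ with a geodesic cone over the short boundary curve that $u_k$ traces out on $\mathbb{S}^n$, and then to bound the total area by summing the cone areas against the uniform length bound of Lemma~\ref{lem:rad_length}. Recall from the discussion preceding the lemma that, since the induced boundary length measure satisfies $ds^{\partial\Omega_k}_{u_k^*(g_{\mathbb{R}^{n+1}})}=\Sigma_1(N_k)\mu_k$ and $u_k$ is smooth up to $\partial\Omega_k$, each $\gamma_{j,k}:=u_k|_{\partial B_{j,k}}$ is a smooth closed curve in $\mathbb{S}^n$ of length $L_{j,k}:=\Sigma_1(N_k)\,\mu_k(\partial B_{j,k})$. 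Since $\mu_k$ is a probability measure supported on $\partial\Omega_k$ and $\Sigma_1(N_k)<\Lambda_1(M)$ by \eqref{ineq:KS_intro}, we get
\[
\sum_{j=1}^k L_{j,k}=\Sigma_1(N_k)<\Lambda_1(M)
\qquad\text{and}\qquad
\max_{1\leq j\leq k}L_{j,k}=\Sigma_1(N_k)\max_{1\leq j\leq k}\mu_k(\partial B_{j,k})\to 0
\]
as $k\to\infty$, the last limit being \eqref{lim:length_bc} of Lemma~\ref{lem:rad_length}.

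For the construction, fix $k$ large enough that $\max_j L_{j,k}<\pi$, pick any $q_{j,k}\in\gamma_{j,k}$, and note that $\gamma_{j,k}$ then lies in the geodesic ball $B^{\mathbb{S}^n}_{L_{j,k}}(q_{j,k})$, hence inside a single normal chart, so $v_{j,k}:=(\exp^{\mathbb{S}^n}_{q_{j,k}})^{-1}\circ\gamma_{j,k}$ is a smooth map into $T_{q_{j,k}}\mathbb{S}^n$ with $|v_{j,k}|\leq L_{j,k}$. Writing $B_{j,k}$ in geodesic polar coordinates $(\rho,\phi)$ about $p_{j,k}$, I would set $\hat u_k:=u_k$ on $\Omega_k$ and
\[
\hat u_k(\rho,\phi):=\exp^{\mathbb{S}^n}_{q_{j,k}}\!\left(\tfrac{\rho}{r_{j,k}}\,v_{j,k}(\phi)\right)\qquad\text{on }B_{j,k}.
\]
This agrees with $u_k$ on each $\partial B_{j,k}$ and takes values in $\mathbb{S}^n\subset\mathbb{B}^{n+1}$, so the two pieces glue to a map $M\to\mathbb{B}^{n+1}$; and since $v_{j,k}$ is smooth and bounded, a direct polar-coordinate estimate gives $\int_{B_{j,k}}|d\hat u_k|^2_{g_k}<\infty$, whence $\hat u_k\in W^{1,2}(M,\mathbb{B}^{n+1})$.

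For the area bound, recall that $J_{\hat u_k}\,dv_{g_k}$ does not depend on the metric, so $\area(\hat u_k(B_{j,k}))=\int_{B_{j,k}}J_{\hat u_k}\,dv_{g_k}$ equals the area, counted with multiplicity, of the geodesic cone over $\gamma_{j,k}$ with vertex $q_{j,k}$. For $k$ large all the $\gamma_{j,k}$ lie in a fixed small ball, where the sphere has uniformly bounded geometry, so a triangle-comparison estimate gives $\area(\hat u_k(B_{j,k}))\leq C\int_{\gamma_{j,k}}\dist_{\mathbb{S}^n}(q_{j,k},x)\,ds(x)\leq C\,L_{j,k}^2$ with $C$ absolute. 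Summing and using the two identities of the first paragraph,
\begin{align*}
\area\big(\hat u_k(M\setminus\Omega_k)\big)
&=\sum_{j=1}^k\area\big(\hat u_k(B_{j,k})\big)\leq C\sum_{j=1}^k L_{j,k}^2\\
&\leq C\Big(\max_{1\leq j\leq k}L_{j,k}\Big)\sum_{j=1}^k L_{j,k}\leq C\,\Lambda_1(M)\max_{1\leq j\leq k}L_{j,k},
\end{align*}
which tends to $0$ as $k\to\infty$ by \eqref{lim:length_bc}, proving the lemma.

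The real work here is already done by Lemma~\ref{lem:rad_length}: given the uniform smallness $\max_j L_{j,k}\to 0$, the construction and estimate above are routine, the $W^{1,2}$-membership of the coned extension resting only on the smoothness of $\gamma_{j,k}$, i.e. the standard boundary regularity of the free boundary branched minimal immersions $u_k$. Thus I do not expect a serious obstacle in this lemma itself. The genuine difficulty in this part of the paper is rather to upgrade it to the \emph{energy} statement $E_{g_k}(\hat u_k;M\setminus\Omega_k)\to 0$ demanded in item~(2) of Proposition~\ref{prop:extension}: area is dominated by energy but not conversely, so the bare cone does not suffice, and one must use a fill-in that is moreover (nearly) conformal on each $B_{j,k}$ — for instance a conformally parametrized least-area disk spanning $\gamma_{j,k}$, which exists and is smooth for $L_{j,k}$ small, lies in a ball of radius $O(L_{j,k})$ and hence has energy equal to its area $O(L_{j,k}^2)$ — after reparametrizing $u_k$ near $\partial B_{j,k}$ so that the pieces still match, which is where the one remaining technical point (the metric change $g_k\to g$ and the boundary behaviour of these small minimal disks) enters.
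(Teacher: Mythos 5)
Your proof is correct and follows essentially the same strategy as the paper: fill each removed disk with a cone over the short boundary curve $\gamma_{j,k}$, bound the cone area by $O(L_{j,k}^2)$, and sum using $\sum_j L_{j,k}=\Sigma_1(N_k)$ bounded together with $\max_j L_{j,k}\to 0$ from Lemma~\ref{lem:rad_length}. The only (cosmetic) difference is that the paper uses the Euclidean straight-line cone $\hat u_k(r,\theta)=z_{j,k}+\tfrac{r}{r_{j,k}}\bigl(u_k(r_{j,k},\theta)-z_{j,k}\bigr)$ with vertex $z_{j,k}\in C_{j,k}\subset\mathbb{S}^n$ (landing in $\overline{\mathbb{B}^{n+1}}$ by convexity), whereas you use the spherical geodesic cone; both give the same quadratic length bound and the remainder of the argument is identical, and your closing remark about the separate issue of upgrading the area bound to an energy bound for item~(2) of Proposition~\ref{prop:extension} is accurate.
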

\begin{proof} 
The argument is standard, but we recall it here for completeness. Consider the curve 
$$
\gamma_{j,k}:= u_k|_{\partial B_{j,k}}: \partial B_{j,k}\to \mathbb{B}^{n+1},
$$
of length 
$$
l_{j,k} = \int_{\partial B_{j,k}}|\gamma_{j,k}'(s)|ds=\Sigma_1(N_k)\mu_k(\partial B_{j,k}).
$$
The diameter $\diam( C_{j,k})$ of the image $C_{j,k}=\gamma_{j,k}(\partial B_{j,k})$ clearly satisfies
$$
\diam(C_{j,k})\leq \frac{1}{2} l_{j,k},
$$
so there exists a point $z_{j,k}\in \mathbb{B}^{n+1}$ such that $C_{j,k}\subset B_{l_{j,k}/2}(z_{j,k})$ (indeed, one can take any $z_{j,k}\in C_{j,k}$). 

We then define the extension $\hat{u}_k$ on $B_{j,k}$ to be the cone over $C_{j,k}$ centered at $z_{j,k}$, i.e. in geodesic polar coordinates centered at $p_{j,k}$ we set
$$
\hat{u}_k(r,\theta) = z_{j,k} + \frac{r}{r_{j,k}}(u_k(r_{j,k},\theta)-z_{j,k}).
$$
Since $|u(r_{j,k},\theta) - z_{j,k}|\leqslant \dfrac{l_{j,k}}{2}$, one has
\begin{equation*}
\begin{split}
\area(\hat{u}_k(B_{j,k}))&\leqslant C\int_{0}^{2\pi}\int_0^{r_{j,k}} \frac{r}{r^2_{j,k}}|u(r_{j,k},\theta) - z_{j,k}| |u_\theta(r_{j,k},\theta)|\,dr\,d\theta \leqslant\\
&\leqslant \frac{Cl_{j,k}}{4}\int_0^{2\pi}|u_\theta(r_{j,k},\theta)|\,d\theta = \frac{Cl_{j,k}^2}{4}.
\end{split}
\end{equation*}
As a result,
$$
\area(\hat{u}_k(M\setminus\Omega_k))\leqslant \frac{C}{4}\sum_{j=1}^k l_{j,k}^2\leqslant \frac{C\Sigma_1(N_k)}{4}\mu_k(\partial\Omega_k)\max_{1\leqslant j\leqslant k}l_{j,k}\to 0,
$$
since $\mu_k$ is a probability measure and $\Sigma_1(N_k)\to\Lambda_1(M)$.
\end{proof}

The final obstacle is that the extensions constructed in the previous lemma could be far from being conformal and, thus, the area bound does not imply the energy bound. However, this can be easily remedied by changing the metric $g_k$ in the interior of the holes. 

\begin{lemma}
There exists a metric $\hat g_k$ on $M$ such that $\hat g_k = g_k$ on $\Omega_k$ and 
$$
\lim_{k\to\infty} E_{\hat g_k}(\hat u_k;M\setminus\Omega_k)=0
$$
\end{lemma}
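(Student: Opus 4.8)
The plan is to choose $\hat g_k$ so that the fixed extension $\hat u_k$ becomes weakly conformal on each hole: on $\Omega_k$ we keep $\hat g_k=g_k$, and on the holes $M\setminus\Omega_k=\bigcup_{j=1}^k B_{j,k}$ we set $\hat g_k:=\hat u_k^*(g_{\mathbb R^{n+1}})$. The whole point of this choice is that a weakly conformal map has energy equal to the area of its image, and the area of $\hat u_k(M\setminus\Omega_k)$ has already been shown to vanish in the previous lemma.

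Concretely, I would first recall the inputs provided by the preceding two lemmas. On $B_{j,k}=B_{r_{j,k}}(p_{j,k};g_k)$ the extension $\hat u_k$ is the explicit cone over the boundary curve $\gamma_{j,k}=u_k|_{\partial B_{j,k}}$, its image has area (counted with multiplicity) at most $\tfrac{C}{4}l_{j,k}^2$ where $l_{j,k}=\Sigma_1(N_k)\mu_k(\partial B_{j,k})$, and one has $\sum_j l_{j,k}=\Sigma_1(N_k)\mu_k(\partial\Omega_k)\le \Lambda_1(M)$ together with $\max_{1\le j\le k} l_{j,k}\to 0$ by Lemma~\ref{lem:rad_length}.

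Next I would set $\hat g_k:=g_k$ on $\Omega_k$ and $\hat g_k:=\hat u_k^*(g_{\mathbb R^{n+1}})$ on each $B_{j,k}$. This is a bounded, measurable, a.e.\ positive-definite metric on $M$ agreeing with $g_k$ on $\Omega_k$; it degenerates only at the finitely many points of each $B_{j,k}$ where $d\hat u_k$ drops rank (the cone apex $p_{j,k}$, and any non-immersed points of $\gamma_{j,k}$), hence is a metric with isolated conical singularities of finite order, admissible in the sense used throughout the paper (cf.\ Example~\ref{ex:con_sing}) and in any case perfectly adequate for defining and bounding $E_{\hat g_k}(\hat u_k;M\setminus\Omega_k)$. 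Since $\hat u_k^*(g_{\mathbb R^{n+1}})=\hat g_k$ on $B_{j,k}$, the map $\hat u_k$ is weakly conformal there with conformal factor identically $1$, so $|d\hat u_k|^2_{\hat g_k}\equiv 2$ and $E_{\hat g_k}(\hat u_k;B_{j,k})=\int_{B_{j,k}}dv_{\hat g_k}=\area(B_{j,k},\hat u_k^*g_{\mathbb R^{n+1}})=\area(\hat u_k(B_{j,k}))$. Summing over $j$ and invoking the previous lemma,
$$E_{\hat g_k}(\hat u_k;M\setminus\Omega_k)=\sum_{j=1}^k \area(\hat u_k(B_{j,k}))=\area(\hat u_k(M\setminus\Omega_k))\le \frac{C}{4}\sum_{j=1}^k l_{j,k}^2\le \frac{C\Lambda_1(M)}{4}\max_{1\le j\le k} l_{j,k}\longrightarrow 0,$$
which is the desired conclusion.

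I do not expect a serious obstacle here — the paper itself calls this "easily remedied" — and the only point requiring mild care is the regularity of $\hat g_k$, which jumps from $g_k$ to $\hat u_k^*(g_{\mathbb R^{n+1}})$ across the circles $\partial B_{j,k}$ and degenerates at the cone apexes. One option is simply to work with this bounded measurable metric with conical singularities, as above; alternatively, if a genuinely smooth metric on the holes is preferred, one can take $\hat u_k^*(g_{\mathbb R^{n+1}})+\varepsilon_k g_k$ on $B_{j,k}$ with $\varepsilon_k\downarrow 0$ chosen slowly, which increases the hole energy by only $o(1)$. In either case $\hat u_k$ remains Lipschitz on $M$, hence lies in $W^{1,2}(M;g_k)$ as required, and the energy estimate above is unaffected.
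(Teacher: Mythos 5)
Your proposal captures the same key idea as the paper—choose $\hat g_k$ so that $\hat u_k$ is (approximately) conformal on the holes, so that its energy reduces to its area, which the preceding lemma already shows is small—but there is a genuine gap in your treatment of regularity, and the paper takes a slightly different route precisely to avoid it.

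The issue is your claim that $\hat u_k^*(g_{\mathbb{R}^{n+1}})$ degenerates only at finitely many points of $B_{j,k}$. For the cone map this is false: in polar coordinates centered at $p_{j,k}$ one has $\hat u_k = z + \frac{r}{r_{j,k}}a(\theta)$ with $a(\theta)=u_k(r_{j,k},\theta)-z$, so
$$
\det\bigl(\hat u_k^*g_{\mathbb{R}^{n+1}}\bigr)\;\propto\;|a(\theta)\wedge a'(\theta)|^2,
$$
which depends only on $\theta$. It therefore vanishes along entire rays $\{\theta=\theta_0\}$ whenever $a(\theta_0)\wedge a'(\theta_0)=0$ — and this does happen, e.g.\ for the $\theta_0$ with $z=u_k(r_{j,k},\theta_0)$ if $z$ is taken on the curve as in the previous lemma. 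So the degeneracy set is one-dimensional, not a finite set of points, and the pullback is not a ``metric with isolated conical singularities'' in the sense of Example~\ref{ex:con_sing}. Beyond that, the pullback is only $L^\infty$: it is discontinuous at the apex (its value there depends on the direction $\theta$) and jumps across $\partial B_{j,k}$. This matters because the very next step in the paper is to apply the (smooth) uniformization theorem to $(M,\hat g_k)$, for which $\hat g_k$ must be a smooth metric. Your Option~1 (use the degenerate measurable metric) therefore does not directly feed into the rest of the argument, and Option~2 (add $\varepsilon_k g_k$) fixes nondegeneracy but still leaves a non-smooth metric, so a further mollification is required but not carried out.

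The paper circumvents these traps by a small reordering: rather than pulling back by the Lipschitz map $\hat u_k$ and then trying to regularize the resulting metric, it first replaces $\hat u_k|_{B_{j,k}}$ by a smooth Lipschitz-close map $v$, perturbs it to a smooth immersion $v_\eps(x)=(v(x),\eps x)$ into $\mathbb{R}^{n+1}\times\mathbb{R}^2$, and takes $h_0=v_\eps^*g_{\mathrm{euc}}$, which is a genuinely smooth nondegenerate metric from the outset, and then shows $E_{h_0}(\hat u_k;B_{j,k})\leqslant C(\area(\hat u_k(B_{j,k}))+k^{-2})$ and glues/mollifies to $g_k$ near $\partial B_{j,k}$. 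So the idea is the same as yours, but ``smooth the map, then pull back'' avoids having to regularize a degenerate, discontinuous tensor. If you want to make your version precise, you would need to exhibit and estimate the mollification explicitly, in particular checking that the smoothing across the rays, the apex, and $\partial B_{j,k}$ changes the energy of $\hat u_k$ only by $o(1)$.
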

\begin{proof}
Fix $j,k$. It is sufficient to construct a metric $h$ on $B_{j,k}$ such that $h = g_k$ near $\partial B_{j,k}$ and 
\begin{equation}
\label{ineq}
E_{h}(\hat u_k; B_{j,k})\leqslant C\left(\area(\hat u_k(B_{j,k})) + \frac{1}{k^2}\right).
\end{equation}
We construct a metric satisfying~\eqref{ineq} and then arrange it to agree with $g_k$ near the boundary. First, one can approximate $w_k\in \Lip(B_{j,k},\mathbb{R}^{n+1})$ by a smooth map $v$ (see e.g.~\cite[p. 251]{EG}) arbitrarily close in Lipschitz norm, which in turn can be approximated by a smooth \emph{immersion} to $\mathbb{R}^{n+1}\times\mathbb{R}^2$. Indeed, if $v\in C^\infty(B_{j,k},\mathbb{R}^{n+1})$, then $v_\eps(x):=(v(x),\eps x)$ is obviously an immersion for any $\eps>0$ (where $B_{j,k}$ is identified with the unit disk so that $B_{j,k}\subset \mathbb{R}^2$). Setting $h_0 = v_\eps^*g_{euc}$ for small enough $\eps>0$, we obtain a metric satisfying~\eqref{ineq}.

Let $\delta>0$ be such that  
$$
E_{g_k}(\hat u_k;B_{j,k}\setminus B_{r_{j,k}-\delta}(p_{j,k}))\leqslant \frac{1}{k^2}.
$$
Define a discontinuous metric $h_1$ to be $g_k$ on $B_{j,k}\setminus B_{r_{j,k}-\delta}(p_{j,k})$ and $h_0$ otherwise, then $h_1$ satisfies the requirements of the claim. Then a suitable mollification of $h_1$ yields the desired smooth metric $h$.

\end{proof}

As a final step of the proof we apply the uniformization theorem for closed surfaces to the pair $(M,\hat g_k)$ to replace a smooth metric $\hat g_k$ by $\tilde g_k\in\Met_{\can}(M)$. This completes the proof, up to a slight abuse of notation $\tilde g_k\mapsto g_k$.

\subsection{Proof of Theorem~\ref{thm:convergence_conf}} The proof of Theorem~\ref{thm:convergence_conf} follows the same ideas, but is substantially simpler. We outline the main steps. One applies the conformal qualitative stability of Theorem~\ref{thm:conf_qual_stab} to obtain an analogue of Proposition~\ref{prop:extension}. However, since the small energy extension is an assumption~\eqref{cond:extension} of Theorem~\ref{thm:convergence_conf}, we do not need to prove item (2). After that, using the same arguments as in Lemma~\ref{lemma:strong_H1} with only minor modifications completes the proof.

\section{Lower bounds for $\Sigma_1(N_k)$}
\label{l.bds}

In this section, we prove the lower bound \eqref{ineq:lowerbound_intro} for $\Sigma_1(N_k)$ given in Theorem \ref{thm:asymptotics_intro}, by producing a metric on $N_k$ whose first normalized Steklov eigenvalue is within $C\frac{\log k}{k}$ of the maximal normalized Laplacian eigenvalue $\Lambda_1(M)$. Indeed, we prove the following slightly stronger, conformally constrained version of this result.

\begin{proposition}\label{construct} Let $(M,g_0)$ be a closed surface of unit area and constant curvature $K_{g_0}\equiv 2\pi\chi(M)$. Then there exists $C=C(M,g_0)$ such that for any $k\in \mathbb{N}$, we can find a collection of disjoint geodesic disks $B_{r_1}(p_1),\ldots B_{r_k}(p_k)$ in the metric $g_0$ and a conformal metric $\tilde{g}_k\in [g_0]$ such that on the domain
$$
\Omega_k:=M\setminus \bigcup_{j=1}^kB_{r_j}(p_j),
$$
we have
$$
\bar{\sigma}_1(\Omega_k,\tilde{g}_k)\geq \Lambda_1(M,[g_0])-C\frac{\log k}{k}.
$$
\end{proposition}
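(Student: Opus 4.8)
Write $\eps_k:=\tfrac{\log k}{k}$ and fix a $\bar\lambda_1$-conformally maximal metric $g_{\max}=\rho\,g_0\in[g_0]$ — so $\rho\in C^\infty(M)$, $\rho\ge 0$ with isolated zeros of finite order, $\mu_{\max}:=dv_{g_{\max}}=\rho\,dv_{g_0}$ — and let $E\subset C^\infty(M)$ be the first eigenspace of $\Delta_{g_{\max}}$, so $\dim E=m$, $\lambda_1(M,g_{\max})=\Lambda_1(M,[g_0])=:\Lambda_1$, and $\delta_0:=\lambda_{m+1}(M,g_{\max})-\Lambda_1>0$. For a large constant $\alpha=\alpha(M,g_0)$, pick centers $p_1,\dots,p_k\in M$ forming a $Ck^{-1/2}$-net which is moreover roughly equidistributed with respect to $\mu_{\max}$ and stays $\gtrsim k^{-1/2}$ from the zeros of $\rho$, put $r_j=k^{-\alpha}$ and $\Omega_k:=M\setminus\bigcup_{j=1}^k B_{r_j}(p_j)$. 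Since each hole has logarithmic capacity $\asymp(\alpha\log k)^{-1}$ and there are $k$ of them (the ``crushed ice'' regime $k/|\log r_j|\to\infty$), standard barrier constructions give $\lambda_1^{\mathrm{Dir}}(\Omega_k,g_0)\ge c\,k/\log k$ and $\|u_{\mathrm{tor}}\|_{L^\infty(\Omega_k)}\le C\eps_k$, where $u_{\mathrm{tor}}$ is the torsion function ($\Delta_{g_0}u_{\mathrm{tor}}=1$ in $\Omega_k$, $u_{\mathrm{tor}}=0$ on $\partial\Omega_k$). Now define $\langle\mu_k,\varphi\rangle:=\int_{\Omega_k}\hat\varphi\,dv_{g_{\max}}$ for $\varphi\in C^\infty(\partial\Omega_k)$, with $\hat\varphi$ the (conformally invariant) harmonic extension to $\Omega_k$; by the maximum principle $\mu_k$ is a positive Radon measure, and its Poisson representation shows $\mu_k=h_k\,ds_{g_0}^{\partial\Omega_k}$ with $h_k\in C^\infty(\partial\Omega_k)$ positive, so choosing $\tilde g_k\in[g_0]$ with $\tilde g_k|_{\partial\Omega_k}=h_k^2\,g_0|_{\partial\Omega_k}$ yields $\mu_k=ds_{\tilde g_k}^{\partial\Omega_k}$ and, by the identification in Example~\ref{ex:Steklov}, $\bar\sigma_1(\Omega_k,\tilde g_k)=\bar\lambda_1(\overline\Omega_k,[g_0],\mu_k)$. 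One has $\mu_k(\partial\Omega_k)=\mu_{\max}(\Omega_k)=1-O(k^{1-2\alpha})$ and $\|\mu_k-\mu_{\max}\|_{\Wdot(g_0)}=O(\eps_k^{1/2})$; crucially, for every $g_0$-harmonic $f$ on $\Omega_k$, subharmonicity of $f^2$ together with the torsion bound gives
\[
0\;\le\;\int_{\partial\Omega_k}f^2\,d\mu_k-\int_{\Omega_k}f^2\,dv_{g_{\max}}\;\le\;2\|\rho\|_\infty\|u_{\mathrm{tor}}\|_\infty\int_{\Omega_k}|\nabla f|^2_{g_0}\,dv_{g_0}\;\le\;C\eps_k\int_{\Omega_k}|\nabla f|^2_{g_0}\,dv_{g_0},
\]
since $\widehat{f^2|_{\partial\Omega_k}}-f^2$ solves a Poisson problem with source $2|\nabla f|^2$ and zero boundary data.

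\textbf{Quasimodes.} For $\phi\in E$ let $v_\phi$ be the harmonic extension of $\phi|_{\partial\Omega_k}$ to $\Omega_k$. Since $\phi-v_\phi\in H_0^1(\Omega_k)$ solves $\Delta_{g_0}(\phi-v_\phi)=\Delta_{g_0}\phi=\Lambda_1\rho\phi$, the Dirichlet bound gives $\|\nabla(\phi-v_\phi)\|_{L^2(g_0)}^2\le C\,\lambda_1^{\mathrm{Dir}}(\Omega_k)^{-1}\|\phi\|_{L^2(\mu_{\max})}^2\le C\eps_k\|\phi\|_{L^2(\mu_{\max})}^2$. Combining this with $\int_M|\nabla\phi|_{g_0}^2\,dv_{g_0}=\Lambda_1\|\phi\|_{L^2(\mu_{\max})}^2$, the negligibility of $\int_{\bigcup_j B_{r_j}}|\nabla\phi|^2$, and the comparison above, one gets on the $m$-dimensional space $V_k:=\{v_\phi:\phi\in E\}\subset W^{1,2}(\Omega_k)$ that $\int_{\Omega_k}\langle\nabla v_\phi,\nabla v_\psi\rangle=\Lambda_1\langle\phi,\psi\rangle_{L^2(\mu_{\max})}+O(\eps_k)\|\phi\|\,\|\psi\|$, $\int_{\partial\Omega_k}v_\phi v_\psi\,d\mu_k=\langle\phi,\psi\rangle_{L^2(\mu_{\max})}+O(\eps_k)\|\phi\|\,\|\psi\|$, and $\int_{\partial\Omega_k}v_\phi\,d\mu_k=O(\eps_k)\|\phi\|$. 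In particular every nonzero element of $V_k$ has $\mu_k$-Rayleigh quotient in $[\Lambda_1-C\eps_k,\Lambda_1+C\eps_k]$; using $V_k\oplus\mathbb R$ as a test space in the variational definition of $\lambda_j(\Omega_k,[g_0],\mu_k)$ gives the \emph{one-sided} bounds $\lambda_j(\Omega_k,[g_0],\mu_k)\le\Lambda_1+C\eps_k$ for $j=1,\dots,m$.

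\textbf{Lower bound on $\lambda_1$ (the crux).} Write $\lambda_j:=\lambda_j(\Omega_k,[g_0],\mu_k)$, let $\varphi_0=\mathrm{const},\varphi_1,\varphi_2,\dots$ be an $L^2(\mu_k)$-orthonormal basis of Steklov eigenfunctions (each harmonic in $\Omega_k$), and put $a_j^\phi:=\langle v_\phi,\varphi_j\rangle_{L^2(\mu_k)}$. The boundary condition $\partial_\nu\varphi_j\,ds_{g_0}=\lambda_j\varphi_j\,d\mu_k$ gives the exact relation $\int_{\Omega_k}\langle\nabla v_\phi,\nabla\varphi_j\rangle=\lambda_j a_j^\phi$; integrating by parts once more against $\Delta_{g_{\max}}\phi=\Lambda_1\phi$ and using the comparison of the Setup yields $(\Lambda_1-\lambda_j)\,a_j^\phi=-\int_{\partial\Omega_k}\varphi_j\,\partial_\nu\phi\,ds_{g_0}+O(\eps_k)a_j^\phi$. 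Because $\mathrm{length}(\partial\Omega_k)=O(k^{1-\alpha})$ and $h_k\gtrsim k^{\alpha-1}$ away from the holes nearest the zeros of $\rho$ (these being finitely many and treated separately), one has $|\int_{\partial\Omega_k}\varphi_j\,\partial_\nu\phi\,ds_{g_0}|\lesssim k^{1-\alpha}$
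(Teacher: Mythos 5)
Your proposal follows the same overall strategy as the paper — remove small geodesic disks at roughly equidistributed centers, define the length measure $\mu_k$ so that $\langle\mu_k,\varphi\rangle=\int_{\Omega_k}\hat\varphi\,dv_{g_{\max}}$, show that restrictions to $\partial\Omega_k$ of first $\Delta_{g_{\max}}$-eigenfunctions are $O(\log k/k)$-quasimodes for the Steklov problem, and then deduce a lower bound for $\sigma_1$. The construction of $\mu_k$ is literally the same (your $\mu_k$ equals $\beta_k\,ds_{g_0}$ with $\beta_k=\partial_\nu\psi_k$ in the paper's notation, after integrating by parts). However, there are two genuine gaps.

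\textbf{The sharp torsion bound is asserted but not proved.} Your argument crucially uses $\|u_{\mathrm{tor}}\|_{L^\infty(\Omega_k)}\le C\,\frac{\log k}{k}$ to control $\int_{\partial\Omega_k}f^2\,d\mu_k-\int_{\Omega_k}f^2\,dv_{g_{\max}}$ for \emph{arbitrary} harmonic $f$, because the pairing $\int u_{\mathrm{tor}}\,|\nabla f|^2$ requires the $L^\infty$ norm of $u_{\mathrm{tor}}$ against the $L^1$ norm of $|\nabla f|^2$. You invoke this from ``standard barrier constructions'' in the crushed-ice regime, but this is exactly the part that needs a real proof. Notably, the paper's Moser iteration (Lemma~\ref{psi.linfty}) only yields $\|\psi_k\|_{L^\infty}\le C\sqrt{\log k/k}$, which would lose a square root in your estimate. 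The paper avoids this by instead proving the $L^2$ bound $\|\psi_k\|_{L^2}\le C\,\frac{\log k}{k}$ (Lemma~\ref{psi.est}) and pairing it against $\phi\in V$, for which elliptic regularity gives $\|\phi\|_{C^2}\le C\|d\phi\|_{L^2}$. That is, the paper exploits the smoothness of the specific eigenfunctions $\phi$ to only need an $L^2$ control on the torsion-type function. Your route needs the pointwise bound, so either exhibit the barriers explicitly (a nontrivial step, essentially reproducing the classical 2D crushed-ice estimates), or restructure the comparison to use $L^2$ against $C^2$ as the paper does.

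\textbf{The lower bound on $\lambda_1$ is never established.} The argument cuts off after obtaining $(\Lambda_1-\lambda_j)\,a_j^\phi=-\int_{\partial\Omega_k}\varphi_j\,\partial_\nu\phi\,ds_{g_0}+O(\eps_k)a_j^\phi$ and bounding $|\int_{\partial\Omega_k}\varphi_j\,\partial_\nu\phi\,ds_{g_0}|\lesssim k^{1-\alpha}$. Even granting those estimates, you have not shown that $\lambda_1\ge\Lambda_1-C\frac{\log k}{k}$. The Rayleigh-quotient bounds on $V_k\oplus\mathbb R$ only give the one-sided inequalities $\lambda_j\le\Lambda_1+C\eps_k$, $j\le m$; they say nothing about eigenvalues falling below $\Lambda_1-C\eps_k$. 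The paper closes this gap via Lemma~\ref{band.mult}, showing the projection $V\to W_{C_2\eps_k}$ is injective, and then an indirect compactness argument (passing to a weak limit of normalized Steklov eigenfunctions if $\sigma_1<\Lambda_1-C_2\eps_k$ along a subsequence, deriving a contradiction with $\dim V=m$). Your spectral-perturbation identity is a plausible starting point for a more direct argument, but as written the conclusion is missing — this is the crux step and it needs to be completed before the proposal constitutes a proof.
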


Note that the lower bound \eqref{ineq:lowerbound_intro} of Theorem \ref{thm:asymptotics_intro} is an immediate corollary of Proposition \ref{construct}, simply by taking $[g_0]$ to be a maximizing conformal class, so that $\Lambda_1(M,[g_0])=\Lambda_1(M)$. The remainder of this section is therefore devoted to the proof of this proposition.

To begin, fix a conformally $\bar{\lambda}_1$-maximizing metric of unit area in $[g_0]$ 
$$
g_{\max}=fg_0.
$$
Recall from Section \ref{sec:Laplace_extremal} that $f$ has the form $f=\alpha|du|_{g_0}^2$, where $u\colon (M,g_0)\to \mathbb{S}^n$ is a nonconstant harmonic map by first eigenfunctions for $\Delta_{g_{\max}}$. In particular, while the metric $g_{\max}$ may have conical singularities at the zeroes of $du$, the function $f$ is a smooth, nonnegative function, with a finite zero set. Denote by $V\subset C^{\infty}(M)$ the space of first eigenfunctions for $\Delta_{g_{\max}}$-i.e., 
$$
V:=\{\phi\in C^{\infty}(M)\mid \Delta_{g_0}\phi=\Lambda_1f\phi\};
$$
since $f$ is smooth, standard elliptic estimates then give
\begin{equation}\label{vests}
\|\phi\|_{C^2}\leq C(M,g_0)\|d\phi\|_{L^2}\text{ for all }\phi\in V.
\end{equation}

\begin{remark} Throughout this section, all function spaces and associated norms ($W^{1,2}$, $L^p$, $C^k$, etc) will be defined with respect to the constant curvature metric $g_0$ unless otherwise indicated.
\end{remark}

Before beginning the proof of Proposition \ref{construct} in earnest, we find it useful to record the following elementary estimates for the areas and boundary lengths of geodesic disks.

\begin{lemma}
\label{std.disk.lem} 
Let $(M,g_0)$ be a closed surface of unit area and constant curvature $K_{g_0}\equiv 2\pi \chi(M)$, and injectivity radius 
$\inj(M)$. Then there is a constant $r_0(\chi(M))>0$ such that for any geodesic disk $B_r(x)\subset M$ with $r<\min\{r_0,\inj(M)\}$, we have 
\begin{equation}\label{std.length.est}
\frac{3}{4}\cdot 2\pi r\leq \length(\partial B_r(x))\leq \frac{5}{4}\cdot 2\pi r
\end{equation}
and
\begin{equation}\label{std.area.est}
\frac{3}{4}\pi r^2\leq \area(B_r(x))\leq \frac{5}{4}\pi r^2.
\end{equation}
\end{lemma}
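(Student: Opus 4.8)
The statement is Lemma \ref{std.disk.lem}, which asks for elementary two-sided bounds on the length of $\partial B_r(x)$ and the area of $B_r(x)$ for small geodesic disks in a constant-curvature surface.

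The plan is to reduce everything to the three explicit model geometries — the round sphere, the Euclidean plane, and the hyperbolic plane — since $(M,g_0)$ has constant curvature $K_{g_0}\equiv 2\pi\chi(M)$, and on the scale $r<\inj(M)$ the disk $B_r(x)$ is isometric to a metric ball of the same radius in the simply connected space form of the appropriate (constant) curvature $K$. More precisely, I would first note that after rescaling, the only relevant curvature values are $K>0$ (sphere case, $\chi(M)>0$), $K=0$ (flat case, $\chi(M)=0$), and $K<0$ (hyperbolic case, $\chi(M)<0$); since the unit-area normalization fixes $|K|$ in terms of $\chi(M)$ via Gauss–Bonnet, the curvature is a fixed constant depending only on $\chi(M)$.

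Next I would write down the exact formulas for length and area of a geodesic ball of radius $r$ in each space form: in the flat case $\length(\partial B_r)=2\pi r$ and $\area(B_r)=\pi r^2$ exactly; in the case $K=\kappa^2>0$, $\length(\partial B_r)=\frac{2\pi}{\kappa}\sin(\kappa r)$ and $\area(B_r)=\frac{2\pi}{\kappa^2}(1-\cos(\kappa r))$; in the case $K=-\kappa^2<0$, $\length(\partial B_r)=\frac{2\pi}{\kappa}\sinh(\kappa r)$ and $\area(B_r)=\frac{2\pi}{\kappa^2}(\cosh(\kappa r)-1)$. In all three cases one has the expansions $\length(\partial B_r)=2\pi r(1+O(r^2))$ and $\area(B_r)=\pi r^2(1+O(r^2))$, with the implied constants controlled purely by $|K|=|2\pi\chi(M)|$. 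Choosing $r_0=r_0(\chi(M))>0$ small enough that all the error terms are bounded by $\tfrac14$ for $r<r_0$ — concretely, for instance, requiring $|\kappa r|$ small enough that $\frac{\sin(\kappa r)}{\kappa r},\frac{\sinh(\kappa r)}{\kappa r}\in[\tfrac34,\tfrac54]$ and similarly for the area ratios — then yields \eqref{std.length.est} and \eqref{std.area.est} directly. (One can of course absorb the flat case trivially since equality holds there.)

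There is essentially no hard part here; the only mild subtlety worth stating carefully is the justification that $B_r(x)\subset M$ is genuinely isometric to the model ball when $r<\inj(M)$ — this follows because the exponential map $\exp_x$ is a diffeomorphism onto $B_r(x)$ for $r<\inj(M)$ and pulls $g_0$ back to the rotationally symmetric form $dr^2+\mathrm{sn}_K(r)^2\,d\theta^2$ determined by the constant curvature $K$, where $\mathrm{sn}_K$ solves $\mathrm{sn}_K''+K\,\mathrm{sn}_K=0$ with $\mathrm{sn}_K(0)=0$, $\mathrm{sn}_K'(0)=1$. After that the length and area are just one-dimensional integrals of $\mathrm{sn}_K$, and the elementary inequalities for $\sin,\sinh$ near $0$ finish the proof. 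I would present this compactly, citing only standard comparison-geometry facts, and spend at most a couple of lines on the choice of $r_0$.
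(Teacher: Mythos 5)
Your proposal is correct and follows essentially the same route as the paper: the paper obtains the explicit formulas $L(t)=2\pi t$, $\sqrt{2\pi/|\chi|}\sinh(\sqrt{2\pi|\chi|}\,t)$, or $\sqrt{2\pi/\chi}\sin(\sqrt{2\pi\chi}\,t)$ by solving the ODE $L''+2\pi\chi(M)L=0$ (derived via Gauss--Bonnet), then bounds $L$ near $0$ and integrates for the area, which is exactly what you do via $\mathrm{sn}_K$ and the Jacobi equation. The only cosmetic difference is whether the ODE is written for the boundary length $L(t)$ directly or for the metric coefficient $\mathrm{sn}_K$; the content is identical.
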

\begin{proof} Since $(M,g_0)$ has curvature $K_{g_0}\equiv 2\pi \chi(M)$, standard computations (e.g., applying the Gauss-Bonnet formula to geodesic disks) show that the length function
$$
L(t):=\length(\partial B_t(x))
$$
satisfies the equation
$$
L''(t)+2\pi \chi(M)L(t)=0
$$
when $t<\inj(M)$, with $L(0)=0$ and $L'(0)=2\pi$. In particular, for $t<\inj(M)$, it follows that
$$
L(t)=\sqrt{\frac{2\pi}{|\chi(M)|}}\sinh\left(\sqrt{2\pi|\chi(M)|}t\right)\text{ when }\chi(M)<0,
$$
$$
L(t)=2\pi t\text{ if }\chi(M)=0,
$$
and
$$
L(t)=\sqrt{\frac{2\pi}{\chi(M)}}\sin\left(\sqrt{2\pi\chi(M)}t\right)\text{ when }\chi(M)>0.
$$
The estimate \eqref{std.length.est} for $r<\min\{r_0(\chi(M)),\inj(M)\}$ follows by direct inspection of these functions. Likewise, since $\frac{d}{dr}\area(B_r(x))=L(r)$ for $r<\inj(M)$, the estimate \eqref{std.area.est} follows by integation of \eqref{std.length.est}.
\end{proof}

\subsection{Choosing the domain $\Omega_k$}

To prove Proposition \eqref{construct}, we first select the desired domain $\Omega_k$ in a manner similar to the construction in \cite{GL}, by removing several small disks centered at a collection of $k$ maximally separated points. 

\begin{lemma}\label{pointslem} For $k\geq k_0(M,g_0)$ sufficiently large, there exist points $p_1,\ldots, p_k\in M$ and universal constants $0<c_0\leq C_0<\infty$ such that
$$
\dist(p_i,p_j)\geq \frac{c_0}{\sqrt{k}}\text{ when }i\neq j,
$$
and
$$
M\subset \bigcup_{j=1}^kB_{C_0/\sqrt{k}}(p_j).
$$
\end{lemma}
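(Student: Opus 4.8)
The statement to prove is Lemma~\ref{pointslem}: the existence of a maximally separated $k$-point configuration on $(M,g_0)$ whose points are pairwise at least $c_0/\sqrt{k}$ apart and whose $C_0/\sqrt{k}$-balls cover $M$.

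\textbf{Approach.} The plan is to use a greedy/maximal-packing argument, which is the standard way to produce such nets. Fix the constant curvature metric $g_0$ of unit area, with injectivity radius $\inj(M)>0$ and the disk radius threshold $r_0(\chi(M))$ from Lemma~\ref{std.disk.lem}. Set $\rho_k:=\tfrac{c_0}{\sqrt k}$ where $c_0$ is a small universal constant (depending only on $\chi(M)$, to be fixed below) chosen so that $\rho_k<\min\{r_0(\chi(M)),\inj(M)\}$ once $k\geq k_0(M,g_0)$; this lets us apply the elementary area estimate \eqref{std.area.est} to all geodesic disks of radius comparable to $\rho_k$.

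\textbf{Key steps.} First I would choose a \emph{maximal} $\rho_k$-separated subset $S=\{p_1,\dots,p_m\}\subset M$, i.e.\ a subset with $\dist(p_i,p_j)\geq \rho_k$ for $i\neq j$ that is maximal with respect to inclusion; such a set exists and is finite by compactness (and a Zorn's lemma / greedy argument). By construction the separation bound $\dist(p_i,p_j)\geq c_0/\sqrt k$ holds. Second, maximality forces the covering property: if some $x\in M$ were at distance $\geq \rho_k$ from every $p_j$, then $S\cup\{x\}$ would still be $\rho_k$-separated, contradicting maximality; hence $M\subset\bigcup_{j=1}^m B_{\rho_k}(p_j)$, which already gives the covering statement with $C_0=c_0$ provided the number of points equals $k$. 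Third — and this is where the two-sided count enters — I would bound $m$ above and below. For the upper bound, the disks $B_{\rho_k/2}(p_j)$ are pairwise disjoint (triangle inequality), so by \eqref{std.area.est}, $m\cdot \tfrac34\pi(\rho_k/2)^2\leq \area(M)=1$, giving $m\leq C_1 k$ for a universal $C_1$. For the lower bound, the $\rho_k$-balls cover $M$, so $m\cdot \tfrac54\pi\rho_k^2\geq 1$, giving $m\geq c_1 k$. Thus $c_1 k\leq m\leq C_1 k$ with universal constants. Finally, to get \emph{exactly} $k$ points rather than $m\asymp k$: if $m<k$, simply repeat points (or, more cleanly, add arbitrary extra points $p_{m+1},\dots,p_k$ — this only strengthens the covering property and we drop the separation requirement for the repeated/extra points); if $m>k$, apply the whole construction with a slightly enlarged separation parameter — more robustly, observe that running the maximal-packing argument with separation parameter $\rho=t/\sqrt k$ produces $m(t)$ points with $c_1/t^2\lesssim m(t)/k\lesssim C_1/t^2$, and since $m(t)$ is (weakly) monotone and changes by controlled amounts, one can select $t$ in a bounded range $[c_0,C_0']$ so that $m(t)=k$ exactly, at the cost of replacing $c_0$ by the bound $c_0$ and $C_0$ by the corresponding $C_0'$ for which $C_0'/\sqrt k$-balls still cover. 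Either way one obtains the two universal constants $0<c_0\leq C_0<\infty$ in the statement.

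\textbf{Main obstacle.} The only genuinely fiddly point is extracting \emph{exactly} $k$ points with \emph{both} a separation lower bound and a covering upper bound by balls of radius $O(1/\sqrt k)$; the packing argument naturally produces $m\asymp k$, and one must argue that adjusting the packing scale within a bounded window hits $m=k$ on the nose (or else accept $k$-with-repetitions for the covering half and keep separation only among the genuine net points — which is in fact all the later argument in Section~\ref{l.bds} uses). The geometric inputs \eqref{std.length.est}--\eqref{std.area.est} make all area comparisons completely routine, so no further difficulty arises there; everything is uniform in $k$ because $(M,g_0)$ is fixed and the relevant radii are below $\min\{r_0(\chi(M)),\inj(M)\}$.
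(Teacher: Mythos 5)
Your skeleton is the right one (maximal packing plus the area estimates from Lemma~\ref{std.disk.lem}, which give both the separation bound and, via maximality, the covering bound), and the count $c_1 k\leq m\leq C_1 k$ is correct. The gap is exactly the one you flag: hitting \emph{exactly} $k$ points. Your ``vary the scale $t$ until $m(t)=k$'' idea does not close it, because the maximal-packing size $m(t)$ is an integer-valued, merely non-increasing step function of $t$, so it can jump past $k$ and there need be no $t$ with $m(t)=k$; and your fallback (``keep repetitions / drop separation for the extras'') proves a weaker statement than the lemma asserts, which requires pairwise separation of all $k$ points.

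The paper resolves this by working at \emph{two} fixed scales rather than varying one continuously. First, choose $R_k=\frac{1}{\sqrt{5\pi k}}$ and take a maximal disjoint collection of geodesic disks $B_{R_k}(x_1),\dots,B_{R_k}(x_{\ell_k})$; the area bound gives $\ell_k\geq k$, the centers are pairwise $\geq 2R_k$ apart, and by maximality $M\subset\bigcup_j B_{2R_k}(x_j)$. Second, within the finite set $\{x_1,\dots,x_{\ell_k}\}$, choose a maximal $T_k$-separated subcollection $S_{T_k}$ with $T_k=\frac{2}{\sqrt{3\pi k}}$; the area bound now gives $|S_{T_k}|\leq k$, and maximality of $S_{T_k}$ inside $\{x_j\}$ gives $\mathrm{dist}(x_j,S_{T_k})\leq 2T_k$ for all $j$, so that $\bigcup_{x\in S_{T_k}}B_{2T_k+2R_k}(x)\supset\bigcup_j B_{2R_k}(x_j)\supset M$. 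Since $|S_{T_k}|\leq k\leq\ell_k$, one then simply \emph{enlarges} $S_{T_k}$ to a $k$-element subset of $\{x_1,\dots,x_{\ell_k}\}$. Enlarging a covering family preserves the covering, and every added point comes from the original $2R_k$-separated family, so the separation bound holds for all $k$ points. This is the discrete interpolation that makes ``exactly $k$'' work; it is the idea missing from your write-up, and everything else in your argument goes through unchanged once it is inserted.
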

\begin{proof} The proof follows elementary covering arguments, but we give it here for completeness. Given 
$$0<R<\frac{1}{2}\min\{r_0(\chi(M)),\inj(M)\},$$
let $B_R(x_1),\ldots, B_R(x_{\ell(R)})$ be a maximal disjoint collection of disks of radius $R$. By maximality, we see that
$$
M\subset \bigcup_{j=1}^{\ell(R)}B_{2R}(x_j),
$$
and since--by Lemma \ref{std.disk.lem}--the area of a geodesic disk $B_t(x)$ with $t<\min\{r_0,\inj(M)\}$ satisfies
\begin{equation}
\frac{3\pi}{4}t^2\leq \area(B_t(x))\leq \frac{5\pi}{4}t^2,
\end{equation}
it follows that
\begin{eqnarray*}
1=\area(M,g_0)&\leq &\sum_{j=1}^{\ell(R)}\area(B_{2R}(x_j))\\
&\leq &\ell(R)\frac{5\pi}{4} 4R^2=\ell(R) 5\pi R^2.
\end{eqnarray*}
In particular, taking $R_k=\frac{1}{\sqrt{5\pi k}}$ for $k\geq k_0(M,g_0)$ sufficiently large, we see that
$$
\ell_k:=\ell(R_k)\geq k.
$$

For each $T\in (R_k,\min\{r_0,\inj(M)\})$, let $S_T\subset \{x_1,\ldots,x_{\ell_k}\}$ be a maximal subcollection such that $\{B_T(x_j)\mid x_j\in S_T\}$ is disjoint. It follows from disjointness and Lemma \ref{std.disk.lem} that
\begin{eqnarray*}
1=\area(M,g_0)&\geq &\sum_{x\in S_T}\area(B_T(x))\\
&\geq &|S_T|\cdot \frac{3\pi}{4}T^2,
\end{eqnarray*}
so that the number of points $|S_T|$ in $S_T$ is bounded above by $\frac{4}{3\pi T^2}$. In particular, taking $T_k=\frac{2}{\sqrt{3\pi k}}$, we have
$$
m_k:=|S_{T_k}|\leq k.
$$
Writing $S_{T_k}=\{x_1,\ldots,x_{m_k}\}\subset\{x_1,\ldots,x_{\ell_k}\}$, note that the maximality in the definition of $S_{T_k}$ implies that $\dist(x_j,S_{T_k})\leq 2T_k$ for all $1\leq j\leq \ell_k$, and consequently
$$
\bigcup_{j=1}^{m_k}B_{2T_k+2R_k}(x_j)\supset \bigcup_{j=1}^{\ell_k}B_{2R_k}(x_j)\supset M.
$$
Thus, since $m_k\leq k\leq \ell_k$, we can arbitrarily extend $S_{T_k}\subset S_{R_k}$ to a set of $k$ points
$$
S_{T_k}\subset \{x_1,\ldots,x_k\}\subset \{x_1,\ldots,x_{\ell_k}\},
$$
which necessarily satisfy
$$
\dist(x_i,x_j)\geq 2R_k=\frac{2}{\sqrt{5\pi k}}\quad\text{ for }\quad i\neq j
$$
and
$$
M\subset \bigcup_{j=1}^kB_{2(R_k+T_k)}(x_j)\subset \bigcup_{j=1}^kB_{8/\sqrt{3\pi k}}(x_j),
$$
so that the conclusion of the lemma is satisfied by $\{p_1,\ldots,p_k\}=\{x_1,\ldots,x_k\}$ with $c_0=\frac{1}{\sqrt{5\pi}}$ and $C_0=\frac{8}{\sqrt{3\pi}}$.

\end{proof}

Now, fix a collection of points $\{p_1,\ldots, p_k\}$ satisfying the conclusions of Lemma~\ref{pointslem}. Since $\dist(p_i,p_j)\geq \frac{c_0}{\sqrt{k}}>4k^{-3/2}$ for $k\geq k_0$, the disks $B_{2k^{-3/2}}(p_1),\ldots, B_{2k^{-3/2}}(p_k)$ are disjoint, and we can consider the domain
\begin{equation}\label{lbsec.omegadef}
\Omega_k:=M\setminus \bigcup_{j=1}^k B_{k^{-3/2}}(p_j).
\end{equation}
In what follows, we will make use of the following simple lemma, stating that the norm of the harmonic extension operator $W^{1,2}(\Omega_k)\to W^{1,2}(M\setminus \Omega_k)$ is bounded independent of $k$. 

\begin{lemma}\label{ext.lem} There exists a constant $C_1<\infty$ such that for $k\geq k_0$ ,any $\chi\in W^{1,2}(\Omega_k)$, the harmonic extension $\hat{\chi}\in W^{1,2}(M\setminus \Omega_k)$ to $M\setminus \Omega_k$ satisfies
\begin{equation}
\|d\hat{\chi}\|_{L^2(M\setminus\Omega_k)}\leq C_1\|d\chi\|_{L^2(\Omega_k)}.
\end{equation} 
\end{lemma}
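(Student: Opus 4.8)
The plan is to reduce the global estimate to a uniform bound on each individual geodesic disk $B_{k^{-3/2}}(p_j)$, using that the harmonic extension is defined componentwise on the connected components of $M\setminus\Omega_k$, which are precisely these disjoint disks. Writing $\hat\chi$ for the harmonic extension of $\chi\in W^{1,2}(\Omega_k)$ to $M\setminus\Omega_k$, on each disk $B_j:=B_{k^{-3/2}}(p_j)$ the function $\hat\chi|_{B_j}$ is the harmonic (with respect to $g_0$) function with boundary data $\chi|_{\partial B_j}\in W^{1/2,2}(\partial B_j)$; equivalently, $\hat\chi|_{B_j}$ minimizes the Dirichlet energy $\int_{B_j}|d\psi|^2_{g_0}\,dv_{g_0}$ among all $\psi\in W^{1,2}(B_j)$ with $\psi=\chi$ on $\partial B_j$. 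Hence it suffices to show that for each $j$,
\begin{equation}\label{ext.per.disk}
\int_{B_j}|d\hat\chi|^2_{g_0}\,dv_{g_0}\leq C_1^2\int_{\Omega_k}|d\chi|^2_{g_0}\,dv_{g_0},
\end{equation}
and in fact to prove the stronger \emph{local} statement that $\int_{B_j}|d\hat\chi|^2\leq C\int_{A_j}|d\chi|^2$, where $A_j:=B_{2k^{-3/2}}(p_j)\setminus B_{k^{-3/2}}(p_j)\subset\Omega_k$ is the surrounding annulus (these annuli are pairwise disjoint by the separation $\dist(p_i,p_j)\geq c_0/\sqrt k>4k^{-3/2}$), after which summing over $j$ gives \eqref{ext.per.disk} with constant independent of $k$.

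The key point is \emph{scale invariance of the Dirichlet energy in dimension two}. First I would fix the standard Euclidean model: since $g_0$ has bounded geometry and the radius $k^{-3/2}$ is far below $\min\{r_0(\chi(M)),\inj(M)\}$, geodesic polar coordinates identify $B_{2k^{-3/2}}(p_j)$ with the Euclidean disk $D_{2k^{-3/2}}\subset\mathbb{R}^2$ via a map whose metric distortion is controlled uniformly in $j,k$ (indeed $C^1$-close to the identity, by the curvature ODE in Lemma~\ref{std.disk.lem} or standard Jacobi field estimates). Under a $2$-dimensional conformal-type change of metric the Dirichlet integral is comparable up to a universal factor, so it is enough to prove the model estimate: if $w$ is the Euclidean-harmonic extension to $D_1$ of a function given on the annulus $D_2\setminus D_1$, then $\int_{D_1}|\nabla w|^2\leq C\int_{D_2\setminus D_1}|\nabla\chi|^2$. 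This model statement is standard — one extends $\chi|_{D_2\setminus D_1}$ to all of $D_2$ by a bounded linear extension operator $W^{1,2}(D_2\setminus D_1)\to W^{1,2}(D_2)$ (e.g. reflection across the unit circle, or simply the radial reflection $\chi(x)\mapsto \chi(x/|x|^2)$), obtaining $\tilde\chi\in W^{1,2}(D_1)$ with $\tilde\chi=\chi$ on $\partial D_1$ and $\|\nabla\tilde\chi\|_{L^2(D_1)}\leq C\|\nabla\chi\|_{L^2(D_2\setminus D_1)}$; since $w$ minimizes Dirichlet energy among functions with this boundary trace, $\int_{D_1}|\nabla w|^2\leq\int_{D_1}|\nabla\tilde\chi|^2\leq C\int_{D_2\setminus D_1}|\nabla\chi|^2$. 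Crucially the constant $C$ here is dilation-invariant, so it does not degenerate as $k\to\infty$.

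Then I would assemble the pieces: transfer the model estimate back to $(B_j,g_0)$ paying only the uniform metric-distortion factor; use that $\hat\chi|_{B_j}$ has the same boundary trace as the true harmonic extension and that the latter is energy-minimizing, to get $\int_{B_j}|d\hat\chi|^2_{g_0}\leq C\int_{A_j}|d\chi|^2_{g_0}$; and finally sum over $j=1,\dots,k$, using the disjointness of the annuli $A_j\subset\Omega_k$, to obtain $\|d\hat\chi\|^2_{L^2(M\setminus\Omega_k)}\leq C\sum_j\|d\chi\|^2_{L^2(A_j)}\leq C\|d\chi\|^2_{L^2(\Omega_k)}$, which is the claim with $C_1=\sqrt C$. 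The only mild subtlety — the ``main obstacle'', such as it is — is making sure the extension/comparison constants genuinely do not depend on $k$: this is exactly where the conformal (scale) invariance of the $2$-dimensional Dirichlet energy is used, together with the fact that all the disks sit in a region where $g_0$ looks uniformly Euclidean at the relevant scale. Everything else is a routine covering-and-rescaling argument.
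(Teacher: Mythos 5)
Your proposal is correct and follows essentially the same route as the paper: both reduce to the Euclidean model estimate on the annulus $D_2(0)\setminus D_1(0)$, invoke conformal (scale) invariance of the two-dimensional Dirichlet energy to make the constant radius-independent, transfer to $(M,g_0)$ via the exponential map with uniform distortion at scale $k^{-3/2}$, and sum over the disjoint disks/annuli. The only difference is that you spell out the model estimate (via reflection extension and energy minimization) while the paper simply posits the constant $C_0$ and cites \cite[Lemma 3.4]{GL} for the transfer step.
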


\begin{proof} Denote by $C_0$ the constant such that the harmonic extension operator 
$$
W^{1,2}(D_2(0)\setminus D_1(0))\ni \chi\mapsto \hat{\chi}\in W^{1,2}(D_1(0))
$$
from the Euclidean annulus $D_2(0)\setminus D_1(0)$ in $\mathbb{R}^2$ to $D_1(0)$ satisfies
$$
\|d\hat{\chi}\|_{L^2(D_1)}\leq C_0\|d\chi\|_{L^2(D_2\setminus D_1)}.
$$
By the conformal invariance of the Dirichlet energy in dimension two, it follows that
\begin{equation}\label{euc.bds}
\|d\hat{\chi}\|_{L^2(D_r(0))}\leq C_0\|d\chi\|_{L^2(D_{2r}(0)\setminus D_r(0))}
\end{equation}
for any $r>0$. 

Using the exponential map, it is straightforward to extend this inequality to $(M,g_0)$ for $r>0$ small, see e.g.~\cite[Lemma 3.4]{GL}. The desired statement then follows from the fact that the balls $B_{2k^{-3/2}}(p_j)$ are disjoint. 
\end{proof}

\begin{remark} The choice of $k^{-3/2}$ as the radius of the holes in the definition \eqref{lbsec.omegadef} is somewhat arbitrary. Any sufficiently large (fixed) power of $\frac{1}{k}$ would suffice, and determining the optimal such power (perhaps $k^{-1}$, as in \cite{GL}) could be an important step toward answering Open Question \ref{oq:limit} in the introduction.
\end{remark}

\subsection{Choosing the metric $\tilde{g}_k$}

Having chosen our domain $\Omega_k\subset M$, we next need to produce a conformal metric $\tilde{g}_k$ on $\Omega_k$ whose first Steklov eigenvalue is close to $\Lambda_1(M,[g_0])$. To this end, define $\psi_k\in C^{\infty}(\Omega_k)$ to be the unique solution of
\begin{equation}\label{psidef}
\Delta_{g_0}\psi_k=d^*d\psi_k=-f\text{ in }\Omega_k\text{\hspace{2mm} and\hspace{2mm} }\psi_k|_{\partial\Omega_k}=0,
\end{equation}
where $f$ again denotes the unit-area $\hat{\lambda}_1$-maximizing conformal factor $g_{\max}=fg_0$. Setting
$$
\beta_k:=\frac{\partial \psi_k}{\partial\nu} \in C^{\infty}(\partial\Omega_k),
$$
we then see that
\begin{equation}\label{betachar}
\int_{\partial\Omega_k}\varphi \beta_kds_{g_0}=\int_{\Omega_k}\hat{\varphi}fdv_{g_0}g,
\end{equation}
for all $\varphi\in C^{\infty}(\partial\Omega_k)$ with harmonic extension $\hat{\varphi}\in C^{\infty}(\Omega_k)$. 

Applying \eqref{betachar} to the collection of all nonnegative, nonvanishing functions $\varphi\in C^{\infty}(\partial\Omega_k)$, one deduces that
$$
\beta_k>0\text{ on }\partial\Omega_k,
$$
and applying \eqref{betachar} with $\varphi=1$ gives
\begin{eqnarray*}
\int_{\partial\Omega_k}\beta_kds_{g_0}&=&\int_{\Omega_k}fdv_{g_0}\\
&=&\area(M,g_{\max})-\sum_{j=1}^k\int_{B_{k^{-3/2}}(p_j)}fdv_{g_0}\\
&\geq & 1-C(M,g_0)k\cdot k^{-\frac{2\cdot3}{2}},
\end{eqnarray*}
so that
\begin{equation}\label{beta.mass}
1=\area(M,g_{\max})\geq \int_{\partial\Omega_k}\beta_kds_g\geq 1-\frac{C}{k^2}.
\end{equation}
In what follows, we argue that the first Steklov eigenvalue of $(\Omega_k, \hat{\beta^2_k}g_0)$ must lie within $O\left(\frac{\log k}{k}\right)$ of $\Lambda_1$, where $\hat{\beta_k}$ is an arbitrary extension of $\beta_k\in C^{\infty}(\partial\Omega_k)$ to $\Omega_k$. As a first step, we record the following $L^2$ estimate for the function $\psi_k$ solving \eqref{psidef}.

\begin{lemma}\label{psi.est}
Let $\psi_k\in C^{\infty}(\Omega_k)$ be the unique solution of \eqref{psidef}. Then 
\begin{equation}
\|\psi_k\|_{L^2(\Omega_k)}\leq C(M,g_0)\frac{\log k}{k}.
\end{equation}
\end{lemma}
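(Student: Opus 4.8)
The plan is to reduce the estimate to a Poincar\'e inequality on $\Omega_k$ whose constant degrades only like $\frac{\log k}{k}$. Testing \eqref{psidef} against $\psi_k$ and using $\psi_k|_{\partial\Omega_k}=0$ gives the basic identity
$$
\int_{\Omega_k}|d\psi_k|_{g_0}^2\,dv_{g_0}=-\int_{\Omega_k}f\psi_k\,dv_{g_0}\leq \|f\|_{L^2}\,\|\psi_k\|_{L^2(\Omega_k)},
$$
so it suffices to prove a Poincar\'e inequality of the form
\begin{equation}\label{poincareplan}
\|\chi\|_{L^2(\Omega_k)}^2\leq C(M,g_0)\,\tfrac{\log k}{k}\,\|d\chi\|_{L^2(\Omega_k)}^2\qquad\text{for all }\chi\in W^{1,2}(\Omega_k)\text{ with }\chi|_{\partial\Omega_k}=0.
\end{equation}
Indeed, granting \eqref{poincareplan}, the identity above yields $\|d\psi_k\|_{L^2(\Omega_k)}\leq (C\tfrac{\log k}{k})^{1/2}\|f\|_{L^2}$, and a second application of \eqref{poincareplan} gives $\|\psi_k\|_{L^2(\Omega_k)}\leq C\tfrac{\log k}{k}\|f\|_{L^2}$, which is the claim (with $\|f\|_{L^2}=\|f\|_{L^2(M,g_0)}$ a fixed constant depending on $(M,g_0)$).

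To prove \eqref{poincareplan}, I would first extend $\chi$ by zero on each removed disk $B_{k^{-3/2}}(p_j)$ to obtain $\tilde\chi\in W^{1,2}(M)$; this lies in $W^{1,2}$ because $\chi$ vanishes on each circle $\partial B_{k^{-3/2}}(p_j)$, and evidently $\|\tilde\chi\|_{L^2(M)}=\|\chi\|_{L^2(\Omega_k)}$ and $\|d\tilde\chi\|_{L^2(M)}=\|d\chi\|_{L^2(\Omega_k)}$. Next, using Lemma~\ref{pointslem}, set $D_j:=B_{C_0 k^{-1/2}}(p_j)$, so that $\bigcup_j D_j=M$; since the $p_j$ are $c_0k^{-1/2}$-separated, a standard packing and area-comparison argument on the fixed surface $(M,g_0)$ shows the $D_j$ have overlap multiplicity bounded by some $N=N(M,g_0)$, independent of $k$. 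For $k$ large each $A_j:=D_j\setminus \overline{B_{k^{-3/2}}(p_j)}$ is a genuine geodesic annulus centered at $p_j$ with radii ratio $C_0 k$, and since $g_0$ is uniformly bi-Lipschitz to the Euclidean metric on geodesic balls of radius $\le\frac12\inj(M,g_0)$, the elementary one-dimensional estimate (writing $\tilde\chi(r,\theta)=\int_{k^{-3/2}}^{r}\partial_{r'}\tilde\chi\,dr'$ since $\tilde\chi=0$ on the inner circle, and applying Cauchy--Schwarz with the weight $\tfrac{dr'}{r'}$) gives
$$
\int_{A_j}\tilde\chi^2\,dv_{g_0}\leq C(M,g_0)\,(C_0k^{-1/2})^2\,\log\!\big(C_0 k\big)\int_{A_j}|d\tilde\chi|_{g_0}^2\,dv_{g_0}\leq C'(M,g_0)\,\tfrac{\log k}{k}\int_{A_j}|d\tilde\chi|_{g_0}^2\,dv_{g_0}.
$$
Since $\tilde\chi\equiv 0$ on $B_{k^{-3/2}}(p_j)$ the left side equals $\int_{D_j}\tilde\chi^2$; summing over $j$ and using $\bigcup_j D_j=M$ together with the bounded overlap $N$ then yields $\|\tilde\chi\|_{L^2(M)}^2\leq C'N\tfrac{\log k}{k}\|d\tilde\chi\|_{L^2(M)}^2$, which is exactly \eqref{poincareplan}. (Finitely many small $k$ are absorbed into the constant.)

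The one genuinely delicate point is the annular estimate and the bookkeeping that makes its constant come out as $\tfrac{\log k}{k}$ rather than a worse power of $k$: one must use that each hole of radius $k^{-3/2}$ sits \emph{concentrically} inside a collar of definite radius $\sim k^{-1/2}$, so that the logarithmic factor is $\log\!\big(k^{-1/2}/k^{-3/2}\big)=\log k$ rather than something larger, and that these collars cover $M$ with $k$-independent multiplicity. Both features are supplied precisely by the maximal separation and density properties recorded in Lemma~\ref{pointslem}, so once the extension-by-zero trick moves the problem onto the closed surface the remaining work is routine. As the subsequent remark in the text indicates, nothing here is special about the exponent $3/2$: replacing $k^{-3/2}$ by $k^{-\alpha}$ for any fixed $\alpha>1$ changes the logarithmic factor, and hence the final bound, only by a multiplicative constant.
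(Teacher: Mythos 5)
Your proposal is correct and follows essentially the same route as the paper: test the equation against $\psi_k$ to reduce to a Poincar\'e inequality on $\Omega_k$ with constant $O(\tfrac{\log k}{k})$, extend by zero across the holes, cover $M$ by the annuli $D_j\setminus B_{k^{-3/2}}(p_j)$ with bounded overlap coming from Lemma~\ref{pointslem}, and apply the log-scale annular Poincar\'e estimate (the paper derives this via a differential inequality for $t\mapsto\bigl(\tfrac1t\int_{\partial B_t}\xi^2\bigr)^{1/2}$ rather than your direct FTC-plus-weighted-Cauchy--Schwarz, but these are the same computation in different clothing). The paper even isolates your intermediate Poincar\'e inequality as Remark~\ref{ext.rk}, so the structure of your argument matches exactly.
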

\begin{proof}

For each $i=1,\ldots,k$, let 
$$U_i:=B_{Ck^{-1/2}}(p_i)\setminus B_{k^{-3/2}}(p_i),$$
so that
$$\Omega_k\subset \bigcup_{i=1}^kU_i.$$
For any $x\in M$, we note that the quantity
$$N(x):=\#\{i\in \{1,\ldots,k\}\mid x\in U_i\}$$
satisfies a uniform bound
\begin{equation}\label{loc.fin.est}
N(x)\leq N_0
\end{equation}
independent of $k$. Indeed, if 
$$x\in U_1\cap \cdots\cap U_N\subset B_{Ck^{-1/2}}(p_1)\cap \cdots\cap B_{Ck^{-1/2}}(p_k),$$
then since Lemma \ref{pointslem} guarantees $\dist(p_i,p_j)\geq \frac{c_0}{\sqrt{k}}$ for $i\neq j$, it follows that the disk $B_{(C+c_0)k^{-1/2}}(x)$ contains at least $N=N(x)$ disjoint disks of radius $\frac{c_0}{2\sqrt{k}}$, so that
$$N(x)\frac{c_0^2}{k}\leq C'\frac{(C+c_0)^2}{k},$$
from which \eqref{loc.fin.est} follows.

Now, since $\psi_k$ vanishes on $\partial\Omega_k$, we may trivially extend $\psi_k$ to a function $\overline{\psi}_k$ in $\Lip(M)$ by setting 
$$\overline{\psi}_k|_{M\setminus\Omega_k}\equiv 0.$$
Now, for each $i=1,\ldots,k$, define a function $\xi_i\in \Lip(B_{Ck^{-1/2}}(0)\setminus B_{k^{-3/2}}(0))$ by setting
$$\xi_i:=\overline{\psi}_k\circ \exp_{p_i},$$
and note that
\begin{equation}\label{psi.xi.comp}
\|\overline{\psi}_k\|_{L^2(U_i)}\leq C\|\xi_i\|_{L^2}\text{, }\|d\xi_i\|_{L^2}\leq C\|d\overline{\psi}_k\|_{L^2(U_i)}
\end{equation}
for a constant $C=C(M,g_0)$ (which could be taken arbitrarily close to $1$ for $k$ sufficiently large).

Then direct computation (which the squeamish reader may prefer to apply to a smooth approximation of $\xi_i$) gives, for $t\in \left[k^{-3/2},Ck^{-1/2}\right]$,
\begin{eqnarray*}
\frac{d}{dt}\left(\frac{1}{t}\int_{\partial B_t(0)}\xi_i^2\right)&=&\frac{2}{t}\int_{\partial B_t(0)}\xi_i\frac{\partial \xi_i}{\partial\nu}\\
\text{(by Cauchy-Schwarz) }&\leq &2\left(\frac{1}{t}\int_{\partial B_t(0)}\xi_i^2\right)^{1/2}\left(\frac{1}{t}\int_{\partial B_t(0)}|d\xi_i|^2\right)^{1/2},
\end{eqnarray*}
or equivalently,
$$\frac{d}{dt}\left(\frac{1}{t}\int_{\partial B_t(0)}\xi_i^2\right)^{1/2}\leq \left(\frac{1}{t}\int_{\partial B_t(0)}|d\xi_i|^2\right)^{1/2}.$$
Since $\xi_i|_{\partial B_{k^{-3/2}}(0)}\equiv 0$, integrating the above over $t\in \left[k^{-3/2},s\right]$ gives
\begin{eqnarray*}
\left(\frac{1}{s}\int_{\partial B_s(0)}\xi_i^2\right)^{1/2}&\leq &\int_{k^{-3/2}}^s t^{-1/2}\left(\int_{\partial B_t(0)}|d\xi_i|^2\right)^{1/2}dt\\
\text{(by Cauchy-Schwarz) }&\leq &\left(\int_{k^{-3/2}}^s\frac{1}{t}dt\right)^{1/2}\left(\int_{k^{-3/2}}^s\int_{\partial B_t(0)}|d\xi_i|^2dt\right)^{1/2}\\
&=&\sqrt{\log(s/k^{-3/2})}\left(\int_{B_s(0)\setminus B_{k^{-3/2}}(0)}|d\xi_i|^2\right)^{1/2},
\end{eqnarray*}
which we can rearrange to see that
\begin{eqnarray*}
\int_{\partial B_s(0)}\xi_i^2&\leq & s \log(s k^{3/2})\int_{B_s(0)\setminus B_{k^{-3/2}}(0)}|d\xi_i|^2\\
&\leq &s\log(k)\int_{B_{Ck^{-1/2}}(0)\setminus B_{k^{-3/2}}(0)}|d\xi_i|^2.
\end{eqnarray*}
Integrating once more over $s\in \left[k^{-3/2},Ck^{-1/2}\right]$ gives
$$
\int_{B_{Ck^{-1/2}}(0)\setminus B_{k^{-3/2}}(0)}\xi_i^2\leq\frac{C^2\log(k)}{2k}\int_{B_{Ck^{-1/2}}(0)\setminus B_{k^{-3/2}}(0)}|d\xi_i|^2,
$$
and using \eqref{psi.xi.comp}, it follows that
$$
\int_{U_i}\overline{\psi}_k^2\,dv_{g_0}\leq C(M,g_0)\frac{\log k}{k}\int_{U_i}|d\overline{\psi}_k|_{g_0}^2\,dv_{g_0}.
$$
Summing over $i=1,\ldots,k$ and applying \eqref{loc.fin.est}, we then conclude that
\begin{equation}\label{l2ext.est}
\int_{\Omega_k}\psi_k^2dv_{g_0}\leq C\frac{\log k}{k}\int_{\Omega_k}|d\psi_k|^2_{g_0}dv_{g_0}.
\end{equation}

On the other hand, by \eqref{psidef} (and the fact that $\|f\|_{C^2}\leq C(M,g_0)$), we have
$$
\int_{\Omega_k}|d\psi_k|^2\,dv_{g_0}=-\int_{\Omega_k}f\psi_k\,dv_{g_0}\leq C(M,g_0)\|\psi_k\|_{L^2(\Omega_k)},
$$
so that
$$
\|\psi_k\|_{L^2(\Omega_k)}^2\leq C\frac{\log k}{k}\int_{\Omega_k}|d\psi_k|_{g_0}^2\,dv_{g_0}\leq C'\frac{\log k}{k}\|\psi_k\|_{L^2(\Omega_k)},
$$
from which the desired estimate follows.
\end{proof}

\begin{remark}\label{ext.rk} Note that the proof of the preliminary estimate \eqref{l2ext.est} does not require that $\psi_k$ is a solution of \eqref{psidef}, and only uses that $\psi_k\in W_0^{1,2}(\Omega_k)$. Thus, we see that
$$
\int_{\Omega_k}\varphi^2\,dv_{g_0}\leq C \frac{\log k}{k}\int_{\Omega_k}|d\varphi|^2_{g_0}\,dv_{g_0}
$$
holds for any $\varphi\in W_0^{1,2}(\Omega_k)$. 
\end{remark}

In the proof of Proposition \ref{construct}, the following (non-sharp) $L^{\infty}$ estimate for $\psi_k$ will also be useful.

\begin{lemma}\label{psi.linfty} The function $\psi_k\in C^{\infty}(\Omega_k)$ given by \eqref{psidef} satisfies a bound of the form
$$\|\psi_k\|_{L^{\infty}}\leq C\sqrt{\frac{\log k}{k}}$$
for some constant $C=C(M,g_0)$.
\end{lemma}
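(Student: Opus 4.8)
The plan is to prove the stronger assertion that $w:=-\psi_k$ satisfies $\|w\|_{L^\infty(\Omega_k)}\leq C\sqrt{\log k/k}$, by running a Moser-type iteration fueled by the strong Poincar\'e inequality of Remark~\ref{ext.rk}. Throughout, write $\delta_k:=\tfrac{\log k}{k}$. First I would record the elementary reductions. Since $\Delta_{g_0}\psi_k=-f\leq 0$ in $\Omega_k$ and $\psi_k|_{\partial\Omega_k}=0$, the maximum principle gives $\psi_k\leq 0$, so $w=-\psi_k\geq 0$; and since $\psi_k\in C^\infty(\overline{\Omega_k})$ it suffices to bound $\|w\|_{L^\infty(\Omega_k)}$, where $w$ solves $\Delta_{g_0}w=f$, $w|_{\partial\Omega_k}=0$, with $0\leq f$ and $\|f\|_{C^0}\leq C(M,g_0)$. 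By Remark~\ref{ext.rk} there is $C_*=C_*(M,g_0)$ with $\|\varphi\|_{L^2(\Omega_k)}^2\leq C_*\delta_k\|d\varphi\|_{L^2(\Omega_k)}^2$ for all $\varphi\in W_0^{1,2}(\Omega_k)$; combining this with the two-dimensional Gagliardo--Nirenberg inequality $\|\varphi\|_{L^4}^2\leq C(\|\varphi\|_{L^2}\|d\varphi\|_{L^2}+\|\varphi\|_{L^2}^2)$ on $(M,g_0)$ yields, for $k$ large and all $\varphi\in W_0^{1,2}(\Omega_k)$,
\begin{equation}
\|\varphi\|_{L^4(\Omega_k)}\leq C\,\delta_k^{1/4}\,\|d\varphi\|_{L^2(\Omega_k)}.
\end{equation}

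Next I would carry out the iteration. For $\beta=2^{n-1}$ with $n\geq 1$, testing $\Delta_{g_0}w=f$ against $w^{2\beta-1}\in W_0^{1,2}(\Omega_k)$ and using $|d(w^\beta)|^2=\beta^2 w^{2\beta-2}|dw|^2$ gives the energy identity $\tfrac{2\beta-1}{\beta^2}\int_{\Omega_k}|d(w^\beta)|^2\,dv_{g_0}=\int_{\Omega_k}f\,w^{2\beta-1}\,dv_{g_0}$. Bounding the left side from below by the displayed Sobolev inequality applied to $\varphi=w^\beta$, bounding the right side from above using $\|f\|_{C^0}\leq C$, H\"older's inequality and $\area(M,g_0)=1$, and simplifying with $\tfrac{\beta^2}{2\beta-1}\leq\beta$, one arrives at the recursion
\begin{equation}
\|w\|_{L^{2^{n+1}}(\Omega_k)}\leq\big(C\,2^{n}\,\delta_k^{1/2}\big)^{2^{-n}}\,\|w\|_{L^{2^n}(\Omega_k)}^{\,1-2^{-n}}.
\end{equation}

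The key observation is then that setting $b_n:=\delta_k^{-1/2}\|w\|_{L^{2^n}(\Omega_k)}$ turns this into $b_{n+1}\leq(C2^n)^{2^{-n}}b_n^{1-2^{-n}}$, which no longer involves $k$; since $\sum_n 2^{-n}\log(C2^n)<\infty$, iterating this recursion bounds $b_n$ by a fixed constant for all $n$, provided $b_1\leq 1$. But $b_1=\delta_k^{-1/2}\|\psi_k\|_{L^2(\Omega_k)}\leq C\delta_k^{1/2}\to 0$ by Lemma~\ref{psi.est}, so indeed $b_1\leq 1$ for $k$ large and $\sup_n b_n\leq C(M,g_0)$. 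Letting $n\to\infty$ and using $\|w\|_{L^{2^n}}\to\|w\|_{L^\infty}$ (as $\area(\Omega_k)\leq 1$) gives $\|\psi_k\|_{L^\infty(\Omega_k)}=\|w\|_{L^\infty(\Omega_k)}\leq C\delta_k^{1/2}$ for $k$ large, the finitely many remaining values of $k$ being absorbed into the constant.

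The main obstacle is that in dimension two $W^{1,2}$ does not embed into $L^\infty$, so neither the Poincar\'e inequality alone nor the standard interior elliptic estimates (which are defeated by the $O(1)$ inhomogeneity $f$) give any improvement over Lemma~\ref{psi.est}. The crux of the argument above is that $\Omega_k$ supports \emph{both} the ordinary two-dimensional Sobolev inequality \emph{and} a Poincar\'e inequality with the tiny constant $\sim\delta_k$, and that rescaling by $\delta_k^{1/2}$ renders the resulting Moser iteration independent of $k$; it is precisely this decoupling, combined with the fact that the seed $b_1$ tends to $0$, that produces the $\sqrt{\log k/k}$ bound. (A sharper $O(\log k/k)$ bound should follow from a global supersolution barrier modeled on the torsion function of an annulus, but constructing such a barrier cleanly requires handling the Voronoi structure of the point configuration, and the weaker estimate above suffices for the proof of Proposition~\ref{construct}.)
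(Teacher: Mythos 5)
Your argument is correct and follows essentially the same route as the paper: both derive the Sobolev inequality $\|\varphi\|_{L^4}^2\leq C\sqrt{\tfrac{\log k}{k}}\,\|d\varphi\|_{L^2}^2$ from Remark~\ref{ext.rk}, test $\Delta\psi_k=-f$ against $|\psi_k|^{2p-1}$ to get the same $L^{2p}\to L^{4p}$ recursion, and close a Moser iteration seeded by Lemma~\ref{psi.est}. The only (cosmetic) difference is how the iteration is launched --- you rescale by $\delta_k^{-1/2}$ to make the recursion $k$-independent, whereas the paper instead starts at the threshold exponent $q_0$ where $\|\psi_k\|_{L^{q_0}}=\sqrt{\log k/k}$ --- but these are two bookkeeping devices for the same normalization.
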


\begin{proof} We proceed by a Moser iteration-type argument. For any $\varphi\in W_0^{1,2}(\Omega_k)$, we may extend $\varphi$ to all of $M$ by setting $\varphi\equiv 0$ on $M\setminus\Omega_k$, and apply the Sobolev embedding theorem for $W^{1,1}(M,g_0)\to L^2(M,g_0)$ to the square $\varphi^2$ to see that
\begin{eqnarray*}
\|\varphi^2\|_{L^2}&\leq & C\|\varphi^2\|_{L^1}+C\|d(\varphi^2)\|_{L^1}\\
&=&C\int_M\varphi^2+2C\int|\varphi||d\varphi|\\
&\leq & C\left(\|\varphi\|_{L^2}^2+\|\varphi\|_{L^2}\|d\varphi\|_{L^2}\right).
\end{eqnarray*}
Moreover, using Remark \ref{ext.rk} to bound the $\|\varphi\|_{L^2}$ terms, it follows that
\begin{equation}\label{omega.sob}
\|\varphi\|_{L^4}^2\leq C\sqrt{\frac{\log k}{k}}\|d\varphi\|^2_{L^2}
\end{equation}
for all $\varphi\in W_0^{1,2}(\Omega_k)$.

Now, for each integer $p\geq 1$, recalling that $\Delta |\psi_k|=-\Delta\psi_k=f$, we compute
\begin{equation*}
\begin{split}
\int_{\Omega_k}|d(\psi_k^p)|^2 & =\int_{\Omega_k}p^2|\psi_k|^{2p-2}|d\psi_k|^2
=\frac{p^2}{2p-1}\int_{\Omega_k}\langle d\left(|\psi_k|^{2p-1}\right),d\psi_k\rangle=\\
&=\frac{p^2}{2p-1}\int_{\Omega_k}|\psi_k|^{2p-1}\Delta|\psi_k|
=\frac{p^2}{2p-1}\int_{\Omega_k}f|\psi_k|^{2p-1}
\leq  \\
&\leqslant C\frac{p^2}{2p-1}\|\psi_k\|_{L^{2p}}^{2p-1}\area(M)^{\frac{1}{2p}}
\end{split}
\end{equation*}
which together with \eqref{omega.sob} (taking $\varphi=\psi_k^p$) yields
\begin{equation}\label{iter.est}
\|\psi_k\|_{L^{4p}}^{2p}\leq C_0p\sqrt{\frac{\log k}{k}}\|\psi_k\|_{L^{2p}}^{2p-1}.
\end{equation}
Next, set
$$
q_0:=\sup\left\{q\in (1,\infty) \mid \|\psi_k\|_{L^q}\leq \sqrt{\frac{\log k}{k}}\right\};
$$
we know from Lemma \ref{psi.est} that $q_0>2$ (for $k\geq k_0(M,g_0)$ sufficiently large), and if $q_0=\infty$, then it follows that $\|\psi_k\|_{L^{\infty}}=\lim_{q\to \infty}\|\psi_k\|_{L^q}\leq \sqrt{\frac{\log k}{k}}$, giving the desired estimate. Thus, we can assume without loss of generality that there is a finite $q_0\in [2,\infty)$ such that
$$
\|\psi_k\|_{L^{q_0}}=\sqrt{\frac{\log k}{k}}\text{ and }\|\psi_k\|_{L^q}>\sqrt{\frac{\log k}{k}}\text{ for all }q>q_0.
$$
Now, taking $p=q/2$ for $q\geq q_0$ in \eqref{iter.est} gives
$$
\|\psi_k\|_{L^{2q}}^q\leq \frac{C_0q}{2}\sqrt{\frac{\log k}{k}}\|\psi_k\|_{L^q}^{q-1}\leq \frac{C_0q}{2}\|\psi_k\|_{L^q}^q.
$$
In particular, taking the $q$-th root of both sides gives
\begin{equation}
\|\psi_k\|_{L^{2q}}\leq (C_0q/2)^{1/q}\|\psi_k\|_{L^q}
\end{equation}
for all $q\geq q_0$, and the standard iteration argument starting at $q=q_0$ then gives, for all $j\in \mathbb{N}$
\begin{equation}\label{moser.est}
\|\psi_k\|_{L^{2^{\ell}q_0}}\leq \left(\prod_{j=0}^{\ell-1}\left(C_02^{j-1}q_0\right)^{\frac{1}{2^jq_0}}\right)\|\psi_k\|_{L^{q_0}}.
\end{equation}
Taking the logarithm of the product term on the right-hand side of \eqref{moser.est}, we see that
\begin{equation*}
\begin{split}
\log & \left(\Pi_{j=0}^{\ell-1}[C_02^{j-1}q_0]^{\frac{1}{2^jq_0}}\right)=\sum_{j=0}^{\ell-1}\frac{\log(C_0)+\log(q_0)+(j-1)\log(2)}{2^jq_0}\leqslant\\
&\leq \frac{1}{q_0}(\log(C_0)+\log(q_0))\sum_{j=0}^{\infty}\frac{1}{2^j}+\frac{\log(2)}{q_0}\sum_{j=0}^{\infty}\frac{j-1}{2^j}
=\\
&=\frac{2\left(\log(C_0)+\log(q_0)\right)}{q_0}\leq C_1,
\end{split}
\end{equation*}
where in the final inequality we used the fact that $q_0\geq 2$ and the boundedness of $\frac{\log x}{x}$ over $1\leq x<\infty$. Returning to \eqref{moser.est}, it follows that
$$
\|\psi_k\|_{L^{2^{\ell}q_0}}\leq e^{C_1}\|\psi_k\|_{L^{q_0}}=e^{C_1}\sqrt{\frac{\log k}{k}},
$$
and taking $\ell\to\infty$ yields $\|\psi_k\|_{L^{\infty}}\leq e^{C_1}\sqrt{\frac{\log k}{k}}$, as desired.

\end{proof}

\subsection{Proof of Proposition \ref{construct}}

With Lemma \ref{psi.est} in place, we next show that the restriction to $\partial\Omega_k$ of the first $\Delta_{g_{\max}}$-eigenfunctions
$$
V:=\left\{\phi\in C^{\infty}(M)\mid \Delta_{g_0}\phi=\Lambda_1f\phi\right\}
$$
are $O\left(\frac{\log k}{k}\right)$-quasimodes of the Dirichlet-to-Neumann operator on $(\Omega_k,\tilde{g}_k)$ with eigenvalue $\Lambda_1$, and use this to deduce the existence of at least $\dim(V)$ Steklov eigenvalues on $(\Omega_k,\tilde{g}_k)$ in the range $\left[\Lambda_1-C\frac{\log k}{k},\Lambda_1+C\frac{\log k}{k}\right]$.

For convenience we consider the norm adapted to the Steklov problem on $(\Omega_k,\tilde g_k)$: for any harmonic $\chi\in W^{1,2}(\Omega_k)$, we set
$$
\|\chi\|_{\mL_k}:=\|\chi\|_{L^2(\partial\Omega_k,\tilde g_k)} + \|d\chi\|_{L^2(\Omega_k,g_0)}.
$$

\begin{lemma}\label{quasimode.lem} For any $\phi\in V$ and any harmonic function $\chi\in W^{1,2}(\Omega_k)$, we have
\begin{equation}
\label{phiquasi}
\left|\int_{\Omega_k}\langle d\phi,d\chi\rangle-\Lambda_1\int_{\partial\Omega_k}\beta_k \chi\phi\right|\leq C\frac{\log k}{k}\|d\phi\|_{L^2(M)}\|\chi\|_{\mL_k}.
\end{equation}
Moreover, for any $\phi\in V$, we have
\begin{equation}\label{phi.quot}
\|d\phi\|_{L^2(M)}^2\leq \left(\Lambda_1+C\frac{\log k}{k}\right)\int_{\partial\Omega_k}\beta_k\phi^2,
\end{equation}

In particular \eqref{phiquasi} can be rewritten as
\begin{equation}
\label{phiquasi.2}
\left|\int_{\Omega_k}\langle d\phi,d\chi\rangle-\Lambda_1\int_{\partial\Omega_k}\beta_k \chi\phi\right|\leq C'\frac{\log k}{k}\|\phi\|_{L^2(\partial\Omega_k,\tilde{g}_k)}\|\chi\|_{\mL_k}.
\end{equation}
\end{lemma}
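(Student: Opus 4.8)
\emph{Overview.} The plan is to run everything off the characterization \eqref{betachar} of $\beta_k$, used in the form
\[
\int_{\partial\Omega_k}\varphi\,\beta_k\,ds_{g_0}=\int_{\Omega_k}\langle d\psi_k,d\varphi\rangle\,dv_{g_0}+\int_{\Omega_k}f\varphi\,dv_{g_0},\qquad\varphi\in W^{1,2}(\Omega_k),
\]
a direct consequence of \eqref{psidef}. To prove \eqref{phiquasi} I first integrate $\int_{\Omega_k}\langle d\phi,d\chi\rangle$ by parts using $\Delta_{g_0}\phi=\Lambda_1 f\phi$, obtaining $\int_{\Omega_k}\langle d\phi,d\chi\rangle=\Lambda_1\int_{\Omega_k}f\phi\chi+\int_{\partial\Omega_k}\chi\,\partial_\nu\phi\,ds_{g_0}$, and then use the displayed identity with $\varphi=\phi\chi$ to rewrite $\Lambda_1\int_{\Omega_k}f\phi\chi=\Lambda_1\int_{\partial\Omega_k}\beta_k\phi\chi-\Lambda_1\int_{\Omega_k}\langle d\psi_k,d(\phi\chi)\rangle$. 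Thus the left-hand side of \eqref{phiquasi} equals $-\Lambda_1\int_{\Omega_k}\langle d\psi_k,d(\phi\chi)\rangle+\int_{\partial\Omega_k}\chi\,\partial_\nu\phi\,ds_{g_0}$, and it suffices to bound each summand by $C\frac{\log k}{k}\|d\phi\|_{L^2(M)}\|\chi\|_{\mL_k}$.

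\emph{The interior term, and where the $\frac{\log k}{k}$ rate comes from.} For the first summand I integrate by parts once more to strip all derivatives off $\psi_k$: since $\psi_k|_{\partial\Omega_k}=0$, $\chi$ is harmonic and $\Delta_{g_0}\phi=\Lambda_1 f\phi$, a short computation gives $\int_{\Omega_k}\langle d\psi_k,d(\phi\chi)\rangle=-2\int_{\Omega_k}\psi_k\langle d\phi,d\chi\rangle+\Lambda_1\int_{\Omega_k}\psi_k f\phi\chi$. Crucially, $\psi_k$ now appears undifferentiated, so I can use the \emph{strong} bound $\|\psi_k\|_{L^2(\Omega_k)}\le C\frac{\log k}{k}$ from Lemma~\ref{psi.est} rather than the weaker $\|d\psi_k\|_{L^2(\Omega_k)}\le C(\frac{\log k}{k})^{1/2}$ (all that follows from $\|d\psi_k\|_{L^2}^2=-\int_{\Omega_k}f\psi_k$) --- this is exactly the point at which the sharp exponent enters. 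Combined with $\|\phi\|_{C^2}\le C\|d\phi\|_{L^2(M)}$ from \eqref{vests}, Cauchy--Schwarz gives the bound $C\frac{\log k}{k}\|d\phi\|_{L^2(M)}\big(\|d\chi\|_{L^2(\Omega_k)}+\|\chi\|_{L^2(\Omega_k)}\big)$, so it remains to prove a Poincaré-type estimate $\|\chi\|_{L^2(\Omega_k)}\le C\|\chi\|_{\mL_k}$ uniform in $k$ for harmonic $\chi$. For this, testing the displayed identity with $\varphi=\chi$ and using harmonicity yields the useful fact $\int_{\partial\Omega_k}\chi\,ds_{\tilde g_k}=\int_{\Omega_k}f\chi$; writing $\chi=a+\chi_0$ with $a$ the $ds_{\tilde g_k}$-average of $\chi$, one has $|a|\le C\|\chi\|_{L^2(\partial\Omega_k,\tilde g_k)}$ by \eqref{beta.mass}, while $\chi_0$ then satisfies $\int_{\Omega_k}f\chi_0=0$, so extending $\chi_0$ to $M$ via Lemma~\ref{ext.lem}, applying the Poincaré inequality on $(M,g_0)$, and using $\int_{\Omega_k}f\ge\frac12$ to pin the mean, one gets $\|\chi_0\|_{L^2(\Omega_k)}\le C\|d\chi_0\|_{L^2(\Omega_k)}$.

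\emph{The boundary term --- the main obstacle.} Writing $\partial\Omega_k=\bigsqcup_j\partial B_j$ and splitting $\chi|_{\partial B_j}$ into its arc-length average $\bar\chi_j$ and the remainder, I use the divergence theorem for $\phi$ on $B_j$ (so $\int_{\partial B_j}\partial_\nu\phi\,ds_{g_0}=\Lambda_1\int_{B_j}f\phi$) to get
\[
\int_{\partial\Omega_k}\chi\,\partial_\nu\phi\,ds_{g_0}=\Lambda_1\sum_j\bar\chi_j\int_{B_j}f\phi\;+\;\sum_j\int_{\partial B_j}(\chi-\bar\chi_j)\,\partial_\nu\phi\,ds_{g_0}.
\]
In the second sum I bound $\|\chi-\bar\chi_j\|_{L^2(\partial B_j,g_0)}\le Cr_j^{1/2}\|d\chi\|_{L^2(B_{2r_j}(p_j)\setminus B_{r_j}(p_j))}$ by a scale-invariant trace/Poincaré estimate on the collar annulus and $\|\partial_\nu\phi\|_{L^\infty}\le\|d\phi\|_{C^0}$; after Cauchy--Schwarz over $j$ this is $O(k^{-1})\|d\phi\|_{L^2(M)}\|\chi\|_{\mL_k}$. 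In the first sum, $|\int_{B_j}f\phi|\le C\|d\phi\|_{L^2(M)}r_j^2$ (by \eqref{vests} and Lemma~\ref{std.disk.lem}) and $|\bar\chi_j|\le Cr_j^{-1/2}\|\chi\|_{L^2(\partial B_j,g_0)}$, so Cauchy--Schwarz over $j$ bounds it by $C\big(\sum_j r_j^3\big)^{1/2}\|d\phi\|_{L^2(M)}\|\chi\|_{L^2(\partial\Omega_k,g_0)}=Ck^{-7/4}\|d\phi\|_{L^2(M)}\|\chi\|_{L^2(\partial\Omega_k,g_0)}$. The genuinely delicate step is then to control $\|\chi\|_{L^2(\partial\Omega_k,g_0)}$ by $\|\chi\|_{\mL_k}$ with only a polynomial loss in $k$: I would use a scaled trace inequality on each collar annulus together with the Sobolev embedding $W^{1,2}(\Omega_k)\hookrightarrow L^4(\Omega_k)$ with constant uniform in $k$ (obtained, once more, by extending to $M$ via Lemma~\ref{ext.lem}), which costs only a factor $k^{1/4}$ --- safely absorbed by the $k^{-7/4}$. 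More broadly, the main obstacle throughout is establishing all these Poincaré/trace/Sobolev and extension estimates on the perforated domains $\Omega_k$ with constants independent of $k$; this is also why the holes are chosen of the fixed, sufficiently large negative power $k^{-3/2}$ as in \eqref{lbsec.omegadef}.

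\emph{The Rayleigh quotient bound, and \eqref{phiquasi.2}.} Inequality \eqref{phi.quot} is easier, since no boundary term appears: from the eigenvalue equation on $M$, $\|d\phi\|_{L^2(M)}^2=\Lambda_1\int_M f\phi^2$; the tail $\int_{M\setminus\Omega_k}f\phi^2$ is $O(k^{-2})\|d\phi\|_{L^2(M)}^2$ by \eqref{vests} and $|M\setminus\Omega_k|\le Ck^{-2}$; and the displayed identity with $\varphi=\phi^2$, followed by one integration by parts (again moving the derivative off $\psi_k$, giving $-2\int_{\Omega_k}\psi_k|d\phi|^2+2\Lambda_1\int_{\Omega_k}\psi_k f\phi^2$), bounds $\big|\int_{\Omega_k}f\phi^2-\int_{\partial\Omega_k}\beta_k\phi^2\big|$ by $C\frac{\log k}{k}\|d\phi\|_{L^2(M)}^2$ via Lemma~\ref{psi.est} and \eqref{vests}. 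Rearranging and absorbing the $O(\frac{\log k}{k})$ error for $k$ large yields \eqref{phi.quot}. In particular, since $\int_{\partial\Omega_k}\beta_k\phi^2=\int_{\partial\Omega_k}\phi^2\,ds_{\tilde g_k}$, this gives $\|d\phi\|_{L^2(M)}\le C\|\phi\|_{L^2(\partial\Omega_k,\tilde g_k)}$, and substituting into the right-hand side of \eqref{phiquasi} produces \eqref{phiquasi.2}.
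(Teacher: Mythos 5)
Your proof is correct and arrives at the same decomposition as the paper: after integrating by parts, both proofs split the error $\int_{\Omega_k}\langle d\phi,d\chi\rangle-\Lambda_1\int_{\partial\Omega_k}\beta_k\phi\chi$ into the boundary integral $\int_{\partial\Omega_k}\chi\,\partial_\nu\phi\,ds_{g_0}$ plus a bulk term of the form $\int_{\Omega_k}\psi_k\cdot(\cdots)$ in which $\psi_k$ appears undifferentiated, and both then exploit $\|\psi_k\|_{L^2(\Omega_k)}\lesssim\frac{\log k}{k}$. The real divergence is in how the boundary integral is handled. The paper transforms $\int_{\partial\Omega_k}\chi\,\partial_\nu\phi$ back into a bulk integral $\int_{M\setminus\Omega_k}\bigl(\langle d\hat\chi,d\phi\rangle-\Lambda_1 f\phi\hat\chi\bigr)$ by another application of the divergence theorem to the harmonic extension $\hat\chi$ of $\chi$; this is then trivially $O(k^{-1})\|d\phi\|\|\chi\|_{W^{1,2}(\Omega_k)}$ because $\area(M\setminus\Omega_k)=O(k^{-2})$ and Lemma~\ref{ext.lem} controls $\hat\chi$. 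You instead decompose $\chi|_{\partial B_j}$ into its mean plus fluctuation circle by circle, invoke a scaled trace/Poincar\'e inequality on each collar annulus, and lean on a $W^{1,2}\hookrightarrow L^4$ embedding with $k$-uniform constant to absorb a $k^{1/4}$ loss against your $k^{-7/4}$ gain --- an argument you correctly identify as delicate, but which is considerably more laborious than the paper's one-line estimate; in particular, the thing you call ``the main obstacle'' is dispatched essentially for free by the extension trick. One other genuine difference: for the $\mL_k$-norm conversion you establish a Poincar\'e estimate $\|\chi\|_{L^2(\Omega_k,g_0)}\lesssim\|\chi\|_{\mL_k}$ via the balancing identity $\int_{\Omega_k}f\chi_0=0$ (a nice, self-contained argument), whereas the paper uses the integration-by-parts identity $\|\chi\|^2_{L^2(\Omega_k,g_{\max})}=\int_{\partial\Omega_k}\beta_k\chi^2+2\int_{\Omega_k}\psi_k|d\chi|^2$ together with Lemma~\ref{psi.linfty}; your route has the advantage of making explicit a step the paper leaves implicit when passing from $\|\chi\|_{W^{1,2}(\Omega_k)}$ in \eqref{comp.est} to $\|\chi\|_{\mL_k}$. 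Your treatment of \eqref{phi.quot} and of the passage to \eqref{phiquasi.2} matches the paper.
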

\begin{proof} Given $\phi\in V$, let $\chi\in W^{1,2}(\Omega_k)$ be harmonic, and let $\hat{\chi}\in W^{1,2}(M)$ be the harmonic extension to $M\setminus\Omega_k$; recall that
\begin{equation}
\label{chi.ext.bd}
\|\hat{\chi}\|_{W^{1,2}(M)}\leq C\|\chi\|_{W^{1,2}(\Omega_k)},
\end{equation}
by Lemma \ref{ext.lem}. Next, note that
\begin{equation*}
\begin{split}
&\left|\int_{\Omega_k}\left(\langle d\phi,d\chi\rangle -\Lambda_1f\phi\chi\right)\right|=\left|\int_{\Omega_k}\mathrm{div}(\chi d\phi)\right|
=\left|\int_{\partial\Omega_k}\chi\frac{\partial\phi}{\partial\nu}\right|=\\
&=\left|\int_{M\setminus \Omega_k}\left(\langle d\hat{\chi},d\phi\rangle-\Lambda_1f \phi\hat{\chi}\right)\right|
\leq \|\phi\|_{C^1}|M\setminus \Omega_k|^{1/2}\|\hat{\chi}\|_{W^{1,2}(M)}.
\end{split}
\end{equation*}
In particular, by \eqref{vests}, \eqref{chi.ext.bd}, and the fact that
$$
\area(M\setminus \Omega_k)\leq \sum_{j=1}^k\area(B_{k^{-3/2}}(p_j))\leq C k^{1-\frac{2\cdot 3}{2}}=\frac{C}{k^2},
$$
it follows that
\begin{equation}\label{comp.est}
\left|\int_{\Omega_k}\left(\langle d\phi,d\chi\rangle-\Lambda_1f\phi \chi\right)\right|\leq \frac{C}{k} \|d\phi\|_{L^2}\|\chi\|_{W^{1,2}(\Omega_k)}.
\end{equation}
Next, recalling the definition \eqref{psidef} of $\psi_k$ and $\beta_k=\frac{\partial\psi_k}{\partial\nu}$, and keeping in mind that $\chi$ is harmonic, we compute
\begin{eqnarray*}
\int_{\partial\Omega_k}\beta_k \phi\chi&=&\int_{\partial\Omega_k}\frac{\partial\psi_k}{\partial\nu}\phi\chi\\
&=&\int_{\Omega_k}(-\Delta\psi_k)\phi\chi+\int_{\Omega_k}\langle d\psi_k,d(\phi\chi)\ra\\
&=&\int_{\Omega_k}f\phi \chi+\int_{\Omega_k}\psi_k \Delta(\phi\chi)\\
&=&\int_{\Omega_k}f\phi\chi+\int_{\Omega_k}\psi_k\left(f\phi\chi-2\langle d\phi,d\chi\rangle\right),
\end{eqnarray*}
so that
\begin{equation*}
\begin{split}
&\left|\int_{\partial\Omega_k}\beta_k\phi\chi-\int_{\Omega_k}f\phi\chi\right|\leq \left|\int_{\Omega_k}\psi_k\left(f\phi\chi-2\langle d\phi,d\chi\rangle\right)\right|\leq\\
&\leq C\|\psi_k\|_{L^2(\Omega_k)}\|\phi\|_{C^1}\left(\|d\chi\|_{L^2(\Omega_k)} + \|\chi\|_{L^2(\Omega_k,g_{\max})}\right).
\end{split}
\end{equation*}
To relate the right hand side to the $\mL_k$-norm, we write
\begin{equation*}
\begin{split}
 \|\chi\|^2_{L^2(\Omega_k,g_{\max})} = \int _{\Omega_k}f\chi^2 = -\int_{\Omega_k}\chi^2\Delta\psi_k = \int_{\partial\Omega_k}\beta_k\chi^2 + 2\int_{\Omega_k}\psi_k|d\chi|^2.
\end{split}
\end{equation*}
As a result, Lemma~\ref{psi.linfty} implies that $ \|\chi\|^2_{L^2(\Omega_k,g_{\max})} \leqslant C \|\chi\|_{\mL_k}$.
Combining with \eqref{comp.est},~\eqref{vests} and Lemma~\ref{psi.est}, this implies that
\begin{equation}
\left|\int_{\Omega_k}\langle d\phi,d\chi\rangle-\Lambda_1\int_{\partial\Omega_k}\beta_k\phi\chi\right|\leq C\frac{\log k}{k}\|d\phi\|_{L^2(M)}\|\chi\|_{\mL_k}
\end{equation}
for any harmonic $\chi\in W^{1,2}(\Omega_k)$, as desired.

To prove \eqref{phi.quot}, first note that
\begin{eqnarray*}
\|d\phi\|_{L^2(M)}^2&=&\Lambda_1\int_Mf\phi^2
=\Lambda_1\int_{\Omega_k}f\phi^2+\Lambda_1\int_{M\setminus\Omega_k}f\phi^2\\
\text{(by \eqref{vests}) }&\leq &\Lambda_1\int_{\Omega_k}f\phi^2+C\|d\phi\|^2_{L^2(M)}\area(M\setminus\Omega_k)\\
&\leq & \Lambda_1\int_{\Omega_k}f\phi^2+C'\|d\phi\|_{L^2(M)}^2\cdot \frac{1}{k^2},
\end{eqnarray*}
so that
$$
\left(1-\frac{C'}{k^2}\right)\|d\phi\|_{L^2(M)}^2\leq \Lambda_1\int_{\Omega_k}f\phi^2.
$$
On the other hand, we see that
\begin{eqnarray*}
\int_{\Omega_k}f\phi^2&=&\int_{\Omega_k}(-\Delta \psi_k)\phi^2\\
&=&\int_{\Omega_k}\left(\mathrm{div}(\phi^2 d\psi_k)-\langle d\psi_k,d(\phi^2)\rangle\right)\\
&=&\int_{\partial\Omega_k}\frac{\partial\psi_k}{\partial\nu}\phi^2-\int_{\Omega_k}\psi_k \Delta(\phi^2),
\end{eqnarray*}
so that
\begin{eqnarray*}
\int_{\Omega_k}f\phi^2-\int_{\partial\Omega_k}\beta_k\phi^2&=&-\int_{\Omega_k}\psi_k\Delta(\phi^2)\\
&\leq & C\|\psi_k\|_{L^2(\Omega_k)}\|\phi\|_{C^2}^2,
\end{eqnarray*}
and by \eqref{vests} and Lemma \ref{psi.est}, it follows that
$$
\int_{\Omega_k}f\phi^2\leq \int_{\partial\Omega_k}\beta_k\phi^2+C'\frac{\log k}{k}\|d\phi\|_{L^2(M)}^2.
$$
Combining this with the preceding estimates, we deduce that
$$
\left(1-\frac{C'}{k^2}\right)\|d\phi\|_{L^2(M)}^2\leq \Lambda_1\int_{\partial\Omega_k}\beta_k\phi^2+C'\frac{\log k}{k}\|d\phi\|_{L^2(M)}^2,
$$
and consequently
$$
\left(1-\frac{C''\log k}{k}\right)\|d\phi\|_{L^2(M)}^2\leq \Lambda_1\int_{\partial\Omega_k}\beta_k\phi^2,
$$
from which \eqref{phi.quot} readily follows.

\end{proof}

Now, denote by
$$0=\sigma_0(\Omega_k,\tilde{g}_k)<\sigma_1(\Omega_k,\tilde{g}_k)<\cdots$$
the Steklov spectrum of $(\Omega_k,\tilde{g}_k)$, and let $\varphi_0,\varphi_1,\ldots$ be an associated collection of eigenfunctions, normalized so that
$$\int_{\partial\Omega_k}\beta_k \varphi_i\varphi_j=\delta_{ij}.$$
For any $\eta>0$, consider the space
$$
W_{\eta}:=\mathrm{Span}\{\varphi_i\mid \Lambda_1-\eta\leq \sigma_i\leq \Lambda_1+\eta \}
$$
spanned by all Steklov eigenspaces of $(\Omega_k,\tilde{g}_k)$ corresponding to eigenvalues in $\left[\Lambda_1-\eta,\Lambda_1+\eta\right]$. Using the preceding lemma, we can prove the following.

\begin{lemma}\label{band.mult} There exists a constant $C_2(M,g_0)$ such that 
$$
\dim\left(W_{C_2\frac{\log k}{k}}\right)\geq \dim V.
$$
\end{lemma}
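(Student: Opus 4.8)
The plan is to extract from Lemma~\ref{quasimode.lem} a $(\dim V)$-dimensional space of genuine test functions for the Steklov problem on $(\Omega_k,\tilde g_k)$ that is ``almost spanned'' by eigenfunctions with eigenvalue near $\Lambda_1$, and then to conclude by a spectral-projection argument. First I would fix the Hilbert-space framework: let $H_k\subset W^{1,2}(\Omega_k)$ be the space of $g_0$-harmonic functions, equipped with the inner product
$$
\langle\chi,\chi'\rangle_b:=\int_{\partial\Omega_k}\beta_k\,\chi\chi'\,ds_{g_0},
$$
which is positive definite since $\beta_k>0$ on $\partial\Omega_k$, and set $q(\chi,\chi'):=\int_{\Omega_k}\langle d\chi,d\chi'\rangle_{g_0}$, so that $\|\chi\|_{\mL_k}=\langle\chi,\chi\rangle_b^{1/2}+q(\chi,\chi)^{1/2}$. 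Since the trace map $H_k\to L^2(\partial\Omega_k,\beta_k\,ds_{g_0})$ is compact, diagonalizing $q$ with respect to $\langle\cdot,\cdot\rangle_b$ recovers the Steklov spectrum $0=\sigma_0<\sigma_1\le\cdots$ and a basis $\{\varphi_i\}$ of $H_k$, orthonormal for $\langle\cdot,\cdot\rangle_b$, with $q(\varphi_i,\chi)=\sigma_i\langle\varphi_i,\chi\rangle_b$ for all $\chi\in H_k$; in particular the spectral projection $P_\eta$ onto $W_\eta=\mathrm{Span}\{\varphi_i:|\sigma_i-\Lambda_1|\le\eta\}$ is orthogonal for $\langle\cdot,\cdot\rangle_b$.

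For each $\phi\in V$, let $\hat\phi_k\in H_k$ be the harmonic extension to $\Omega_k$ of $\phi|_{\partial\Omega_k}$. Since $\phi-\hat\phi_k\in W_0^{1,2}(\Omega_k)$, for every harmonic $\chi$ one has $\int_{\Omega_k}\langle d\phi,d\chi\rangle=q(\hat\phi_k,\chi)$ and $\int_{\partial\Omega_k}\beta_k\phi\chi=\langle\hat\phi_k,\chi\rangle_b$, while $\|\phi\|_{L^2(\partial\Omega_k,\tilde g_k)}=\|\hat\phi_k\|_b$, so Lemma~\ref{quasimode.lem} reads
\begin{equation}\label{bm.quasi}
\bigl|q(\hat\phi_k,\chi)-\Lambda_1\langle\hat\phi_k,\chi\rangle_b\bigr|\le C\frac{\log k}{k}\,\|\hat\phi_k\|_b\,\|\chi\|_{\mL_k}\qquad\text{for all }\chi\in H_k,
\end{equation}
and the Dirichlet principle together with \eqref{phi.quot} yields the a priori energy bound
\begin{equation}\label{bm.energy}
q(\hat\phi_k,\hat\phi_k)\le\int_{\Omega_k}|d\phi|^2_{g_0}\le\|d\phi\|^2_{L^2(M)}\le\Bigl(\Lambda_1+C\frac{\log k}{k}\Bigr)\|\hat\phi_k\|_b^2.
\end{equation}
I would also observe that $\phi\mapsto\hat\phi_k$ is injective on $V$ once $k$ is large: if $\phi|_{\partial\Omega_k}\equiv0$ then \eqref{phi.quot} forces $\|d\phi\|_{L^2(M)}=0$, hence $\phi\equiv0$ since nonzero elements of $V$ are nonconstant. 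Thus $\widetilde V_k:=\{\hat\phi_k:\phi\in V\}\subset H_k$ has $\dim\widetilde V_k=\dim V$.

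It then remains to choose $C_2=C_2(M,g_0)$ so that $P_{\eta}$ is injective on $\widetilde V_k$ for $\eta=C_2\frac{\log k}{k}$. Fix $\chi=\hat\phi_k\in\widetilde V_k$, write $\chi=\sum_i a_i\varphi_i$, and let $\chi_+:=\sum_{\sigma_i>\Lambda_1+\eta}a_i\varphi_i$ and $\chi_-:=\sum_{\sigma_i<\Lambda_1-\eta}a_i\varphi_i$, so $\chi-P_\eta\chi=\chi_++\chi_-$. Since $q(\chi,\chi_\pm)-\Lambda_1\langle\chi,\chi_\pm\rangle_b=\sum(\sigma_i-\Lambda_1)a_i^2$ is a sum over a range in which $\sigma_i-\Lambda_1$ has fixed sign and $|\sigma_i-\Lambda_1|>\eta$, we get $\eta\|\chi_\pm\|_b^2\le|q(\chi,\chi_\pm)-\Lambda_1\langle\chi,\chi_\pm\rangle_b|$, which by \eqref{bm.quasi} is at most $C\frac{\log k}{k}\|\chi\|_b\|\chi_\pm\|_{\mL_k}$. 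Using \eqref{bm.energy} together with $\|\chi_\pm\|_b\le\|\chi\|_b$ and $q(\chi_\pm,\chi_\pm)\le q(\chi,\chi)$,
$$
\|\chi_\pm\|_{\mL_k}=\|\chi_\pm\|_b+q(\chi_\pm,\chi_\pm)^{1/2}\le\|\chi\|_b+q(\chi,\chi)^{1/2}\le C'\|\chi\|_b,
$$
hence $\|\chi_\pm\|_b^2\le\frac{CC'\log k}{\eta k}\|\chi\|_b^2$. Choosing $C_2$ large enough (in terms of the constants in \eqref{bm.quasi} and \eqref{bm.energy}) makes $\|\chi-P_\eta\chi\|_b^2=\|\chi_+\|_b^2+\|\chi_-\|_b^2\le\tfrac12\|\chi\|_b^2$; since $P_\eta$ is orthogonal, $\|P_\eta\chi\|_b^2\ge\tfrac12\|\chi\|_b^2$, so $P_\eta$ is injective on $\widetilde V_k$ and $\dim W_{C_2\log k/k}\ge\dim\widetilde V_k=\dim V$ for $k\ge k_0(M,g_0)$.

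The main obstacle is that the error in the quasimode estimate \eqref{bm.quasi} is measured in the $\mL_k$-norm, which controls the Dirichlet energy and not merely the boundary norm governing the spectral decomposition, so a priori a ``far-spectrum'' component $\chi_\pm$ of $\hat\phi_k$ could carry enormous energy; this is precisely what the a priori bound \eqref{bm.energy} (a consequence of \eqref{phi.quot} and the estimates for $\psi_k$ in Lemmas~\ref{psi.est} and~\ref{psi.linfty}) rules out, reducing the $\mL_k$-error to a genuine $O(\log k/k)$ perturbation in the $\langle\cdot,\cdot\rangle_b$-norm.
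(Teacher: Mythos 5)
Your proof is correct and follows essentially the same approach as the paper: you use the quasimode estimate from Lemma~\ref{quasimode.lem}, decompose the harmonic extension into near/far-spectrum components, bound the $\mL_k$-norm via \eqref{phi.quot}, and deduce that the far-spectrum components have small boundary $L^2$-norm. The only cosmetic difference is that the paper argues by contradiction from $\phi\in\ker\Pi_\eta$ (so the middle terms vanish outright), whereas you prove the slightly stronger quantitative statement $\|P_\eta\chi\|_b^2\geq\tfrac12\|\chi\|_b^2$ for all $\chi\in\widetilde V_k$; both routes rest on exactly the same estimates and spectral orthogonality.
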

\begin{proof}

Fix $\eta\in (0,1)$, and let 
$$\Lambda_1-\eta\leq\sigma_m\leq \cdots\leq \sigma_{m+\ell}\leq \Lambda_1+\eta$$
be the portion of the Steklov spectrum of $(\Omega_k,\tilde{g}_k)$ lying in $[\Lambda_1-\eta,\Lambda_1+\eta]$. Consider the projection map
$$
\Pi_{\eta}: V\to W_{\eta}
$$
given by
$$
\Pi_{\eta}\phi:=\sum_{i=m}^{m+\ell}\left(\int_{\partial\Omega_k}\beta_k\phi \varphi_i\right)\varphi_i.
$$
Suppose that $\phi\in \ker(\Pi_{\eta})$ is an element of the kernel.
The harmonic extension $\hat\phi = \sum a_i\varphi_i$ of $\phi|_{\partial\Omega_k}$ to $\Omega_k$ can be written 
$$
\hat\phi=\hat\phi_- + \hat\phi_+ := \sum_{\sigma_i<\Lambda_1-\eta} a_i\varphi_i + \sum_{\sigma_i>\Lambda_1+\eta}a_i\varphi_i,
$$
where $a_i=0$ if $\Lambda_1-\eta\leq \sigma_i\leq \Lambda_1+\eta$. Setting
$$I_{\pm}:=\{i\in \mathbb{N}\cup \{0\} \mid \pm (\sigma_i-\Lambda_1)>\eta\},$$
one then has
\begin{equation}
\label{ineq:phipm_low}
\begin{split}
\left|\int_{\Omega_k}|d\hat\phi_\pm|^2 - \Lambda_1\int_{\partial\Omega_k}\beta_k\hat\phi_\pm^2 \right| &=\left| \sum_{i\in I_{\pm}}(\sigma_i-\Lambda_1)a_i^2\right|\geqslant\\
&\geqslant  \min_{i\in I_{\pm}}\{\pm(\sigma_i-\Lambda_1)\}\sum_{i\in I_{\pm}} a_i^2 \geqslant \eta\|\phi_\pm\|^2_{L^2(\partial\Omega_k,\tilde g_k)}.
\end{split}
\end{equation}

At the same time, since $\hat\phi_+\perp\hat\phi_-$ in $L^2(\partial\Omega_k,\tilde g_k)$, one has
$$
\int_{\Omega_k}\la d\phi, d\hat\phi_{\pm}\ra= \int_{\Omega_k}\la d\hat\phi, d\hat\phi_{\pm}\ra = \int_{\Omega_k} |d\hat\phi_{\pm}|^2.
$$
Therefore, Lemma~\ref{quasimode.lem} implies that 
\begin{equation*}
\begin{split}
&\left|\int_{\Omega_k}\la d\phi, d\hat\phi_\pm\ra - \Lambda_1\int_{\partial\Omega_k}\beta_k\hat\phi\phi_\pm \right| = \left|\int_{\Omega_k}|d\hat\phi_\pm|^2 - \Lambda_1\int_{\partial\Omega_k}\beta_k\hat\phi_\pm^2 \right|\leqslant \\
&\leqslant C\frac{\log k}{k}\|\phi\|_{L^2(\partial\Omega_k,\tilde g_k)}\|\hat\phi_{\pm}\|_{\mL_k}.
\end{split}
\end{equation*}
Combining this with~\eqref{ineq:phipm_low} and adding up the inequalities for $\hat\phi_{\pm}$ yields
\begin{equation}
\label{ineq:almost_qsm}
\eta\|\phi\|^2_{L^2(\partial\Omega_k,\tilde g_k)}\leqslant C'\frac{\log k}{k}\|\phi\|_{L^2(\partial\Omega_k,\tilde g_k)}\|\hat\phi\|_{\mL_k}.
\end{equation}
Finally, the inequality~\eqref{phi.quot} gives that
$$
\|d\hat\phi\|_{L^2(\Omega_k)}\leqslant \|d\phi\|_{L^2(M)}\leqslant C\|\phi\|_{L^2(\partial\Omega_k,\tilde g_k)},
$$
which together with~\eqref{ineq:almost_qsm} implies
$$
\eta\|\phi\|^2_{L^2(\partial\Omega_k,\tilde g_k)}\leqslant C''\frac{\log k}{k}\|\phi\|^2_{L^2(\partial\Omega_k,\tilde g_k)}.
$$
If $\phi\neq 0$, dividing by $\|\phi\|_{L^2(\partial\Omega_k,\tilde{g}_k)}^2$ on both sides yields $\eta\leqslant C''\frac{\log k}{k}$. In other words, $\Pi_{\eta}\colon V\to W_{\eta}$ must be \emph{injective} whenever $\eta>C''\frac{\log k}{k}$, so setting, e.g., $C_2=2C''$, it follows that
$$
\dim (W_{C_2\frac{\log k}{k}})\geq \dim V,
$$
as desired.

\end{proof}

With Lemma \ref{band.mult} in hand, we argue finally that the first Steklov eigenfunction for $(\Omega_k,\tilde{g}_k)$ must also lie in $W_{C_2\frac{\log k}{k}}$ for $k\geq k_0$ sufficiently large, to complete the proof of Proposition \ref{construct}.

\begin{proof}[Proof of Proposition \ref{construct}] Let $C_2=C_2(M,g_0)$ be the constant from Lemma \ref{band.mult}, and consider the set
$$
S:=\left\{k\in \mathbb{N}\mid \sigma_1(\Omega_k,\tilde{g}_k)<\Lambda_1-C_2\frac{\log k}{k}\right\}.
$$
To prove Proposition \ref{construct}, the main step consists of showing that $S$ is \emph{finite}. 

To show that $S$ is finite, we will argue by contradiction. Letting $N:=\dim V$ denote the multiplicity of the first eigenvalue of $\Delta_{g_{\max}}$, it follows from Lemma \ref{band.mult} that if $k\in S$, then at least the first $N+1$ nontrivial Steklov eigenvalues of $(\Omega_k,\tilde{g}_k)$ must lie below $\Lambda_1+C_2\frac{\log k}{k}$; i.e.,
$$0<\sigma_1(\Omega_k,\tilde{g}_k)\leq \cdots\leq \sigma_{N+1}(\Omega_k,\tilde{g}_k)\leq \Lambda_1+C_2\frac{\log k}{k}.$$
Denote by $\varphi_{k,1},\ldots, \varphi_{k,N+1}$ the corresponding Steklov eigenfunctions, normalized so that
$$
\int_{\partial\Omega_k}\beta_k\varphi_{k,i}\varphi_{k,j}=\delta_{ij},
$$
and let $\hat{\varphi}_{k,i}\in W^{1,2}(M)$ denote the harmonic extension into $M\setminus \Omega_k$; note then that
\begin{equation}
\|\hat{\varphi}_{k,i}\|_{W^{1,2}(M)}\leq C\|\varphi_{k,i}\|_{W^{1,2}(\Omega_k)}\leq C,
\end{equation}
by Lemma \ref{ext.lem}. 

To obtain a contradiction, suppose that $S$ is infinite, and pass to a subsequence $k_j\in S$ such that
$$
\hat{\varphi}_{k_j,i}\rightharpoonup \hat{\varphi}_i\in W^{1,2}(M)
$$
weakly in $W^{1,2}$ and strongly in $L^2$ as $k_j\to \infty$. (In what follows, we write $k_j=k$ for simplicity.) For any $\chi\in C^{\infty}(M)$, we note then that
\begin{equation*}
\begin{split}
&\left|\int_M\langle d\hat{\varphi}_{k,i},d\chi\rangle-\sigma_i(\Omega_k,\tilde{g}_k)\int_Mf\hat{\varphi}_{k,i}\chi\right|\\
\leq &\left|\int_{\Omega_k}\langle d\varphi_{k,i},d\chi\rangle-\sigma_i(\Omega_k,\tilde{g}_k)\int_{\Omega_k}f\varphi_{k,i}\chi\right|\\
+&\left|\int_{M\setminus\Omega_k}\left(\langle d\hat{\varphi}_{k,i},d\chi\rangle-\sigma_i(\Omega_k,\tilde{g}_k)f\hat{\varphi}_{k,i}\chi\right)\right|
\\
\leq &\sigma_i(\Omega_k,\tilde{g}_k)\left|\int_{\partial\Omega_k}\beta_k \varphi_{k,i}\chi-\int_{\Omega_k}f\varphi_{k,i}\chi\right|
\\
&+C\|\chi\|_{C^1}\|\hat{\varphi}_{k,i}\|_{W^{1,2}}\area(M\setminus\Omega_k)^{1/2}.
\end{split}
\end{equation*}
In particular, since
$$
\area(M\setminus\Omega_k)\leq \frac{C}{k^2}\to 0
$$
as $k\to\infty$ and, by definition of $\beta_k=\frac{\partial\psi_k}{\partial\nu}$,
\begin{eqnarray*}
\left|\int_{\partial\Omega_k}\beta_k\varphi_{k,i}\chi-\int_{\Omega_k}f\varphi_{k,i}\chi\right|&=&\left|\int_{\Omega_k}\langle d\psi_k,d(\chi\varphi_{k,i})\rangle\right|\\
&=&\left|\int_{\Omega_k}\psi_k \Delta(\chi\varphi_{k,i})\right|\\
&=&\left|\int_{\Omega_k}\psi_k(\varphi_{k,i}\Delta\chi-2\langle d\chi,d\varphi_{k,i})\right|\\
&\leq &C\|\psi_k\|_{L^2}\|\chi\|_{C^2}\|\varphi_{k,i}\|_{W^{1,2}}\to 0
\end{eqnarray*}
as $k\to\infty$ by Lemma \ref{psi.est}, it follows that
$$
\lim_{k\to\infty}\left|\int_M\langle d\hat{\varphi}_{k,i},d\chi\rangle-\sigma_i(\Omega_k,\tilde{g}_k)\int_Mf\hat{\varphi}_{k,i}\chi\right|=0
$$
for any $\chi\in C^{\infty}(M)$. Thus, the weak limit $\hat{\varphi}_i$ of $\hat{\varphi}_{k,i}$ along the subsequence $k_j\in S$ satisfies
\begin{equation}\label{wklim.eqn}
\Delta_{g_0}\hat{\varphi}_i=\tilde{\sigma}_if\hat{\varphi}_i,
\end{equation}
where
\begin{equation}\label{wklim.sigma}
\tilde{\sigma}_i:=\lim_{k\to\infty}\sigma_i(\Omega_k,\tilde{g}_k)\leq \lim_{k\to\infty}\left[\Lambda_1+C_2\frac{\log k}{k}\right]=\Lambda_1.
\end{equation}

Moreover, we see that
\begin{equation*}
\begin{split}
\left|\int_Mf\hat{\varphi}_i\hat{\varphi_j}-\delta_{ij}\right|\leq &\left|\int_{\Omega_k}f\hat{\varphi}_i\hat{\varphi}_j-\int_{\partial\Omega_k}\beta_k\varphi_{k,i}\varphi_{k,j}\right|\\
&+\|\hat{\varphi}_i\|_{L^{\infty}}\|\hat{\varphi}_j\|_{L^{\infty}}\area(M\setminus\Omega_k)\\
\leq &\int_{\Omega_k}f|\hat{\varphi}_i\hat{\varphi}_j-\varphi_{k,i}\varphi_{k,j}|+\left|\int_{\Omega_k}f\varphi_{k,i}\varphi_{k,j}-\int_{\partial\Omega_k}\beta_k\varphi_{k,i}\varphi_{k,j}\right|\\
&+\|\hat{\varphi}_i\|_{L^{\infty}}\|\hat{\varphi}_j\|_{L^{\infty}}\cdot\frac{C}{k^2},
\end{split}
\end{equation*}
and in view of the (strong) $L^2$ convergence $\varphi_{k,i}\cdot {\bf 1}_{\Omega_k}\to \hat{\varphi}_i$, it follows that
\begin{eqnarray*}
\left|\int_Mf\hat{\varphi}_i\hat{\varphi_j}-\delta_{ij}\right|&\leq &\lim_{k\to\infty}\left|\int_{\Omega_k}f\varphi_{k,i}\varphi_{k,j}-\int_{\partial\Omega_k}\beta_k\varphi_{k,i}\varphi_{k,j}\right|\\
&=&\lim_{k\to\infty}\left|\int_{\Omega_k}\psi_k\Delta(\varphi_{k,i}\varphi_{k,j})\right|\\
\text{(since $\Delta \varphi_{k,i}=0$) }&=&2\lim_{k\to\infty}\left|\int_{\Omega_k}\psi_k\langle d\varphi_{k,i},d\varphi_{k,j}\rangle\right|\\
&\leq &C\lim_{k\to\infty}\|\psi_k\|_{L^{\infty}}\sqrt{\sigma_i(\Omega_k,\tilde{g}_k)\sigma_j(\Omega_k,\tilde{g}_k)}\\
\text{(by Lemma \ref{psi.linfty}) }&\leq &C\lim_{k\to\infty}\sqrt{\frac{\log k}{k}}=0,
\end{eqnarray*}
so that the functions $\{\hat{\varphi}_i\}_{i=1}^{N+1}$ are orthonormal in $L^2(M,g_{\max})$. 

Similarly, since $\int_{\Omega_k}f\varphi_{k,i}=\int_{\partial\Omega_k}\beta_k\varphi_{k,i}=0$, it is easy to see that
$$\int_Mf\hat{\varphi}_i=\lim_{k\to\infty}\int_{\Omega_k}f\varphi_{k,i}=0,$$
so that each $\hat{\varphi}_i$ is likewise orthogonal to the constant functions in $L^2(M,g_{\max})$, and in particular, we must have equality in \eqref{wklim.sigma}. Putting all this together, we see that $\{\hat{\varphi}_i\}_{i=1}^{N+1}$ gives an $L^2(M,g_{\max})$-orthonormal collection of first eigenfunctions for $\Delta_{g_{\max}}$. But $N=\dim V$ is precisely the dimension of the $\Lambda_1$-eigenspace, so we've arrived at a contradiction.

We've therefore confirmed that the set $S$ of integers $k$ for which $\sigma_1(\Omega_k,\tilde{g}_k)<\Lambda_1-C_2\frac{\log k}{k}$ must be \emph{finite}; in other words, there exists $k_0(M,g_0)$ such that
$$\sigma_1(\Omega_k,\tilde{g}_k)\geq \Lambda_1-C_2\frac{\log k}{k}$$
whenever $k\geq k_0$. Finally, we know from \eqref{beta.mass} that
$$\length_{\tilde{g}_k}(\partial\Omega_k)=\int_{\partial\Omega_k}\beta_k\geq 1-\frac{C}{k^2},$$
so indeed we must have
$$
\bar{\sigma}_1(\Omega_k,\tilde{g}_k)=\sigma_1(\Omega_k,\tilde{g}_k)L(\partial\Omega_k,\tilde{g}_k)\geq \Lambda_1-C\frac{\log k}{k},
$$
as desired.

\end{proof}

\section{Upper bounds for $\Sigma_1(N_k)$}\label{sec:u.bds}

In this section, we complete the proof of Theorem \ref{thm:asymptotics_intro}, by proving the upper bound \eqref{ineq:upperbound_intro}, which we reformulate as the following proposition.

\begin{proposition}\label{ubd.prop}
Let $M=\Sp,\mathbb{RP}^2, \mathbb{T}^2$, or the Klein bottle $\mathbb{K}$, and let $N_k$ be the compact surface with boundary given by removing $k$ disjoint disks from $M$. Then there exists a constant $c(M)>0$ such that for any metric $g$ on $N_k$, 
\begin{equation}
\bar{\sigma}_1(N_k,g)\leq \Lambda_1(M)-c(M)\frac{\log k}{k}.
\end{equation}
\end{proposition}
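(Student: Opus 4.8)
The plan is to combine a quantitative stability estimate for $\bar\lambda_1$-maximal measures --- a refinement of \cite{KNPS}, valid precisely for the four surfaces in the statement --- with a capacitary test-function computation adapted to the geometry of the removed disks.

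\textbf{Step 1 (conformal normalization).} Given an arbitrary metric $g$ on $N_k$, Theorem~\ref{thm:uni} provides a unit-area constant curvature metric $g_0\in\Met_{\can}(M)$, disjoint geodesic disks $B_{r_j}(p_j)\subset(M,g_0)$, $j=1,\dots,k$, and a conformal identification of $(N_k,g)$ with $\Omega_k:=M\setminus\bigcup_j B_{r_j}(p_j)$. Since $\bar\sigma_1$ depends only on the conformal class and on the boundary restriction of the metric (Example~\ref{ex:Steklov}), and is scaling-invariant, I may pass to a conformal metric on $\Omega_k$ whose boundary length measure, viewed on $M$, is a probability measure $\mu_k$ supported on $\partial\Omega_k$, so that $\bar\sigma_1(N_k,g)=\bar\lambda_1(\overline\Omega_k,[g_0],\mu_k)$. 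Writing $\delta_k:=\Lambda_1(M)-\bar\sigma_1(N_k,g)$, the goal is $\delta_k\geq c(M)\frac{\log k}{k}$; since $\delta_k>0$ by \eqref{ineq:KS_intro} and finitely many values of $k$ can be absorbed into $c(M)$, it suffices to rule out $\delta_k\leq\eps\frac{\log k}{k}$ for all large $k$, where $\eps=\eps(M)>0$ is a small constant fixed at the end.

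\textbf{Step 2 (quantitative stability).} The key input is the estimate: there is a $\bar\lambda_1$-maximal unit-area metric $g_{\max}=f g_0$ on $M$ --- this is where $M\in\{\Sp,\mathbb{RP}^2,\mathbb{T}^2,\mathbb{K}\}$ enters, these being exactly the surfaces for which the quantitative stability of \cite{KNPS} is available and for which the maximizing conformal class/metric is essentially canonical --- such that
\[
\delta_k\;\geq\; c_0\Bigl(\area_{g_{\max}}(M\setminus\Omega_k)\;+\;\|\mu_k-dv_{g_{\max}}\|_{\Wdot(g_{\max})}^2\Bigr),\qquad c_0=c_0(M)>0.
\]
I would deduce this by first passing from $\Omega_k$ to $M$ via $\bar\lambda_1(\overline\Omega_k,[g_0],\mu_k)\leq\bar\lambda_1(M,[g_0],\mu_k)$ (cf. \eqref{ineq:Sdom}), applying the quantitative stability of \cite{KNPS} to bound the $\Wdot$-term by $\Lambda_1(M)-\bar\lambda_1(M,[g_0],\mu_k)$, and then extracting the additional \emph{linear} area term --- which, as noted in Remark~\ref{rmk:quant_stab_intro}, does not follow from \cite{KNPS} alone --- by testing the eigenvalue $\bar\lambda_1(\overline\Omega_k,[g_0],\mu_k)$ against the first $\Delta_{g_{\max}}$-eigenfunctions on $M$ restricted to $\Omega_k$ and quantifying the mass the holes carry away. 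In particular, under the contradiction hypothesis $\delta_k\leq\eps\frac{\log k}{k}$ we obtain $A_k:=\area_{g_{\max}}(M\setminus\Omega_k)\leq\frac{\eps}{c_0}\frac{\log k}{k}$, hence $\sum_j r_j^2\lesssim\frac{\log k}{k}$, and $\|\mu_k-dv_{g_{\max}}\|_{\Wdot(g_{\max})}^2\leq\frac\eps{c_0}\frac{\log k}{k}$.

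\textbf{Step 3 (test function).} I would then produce $\varphi_k\in C^\infty(M)$ with
\[
\frac{\la\varphi_k,\,dv_{g_{\max}}-\mu_k\ra}{\|\varphi_k\|_{W^{1,2}(M,g_{\max})}}\;\geq\; c_1\sqrt{\tfrac{\log k}{k}},\qquad c_1=c_1(M)>0,
\]
forcing $\|\mu_k-dv_{g_{\max}}\|_{\Wdot(g_{\max})}\geq c_1\sqrt{\frac{\log k}{k}}$ and thus, via Step 2, $\delta_k\geq c_0 c_1^2\frac{\log k}{k}$, which contradicts $\delta_k\leq\eps\frac{\log k}{k}$ once $\eps<c_0c_1^2$ and $k$ is large. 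The function $\varphi_k$ is built from the capacitary (logarithmic) potentials of the holes: for each $j$ fix an outer radius $s_j$ comparable to $\min\{k^{-1/2},\tfrac12\dist_{g_0}(p_j,\{p_i\}_{i\neq j})\}$ so that the disks $B_{s_j}(p_j)$ are pairwise disjoint, and let $\tau_j$ equal $1$ on $B_{r_j}(p_j)$, vanish outside $B_{s_j}(p_j)$, and interpolate harmonically on the annulus, so $\|d\tau_j\|_{L^2(g_0)}^2=\frac{2\pi}{\log(s_j/r_j)}(1+o(1))$. Let $J$ denote the set of holes with $\log(s_j/r_j)\gtrsim\log k$; the area bound $\sum_j r_j^2\lesssim\frac{\log k}{k}$ forces all but a small fraction of the $k$ holes to lie in $J$. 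Taking $\varphi_k:=-\sum_{j\in J}\tau_j$, the disjointness of the $B_{s_j}$ gives $\la\varphi_k,\mu_k\ra=-\sum_{j\in J}\mu_k(\partial B_{r_j})$; the smallness of $\sum_j r_j^2$ together with $s_j\leq k^{-1/2}$ and $\sum_j s_j^2\lesssim 1$ yields $|\la\varphi_k,dv_{g_{\max}}\ra|\lesssim A_k+\sum_{j\in J}\frac{s_j^2}{\log(s_j/r_j)}=o(1)$; and $\|\varphi_k\|_{W^{1,2}}^2\lesssim\sum_{j\in J}\frac1{\log(s_j/r_j)}+\sum_{j\in J} s_j^2\lesssim\frac{k}{\log k}$. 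The remaining point is that the boundary length $\sum_{j\in J}\mu_k(\partial B_{r_j})$ carried by the good holes is bounded below by a positive constant: here one uses the $\Wdot$-closeness of $\mu_k$ to the nondegenerate measure $dv_{g_{\max}}$ (preventing the boundary length from concentrating on the remaining, large or clustered, holes) together with a clustering/merging argument for those holes. Combining the three displayed estimates then gives the ratio $\gtrsim\frac{1-o(1)}{\sqrt{k/\log k}}\gtrsim\sqrt{\frac{\log k}{k}}$, as needed.

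\textbf{Main obstacle.} Two points carry the weight. The first is the refinement of \cite{KNPS} in Step 2 producing the \emph{linear} area term $\area_{g_{\max}}(M\setminus\Omega_k)$ alongside the quadratic $\Wdot$-term: it is precisely this term --- and its linear (rather than quadratic) dependence on $\delta_k$ --- that converts the capacitary estimate $\|\mu_k-dv_{g_{\max}}\|_{\Wdot}\gtrsim\sqrt{\log k/k}$ into the sharp rate $\frac{\log k}{k}$, and it is this ingredient that is currently available only for $M=\Sp,\mathbb{RP}^2,\mathbb{T}^2,\mathbb{K}$. The second is the geometric bookkeeping in Step 3 for arbitrary --- in particular, highly non-uniform or clustered --- disk configurations: one must choose the cutoff scales $s_j$ and control the distribution of boundary length among the holes so that a definite fraction of the mass sits on holes with $\log(s_j/r_j)\gtrsim\log k$, which is where the geodesic-disk normalization from Theorem~\ref{thm:uni} and the $\Wdot$-closeness of $\mu_k$ to $dv_{g_{\max}}$ must be used in tandem.
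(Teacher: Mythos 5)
Your Steps 1 and 2 match the paper's argument very closely: uniformization via Theorem~\ref{thm:uni}, then the refined quantitative stability giving both the \emph{quadratic} $\Wdot$--bound and the crucial \emph{linear} bound $\area_{g_{\max}}(M\setminus\Omega_k)\lesssim \delta_k$; you correctly identify this linear area term as the essential refinement over \cite{KNPS}, and this is indeed what the paper establishes in Propositions~\ref{s2.quant.stab} and~\ref{quant.stab.prop}. The strategy of concluding by producing a test function that witnesses $\|\mu_k-dv_{g_{\max}}\|_{\Wdot}\gtrsim\sqrt{\log k/k}$ is also the paper's strategy in spirit.

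The genuine gap is in Step 3, and it lies exactly where you flag it. You build $\varphi_k=-\sum_{j\in J}\tau_j$ from per-hole annular potentials and restrict to a ``good'' set $J$ excluding holes with $\log(s_j/r_j)\lesssim\log k$, where $s_j\sim\min\{k^{-1/2},\tfrac12\dist(p_j,\{p_i\}_{i\neq j})\}$. Your assertion that ``the area bound $\sum_j r_j^2\lesssim\frac{\log k}{k}$ forces all but a small fraction of the $k$ holes to lie in $J$'' is \emph{false as stated}: if, say, all $k$ centers are mutually $\sim 1/k$ apart and all radii satisfy $r_j\sim 1/k$, then $\sum r_j^2\sim 1/k$ (consistent with the area bound) while $s_j\sim 1/k$, so $\log(s_j/r_j)=O(1)$ for every $j$ and $J=\varnothing$ --- the test function is trivial. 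The area bound controls radii but not clustering. You acknowledge the issue and propose to resolve it via a ``clustering/merging argument'' using $\Wdot$-closeness of $\mu_k$ to $dv_{g_{\max}}$, but this is sketched, not proven, and it is not a routine step: one must rule out boundary mass concentrating on densely-packed small holes, which is not an obvious consequence of the $\Wdot$-bound alone.

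The paper circumvents this entirely by a different test function design. Rather than summing annular potentials, it sets $\varphi=\max\{\log(\sqrt\delta/d_S),0\}$ truncated at $\log k$, where $d_S$ is the distance to \emph{all} centers simultaneously. The point is that the coarea estimate $\int|d\varphi|^2\leq\int t^{-2}L_S(t)\,dt$ with $L_S(t)=\mathcal H^1(\{d_S=t\})\leq k\cdot\frac{5\pi}{2}t$ holds regardless of clustering --- when holes cluster, the level sets $\{d_S=t\}$ only shrink, so $L_S(t)$ decreases. The lower bound $\langle\mu_k,\varphi\rangle\geq\frac14\log k\cdot\mu_k(\text{small-radius holes})$ likewise needs only that the holes have small radius (directly controlled by the area bound via \eqref{rad.bd.1}), not that they are well separated. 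The ``bad'' set $S'$ is defined purely by a radius threshold, $r_j\geq\sqrt\delta/k^{1/4}$, and Lemma~\ref{bigrad.smallmass} shows it carries little mass using a separate easy bump-function test. This is the key structural difference, and it is precisely what makes the paper's proof close where yours has a hole.

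Two minor remarks: the paper's final step reaches the rate not by showing $\|\mu_k-dv_{g_{\max}}\|_{\Wdot}\geq c_1\sqrt{\log k/k}$ directly, but by feeding the test-function pairing into the hypothesis~\eqref{h-1.hyp} and producing the self-improving inequality~\eqref{key.gap.est} for $\sigma\mu(M)$; this is equivalent but packaged differently from your ``contradiction against $\delta_k\leq\eps\log k/k$'' framing. Also, for $M=\Sp$ the paper does not invoke the machinery of Proposition~\ref{quant.stab.prop} at all --- the balanced Hersch map together with Lemma~\ref{stek.stab.lem} gives the required estimate directly (Proposition~\ref{s2.quant.stab}); your Step 2 glosses over this case distinction.
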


As discussed in the introduction, it is quite possible that the estimate holds for all closed surfaces $M$, not just those listed here. From the arguments below, it follows that the upper bound \eqref{ineq:upperbound_intro} holds for all $M$ satisfying the hypotheses of Theorem 6.1 in \cite{KNPS}--i.e., all those $M$ for which the minimal surfaces in $\mathbb{S}^n$ realizing $\Lambda_1(M)$ have maximal possible Morse index $n+1+\dim (\mathcal{M}(M))$ as critical points of the area functional, where $\mathcal{M}(M)$ denotes the moduli space of conformal structures on $M$.

\subsection{Refined quantitative stability for Steklov-maximizing metrics}\label{ref.stab}

As an important first step toward proving Proposition \ref{ubd.prop}, we need to refine the quantitative stability results of \cite{KNPS} for nearly $\bar{\lambda}_1$-maximizing metrics. The difference between the results of \cite{KNPS} and those below is that here we are interested in obtaining lower bounds on the gap
$$\Lambda_1(M)-\bar{\lambda}_1(\Omega,[g],\mu)$$
between the maximum $\Lambda_1(M)$ and the normalized first eigenvalue \emph{restricted to a domain} $\Omega\subset M$ for a measure $\mu$ supported on $\Omega\subset M$, whereas the results in \cite{KNPS} provide lower bounds for the gap $\Lambda_1(M)-\bar{\lambda}_1(M,[g],\mu)$. While the proofs are quite similar, we note that the refinement is necessary to obtain the sharp upper bound, as a direct application of the results in \cite{KNPS} seems to yield at best the non-sharp bound $\Sigma_1(N_k)\leq \Lambda_1(M)-\frac{c}{k}$.

We begin with the following straightforward adaptation of Lemma 2.1 in \cite{KNPS}.

\begin{lemma}\label{stek.stab.lem}
Let $\Omega\subset M$ be a smooth domain in a closed Riemannian surface $(M,g)$, and let $\mu$ be an admissible measure supported in $\Omega$ with the first nontrivial eigenvalue
$$
\sigma_1:=\lambda_1(\Omega,[g],\mu).
$$
If $u\in W^{1,\infty}(M,\mathbb{S}^n)$ is a sphere-valued map such that
$$
\int_Mu d\mu=0,
$$
then 
\begin{equation}\label{stek.stab.est}
\|\sigma_1\mu-|du|_{g}^2dv_g\lfloor \Omega\|_{\left(W^{1,2}(\Omega,g)\right)^*}^2\leq \|u\|_{W^{1,\infty}(g)}^2\left(2E_{g}(u;\Omega)-\sigma_1\mu(M)\right).
\end{equation}
\end{lemma}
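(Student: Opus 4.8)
The plan is to mimic the proof of Lemma~2.1 in \cite{KNPS}, adapting it to the Steklov-type setting where the measure $\mu$ is supported in a domain $\Omega\subset M$. The left-hand side of \eqref{stek.stab.est} is, by definition of the dual norm on $\left(W^{1,2}(\Omega,g)\right)^*$, the supremum over $\varphi\in W^{1,2}(\Omega,g)$ with $\|\varphi\|_{W^{1,2}(\Omega,g)}\le 1$ of the pairing $\int_\Omega \varphi\,\sigma_1\,d\mu - \int_\Omega \varphi |du|_g^2\,dv_g$; so the first step is to fix such a test function $\varphi$ and estimate this linear pairing. The natural move is to test the eigenvalue characterization with the perturbed map: since $u$ has components $u^1,\dots,u^{n+1}$ and $\int_M u\,d\mu = 0$, each component $u^\alpha$ is an admissible test function for $\sigma_1 = \lambda_1(\Omega,[g],\mu)$ via \eqref{eq:eigmes}, giving $\sigma_1 \int_\Omega (u^\alpha)^2\,d\mu \le \int_\Omega |\nabla u^\alpha|_g^2\,dv_g$ (here one should be slightly careful that $u$ restricted to $\Omega$ lies in $W^{1,2}$, which follows from $u\in W^{1,\infty}(M)$). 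The key idea, as in \cite{KNPS}, is to apply this not to $u^\alpha$ itself but to $(1+t\varphi)u^\alpha$ for small $t$ — a first-order perturbation whose derivative in $t$ at $t=0$ produces exactly the pairing we want to bound.

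Concretely, I would write $Q(t) := \sum_\alpha \left(\int_\Omega |\nabla((1+t\varphi)u^\alpha)|_g^2\,dv_g - \sigma_1\int_\Omega (1+t\varphi)^2(u^\alpha)^2\,d\mu\right)$, which is nonnegative for all $t$ by the variational characterization of $\sigma_1$ (using $|u|^2 = 1$ so that $\int_\Omega(1+t\varphi)^2(u^\alpha)^2 d\mu$ summed over $\alpha$ equals $\int_\Omega(1+t\varphi)^2 d\mu$, and $\int_M u\,d\mu = 0$ guarantees the perturbation stays in the admissible range after projecting off constants — strictly, one tests each component $(1+t\varphi)u^\alpha$ minus its $\mu$-average; the averages contribute only higher-order-in-$t$ corrections controlled by $\|\varphi\|$). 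Expanding $Q(t) = Q(0) + t\,Q'(0) + t^2 Q''(0)/2 \ge 0$ and noting $Q(0) = 2E_g(u;\Omega) - \sigma_1\mu(\Omega) = 2E_g(u;\Omega)-\sigma_1\mu(M)$ (since $\mu$ is supported in $\Omega$), the standard argument — optimizing over $t$, or simply using that a nonnegative quadratic $at^2+bt+c$ satisfies $b^2 \le 4ac$ — yields $|Q'(0)|^2 \le 4 Q(0)\cdot \tfrac12 Q''(0)$, i.e. $|Q'(0)|^2 \le 2Q''(0)\,Q(0)$. One then computes $Q'(0) = 2\sum_\alpha\int_\Omega \langle \nabla\varphi, u^\alpha\nabla u^\alpha\rangle + \varphi|\nabla u^\alpha|_g^2\,dv_g - 2\sigma_1\int_\Omega \varphi\,d\mu$; the first inner-product term vanishes because $\sum_\alpha u^\alpha\nabla u^\alpha = \tfrac12\nabla|u|^2 = 0$, leaving $Q'(0) = 2\left(\int_\Omega \varphi|du|_g^2\,dv_g - \sigma_1\int_\Omega\varphi\,d\mu\right)$, which up to sign is twice the pairing we are estimating. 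Similarly $Q''(0) = 2\sum_\alpha \int_\Omega |\nabla(\varphi u^\alpha)|_g^2\,dv_g - 2\sigma_1\int_\Omega\varphi^2(u^\alpha)^2\,d\mu + (\text{lower order})$, which is bounded by $C\|u\|_{W^{1,\infty}(g)}^2\|\varphi\|_{W^{1,2}(\Omega,g)}^2$ using Leibniz, $|u|^2=1$, $|\nabla u|\le\|u\|_{W^{1,\infty}}$, and dropping the nonpositive $-\sigma_1$ term; in fact a careful bookkeeping gives $Q''(0)\le 2\|u\|_{W^{1,\infty}(g)}^2\|\varphi\|_{W^{1,2}(\Omega,g)}^2$ so the constants work out cleanly.

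Combining, for $\|\varphi\|_{W^{1,2}(\Omega,g)}\le 1$ we get $\left|\int_\Omega \varphi|du|_g^2\,dv_g - \sigma_1\int_\Omega\varphi\,d\mu\right|^2 = \tfrac14|Q'(0)|^2 \le \tfrac14\cdot 2Q''(0)Q(0) \le \|u\|_{W^{1,\infty}(g)}^2\left(2E_g(u;\Omega)-\sigma_1\mu(M)\right)$, and taking the supremum over such $\varphi$ gives exactly \eqref{stek.stab.est}. The main obstacle I anticipate is purely bookkeeping: tracking the constant through $Q''(0)$ so it comes out as $\|u\|_{W^{1,\infty}}^2$ with no extra factor, and handling the subtraction of $\mu$-averages needed to keep the perturbed components genuinely admissible in \eqref{eq:eigmes} — one must check these correction terms are absorbed, which uses $\int_M u\,d\mu=0$ and $\|u\|_{W^{1,\infty}}$ control in an essential way. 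Since this is billed as a "straightforward adaptation" of \cite{KNPS}, I expect no conceptual difficulty beyond carefully restricting all integrals to $\Omega$ and invoking $\mathrm{supp}(\mu)\subset\Omega$ to identify $\mu(\Omega)$ with $\mu(M)$.
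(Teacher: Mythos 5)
Your proof is correct and is essentially the paper's argument: the paper packages your $t$-discriminant computation as the Cauchy--Schwarz inequality for the nonnegative quadratic form $Q(v,v)=\int_\Omega|dv|_g^2\,dv_g-\sigma_1\int_\Omega|v|^2\,d\mu$ on $\mu$-mean-zero maps, applied to $u$ and the projection $v_1=\varphi u-\frac{1}{\mu(M)}\int_\Omega \varphi u\,d\mu$, which is the abstract form of your $Q((1+t\varphi)u)\ge 0$ expansion. One small slip worth noting: the $\mu$-averages of $(1+t\varphi)u^\alpha$ are $O(t)$, not ``higher-order in $t$''; they are harmless because in $Q'(0)$ the contribution $\sum_\alpha\big(\frac{1}{\mu(M)}\int_\Omega\varphi u^\alpha\,d\mu\big)\int_\Omega u^\alpha\,d\mu$ vanishes by the balancing hypothesis $\int_M u\,d\mu=0$, and in $Q''(0)$ the constant shift only affects the nonpositive $-\sigma_1\int|\cdot|^2\,d\mu$ piece, which you discard anyway. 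The paper's choice of $v_1$ together with $Q(v_1,v_1)\le\|d(\varphi u)\|_{L^2(\Omega)}^2$ handles this cleanly and would tighten up that step of your write-up.
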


\begin{proof} Denote by $V\subset W^{1,2}(\Omega,\mathbb{R}^{n+1})$ the subspace of maps $v\colon\Omega\to \mathbb{R}^{n+1}$ for which
$$
\int v d\mu=0,
$$
and consider the quadratic form $Q$ on $V$ given by
$$
Q(v,v):=\int_{\Omega} |dv|_{g}^2dv_{g}-\sigma_1\int_{\Omega} |v|^2d\mu.
$$
By definition of $\sigma_1$, it is clear that $Q$ is nonnegative definite on $V$, and therefore the Cauchy-Schwarz inequality for the associated bilinear form gives
\begin{equation}\label{cs.cons}
\left|\int_{\Omega}\langle du,dv\rangle_{g}dv_{g}-\sigma_1\int_\Omega\langle u,v\rangle d\mu\right|\leq \sqrt{Q(u,u)}\sqrt{Q(v,v)}.
\end{equation}

Now, let $u\in V\cap W^{1,\infty}(M,\mathbb{S}^n)$, as in the hypotheses of the lemma. Then since $|u|\equiv 1$, we have
$$Q(u,u)=2E_{g}(u;\Omega)-\sigma_1\mu(M),$$
and for any $v\in W^{1,2}(\Omega,\mathbb{R}^{n+1})$, applying \eqref{cs.cons} to $u$ and the map
$$
v_1=v-\frac{1}{\mu(M)}\int_{\Omega} v d\mu,
$$
we see that
\begin{eqnarray*}
\left|\int_{\Omega}\langle du,dv\rangle_{g}dv_g-\sigma_1\int_\Omega \langle u,v\rangle d\mu\right|&=&\left|\int_{\Omega}\langle du,dv_1\rangle_gdv_g-\sigma_1\int_\Omega\langle u,v_1\rangle d\mu\right|\\
&\leq &\sqrt{Q(v_1,v_1)}\sqrt{2E_{g}(u;\Omega)-\sigma_1\mu(M)}\\
&\leq &\|dv\|_{L^2(\Omega)}\sqrt{2E_{g}(u;\Omega)-\sigma_1\mu(M)}.
\end{eqnarray*}
In particular, taking $v=\varphi u$ for some $\varphi\in W^{1,2}(\Omega)$, and recalling that $\langle du,d(\varphi u)\rangle=\varphi |du|^2$ since $|u|\equiv 1$, it follows that
$$
\left|\int_{\Omega}\varphi |du|_g^2dv_g-\sigma_1\int_\Omega \varphi d\mu\right|\leq \|d(\varphi u)\|_{L^2(\Omega)}\sqrt{2E_{g}(u;\Omega)-\sigma_1\mu(M)}.
$$
In particular, since
$$
\|d(\varphi u)\|_{L^2(\Omega)}^2=\int_{\Omega}\varphi^2|du|_g^2+|u|^2|d\varphi|_g^2dv_g\leq \|u\|_{W^{1,\infty}(g)}^2\|\varphi\|_{W^{1,2}(\Omega,g)}^2,
$$
it follows that
$$
\frac{|\langle \varphi, |du|_g^2dv_g-\sigma_1\mu\rangle|}{\|\varphi\|_{W^{1,2}(\Omega,g)}}\leq \|u\|_{W^{1,\infty}(g)}\sqrt{2E_{g}(u;\Omega)-\sigma_1\mu(M)},
$$
which is precisely what we wanted to show.
\end{proof}

As an immediate consequence, for surfaces of genus $0$, we have the following stability estimate--which, combined with uniformization and the standard Hersch trick, will suffice for our purposes in the genus $0$ case.

\begin{proposition}\label{s2.quant.stab} Let $\Omega\subset \Sp$ be a domain in the round unit sphere $(\Sp,g_0)\subset \mathbb{R}^3$, and let $\tilde{g}\in [g_0]$ be a conformal metric such that the identity map $I\colon\Sp\hookrightarrow \mathbb{R}^3$ satisfies
$$
\int_{\Sp}I d\mu=0,
$$
where $\mu$ is the length measure $\mu=ds_{\tilde g}$ of $\partial\Omega$. Then for the first nontrivial Steklov eigenvalue $\sigma_1=\sigma_1(\Omega,\tilde{g})$, we have
\begin{equation}
\|\sigma_1\mu-2dv_{g_0}\lfloor\Omega\|_{(W^{1,2}(\Omega,g_0))^*}^2+6\area_{g_0}(M\setminus\Omega)\leq 3\left(8\pi-\bar{\sigma}_1(\Omega,\tilde{g})\right)
\end{equation}
\end{proposition}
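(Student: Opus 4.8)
The plan is to obtain Proposition~\ref{s2.quant.stab} as a direct consequence of Lemma~\ref{stek.stab.lem}, applied with the background metric $g=g_0$ equal to the round metric and with the test map $u=I\colon(\Sp,g_0)\hookrightarrow\R^3$ equal to the standard inclusion, which takes values in $\Sp=\bd\mathbb{B}^3$ and is smooth, hence lies in $W^{1,\infty}(\Sp,\Sp)$. By conformal invariance of the Steklov problem on surfaces (Example~\ref{ex:Steklov}), the eigenvalue $\sigma_1(\Omega,\tilde g)$ coincides with $\lambda_1(\overline\Omega,[g_0],\mu)$ for the admissible measure $\mu=ds_{\tilde g}$ supported on $\bd\Omega$, so Lemma~\ref{stek.stab.lem} applies with this $\mu$. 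The hypothesis $\int_{\Sp}I\,d\mu=0$ of the proposition — which in applications one arranges by the usual Hersch balancing, i.e. by precomposing with a conformal automorphism of $\Sp$ — is exactly the balancing condition $\int_M u\,d\mu=0$ required by the lemma.

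\textbf{The computation.} Next I would record the elementary geometric facts about the round unit sphere. Since $I$ is an isometric immersion, its energy density is constant, $|dI|_{g_0}^2\equiv 2$ (equivalently $\Delta_{g_0}I=2I$, reflecting $\lambda_1(\Sp,g_0)=2$), and therefore $\|I\|_{W^{1,\infty}(g_0)}^2=\|I\|_{L^\infty}^2+\||dI|_{g_0}\|_{L^\infty}^2=1+2=3$. Consequently
$$
2E_{g_0}(I;\Omega)=\int_\Omega|dI|_{g_0}^2\,dv_{g_0}=2\area_{g_0}(\Omega)=8\pi-2\area_{g_0}(\Sp\setminus\Omega),
$$
and $\sigma_1\mu(\Sp)=\sigma_1(\Omega,\tilde g)\,\length_{\tilde g}(\bd\Omega)=\bar\sigma_1(\Omega,\tilde g)$. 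Substituting $|dI|_{g_0}^2\equiv 2$ into the left-hand side of \eqref{stek.stab.est} and these three identities into the right-hand side, Lemma~\ref{stek.stab.lem} yields
$$
\|\sigma_1\mu-2\,dv_{g_0}\lfloor\Omega\|_{(W^{1,2}(\Omega,g_0))^*}^2\le 3\Bigl(8\pi-2\area_{g_0}(\Sp\setminus\Omega)-\bar\sigma_1(\Omega,\tilde g)\Bigr),
$$
and moving the term $6\,\area_{g_0}(\Sp\setminus\Omega)$ to the left-hand side gives precisely the claimed inequality.

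\textbf{Main point of care.} There is no real analytic obstacle here: the entire content sits in Lemma~\ref{stek.stab.lem} (a stability estimate derived from the Cauchy--Schwarz inequality for the nonnegative quadratic form $Q$) and in the conformal invariance that reduces an arbitrary $\tilde g\in[g_0]$ to the data $(\mu,g_0)$. The only things needing attention are matching the balancing hypothesis of the proposition to that of the lemma, and the constant bookkeeping — in particular that the choice of $W^{1,\infty}$-norm convention makes $\|I\|_{W^{1,\infty}(g_0)}^2=3$, which is what produces the factor $3$ on the right and the coefficient $6=2\cdot 3$ in front of $\area_{g_0}(\Sp\setminus\Omega)$. (It is worth emphasizing that this genus-zero estimate is the \emph{easy} case of Proposition~\ref{ubd.prop}, made possible by the explicit balanced test map $I$; the cases $M=\mathbb{RP}^2,\mathbb{T}^2,\mathbb{K}$ require instead a refinement of the conformal quantitative stability machinery of \cite{KNPS}.)
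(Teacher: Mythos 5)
Your proof is correct and matches the paper's argument essentially line for line: apply Lemma~\ref{stek.stab.lem} with $g=g_0$, $u=I$, use $|dI|_{g_0}^2\equiv 2$, $\|I\|_{W^{1,\infty}(g_0)}^2=1+2=3$, $2E_{g_0}(I;\Omega)=2\area_{g_0}(\Omega)=8\pi-2\area_{g_0}(\Sp\setminus\Omega)$, and $\sigma_1\mu(\Sp)=\bar\sigma_1(\Omega,\tilde g)$, then rearrange. The constant bookkeeping (the factor $3$ from $\|I\|_{W^{1,\infty}}^2$ and $6=2\cdot 3$ from combining with $|dI|^2\equiv 2$) is exactly as in the paper.
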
 
\begin{proof} Applying Lemma \ref{stek.stab.lem} with $\mu=\mathcal{H}^1_{\tilde{g}}\lfloor \partial\Omega$ and the identity map $u=I$, for which $|du|_{g_0}^2\equiv 2$ and $2E_{g_0}(u;\Omega)=2\area_{g_0}(\Omega)$, we see that \eqref{stek.stab.est} gives
\begin{eqnarray*}
\|\sigma_1\mu-2dv_{g_0}\lfloor\Omega\|_{(W^{1,2}(\Omega,g_0))^*}^2&\leq & 3\left(2\area_{g_0}(\Omega)-\bar{\sigma}_1(\Omega,\tilde{g})\right)\\
&=&3\left(8\pi-2\area_{g_0}(M\setminus\Omega)-\bar{\sigma}_1(\Omega,\tilde{g})\right),
\end{eqnarray*}
from which the desired estimate immediately follows.
\end{proof}

To obtain analogous estimates in the cases where $M=\mathbb{RP}^2,\mathbb{T}^2,$ or the Klein bottle $\mathbb{K}$, we combine Lemma \ref{stek.stab.lem} with the techniques of \cite[Section 6]{KNPS}. The case of $M=\mathbb{RP}^2$--which carries only one conformal structure--is in principle simpler, but we group it with the others for convenience.

\begin{proposition}
\label{quant.stab.prop}
 Let $M$ be a closed surface homeomorphic to $\mathbb{RP}^2$, $\mathbb{T}^2$, or the Klein bottle $\mathbb{K}$, and let $g_1\in \Met_{\can}(M)$ be a unit-area, constant curvature metric on $M$. There exist constants $C(M),\delta_1(M)\in (0,\infty)$ such that the following holds. If $\Omega\subset M$ is a smooth domain in $M$ with a conformal metric $\tilde{g}\in [g_1]$ such that
$$
\bar{\sigma}_1(\Omega,\tilde{g})\geq \Lambda_1(M)-\delta_1,
$$
then there exists a $\bar{\lambda}_1$-maximal metric $g_{\max}$ conformal to some $g_0\in \Met_{\can}(M)$, such that
\begin{equation}
\label{qstab.est.0}
\|g_0-g_1\|_{C^1(g_0)}^2\leq C\left(\Lambda_1(M)-\bar{\sigma}_1(\Omega,\tilde{g})\right)
\end{equation}
and the length measure $\mu=ds_{\tilde g}$ of $\partial\Omega$, normalized by $\sigma_1=\sigma_1(\Omega,\tilde{g})$ satisfies
\begin{equation}\label{qstab.est.1}
\|\sigma_1\mu-\lambda_1(g_{max})dv_{g_{max}}\|_{\left(W^{1,2}(\Omega,g_0)\right)^*}^2+\area_{g_0}(M\setminus\Omega)\leq C\left(\Lambda_1(M)-\bar{\sigma}_1(\Omega,\tilde{g})\right).
\end{equation}
\end{proposition}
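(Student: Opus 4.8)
The plan is to extract quantitative closeness from near‑maximality via the stability theory of \cite[Section~6]{KNPS}, to balance the relevant conformally‑maximal map against the boundary measure by a conformal automorphism of the \emph{target} ball, and then to run the argument of Lemma~\ref{stek.stab.lem} \emph{on the domain $\Omega$} itself, so that the energy not carried by $\Omega$ appears on the favorable side of the resulting inequality. Set $\eps:=\Lambda_1(M)-\bar{\sigma}_1(\Omega,\tilde g)$ and normalize $\tilde g$ on $\partial\Omega$ so that $\mu:=ds_{\tilde g}$ is a probability measure and $\sigma_1(\Omega,\tilde g)=\bar\sigma_1(\Omega,\tilde g)$. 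Regarding $\mu$ as an admissible measure on $M$ supported in $\Omega$, \eqref{ineq:Sdom} and Theorem~\ref{thm:KS} give $\Lambda_1(M)-\eps\le\bar\lambda_1(M,[g_1],\mu)\le\Lambda_1(M,[g_1])\le\Lambda_1(M)$, so $[g_1]$ is within $\eps$ of a maximal conformal class and $\mu$ is within $\eps$ of being conformally maximal in $[g_1]$. First I would invoke the quantitative stability of \cite[Section~6]{KNPS}, valid precisely for $\mathbb{RP}^2,\mathbb{T}^2,\mathbb{K}$ (whose $\bar\lambda_1$‑maximal minimal immersions have maximal Morse index): for $\delta_1=\delta_1(M)$ small it yields $g_0\in\Met_{\can}(M)$, a unit‑area $\bar\lambda_1$‑maximal metric $g_{\max}=f_0g_0$ with $f_0\in C^\infty(M)$, $\inf f_0>0$ (these maximizers being smooth and unbranched), the associated minimal immersion $u_{\max}\colon(M,g_0)\to\mathbb{S}^n$ with $|u_{\max}|\equiv1$, $|du_{\max}|^2_{g_0}=\Lambda_1(M)f_0$ (hence $|du_{\max}|^2_{g_0}dv_{g_0}=\Lambda_1(M)\,dv_{g_{\max}}$ and $2E_{g_0}(u_{\max};M)=\Lambda_1(M)$) and $\int_M u_{\max}\,dv_{g_{\max}}=0$, such that $\|g_0-g_1\|_{C^1(g_0)}^2+\|\mu-dv_{g_{\max}}\|_{W^{-1,2}(M,g_0)}^2\le C(M)\,\eps$. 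The first term is exactly \eqref{qstab.est.0}. (When $[g_1]\ne[g_0]$ — impossible for $\mathbb{RP}^2$ — one replaces $u_{\max}$ throughout by the conformally‑maximal map $u_1$ in $[g_1]$, which is $W^{1,\infty}$ and, by the $C\sqrt\eps$‑closeness of $[g_1]$ to the maximal class, an immersion with $|du_1|^2_{g_1}\ge c_0>0$; the $O(\sqrt\eps)$ discrepancies between $(u_1,g_1,dv_{g_{u_1}})$ and $(u_{\max},g_0,dv_{g_{\max}})$ are harmless for the squared bound \eqref{qstab.est.1}.)

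Next I would balance $u_{\max}$ against $\mu$ in the target. Since $u_{\max}$ is $dv_{g_{\max}}$‑centered, $c:=\int_M u_{\max}\,d\mu=\int_M u_{\max}\,d(\mu-dv_{g_{\max}})$ satisfies $|c|\le\|u_{\max}\|_{W^{1,2}(M,g_0)}\,\|\mu-dv_{g_{\max}}\|_{W^{-1,2}(M,g_0)}\le C\sqrt\eps$. For the standard family $\{F_t\}_{t\in\mathbb{B}^{n+1}}$ of conformal automorphisms of $\mathbb{B}^{n+1}$ with $F_0=\mathrm{id}$, the smooth map $G(t):=\int_M(F_t\circ u_{\max})\,d\mu$ has $G(0)=c$ and $DG(0)=2(I-A)$ with $A:=\int_M u_{\max}\otimes u_{\max}\,d\mu$; the same duality shows $A$ lies within $C\sqrt\eps$ of the fixed matrix $A_0:=\int_M u_{\max}\otimes u_{\max}\,dv_{g_{\max}}$, which has $\lambda_{\max}(A_0)<1$ since a non‑constant continuous map cannot equal $\pm v$ a.e. Hence, after shrinking $\delta_1$, a quantitative inverse function theorem with constants depending only on $M$ produces $t^*$ with $|t^*|\le C|c|\le C\sqrt\eps$ and $\int_M(F_{t^*}\circ u_{\max})\,d\mu=0$. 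Put $u:=F_{t^*}\circ u_{\max}\in C^\infty(M,\mathbb{S}^n)$; then $\|u\|_{W^{1,\infty}(g_0)}\le C(M)$ and, with $\rho_{t^*}$ the conformal‑distortion factor of $F_{t^*}$, $|du|^2_{g_0}=(\rho_{t^*}\circ u_{\max})^2|du_{\max}|^2_{g_0}$ with $c_0\le(\rho_{t^*}\circ u_{\max})^2\le C$ and $\|(\rho_{t^*}\circ u_{\max})^2-1\|_{C^0}\le C|t^*|\le C\sqrt\eps$.

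Finally I would feed $u$ into Lemma~\ref{stek.stab.lem}. Since $u_{\max}$ is a critical point of $E_{g_0}(\cdot;M)$ on $W^{1,2}(M,\mathbb{S}^n)$ and $t\mapsto F_t\circ u_{\max}$ is a smooth curve, $2E_{g_0}(u;M)=\Lambda_1(M)+O(|t^*|^2)=\Lambda_1(M)+O(\eps)$, while the complement carries energy
\[
2E_{g_0}(u;M\setminus\Omega)=\int_{M\setminus\Omega}(\rho_{t^*}\circ u_{\max})^2|du_{\max}|^2_{g_0}\,dv_{g_0}\ \ge\ c_0\Lambda_1(M)\!\int_{M\setminus\Omega}\!\!f_0\,dv_{g_0}\ \ge\ c_1\,\area_{g_0}(M\setminus\Omega).
\]
Thus $2E_{g_0}(u;\Omega)=2E_{g_0}(u;M)-2E_{g_0}(u;M\setminus\Omega)\le\Lambda_1(M)+C\eps-c_1\area_{g_0}(M\setminus\Omega)$, and since the Rayleigh‑quotient bound for $\sigma_1(\Omega,\tilde g)$ (applied componentwise to the mean‑zero, unit‑norm $u$) gives $2E_{g_0}(u;\Omega)\ge\sigma_1\mu(M)=\bar\sigma_1(\Omega,\tilde g)$, we obtain
\[
0\ \le\ 2E_{g_0}(u;\Omega)-\sigma_1\mu(M)\ \le\ C\eps-c_1\,\area_{g_0}(M\setminus\Omega),
\]
which already forces $\area_{g_0}(M\setminus\Omega)\le C(M)\,\eps$. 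Lemma~\ref{stek.stab.lem} then gives
\[
\|\sigma_1\mu-|du|^2_{g_0}\,dv_{g_0}\lfloor\Omega\|^2_{(W^{1,2}(\Omega,g_0))^*}\ \le\ \|u\|^2_{W^{1,\infty}(g_0)}\big(2E_{g_0}(u;\Omega)-\sigma_1\mu(M)\big)\ \le\ C\eps,
\]
and since $|du|^2_{g_0}\,dv_{g_0}\lfloor\Omega=(\rho_{t^*}\circ u_{\max})^2\Lambda_1(M)\,dv_{g_{\max}}\lfloor\Omega$ differs from $\lambda_1(g_{\max})\,dv_{g_{\max}}\lfloor\Omega=\Lambda_1(M)\,dv_{g_{\max}}\lfloor\Omega$ by a signed measure of $(W^{1,2}(\Omega,g_0))^*$‑norm at most $C\|(\rho_{t^*}\circ u_{\max})^2-1\|_{C^0}(\sup f_0)\,\area_{g_0}(\Omega)^{1/2}\le C\sqrt\eps$ (bound $\int_\Omega\varphi\big((\rho_{t^*}\circ u_{\max})^2-1\big)dv_{g_{\max}}$ by Cauchy--Schwarz on $\Omega$), the triangle inequality yields $\|\sigma_1\mu-\lambda_1(g_{\max})\,dv_{g_{\max}}\|^2_{(W^{1,2}(\Omega,g_0))^*}\le C(M)\,\eps$. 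Together with the area bound this is \eqref{qstab.est.1}.

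The step I expect to be the main obstacle is the balancing — and, entangled with it, the passage from an arbitrary conformal class $[g_1]$ to the maximal one $[g_0]$. Unlike the genus‑zero situation of Proposition~\ref{s2.quant.stab}, where Möbius transformations of $\mathbb{S}^2$ simultaneously serve as conformal automorphisms preserving the conformal class (so the immersion may simply be assumed centered), for $\mathbb{RP}^2,\mathbb{T}^2,\mathbb{K}$ the conformal‑automorphism group of $(M,g_0)$ is too small for this, so one must center in the target and then absorb the distortion $(\rho_{t^*}\circ u_{\max})^2$. The whole scheme survives because this perturbation costs only $O(|t^*|^2)=O(\eps)$ in the total energy — thanks to criticality of the maximal map — and only $O(\sqrt\eps)$ pointwise in the induced measure, exactly what the squared estimate \eqref{qstab.est.1} can absorb. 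Keeping the inverse‑function‑theorem step uniform in $(\Omega,\tilde g)$, and in the general case making the $[g_1]\to[g_0]$ reduction lossless at the level of \eqref{qstab.est.1}, is precisely where the $W^{-1,2}$ measure bound and the quantitative continuity of maximizers from \cite{KNPS} enter, which is why $\delta_1(M)$ must be taken sufficiently small.
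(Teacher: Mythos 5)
Your plan—apply the quantitative stability of \cite[Section 6]{KNPS}, balance a nearly-maximal sphere-valued map against $\mu$ via a conformal automorphism of the target, and run Lemma~\ref{stek.stab.lem} on $\Omega$ so the energy over $M\setminus\Omega$ lands on the good side—is structurally the same as the paper's, and your observation that the unbranchedness of the maximizer ($|du_{\max}|^2\geq c_0>0$) is what converts the leftover energy into an area bound is exactly right. But your main argument, as written, has a genuine gap at the Rayleigh-quotient step, and it does not produce the claimed $O(\eps)$ bounds.

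The problem: you write $2E_{g_0}(u;\Omega)\geq\sigma_1\mu(M)$, but $\sigma_1(\Omega,\tilde g)=\lambda_1(\Omega,[g_1],\mu)$, so the Rayleigh quotient is computed in the Dirichlet energy of $[g_1]$, not $[g_0]$. Since $g_0\in[g_{\max}]$ is in general \emph{not} conformal to $g_1$, you have only $2E_{g_1}(u;\Omega)\geq\bar\sigma_1$, and passing between $E_{g_0}$ and $E_{g_1}$ with $\|g_0-g_1\|_{C^1}\leq C\sqrt\eps$ introduces a multiplicative factor $1+O(\sqrt\eps)$ on an $O(1)$ quantity, i.e.\ an additive error of size $O(\sqrt\eps)$. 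This degrades both conclusions to $\area_{g_0}(M\setminus\Omega)\leq C\sqrt\eps$ and $\|\sigma_1\mu-\lambda_1(g_{\max})dv_{g_{\max}}\|^2_{(W^{1,2}(\Omega,g_0))^*}\leq C\sqrt\eps$, which is precisely the non-sharp loss the paper warns about in Section~\ref{ref.stab}. Your parenthetical fix — work with a map $u_1$ adapted to $[g_1]$ rather than $u_{\max}$ — is the right repair, but it is not carried out, and it needs more than a word: you would need an $O(\sqrt\eps)$ bound on $\|\mu-dv_{g_{u_1}}\|_{W^{-1,2}(g_1)}$ (not just on $\|\mu-dv_{g_{\max}}\|_{W^{-1,2}(g_0)}$), criticality and a uniform Hessian bound for $E_{g_1}$ at $u_1$, a uniform lower bound $|du_1|^2_{g_1}\geq c_0$, and control on $\|dv_{g_{u_1}}-dv_{g_{\max}}\|$ to convert back to the stated $g_{\max}$.

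The paper sidesteps this entirely. Instead of the harmonic conformal maximizer and a Taylor-expansion-at-a-critical-point argument, it takes $F_1:=F_{\langle g_1\rangle}$ from the family constructed in \cite[Lemma~6.5]{KNPS}, which is a \emph{conformal} (not necessarily harmonic) map $(M,g_1)\to\mathbb S^n$, balances it via the classical Hersch trick to get $u_1=G_a\circ F_1$ with $\int u_1\,d\mu=0$, and exploits conformality so that $E_{g_1}(u_1;\Omega)=\area(u_1(\Omega))$ on the nose. Lemma~\ref{stek.stab.lem} in the metric $g_1$ gives $\|\sigma_1\mu-|du_1|^2_{g_1}dv_{g_1}\lfloor\Omega\|^2\leq C\bigl(2\area(u_1(\Omega))-\bar\sigma_1\bigr)$, and the area-stability estimate in \cite[Lemma~6.5]{KNPS} gives $\|g_1-g_0\|_{C^1}^2+\|u_1-u_0\|_{C^2}^2\leq C\bigl(\Lambda_1-2\area(u_1(M))\bigr)$. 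Adding the two right-hand sides yields precisely $C\bigl(\Lambda_1-\bar\sigma_1\bigr)-2C\,\area(u_1(M\setminus\Omega))$, which simultaneously produces the measure bound \emph{and} the area bound with no metric-comparison loss. No quantitative inverse-function theorem and no criticality computation are needed; the Hersch trick supplies balancing abstractly and the smallness of $G_a$ is recovered \emph{after the fact} from $\|u_1-u_0\|_{C^2}\leq C\sqrt\eps$. If you want to salvage your variant, the cleanest route is to adopt exactly this: keep everything in $[g_1]$ and use conformality (energy $=$ area) in place of the criticality expansion, since that is what makes the $g_1\to g_0$ passage lossless at the level of \eqref{qstab.est.1}.
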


\begin{proof} The proof follows closely that of \cite[Theorem 1.17]{KNPS}, with Lemma \ref{stek.stab.lem} replacing \cite[Lemma 2.1]{KNPS} at the final step. As discussed in \cite[Section 6]{KNPS}, the minimal immersions $u\colon M\to \mathbb{S}^n$ that induce the $\bar{\lambda}_1$-maximizing metrics on $M=\mathbb{RP}^2$, $\mathbb{T}^2$, and $\mathbb{K}$ all have \emph{maximal Morse index} as critical points of the area functional, in the sense that $\ind_A(u)=n+1+\dim(\mathcal{M}_0(M))$, where $\mathcal{M}_0(M)=\Met_{\can}(M)/\Diff_0(M)$ denotes the Teichm\"{u}ller space of conformal structures on $M$. In particular, these minimal immersions satisfy the hypotheses of \cite[Lemma 6.5]{KNPS}.

Following the proof of \cite[Theorem 6.1]{KNPS}, let $\mathcal{C}_{\max}\subset \mathcal{M}_0(M)$ denote the set of (equivalence classes of) conformal structures $\langle g\rangle$ achieving the maximum $\Lambda_1(M,[g])=\Lambda_1(M)$. By \cite[Lemma 6.5]{KNPS}, there exists a neighborhood $\mathcal{U}$ of $\mathcal{C}_{\max}$ in $\mathcal{M}_0(M)$ and a family of maps
\begin{equation}\label{map.fam}
\mathcal{U}\ni \tau \mapsto F_{\tau}\in C^{\infty}(M,\mathbb{S}^n)
\end{equation}
such that the constant curvature metric $g_{\tau}$ conformal to $F_{\tau}^*(g_{\mathbb{S}^n})$ lies in $\tau\in \mathcal{M}_0(M)$, for every $\langle g_0\rangle\in \mathcal{C}_{max}$ the map $F_{\langle g_0\rangle}=u_0$ is a minimal immersion inducing the $\bar{\lambda}_1$-maximizing metric, and denoting by 
$$
\mathbb{B}^{n+1}\ni a\mapsto G_a\in \Conf(\mathbb{S}^n)
$$
the canonical family of conformal dilations, for every $(a,\tau)\in \mathbb{B}^{n+1}\times \mathcal{U}$ such that
\begin{equation}\label{area.big}
\area(G_a\circ F_{\tau})\geq \frac{1}{2}[\Lambda_1(M)-\delta_0(M)],
\end{equation}
for a small constant $\delta_0(M)>0$, we have 
\begin{equation}\label{area.stab}
\|g_{\tau}-g_0\|_{C^1(g_0)}^2+\|G_a\circ F_{\tau}-u_0\|_{C^2(g_0)}^2\leq C(M)[\Lambda_1(M)-2\area(G_a\circ F_{\tau})],
\end{equation}
for some $\langle g_0\rangle \in \mathcal{C}_{max}$ with $u_0=F_{\langle g_0\rangle}$.

Now, let $\Omega\subset M$ and $\tilde{g}\in [g_1]$ be as in the hypotheses of the proposition, with $\mu=ds_{\tilde g}$. It follows from~\eqref{ineq:Sdom} that 
\begin{equation}\label{gap.small}
\Lambda_1(M)-\delta_1<\bar{\sigma}_1(\Omega,\tilde{g})\leq \bar{\lambda}_1(M,[g_1],\mu).
\end{equation}
Theorem \ref{thm:glob_qual_stab} then implies that for $\delta_1=\delta_1(M)>0$ sufficiently small, one has
$$
\langle g_1\rangle\in \mathcal{U},
$$
where $\mathcal{U}$ is the neighborhood of $\mathcal{C}_{\max}$ given above. Assume now that \eqref{gap.small} holds, and let $F_1=F_{\langle g_1\rangle}$ be the map associated to $\langle g_1\rangle$ as in \eqref{map.fam}. We know then that $F_1\circ \Phi\colon (M,g_1)\to \mathbb{S}^n$ is conformal for some diffeomorphism $\Phi\in \Diff_0(M)$, and since the desired estimates \eqref{qstab.est.0}-\eqref{qstab.est.1} are invariant under the change
$$
(\Omega,\tilde{g},g_1,g_0)\mapsto \left(\Phi^{-1}(\Omega),\Phi^*\tilde{g},\Phi^*g_1,\Phi^*g_0\right)
$$
for any diffeomorphism $\Phi\in \Diff(M)$, we may assume without loss of generality that
$$
F_1\colon (M,g_1)\to \mathbb{S}^n
$$
is conformal.

By the standard Hersch trick (see, e.g., \cite{LY}), there must exist a conformal dilation $G_a\in \Conf(\mathbb{S}^n)$ for which the map
$$
u_1:=G_a\circ F_1
$$
satisfies
$$
\int_{\Omega} u_1 d\mu=0\in \mathbb{R}^{n+1}.
$$
Therefore, by the definition of $\sigma_1$, we see that
$$
2\area(u_1(M))=2E_{g_1}(u_1(M))\geq \sigma_1\int |u_1|^2d\mu=\bar{\sigma}_1(\Omega,\tilde{g}),
$$
so that, by \eqref{gap.small},
$$
\area(u_1(M))\geq \frac{1}{2}\left(\Lambda_1(M)-\delta_1\right).
$$
In particular, taking $\delta_1(M)<\delta_0(M)$, we see that \eqref{area.big} is satisfied, so there exists a minimal immersion $u_0\colon M\to \mathbb{S}^n$ inducing a $\bar{\lambda}_1$-maximizing metric $g_{\max}$ and a unit-area constant curvature metric $g_0\in [g_{\max}]$ such that
\begin{equation}
\label{map.met.close}
\|g_1-g_0\|_{C^1(g_0)}^2+\|u_1-u_0\|_{C^2(g_0)}^2\leq C\left(\Lambda_1(M)-2\area(u_1(M))\right)\leq C\delta_1(M).
\end{equation}
As an immediate consequence, we have
$$
\|g_1-g_0\|_{C^1(g_0)}^2\leq C\left(\Lambda_1(M)-\bar{\sigma}_1(\Omega,\tilde{g})\right),
$$
giving the first desired estimate~\eqref{qstab.est.0}.

Now, by Lemma \ref{stek.stab.lem}, we have that
\begin{eqnarray*}
\|\sigma_1\mu-|du_1|^2_{g_1}dv_{g_1}\lfloor\Omega\|_{[W_{g_1}^{1,2}(\Omega)]^*}&\leq & \|u_1\|_{W^{1,\infty}(g_1)}\left(2E_{g_1}(u_1;\Omega)-\bar{\sigma}_1(\Omega,\tilde{g})\right)^{1/2}\\
&\leq &C\left(2\area(u_1(\Omega))-\bar{\sigma}_1(\Omega,\tilde{g})\right)^{1/2},
\end{eqnarray*}
where in the last line we used the conformality of $u_1$ and the fact that $\|u_1\|_{C^1(g_1)}\leq 2\|u_0\|_{C^1(g_0)}\leq C'(M)$ for $\delta_1(M)$ sufficiently small, by \eqref{map.met.close}. Furthermore, It follows from~\eqref{map.met.close} that
$$
\frac{1}{2}\|\cdot\|_{(W^{1,2}(\Omega,g_1))^*}\leq \|\cdot\|_{(W^{1,2}(\Omega,g_0))^*}\leq 2\|\cdot\|_{(W^{1,2}(\Omega,g_1))^*}
$$
provided $\delta_1(M)$ is sufficiently small, so that
\begin{equation}
\|\sigma_1\mu-|du_1|_{g_1}^2dv_{g_1}\lfloor \Omega\|_{\left(W^{1,2}(\Omega,g_0)\right)^*}\leq C\left(2\area(u_1(\Omega))-\bar{\sigma}_1(\Omega,\tilde{g})\right)^{1/2}.
\end{equation}
Repeatedly using \eqref{map.met.close}, we also see that
$$
\||du_1|_{g_1}^2dv_{g_1}-|du_0|_{g_0}^2dv_{g_0}\|_{\left(W^{1,2}(\Omega,g_0)\right)^*}\leq C\left(\Lambda_1(M)-2\area(u_1(M))\right)^{1/2},
$$
and recalling that $|du_0|_{g_0}^2dv_{g_0}=2dv_{g_{\max}}$, we can combine this with the preceding estimate to find that
\begin{equation*}
\begin{split}
\|\sigma_1\mu-2dv_{g_{\max}}\|_{\left(W^{1,2}(\Omega,g_0)\right)^*}^2 &\leq C'[\Lambda_1(M)-2\area(u_1(M))]\\
&+C'[2\area(u_1(\Omega))-\bar{\sigma}_1(\Omega,\tilde{g})] \\
&=C'[\Lambda_1(M)-\bar{\sigma}_1(\Omega,\tilde{g})]-2C'\area(u_1(M\setminus\Omega));
\end{split}
\end{equation*}
i.e.,
\begin{equation}\label{almost.est}
\|\sigma_1\mu-2dv_{g_{max}}\|_{\left(W^{1,2}(\Omega,g_0)\right)^*}^2+2C'\area(u_1(M\setminus \Omega))\leq C'\left(\Lambda_1(M)-\bar{\sigma}_1(\Omega,\tilde{g})\right).
\end{equation}
Finally, it follows from \eqref{map.met.close} that
$$
g_{\max}=u_0^*(g_{\mathbb{S}^n})\leq 2u_1^*(g_{\mathbb{S}^n})
$$
provided $\delta_1(M)$ is sufficiently small, and we know that
$$
g_0\leq C(M)g_{\max},
$$
since the maximizing metrics on $\mathbb{RP}^2$, $\mathbb{T}^2$, and $\mathbb{K}$ are smooth. Hence, we have
$$
\area_{g_0}(M\setminus\Omega)\leq C \area(u_1(M\setminus\Omega)),
$$
and combining this with \eqref{almost.est}, we arrive at the desired estimate \eqref{qstab.est.1}.
\end{proof}

\begin{remark}\label{conf.ubds} Using the techniques of \cite[Section 2.3]{KNPS} in place of \cite[Section 6]{KNPS}, it is straightforward to prove a simpler, conformally-constrained version of the preceding lemma for those conformal classes induced by (non-branched) minimal immersions $M\to \mathbb{S}^n$ by first eigenfunctions on any closed surface $M$. Combining this with the estimates of Subsection \ref{gap.ctrl} below, one can easily prove a conformal analog of Proposition \ref{ubd.prop} for such conformal classes. Namely, if $(M,[g_1])$ is a conformal class arising from a minimal immersion $M\to \mathbb{S}^n$ by first eigenfunctions, then for any $g\in [g_1]$ and any domain $\Omega_k\subset M$ with $k$ boundary components, one has
\begin{equation}\label{conf.ubds}
\bar{\sigma}_1(\Omega_k,g)\leq \Lambda_1(M,[g_1])-c(M,[g_1])\frac{\log k}{k}.
\end{equation}
\end{remark}

\subsection{Structure of nearly-$\bar{\sigma}_1$-maximizing metrics with many boundary components}\label{gap.ctrl}

We collect now some of the key estimates which, together with Proposition \ref{quant.stab.prop} and \eqref{s2.quant.stab}, yield the proof of Proposition \ref{ubd.prop}.

Let $g_1\in \Met_{\can}(M)$ be a unit-area metric of constant curvature $K=2\pi \chi(M)$ on the closed surface $M$, with injectivity radius 
$$
\inj(M,g_1)\geq \iota_0>0.
$$
In particular, there exists $C=C(\iota_0)>0$ such that for any $\varphi\in W^{1,2}(M,g_1)$ one has
\begin{equation}
\label{ineq:CSobolev}
\|\varphi\|_{W^{1,2}(M,g_1)}\leqslant C\left(\|d\varphi\|_{L^2(M,g_1)} + \|\varphi\|_{L^1(M,g_1)}\right).
\end{equation}

Given a collection of disjoint geodesic balls $B_{r_1}(p_1),\ldots, B_{r_k}(p_k)$ in $(M,g_1)$, set
$$
\mathcal{B}:=B_{r_1}(p_1)\cup\cdots\cup B_{r_k}(p_k)
$$
and
$$
\Omega:=M\setminus \mathcal{B}.
$$
For some small $\delta>0$, suppose that
\begin{equation}
\label{area.bd.hyp}
\area_{g_1}(\mathcal{B})\leq \delta.
\end{equation}
Let $\mu$ be an admissible measure supported on $\partial\Omega$ and $0\leqslant\rho\in C^{\infty}(\Omega)$ a non-negative function such that
\begin{equation}\label{h-1.hyp}
|\langle \varphi, \sigma d\mu-\rho dv_{g_1}\rangle|^2\leq \delta \|\varphi\|_{W^{1,2}(\Omega,g_1)}^2
\end{equation}
for every $\varphi\in W^{1,2}(\Omega)$ and some constant $\sigma>0$. For the application we have in mind, one should think of $\sigma$ as the the first eigenvalue of $\mu$ on $\Omega$, i.e. $\sigma=\lambda_1(\Omega,[g_1],\mu)$.

By Lemma \ref{std.disk.lem}, we know that there exists $r_0(\chi(M))$ depending only on the curvature of $(M,g_1)$ such that a geodesic ball $B_t(x)$ of radius $t<t_0(\chi(M),\iota_0)=\min\{\iota_0,r_0\}$ in $(M,g_1)$ has area
$$
\frac{3\pi}{4}t^2\leq \area_{g_1}(B_t(x))\leq \frac{5\pi}{4}t^2.
$$
In particular, provided 
$$
\delta<\frac{3\pi}{4}t_0^2
$$
in~\eqref{area.bd.hyp}, it follows that
\begin{equation}
\label{rad.bd.1}
\frac{3\pi}{4}\sum_{j=1}^kr_j^2\leq \area_{g_1}(\mathcal{B})\leq \delta.
\end{equation}

Denote by $S$ the collection $S:=\{p_1,\ldots, p_k\}$ of all centers of the disks $B_{r_j}(p_j)$, and consider the subset
\begin{equation}
\label{sprime.def}
S':=\left\{p_i\in S\mid r_i\geq \frac{\sqrt{\delta}}{k^{1/4}}\right\}.
\end{equation}
It follows from \eqref{rad.bd.1} that
\begin{equation}\label{sprime.count}
|S'|\leq \frac{4}{3\pi}\sqrt{k},
\end{equation}
and it is of course possible that $S'=\varnothing$. In general, we have the following.

\begin{lemma}
\label{bigrad.smallmass}  
Let $(M,g_1)$, $\Omega$, $\mu$, and $\rho$ be as above satisfying \eqref{area.bd.hyp}-\eqref{h-1.hyp} with
$$
\delta<\frac{1}{\sqrt{k}}\leq t_0(\chi(M),\iota_0).
$$
Then 
$$
\sigma \mu\left(\bigcup_{p_j\in S'}B_{r_j}(p_j)\right)\leq C\left(1+\|\rho\|_{L^{\infty}}\right)\delta^{1/2}k^{1/4}
$$
for some constant $C=C(\chi(M),\iota_0)$.
\end{lemma}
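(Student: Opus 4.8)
The plan is to feed the hypothesis \eqref{h-1.hyp} a test function $\varphi$ that is $\equiv 1$ on every circle $\partial B_{r_j}(p_j)$ with $p_j\in S'$ — so that $\langle\varphi,\sigma\,d\mu\rangle\ge\sigma\sum_{p_j\in S'}\mu(\partial B_{r_j}(p_j))$, which is precisely the quantity we want to bound — while keeping $\|\varphi\|_{W^{1,2}}$ and $\|\varphi\|_{L^1}$ within the budget dictated by the right-hand side of \eqref{h-1.hyp} and the target bound. The natural choice is a pointwise maximum of logarithmic cutoffs, one per ``big'' disk. Concretely, for each $p_j\in S'$ let $\varphi_j\in W^{1,2}(M)$ be the radial function of $\dist_{g_1}(\cdot,p_j)$ equal to $1$ on $\overline{B_{r_j}(p_j)}$, equal to $0$ outside $B_{2r_j}(p_j)$, and interpolating log-linearly on the annulus in between, and set $\varphi:=\max_{p_j\in S'}\varphi_j$. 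Since \eqref{rad.bd.1} gives $r_j\le C(\chi(M))\sqrt{\delta}<C(\chi(M))k^{-1/4}<t_0(\chi(M),\iota_0)$ for $k$ large, each $B_{2r_j}(p_j)$ is a genuine geodesic ball to which Lemma \ref{std.disk.lem} applies.

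Next I would record the two elementary estimates for $\varphi$. A direct computation in geodesic polar coordinates (using Lemma \ref{std.disk.lem} to control the area element on the relevant scale) shows that a single log-cutoff over a fixed-ratio annulus has Dirichlet energy $\|d\varphi_j\|_{L^2(M,g_1)}^2\le C_0(\chi(M),\iota_0)$, a constant \emph{independent} of $r_j$; since $|d\varphi|\le\max_j|d\varphi_j|$ a.e.\ and $(\max_j|d\varphi_j|)^2\le\sum_j|d\varphi_j|^2$, the count $|S'|\le\frac{4}{3\pi}\sqrt{k}$ from \eqref{sprime.count} yields $\|d\varphi\|_{L^2(M,g_1)}^2\le\sum_{p_j\in S'}\|d\varphi_j\|_{L^2}^2\le C\sqrt{k}$. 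On the other hand $0\le\varphi\le\sum_{p_j\in S'}\mathbf 1_{B_{2r_j}(p_j)}$, so Lemma \ref{std.disk.lem} and \eqref{rad.bd.1} give $\|\varphi\|_{L^1(M,g_1)}\le\sum_{p_j\in S'}\area_{g_1}(B_{2r_j}(p_j))\le C\sum_j r_j^2\le C\delta$. Combining these with the Sobolev-type inequality \eqref{ineq:CSobolev} (applied to $\varphi$ on $M$) and restricting to $\Omega$ gives
$$
\|\varphi\|_{W^{1,2}(\Omega,g_1)}\le\|\varphi\|_{W^{1,2}(M,g_1)}\le C\big(\|d\varphi\|_{L^2(M,g_1)}+\|\varphi\|_{L^1(M,g_1)}\big)\le C\,k^{1/4}
$$
for $k$ large, since $\delta<k^{-1/2}$.

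Finally I would plug $\varphi$ into \eqref{h-1.hyp}. Using $\varphi\ge0$, $\rho\ge0$, the $L^1$ bound, the facts that $\varphi\equiv1$ on $\partial B_{r_j}(p_j)$ for $p_j\in S'$ and $\supp\mu\subset\partial\Omega=\bigsqcup_j\partial B_{r_j}(p_j)$, and then $\delta<\delta^{1/2}<\delta^{1/2}k^{1/4}$ (as $\delta<k^{-1/2}<1$), we obtain
\begin{align*}
\sigma\sum_{p_j\in S'}\mu(\partial B_{r_j}(p_j)) &\le \sigma\int\varphi\,d\mu \le \int_\Omega\varphi\rho\,dv_{g_1}+\sqrt{\delta}\,\|\varphi\|_{W^{1,2}(\Omega,g_1)}\\
&\le \|\rho\|_{L^\infty}\|\varphi\|_{L^1(M,g_1)}+C\sqrt{\delta}\,k^{1/4} \le C\big(1+\|\rho\|_{L^\infty}\big)\,\delta^{1/2}k^{1/4},
\end{align*}
which is the assertion once one interprets $\mu\big(\bigcup_{p_j\in S'}B_{r_j}(p_j)\big)$ as $\mu$ of the closures, i.e.\ $\sum_{p_j\in S'}\mu(\partial B_{r_j}(p_j))$, which is forced by $\supp\mu\subset\partial\Omega$.

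The main point — and the only place where the exponents must line up — is the three-scale balance: there are only $O(\sqrt{k})$ big disks by \eqref{sprime.count}, each fixed-ratio log-cutoff carries $O(1)$ energy, so $\|\varphi\|_{W^{1,2}}=O(k^{1/4})$, and the factor $\sqrt{\delta}$ in \eqref{h-1.hyp} then produces exactly $\delta^{1/2}k^{1/4}$. Widening the annuli to ratio $k^{\varepsilon}$ would lower the energy but inflate $\|\varphi\|_{L^1}$ to order $\delta k^{2\varepsilon}$, so the ratio $2$ is the cleanest choice making both terms fit; I expect this balancing, together with the routine verifications that $\max$ of finitely many $W^{1,2}$ functions lies in $W^{1,2}$ with $|d\max|\le\max_j|d\varphi_j|$ and that all radii $2r_j$ remain below the scale $t_0(\chi(M),\iota_0)$ once $k\ge k_0(\chi(M),\iota_0)$, to be the substance of the argument.
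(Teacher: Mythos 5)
Your argument is correct and follows essentially the same strategy as the paper: construct a test function equal to $1$ on the large disks whose $W^{1,2}$-norm scales like $k^{1/4}$ via the count $|S'|\lesssim\sqrt{k}$, then feed it into \eqref{h-1.hyp}. The only (cosmetic) difference is the cutoff: the paper uses a single linear cutoff of the distance function $d_{S'}$ at the fixed scale $\sqrt{\delta}$, whereas you use per-disk logarithmic cutoffs at scales $[r_j,2r_j]$ and take the pointwise maximum; both yield the same $O(\sqrt{k})$ Dirichlet energy and an adequate $L^1$ bound (yours is in fact slightly sharper, $O(\delta)$ versus the paper's $O(|S'|\delta)$, though both suffice in the final step).
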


\begin{proof} Let $\chi\in C^{\infty}(\mathbb{R})$ be a smooth, decreasing function such that
$$
\chi(t)\equiv 1\text{ for }t\leq \sqrt{\frac{4\delta}{3\pi}},
$$
$$
\chi(t)\equiv 0\text{ for }t\geq 2\sqrt{\frac{4\delta}{3\pi}},
$$
and
$$
|\chi'|\leq \frac{10}{\sqrt{\delta}}.
$$
Denoting by $d_{S'}\in \Lip(M)$ the distance function
$$
d_{S'}(x)=\min\left\{\dist_{g_1}(x,p_i)\mid p_i\in S'\right\},
$$
let
$$\varphi:=\chi\circ d_{S'}.$$
Since each $r_j\leq \sqrt{\frac{4\delta}{3\pi}}$ by \eqref{rad.bd.1}, it follows from the definition of $\varphi$ that
$$\varphi\equiv 1\text{ on }\bigcup_{p_j\in S'}B_{r_j}(p_j),$$
and consequently
\begin{equation}\label{sprime.mass}
\mu\left(\bigcup_{p_j\in S'}B_{r_j}(p_j)\right) = \langle \varphi,\mu\rangle.
\end{equation}
On the other hand, since $0\leq \varphi\leq 1$, $|d\varphi|\leq \frac{10}{\sqrt{\delta}}$, and $\varphi$ is supported on $\bigcup_{p_j\in S'}B_{\sqrt{16\delta/3\pi}}(p_j)$ by construction, we see that
$$
\|\varphi\|_{L^1(M,g_1)}\leq \sum_{p_j\in S'}\area_{g_1}\left(B_{\sqrt{16\delta/3\pi}}(p_j)\right)\leq C|S'|\delta
$$
and
$$
\|d\varphi\|_{L^2(M,g_1)}^2\leq \frac{100}{\delta}\sum_{p_j\in S'}\area_{g_1}\left(B_{\sqrt{16\delta/3\pi}}(p_j)\right)\leq C'|S'|.
$$
In particular, combining this with~\eqref{ineq:CSobolev} and~\eqref{sprime.count}, it follows that
$$
\|\varphi\|_{W^{1,2}(\Omega,g_1)}^2\leq C|S'|\leq C\sqrt{k}
$$
and
$$
\int_{\Omega}\rho \varphi dv_{g_1}\leq C\|\rho\|_{L^{\infty}}|S'|\delta\leq C'\|\rho\|_{L^{\infty}}\delta \sqrt{k};
$$
putting this together with \eqref{sprime.mass} and \eqref{h-1.hyp}, we find that
\begin{eqnarray*}
\sigma \mu\left(\bigcup_{p_j\in S'}B_{r_j}(p_j)\right) &= & \langle \sigma \mu,\varphi\rangle\\
&\leq &\left|\langle \sigma \mu-\rho dv_{g_1},\varphi\rangle\right|+\int_\Omega \rho \varphi dv_{g_1}\\
&\leq & \sqrt{\delta}\|\varphi\|_{W^{1,2}(\Omega,g_1)}+C\|\rho\|_{L^{\infty}}\delta\sqrt{k}\\
&\leq & C\left(\delta^{1/2}k^{1/4}+\|\rho\|_{L^{\infty}}\delta\sqrt{k}\right).
\end{eqnarray*}
Finally, recalling that $\delta \sqrt{k}<1$ by assumption, we have that $\delta\sqrt{k}\leq \delta^{1/2}k^{1/4}$, and the desired estimate follows.

\end{proof} 


\begin{lemma} Let $(M,g_1)$, $\Omega$, $\mu$, and $\rho$ be as above satisfying \eqref{area.bd.hyp}-\eqref{h-1.hyp} with
$$
\delta<\frac{1}{\sqrt{k}}\leq t_0(\chi(M),\iota_0).
$$
Then we have 
\begin{equation}
\label{big.sec4.est}
\sigma \mu(M)\leq C\frac{(1+\|\rho\|_{L^{\infty}})}{\log k}\left(k\delta+\sqrt{\delta k\log k}\right)
\end{equation}
for some $C=C(\chi(M),\iota_0)<\infty$. 

\end{lemma}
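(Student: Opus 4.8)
The plan is to bound $\sigma\mu(M)$ directly by a test function argument built on \eqref{h-1.hyp} and the Sobolev-type inequality \eqref{ineq:CSobolev}, exploiting the split $S=S'\cup S''$ with $S'':=S\setminus S'$ the set of ``small'' centers for which, by \eqref{sprime.def}, the radii satisfy $r_j<\sqrt\delta\,k^{-1/4}$. Since $\mu$ is supported on $\partial\Omega=\bigcup_{j=1}^k\partial B_{r_j}(p_j)$, I would write
$$
\sigma\mu(M)=\sigma\mu\Big(\bigcup_{p_j\in S'}\partial B_{r_j}(p_j)\Big)+\sigma\mu\Big(\bigcup_{p_j\in S''}\partial B_{r_j}(p_j)\Big),
$$
where the first term is already controlled by Lemma~\ref{bigrad.smallmass} by a quantity of size $C(1+\|\rho\|_{L^\infty})\delta^{1/2}k^{1/4}$; since $\sqrt{\log k}\le k^{1/4}$, this is at most $C(1+\|\rho\|_{L^\infty})\sqrt{\delta k/\log k}$, which will be dominated by the estimate for the second term. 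So the whole point is to bound $\sigma\mu$ on the small circles, and for that a single logarithmic cutoff adapted to all of $S''$ at once suffices.

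Concretely, I would set $\tilde r:=\max_{p_j\in S''}r_j<\sqrt\delta\,k^{-1/4}$ and $\tilde\rho:=\sqrt{\delta/\log k}$, observe that for $k$ large $\tilde\rho<\min\{\iota_0,r_0(\chi(M))\}$ while $\tilde\rho/\tilde r>k^{1/4}/\sqrt{\log k}$, so $\log(\tilde\rho/\tilde r)\ge\tfrac18\log k$, and then take the standard radial profile $\chi\colon[0,\infty)\to[0,1]$ equal to $1$ on $[0,\tilde r]$, to $0$ on $[\tilde\rho,\infty)$, and logarithmic in between, putting $\varphi_j:=\chi(\dist_{g_1}(\cdot,p_j))$ and $\varphi:=\max_{p_j\in S''}\varphi_j$. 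Then $\varphi\equiv 1$ on $\bigcup_{S''}\overline{B_{r_j}(p_j)}$, so (using $0\le\varphi\le1$ and $\mu\ge0$ supported on $\partial\Omega$) $\mu\big(\bigcup_{S''}\partial B_{r_j}(p_j)\big)\le\int_M\varphi\,d\mu$. The usual annulus computation with \eqref{std.length.est} gives $\|d\varphi_j\|_{L^2(M,g_1)}^2\le C/\log(\tilde\rho/\tilde r)\le C/\log k$; since $|d\varphi|^2\le\sum_j|d\varphi_j|^2$ a.e. and $|S''|\le k$, summing yields $\|d\varphi\|_{L^2(M,g_1)}^2\le Ck/\log k$. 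Similarly $\varphi\le\sum_{p_j\in S''}\varphi_j$ is supported in $\bigcup B_{\tilde\rho}(p_j)$, so by \eqref{std.area.est} $\|\varphi\|_{L^1(M,g_1)}\le\sum_{p_j\in S''}\area_{g_1}(B_{\tilde\rho}(p_j))\le Ck\tilde\rho^2=Ck\delta/\log k$. Feeding these into \eqref{ineq:CSobolev} gives $\|\varphi\|_{W^{1,2}(M,g_1)}\le C\sqrt{k/\log k}$ (the gradient term dominates, as $k\delta/\log k\le\sqrt k/\log k\le\sqrt{k/\log k}$). Applying \eqref{h-1.hyp} to $\varphi|_\Omega$ together with $\rho\ge0$ then gives
$$
\sigma\int_M\varphi\,d\mu\le\big|\langle\varphi,\sigma\,d\mu-\rho\,dv_{g_1}\rangle\big|+\int_\Omega\rho\,\varphi\,dv_{g_1}\le\sqrt\delta\,\|\varphi\|_{W^{1,2}(\Omega,g_1)}+\|\rho\|_{L^\infty}\|\varphi\|_{L^1(M,g_1)}\le C\sqrt{\tfrac{\delta k}{\log k}}+C\|\rho\|_{L^\infty}\tfrac{k\delta}{\log k},
$$
and combining with the $S'$-contribution and rewriting $\sqrt{\delta k/\log k}=\sqrt{\delta k\log k}/\log k$ yields $\sigma\mu(M)\le C(1+\|\rho\|_{L^\infty})\,(k\delta+\sqrt{\delta k\log k})/\log k$, as desired.

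The step that needs the most care is the choice of the two scales $\tilde r$ and $\tilde\rho$: the Dirichlet energy of the cutoff wants the annulus $B_{\tilde\rho}\setminus B_{\tilde r}$ to be logarithmically wide (i.e., $\tilde\rho/\tilde r$ large), while the $L^1$ norm wants $\tilde\rho$ small; the definition \eqref{sprime.def} of $S'$ is exactly what reconciles these, since on $S''$ we already know $\tilde r<\sqrt\delta\,k^{-1/4}$ is tiny, leaving room to take $\tilde\rho=\sqrt{\delta/\log k}$ small while still keeping $\log(\tilde\rho/\tilde r)\gtrsim\log k$. The remaining points are routine: verifying $\tilde\rho$ lies below the injectivity radius and the curvature scale so that \eqref{std.length.est}--\eqref{std.area.est} apply on each $B_{\tilde\rho}(p_j)$; handling the trivial case $S''=\varnothing$; and checking that the $S'$-term and the lower-order $\sqrt\delta\,\|\varphi\|_{L^1}$ term (of size $\lesssim k\delta^{3/2}/\log k\le k\delta/\log k$, using $\delta<1$) are both absorbed into the stated right-hand side, with the finitely many small $k$ satisfying $1/\sqrt k\le t_0$ absorbed into the constant $C$.
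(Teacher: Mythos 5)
Your proof is correct and takes essentially the same approach as the paper's: a logarithmic cutoff adapted to the centers $\{p_j\}$, with the definition of $S'$ guaranteeing enough annular width for a small-Dirichlet-energy test function, fed into \eqref{h-1.hyp} and \eqref{ineq:CSobolev}, and Lemma~\ref{bigrad.smallmass} absorbing the large-radius contribution. The only difference is cosmetic: the paper works with the single unnormalized function $\varphi=\max\{\log(\sqrt\delta/d_S),0\}$ truncated at $\log k$ and centered at all of $S$, extracting $\sigma\mu(M)$ from the lower bound $\langle\sigma\mu,\varphi\rangle\geq\tfrac14(\log k)\sigma\mu(M)-C(\log k)(1+\|\rho\|_{L^\infty})\delta^{1/2}k^{1/4}$, whereas you split $S=S'\cup S''$ up front and use a normalized (sup $\leq 1$) cutoff centered only at $S''$ so the pairing directly controls $\sigma\mu$ of the small boundary circles; after rescaling by $\log k$ the two test functions yield the same estimate.
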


\begin{proof} Denote by $d_S\in \Lip(M)$ the distance to the full set $\{p_1,\ldots, p_k\}$ of centers of the geodesic disks $B_{r_1}(p_1),\ldots, B_{r_k}(p_k)$, and define a test function $\varphi\in \Lip(M)$ by
$$
\varphi(x):=\max\left\{\log\left(\sqrt{\delta}/d_S(x)\right),0\right\}\quad\text{ if }\quad d_S(x)\geq \frac{\sqrt{\delta}}{k}
$$
and
$$
\varphi(x):=\log(k)\quad\text{ if }\quad d_S(x)<\frac{\sqrt{\delta}}{k}.
$$
For $p_j\in S\setminus S'$, note that 
$$
\varphi\equiv \log\left(\sqrt{\delta}/r_j\right)\geq \log\left(k^{1/4}\right).
$$
Appealing to Lemma \ref{bigrad.smallmass}--and the nonnegativity of $\varphi$--it then follows that
\begin{eqnarray*}
\langle \sigma \mu,\varphi\rangle &\geq &\log\left(k^{1/4}\right)\sigma \mu \left(\bigcup_{p_j\in S\setminus S'}B_{r_j}(p_j)\right)\\
&=&\log\left(k^{1/4}\right)\left(\sigma \mu(M)-\sigma \mu\left(\bigcup_{p_j\in S'}B_{r_j}(p_j)\right)\right)\\
&\geq &\log \left(k^{1/4}\right)\sigma \mu(M)-C\log\left(k^{1/4}\right)\left(1+\|\rho\|_{L^{\infty}}\right)\delta^{1/2}k^{1/4};
\end{eqnarray*}
i.e.,
\begin{equation}\label{muphi.lbd}
\langle \sigma \mu,\varphi\rangle \geq \frac{1}{4}\log(k)\left(\sigma \mu(M)-C(1+\|\rho\|_{L^{\infty}})\delta^{1/2}k^{1/4}\right).
\end{equation}

Next, writing
$$
L_S(t):=\mathcal{H}^1(\{d_S=t\})\leqslant \mathcal{H}^1\left(\bigcup_{j=1}^k\partial B_t(p_j)\right);
$$
note that, since $t<t_0(\chi(M),\iota_0)<\min\{r_0(\chi(M)),\inj(M)\},$ Lemma \ref{std.disk.lem} gives
$$
L_S(t)\leq \sum_{j=1}^k\mathcal{H}^1(\partial B_t(p_j))\leq k \frac{5\pi}{2}t.
$$
By definition of $\varphi$, we can then employ the coarea formula for $d_S$ to see that
\begin{eqnarray*}
\int_{\Omega}\varphi dv_{g_1}&\leq &\int_M\varphi dv_{g_1}\\
&=&\log k\int_0^{\sqrt{\delta}/k}L_S(t)dt+\int_{\sqrt{\delta}/k}^{\sqrt{\delta}}\log(\sqrt{\delta}/t)L_S(t)dt\\
&\leq &\frac{5\pi}{2}k\left(\log k\int_0^{\sqrt{\delta}/k}tdt+\int_0^{\sqrt{\delta}}\log\left(\sqrt{\delta}/t\right)tdt\right)\\
&=&\frac{5\pi}{4}k\delta\left(\frac{\log k}{k^2}+\frac{1}{2}\right)\leq \frac{5\pi}{4}k \delta.
\end{eqnarray*}
Similarly, since $|d\varphi|_{g_1}^2=d_S^{-2} \chi_{\left\{\frac{\sqrt{\delta}}{k}\leq d_S\leq \sqrt{\delta}\right\}}$, we can compute
$$
\int_{\Omega}|d\varphi|^2_{g_1}\,dv_{g_1}\leq \int_{\sqrt{\delta}/k}^{\sqrt{\delta}}\frac{1}{t^2}L_S(t)dt\leq \frac{5\pi}{2}k\int_{\sqrt{\delta}/k}^{\sqrt{\delta}}\frac{1}{t}dt
= \frac{5\pi}{2}k\log k.
$$
In particular, since $\varphi\geqslant 0$ and $\delta^2k<1$, it follows from~\eqref{ineq:CSobolev} that
$$
\|\varphi\|_{W^{1,2}(\Omega,g_1)}^2\leq C_0 \left(k^2\delta^2+k\log k\right)\leqslant C_0'k\log k
$$
and
$$
\int_{\Omega}\rho \varphi\,dv_{g_1} \leq C_0\|\rho\|_{L^{\infty}}k \delta.
$$
From \eqref{h-1.hyp}, we deduce that
\begin{eqnarray*}
\langle \varphi, \sigma \mu\rangle&\leq & \int_{\Omega}\rho\varphi dv_{g_1}+\sqrt{\delta}\|\varphi\|_{W^{1,2}(\Omega,g_1)}\\
&\leq &C_0\|\rho\|_{L^{\infty}}k\delta +C_0\sqrt{\delta}\left(k\log k\right)^{1/2}\\
&\leq & C_1\left (1+\|\rho\|_{L^{\infty}}\right)\left(k\delta+\sqrt{\delta k \log k}\right).
\end{eqnarray*}
Finally, combining this with \eqref{muphi.lbd}, we obtain
$$
\frac{1}{4}\log(k)\left(\sigma \mu(M)-C(1+\|\rho\|_{L^{\infty}})\delta^{1/2}k^{1/4}\right)\leq C\left(1+\|\rho\|_{L^{\infty}}\right)\left(k\delta+\sqrt{\delta k \log k}\right),
$$
and noting that $\delta^{1/2}k^{1/4}\leq C\sqrt{\delta k \log k}$ for $k>1$, this in turn gives
$$
\sigma \mu(M)\leq C \frac{(1+\|\rho\|_{L^{\infty}})}{\log k}\left(k\delta+\sqrt{\delta k \log k}\right),
$$
which is the desired estimate \eqref{big.sec4.est}.
\end{proof}

\subsection{Proof of Proposition \ref{ubd.prop}}

With the results of Sections \ref{ref.stab} and \ref{gap.ctrl} in place, the proof of Proposition \ref{ubd.prop} is now relatively straightforward.

\begin{proof}[Proof of Proposition \ref{ubd.prop}]

Let $N_k$ be the compact surface with boundary given by removing $k$ disks from $M=\Sp,\mathbb{RP}^2,\mathbb{T}^2$, or $\mathbb{K}$, and suppose that $g$ is a metric on $N_k$ for which 
\begin{equation}
\label{del.gap}
\bar{\sigma}_1(N_k,g)\geq \Lambda_1(M)-\eta.
\end{equation}
We wish to show that $\eta\geq c \frac{\log k}{k}$ for some constant $c=c(M)>0$.

By Theorem \ref{thm:uni}, we may identify $(N_k,g)$ isometrically with a domain $(\Omega_k,\tilde{g})$ satisfying the hypotheses of Proposition \ref{s2.quant.stab} or \ref{quant.stab.prop}; i.e., we may assume that there exists a unit-area, constant curvature metric $g_1$ on $M$ such that $\tilde{g}\in [g_1]$ and $\Omega_k=M\setminus \left(B_1\cup\cdots\cup B_k\right)$, where $\{B_i\}_{i=1}^k$ is a collection of disjoint geodesic disks in $(M,g_1)$. Moreover, if $M=\Sp$, we may assume without loss of generality (by the standard Hersch trick) that the length measure $\mu_k=ds_{\tilde g}$ of $\partial\Omega_k$ satisfies the balancing condition $\int_{\Sp}I d\mu_k=0$.

If $\eta<\delta_1(M)$, it then follows from Proposition \ref{s2.quant.stab} or \ref{quant.stab.prop} that there exists a $\bar{\lambda}_1$-maximal metric $g_{\max}$ on $M$ conformal to some $g_0\in \Met_{\can}(M)$ such that 
\begin{equation}\label{mets.close}
\|g_0-g_1\|_{C^1(g_0)}^2\leq C(M) \eta
\end{equation}
and the length measure $\mu_k=ds_{\tilde g}$ of $\partial\Omega_k$ satisfies
\begin{equation}\label{main.gap.cons}
\|\sigma_1\mu_k-\lambda_1(g_{\max})dv_{g_{\max}}\|_{\left(W^{1,2}(\Omega,g_0)\right)^*}^2+\area_{g_0}(M\setminus \Omega_k)\leq C(M)\eta.
\end{equation}
In particular, for $\eta<\delta_2(M)$ sufficiently small, it follows from \eqref{mets.close} that
\begin{equation}
\inj(M,g_1)\geq \frac{1}{2}\inj(M,g_0)\geq c_0(M),
\end{equation}
and 
$$
\frac{1}{2}g_1\leq g_0\leq 2g_1,
$$
so we can replace $\area_{g_0}$ and $\left(W^{1,2}(\Omega,g_0)\right)^*$ in~\eqref{main.gap.cons} with $\area_{g_1}$ and $\left(W^{1,2}(\Omega,g_1)\right)^*$, adjusting the constant $C(M)$ on the right-hand side accordingly. Moreover, writing
$$
\lambda_1(g_{\max})dv_{g_{\max}}=\rho dv_{g_1},
$$
we see that 
\begin{equation}
\|\rho\|_{L^{\infty}}\leq C(M).
\end{equation}

Assume now that 
$$\eta<\frac{\log k}{k}.$$
By the preceding observations, we then see that $(M,g_1)$, $\Omega_k$, $\mu_k$, and $\rho$ satisfy the hypotheses of Lemma \ref{big.sec4.est} for $k\geq k_0(M)$ sufficiently large, with $\sigma=\sigma_1(\Omega,\tilde{g})$ and
$$\delta=C(M)\eta.$$
In particular, since in this case we have $\|\rho\|_{L^{\infty}}\leq C(M)$ and $\inj(M,g_1)\geq c_0(M)$, it follows from Lemma \ref{big.sec4.est} that
$$
\Lambda_1(M)-\eta\leq \bar{\sigma}_1(M,\tilde{g})\leq \frac{C(M)}{\log k}\left(k\eta+\sqrt{\eta k\log k}\right),
$$
so that
\begin{equation}\label{key.gap.est}
\frac{1}{2}\Lambda_1(M)\leq C(M)\left(\frac{k\eta}{\log k}+\sqrt{\frac{k\eta}{\log k}}\right)\leq C'(M)\left(\frac{k\eta}{\log k}\right)^{1/2},
\end{equation}
using the assumption that $\eta<\frac{\log k}{k}$. Squaring both sides and rearranging, we obtain
\begin{equation}
\eta \geq \left(\frac{\Lambda_1(M)}{2C'(M)}\right)^2\frac{\log k}{k},
\end{equation}
giving the desired bound.

\end{proof}

\end{document}